\numberwithin{equation}{section}
\newcommand{\Cf}{C_f}
\newcommand{\CL}{C_{34}}
\newcommand{\CR}{C_R}
\newcommand{\Clap}{C_{\z}}
\newcommand{\e}{\epsilon}
\newcommand{\HH}{H}
\newcommand{\br}{\mathbb{R}}
\newcommand{\ind}{\mathbbm{1}}
\newcommand{\pa}{\partial}
\newcommand{\la}{\lambda}
\newcommand{\cmin}{C_{\min}}
\newcommand{\be}{\begin{equation}}
\newcommand{\ee}{\end{equation}}
\def\bs#1\es{
    \begin{equation}\begin{split}
    #1
    \end{split}\end{equation}
}
\def\bsn#1\esn{
    \begin{equation*}\begin{split}
    #1
    \end{split}\end{equation*}
}
\newcommand{\g}{q}
\newcommand{\z}{w}
\renewcommand{\omega}{\z}
\renewcommand{\psi}{\phi}
\renewcommand{\delta}{\phi}
\renewcommand{\Theta}{\Phi}
\newcommand{\dd}{\,\mathrm{d}}
\newcommand{\E}{\mathbb E}
\newcommand{\Rem}{\mathrm{Rem}}
\newcommand{\Wat}{{\mathrm{Wat}}}
\newcommand{\Lap}{{\mathrm{Lap}}}
\newcommand{\CA}{\mathcal{A}}
\newcommand{\les}{\lesssim}
\newcommand{\us}{{\mathcal U_H}}
\newcommand{\U}{{\mathcal U_{H,1}}}
\newcommand{\BZ}{\mathbb Z}
\newtheorem{theorem}{Theorem}[section]
\newtheorem{lemma}[theorem]{Lemma}
\newtheorem{corollary}[theorem]{Corollary}
\newtheorem{proposition}[theorem]{Proposition}
\newtheorem{assumption}{Assumption}
\theoremstyle{definition}
\newtheorem{definition}[theorem]{Definition}
\newtheorem{remark}[theorem]{Remark}
\newtheorem{example}[theorem]{Example}
\begin{document}

\title[Rare event asymptotics in high dimensions]{Asymptotic analysis of rare events in high dimensions}
\author[A Katsevich]{Anya Katsevich, Alexander Katsevich}

\begin{abstract} 
Understanding rare events is critical across domains ranging from signal processing to reliability and structural safety, extreme-weather forecasting, and insurance. The analysis of rare events is a computationally challenging problem, particularly in high dimensions $d$. 
In this work, we develop the first asymptotic high-dimensional theory of rare events. First, we exploit asymptotic integral methods recently developed by the first author to provide an asymptotic expansion of rare event probabilities. The expansion employs the geometry of the rare event boundary and the local behavior of the log probability density. Generically, the expansion is valid if $d^2\ll\lambda$, where $\lambda$ characterizes the extremity of the event. We prove this condition is necessary by constructing an example in which the first-order remainder is bounded above and below by $d^2/\la$. We also provide a nonasymptotic remainder bound which specifies the precise dependence of the remainder on $d$, $\la$, the density, and the boundary, and which shows that in certain cases, the condition $d^2\ll \lambda$ can be relaxed. As an application of the theory, we derive asymptotic approximations to rare probabilities under the standard Gaussian density in high dimensions. In the second part of our work, we provide an asymptotic approximation to \emph{densities} conditional on rare events. This gives rise to simple procedure for approximately sampling conditionally on the rare event using independent Gaussian and exponential random variables.
\end{abstract}

\maketitle

\section{Introduction} 

Understanding rare events is critical across many domains—from reliability and structural safety \cite{choi2007reliability,ditlevsen1996structural} to finance and insurance~\cite{embrechts1997modelling}, signal processing and communications~\cite{BenRached2016UnifiedIS, Li2017LargeDeviations}, and extreme-weather forecasting~\cite{robRES}. Rare events also arise in the context of Bayesian inference, where extreme tails of the posterior distribution of a parameter given observed data are of interest~\cite{Straub2016BayesianRareEvents, FriedliLinde2024RareEvents}.

Rare event analysis is inherently extremely computationally challenging, since by definition, a faithful sampling of the full state space only rarely explores the rare event. The problem is even more challenging in high-dimensional state spaces. 

In this work, we analyze high-dimensional rare events using asymptotic techniques. We study the asymptotic framework of a concentrating probability density of the form $\pi(u)\propto e^{-\la z(u)}$, $u\in\br^{d+1}$, $\la\gg1$, and a ``fixed" (with respect to $\la$) event $D\subset\br^{d+1}$, with $d$ allowed to grow large with $\la$. In science and engineering applications, $\la$ is a relevant parameter controlling the rarity of the event. In the statistical context where $\pi$ is a posterior distribution, $\la$ denotes the number of independent observations. 

We study the standard setting in which the \emph{instanton} $u^*=\arg\min_{u\in D}z(u)$, the most likely point in $D$ under $\pi$, lies on the boundary $\pa D$. We consider $D$ to be a rare event not because $e^{-\la z(u^*)}$ is exponentially small but rather because $\la\|\nabla z(u^*)\|\gg1$, which makes $e^{-\la z(u)}$ decay rapidly as $u$ moves away from $u=u^*$ into the interior of $D$. Thus only an immediate neighborhood of the instanton $u^*$ contributes significantly to the probability of $D$. We derive asymptotic approximations of three quantities: 
\begin{enumerate}
\item the probability of the rare event $\pi(D)$, 
\item the expectation $\E_{X\sim\pi}[g(X)\mid X\in D]$ of some function $g$ defined on the state space (e.g. expected damage given a natural disaster occurs), and 
\item the probability \emph{density} $\pi\vert_D(u)=\pi(u)\ind_D(u)/\pi(D)$ itself. 
\end{enumerate}
We approximate these quantities in terms of simpler, explicit expressions involving the derivatives of $z$, $g$, and the boundary surface $\pa D$ at $u^*$. One of the uses for (3), i.e. for approximating $\pi\vert_D$ by a simpler $\hat\pi$, is to more cheaply explore the rare event landscape. We discuss other uses for $\hat\pi$ in Section~\ref{intro:sig} below.

We review the literature on the asymptotics of these three quantities. The first two have been studied both from the perspective of classical asymptotic integral analysis, and in the more specialized literature focusing on rare events. The quantities $\pi(D)$ and $\E_{U\sim\pi}[g(U)\mid U\in D]$ are ``classical" because they are given in terms of Laplace-type integrals, i.e. integrals with a large parameter $\la$ in the exponent:
\be\label{intro:piD}\pi(D)=\frac{\int_{D}e^{-\la z(u)}\dd u}{\int_{\br^{d+1}}e^{-\la z(u)}\dd u},\qquad \E_{U\sim\pi}[g(U)\mid U\in D] =\int gd\pi\vert_D= \frac{\int_{D}g(u)e^{-\la z(u)}\dd u}{\int_{D}e^{-\la z(u)}\dd u}.\ee The four Laplace-type integrals in these expressions fall into two categories: ones in which the minimizer of the exponent is on the boundary of the region of integration (the three integrals over $D$), and one in which the minimizer of the exponent lies in the interior (the integral over $\br^{d+1}$). In fixed $d$, there are well-known, rigorously justified and classical asymptotic expansions (AEs) for both categories of Laplace-type integrals; see e.g.~\cite[Chapter 8]{bleistein1975asymptotic}.

In the rare event literature, the asymptotic approach to computing $\pi(D)$ is standard in the field of reliability analysis~\cite{hu2021second}. There, a multitude of asymptotic methods, called ``second-order reliability methods" (SORMs) are of the following general type: first, the density $\pi$ is transformed into a standard Gaussian density by a change of variables, and second, the boundary of the transformed rare event set is approximated by a quadratic function near the instanton. See~\cite{hu2021second} for a review of SORMs. These methods are closely related to the classical theory of~\cite{bleistein1975asymptotic}; for example, the method given in~\cite{breitung1984asymptotic} directly applies to~\cite{bleistein1975asymptotic}.

Recent work has extended the classical theory of Laplace-type asymptotics with interior minimum (such as the denominator $\int_{\br^{d+1}}e^{-\la z(u)}\dd u$) to the high-dimensional regime; see~\cite{tang2025laplace} and~\cite{A24}. \emph{Yet there is no high-dimensional asymptotic theory for Laplace-type integrals $\int_{D}g(u)e^{-\la z(u)}\dd u$ with boundary minimum}, which are of course the more pertinent ones for rare event analysis.

There is significantly less work on asymptotic approximations of the third quantity, the restricted probability density $\pi\vert_D$ to a rare event $D$. We are only aware of the single work~\cite{lapinski2019multivariate}, which gives such an approximation in the case of fixed $d$ and a flat boundary $\pa D$. On the other hand, the analogous unrestricted problem is well-studied. Here, one approximates a Laplace-type density $\pi(u)\propto e^{-\la z(u)}$ by replacing $z$ with its quadratic Taylor approximation about the global minimizer of $z$. This yields a Gaussian density called the Laplace approximation (LA) to $\pi$, and it is known to be asymptotically exact. The LA is in widespread use in Bayesian inference for approximately sampling from posteriors, which are of Laplace-type, with $\la$ denoting sample size. The work~\cite{schillings2020convergence} establishes the accuracy of the LA in fixed dimension, and the works~\cite{helin2022non,bp,spokoiny2023dimension,katskew,katsBVM,katspok} have extended the theory of the LA into the high-dimensional regime. However, \emph{there is no high-dimensional asymptotic theory for approximating Laplace-type densities conditional on rare events $D$.}

Given the lack of asymptotic theory in the important high-dimensional regime, recent work on rare event algorithms has largely focused on simulation-based techniques, such as sequential sampling, multi-level splitting, and subset simulation~\cite{cerou2007adaptive,au2001estimation,cerou2012sequential}, as well as importance sampling techniques~\cite{kahn1953methods, siegmund1976importance}, which have in recent years been combined with the cross-entropy method to find a good proposal distribution~\cite{rubinstein2004cross, uribe2021cross, el2021improvement,tong2023large}. We note that importance sampling involves sampling from a proposal distribution $\hat\pi$ which, ideally, is close to the target $\pi\vert_D$. In this sense, the problem of approximating $\pi\vert_D$ by some simpler $\hat\pi$ has indeed been considered in the high-dimensional rare event literature. Yet there is no analysis in the literature of whether $\hat\pi$ approximates $\pi\vert_D$ in any quantifiable sense.

We now describe our results, which constitute {\bf the first asymptotic high-dimensional theory of rare events.}

\subsection{Summary of results}
We assume $\nabla z(u^*)$ is parallel to $(0_d,1)$ without loss of generality. Here $0_d$ refers to the zero vector in $\br^d$.  Thus, the tail $\pi(u)\propto e^{-\la z(u)}$ becomes more extreme as one moves upward in the $(d+1)$st coordinate away from $u^*$. Our assumptions imply that 
\begin{enumerate}
\item The boundary $\pa D$ can be locally parameterized as $(x, \psi(x))$ in a neighborhood of $u^*$, for some sufficiently smooth function $\psi:\br^d\to \br$. 
\item The boundary $\pa D$ is tangent to the level set $\{z(u)=z(u^*)\}$ at $u^*$. 
\item The set $D$ lies above the level set $\{z(u)=z(u^*)\}$, i.e. $(0_d,1)$ is an interior normal to $D$ at $u^*\in\pa D$.
\end{enumerate}
We consider the generic case where the level set and $\pa D$ are tangent to exactly second order. In other words, the difference between the two surfaces can be locally approximated by a positive-definite quadratic form $x^\top Hx/2$, with $H\succ0$. Finally, we assume $z$ grows at least linearly at infinity. Under these generic conditions, we prove several types of asymptotic results. 
\begin{enumerate}[label=\arabic*.]
\item\textbf{AE to order $N$.} In our first result, we derive an asymptotic expansion of $\int_Dg(u)e^{-\la z(u)}\dd u$ to arbitrary order $N$. We impose some additional restrictions on the magnitude of derivatives of the functions $\g(x,y):=g(x,\psi(x)+y)$ and $\z(x,y):=z(x,\psi(x)+y)$, where $x\in\br^d$ and $y\in\br$. Roughly speaking, we require that
\bs\label{intro:conds}
\max_{j= j_\ell,\dots,J_\ell}\|\nabla_x^\ell\pa_y^j\g(u)\|_H& \les d^{\lceil\frac\ell2\rceil},\\
\max_{j= j_\ell,\dots,J_\ell}\|\nabla_x^\ell\pa_y^j\z(u)\|_H &\les d^{\max(0,\lceil\frac\ell2\rceil-2)},
\es
uniformly over $u$ in certain small neighborhoods of $u^*$. Here, $\|\cdot\|_H$ is an $H$-weighted operator norm and $\lceil\cdot\rceil$ is the ceiling function. The maximum orders $J_\ell$ of $y$-partial derivative satisfy $J_\ell+\ell\leq 2N$ for $\g$ and $J_\ell+\ell\leq 2N+2$ for $\z$. Under these conditions, we prove that
\bs\label{intro:expand}
\int_{D}g(u)e^{-\la z(u)}\dd u =  \frac{(2\pi/\la)^{d/2}e^{-\la z(0_{d+1})}}{\la(\det H)^{1/2}}\left(\sum_{m=0}^{N-1}a_m(d^2/\la)^{m}+ \mathcal O((d^2/\la)^N)\right),
\es where the $|a_m|$ are bounded by constants depending only on the suppressed constants in~\eqref{intro:conds}. The terms of the expansion $a_m(d^2/\la)^m$ \emph{are the same as in the fixed $d$ case}. They are more commonly written as $\sum_mb_m\la^{-m}$, with $b_m=a_md^{2m}$, but we have pulled out a factor of $d^{2m}$ to emphasize that when $d$ is non-negligible, we have $|b_m|\les d^{2m}$. Thus~\eqref{intro:conds} and~\eqref{intro:expand} specify how large $d$, and the derivatives of $\g,\z$ can be to ensure the classical AE continues to hold. 
\item\textbf{Refined AE, $N=1$.} Our second result refines the first, in the case that $N=1$ and $g\equiv1$. We prove that
\be\label{intro:expand2}
\int_{D}e^{-\la z(u)}\dd u =  \frac{(2\pi/\la)^{d/2}e^{-\la z(0_{d+1})}}{\la\sqrt{\det H}}\left(1+ \Rem_1\right), \qquad|\Rem_1|\les C(\z)\frac{d^2}{\la}.
\ee This holds provided an explicit non-asymptotic inequality is satisfied, involving the derivatives of $\z$. The suppressed constant in the bound on $\Rem_1$ depends only on quantities which are not expected to grow with $d$ or $\la$, and $C(\z)$ is an explicit expression involving mixed partial derivatives of $\z$ in a neighborhood of $u^*$, of total order at most four. See Proposition~\ref{prop:explicitL2} for more details. This result refines~\eqref{intro:conds}-\eqref{intro:expand} in that it offers more flexibility. Indeed, the bound on the remainder $C(\z)d^2/\la$ is small under several asymptotic regimes. For example, the setting closest to~\eqref{intro:conds}-\eqref{intro:expand} is the case where $C(\z)$ is bounded and $d^2\ll\la$. But we could also have that $C(\z)\les d^{-1}$, in which case only $d\ll\la$ is required. Note that the quantity $C(\z)$ in turn depends on the derivatives of $z$ and the boundary $\psi$.
\item\textbf{Specialization to convex $z$ and $D$.} In the case that $z$ and $D$ are convex, we formulate simpler versions of the conditions under which~\eqref{intro:expand} and~\eqref{intro:expand2} hold, and the suppressed constant in~\eqref{intro:expand2} is now a function of a more explicit quantity.
\item\textbf{Rare high-dimensional Gaussian probabilities.} Next, we provide expansions for rare probabilities $\mathbb P(\mathcal N(0_{d+1}, I_{d+1})\in D_\la)$. Here, the density is fixed with $\la$, while the event $D_\la$ becomes more extreme as $\la\to\infty$. This is encoded by letting $D_\la = \sqrt\la D+(0_d,\sqrt\la)$ for a set $D$ which, in the most natural form of our results, only weakly depends on $\la$. Through a rescaling, we bring these probabilities into the form $\int_De^{-\la z(u)}\dd u$, divided by a Gaussian normalizing constant, and apply~\eqref{intro:expand},~\eqref{intro:expand2}. The remainder bounds are formulated in terms of derivatives of the boundary of $D$. For example, the specialization of~\eqref{intro:expand2} to the Gaussian case takes the following simple form:
\bs\label{intro:gauss}
\mathbb P\left(\mathcal N\left(0_{d+1}, \,I_{d+1}\right)\in  D_\la\right) &= \frac{e^{-\la/2}\left(1+\Rem_1\right)}{\sqrt{2\pi\la\det H}},\quad|\Rem_1| \les\left(\delta_2^2+\delta_3^2 +\delta_4( R\e)\right)\frac{d^2}{\la} + \frac1\la,
\es where $\delta_2,\delta_3$ are operator norms of $\nabla^2\psi$ and $\nabla^3\psi$ at the instanton $u^*$, and $\delta_4(R\e)$ is the supremum of the operator norm of $\nabla^4\psi$ in a neighborhood of $u^*$. Since~\eqref{intro:gauss} is a nonasymptotic bound, it can even be applied in the case where $D$ does depend strongly on $\la$, as we demonstrate in Example~\ref{ex:quad2}.
\item\textbf{Lower bound.} We give an example in which $\Rem_1$ in~\eqref{intro:expand2} satisfies $C_1d^2/\la\leq\Rem_1\leq C_2d^2/\la$ for some absolute constants $C_1,C_2$. This shows that our remainder bound is tight.
\item\textbf{Asymptotic approximation to $\pi\vert_D$.} Finally, we construct a probability density $\hat\pi$ which asymptotically approximates $\pi\vert_D$ under the same conditions under which~\eqref{intro:expand2} holds. We show that $\mathrm{TV}(\pi\vert_D, \hat\pi)\leq C(\z)d/\sqrt\la$, where $C(\z)$ is similar to the corresponding coefficient in~\eqref{intro:expand2}. The probability density $\hat\pi$ is constructed by Taylor expanding $\pi$ at the instanton and retaining quadratic terms in $x$ (the $d$-dimensional variable parameterizing the boundary) and a linear term in $y$ (the variable specifying vertical distance above the boundary). Thus, $\hat \pi$ can be viewed as a canonical approximation to $\pi$. Samples from $\hat\pi$ can be constructed by drawing a $d$-dimensional Gaussian $X$ and a scalar exponential random variable $Y$ independent of $X$, and returning $(X, Y+\hat\psi(X))$. Here, $\hat\psi$ is a quadratic approximation of the boundary $\psi$. Since $Y\geq0$, this construction ensures that the sampled points lie above the (approximate) boundary $(x,\hat\psi(x))$. The parameters of the Gaussian and exponential random variables depend purely on the first- and second-order derivatives of $z$ and $\psi$ at the instanton.
\end{enumerate}

\subsection{Significance}\label{intro:sig}

The significance of the high-dimensional asymptotic theory of rare events developed here is manifold. First, asymptotic methods yield explicit, analyzable formulas for the quantities of interest (e.g., rare-event probabilities). These expressions illuminate the underlying mechanisms—including boundary effects—and enable systematic sensitivity analysis with respect to physically meaningful parameters. By contrast, simulation-based techniques are essentially ``black-box'' tools that return numbers without structure, and in high dimensions the large number of parameters makes comprehensive exploration computationally impractical. Explicit formulas, on the other hand, allow one to identify the most influential parameters and to quantify their impact on target functionals in a direct and transparent way. Likewise, combining the explicit formulas with downstream analyses of the rare event enables a more comprehensive, fully analytic investigation of its mechanisms and impacts.

Second, asymptotic methods are computationally attractive, as they require only local derivative information at the instanton $u^*$. Of course, this requires first finding $u^*$, but this problem is common to the vast majority of numerical methods. Despite their popularity and widespread use in numerical rare-event analysis, \emph{no rigorous guarantees had previously been available} for these methods in the high-dimensional settings that arise in modern applications. Our theory substantially extends the validity of asymptotic expansions into this high-dimensional regime. In particular, it yields verifiable criteria for establishing that the remainder terms are small, thereby ensuring that the expansion may be safely employed. The bounds are not explicit only due to constants which are straightforward to quantify. We omit the computation of these constants in order to keep the paper's focus on the central concepts. 

We give a compelling example for the need for this high-dimensional theory in the setting of Gaussian rare event probabilities with a quartic boundary near the instanton. In this example, we prove that
\be\label{intro:quart-tight}
\mathbb P\left(\mathcal N\left(0_{d+1}, \,I_{d+1}\right)\in  D_\la\right) = \frac{e^{-\la/2}}{\sqrt{2\pi\la}}\left(1 -\frac{d^2+2d+24}{24\la} + \mathcal O((d^2/\la)^2)\right).
\ee Interestingly enough, the leading order term $\frac{e^{-\la/2}}{\sqrt{2\pi\la}}$ is independent of $d$, yet our expansion shows that it is truly leading order \emph{only if} $d^2\ll \la$. See Example~\ref{ex:quart} for more details.

Another significant feature of our results is the convenience of the asymptotic formulae we develop, when compared to the popular SORM. As mentioned above, SORM involves a coordinate transformation which turns the probability density into a standard Gaussian and correspondingly transforms the rare event set $D$. This transformation must be available analytically to carry out SORM, which approximates the boundary of the transformed set by a quadratic. But there are various numerical challenges in computing this transformation, as outlined e.g. in~\cite{normalization2020}. These challenges are only exacerbated in high-dimensional spaces. For example, the determinant of the Jacobian of the coordinate change may grow with $d$. In contrast, the formulae we derive require no such transformation, but rather, only knowledge of the derivatives of $z$ and the original boundary at the instanton. 

It is also notable that our results are flexible, encompassing multiple asymptotic regimes. As discussed above, the remainder bound~\eqref{intro:expand2} can be made small in multiple ways, depending on the interaction between $C(\z)$, $d$, and $\la$, which ultimately translates into interaction between $z$, the boundary $\psi$, $d$, and $\la$. In the special case of Gaussian probabilities, we obtain the even more transparent bound~\eqref{intro:gauss}, which is small under simple explicit conditions on $d$, $\la$, and the second, third, and fourth derivatives of $\psi$.

Finally, we discuss the significance of our approximation of $\pi\vert_D$ by $\hat\pi$. Samples from $\hat\pi$ are a cheap but provably accurate proxy for samples from $\pi$ conditional on the rare event $D$, and can be used to explore the different configurations (points $u$) which are likely to be observed under $\pi\vert_D$, beyond the instanton $u^*$ itself. Furthermore, i.i.d. samples $U_i\sim\hat\pi$ can be used to approximate expectations $\E_{U\sim\pi}[g(U)\mid U\in D]$, as in~\eqref{intro:piD}. Indeed, if $g$ is non-smooth (such as an indicator of a subset of $D$ near the instanton), then our integral expansion of $\int_D ge^{-\la z}$ does not apply, but we can instead use the approximation $\int gd\pi\vert_D\approx \int gd\hat\pi \approx \frac1N\sum_{i=1}^Ng(U_i)$. If $g$ is a bounded function, then the error in the first approximation is controlled by the TV distance between $\pi\vert_D$ and $\hat\pi$, which we have bounded, as described in point 6 above. The error in the second approximation can be made vanishingly small by taking $N$ sufficiently large. 

There is a third use for the samples $U_i$. Suppose the asymptotic approximation to $\pi(D)$ or $\int gd\pi\vert_D$ (based on integral expansions) is insufficiently accurate. For example, in the regime of our first result, this could happen if $d^2/\la$ is not sufficiently small. Although this can be remedied by expanding further, until the error scales as a sufficiently high power of $d^2/\la$, another approach could be even more computationally convenient. Namely, samples $U_i$ can be used in an importance sampling procedure to yield an unbiased estimator of the true quantity of interest. 

It is well known that the performance of the importance sampling estimator depends on the proximity of the proposal distribution (from which samples are generated) to the target $\pi\vert_D$. Our $\hat\pi$ has the advantage over other proposals in the literature, e.g.~\cite{uribe2021cross, el2021improvement,tong2023large}, in that it exploits asymptotic properties of $\pi\vert_D$ to obtain a canonical approximation.

\subsection{Related Work} Several works use large deviation theory (LDT), a closely related asymptotic framework, to analyze rare events. The LDT-based approach is particularly common in the setting of probability distributions over infinite-dimensional path space, e.g. random realizations of the solution to an SDE. See~\cite{grafke2019numerical} for an overview in this setting. LDT has been applied in finite-dimensional settings in~\cite{tong2023large, tong2021extreme, dematteis2019extreme}. The works dealing with infinite dimensions are not directly comparable to ours, while the works in finite dimensions do not quantity  the approximation error's dependence on the dimension.

\subsection*{Organization} The rest of the paper is organized as follows. In Section~\ref{sec:not} we introduce some notation and describe the problem setting. In Section~\ref{sec:gen}, we study asymptotic expansions of high-dimensional Laplace-type integrals with minimum on the boundary. In Section~\ref{sec:gauss}, we apply the results from Section~\ref{sec:gen} to derive asymptotic approximations to rare probabilities under the standard Gaussian density in high dimensions. In Section~\ref{sec:sample}, we present our results on the approximation to a density conditional on a rare event. Section~\ref{sec:proofs} outlines the proofs of the main results, and technical details are deferred to the Appendix.

\section{Notation, assumptions, and problem setting}\label{sec:not}

\subsection{Notation} In this section, we introduce some frequently used notation; additional notation will be introduced as needed.

The point $0_k$ denotes the origin in $\br^k$. The notation $a\les b$ means $a\leq Cb$ for an absolute constant $C$, and the notation $a\les_M b$ means $a\leq C(M)b$, where $M\mapsto C(M)$ is a monotone increasing function from $(0,\infty)$ to itself. For a vector $x\in\br^d$, and a symmetric positive definite matrix $H\in\br^{d\times d}$, let
\be\label{xH}
\|x\|_H = \sqrt{x^\top H x} = \|H^{1/2}x\|,
\ee where $\|\cdot\|$ indicates the standard Euclidean norm in $\br^d$. For a vector $u=(x,y)\in\br^{d+1}$, where $x\in\br^d$, let
\be\label{uH1}
\|u\|_{H,1}^2= \|x\|_H^2 + y^2 = x^\top Hx +y^2.
\ee
For a function $f\in C^k(\br^d)$, let $\nabla^kf(x)$ be the tensor of $k$th order derivatives,
$$
(\nabla^kf(x))_{i_1\dots i_k} = \pa_{x_{i_1}}\dots\pa_{x_{i_k}}f(x).$$ The inner product of $\nabla^kf(x)$ with a $k$-fold outer product of vectors $u_1,\dots,u_k$ is defined as
$$\langle \nabla^kf(x), u_1\otimes\dots\otimes u_k\rangle = \sum_{i_1,\dots,i_k=1}^d(\nabla^kf(x))_{i_1\dots i_k} (u_1)_{i_1}(u_2)_{i_2}\dots (u_k)_{i_k}.$$ 
We define the $H$-weighted operator norm of $\nabla^kf(x)$ to be
\be\label{TA}\|\nabla^kf(x)\|_H:=\sup_{\|u_1\|_H=\dots=\|u_k\|_H=1}\langle \nabla^kf(x), u_1\otimes \dots\otimes u_k\rangle=\sup_{\|u\|_H=1}\langle \nabla^kf(x), u^{\otimes k}\rangle.\ee The second equality is by~\cite[Theorem 2.1]{symmtens} and holds because $\nabla^kf(x)$ is a \emph{symmetric} tensor. 
\begin{example}We have
\bs\label{T-form}
\|\nabla f(x)\|_H &=\|H^{-1/2}\nabla f(x)\|,\\
\|\nabla^2f(x)\|_H &=\|H^{-1/2}\nabla^2 f(x)H^{-1/2}\|,
\es where the righthand sides are the standard Euclidean norm of a vector in $\br^d$ (first line) and the standard matrix operator norm (second line).
\end{example}
We note an important distinction between~\eqref{xH} and the first line of~\eqref{T-form}. Both $x$ and $\nabla f(x)$ are vectors in $\br^d$, but we view the latter as a 1-linear form acting on $\br^d$. Thus for points in $\br^d$ we have $\|\cdot\|_H =\|H^{1/2}\cdot\|$, whereas for 1-linear forms on $\br^d$, we have $\|\cdot\|_H =\|H^{-1/2}\cdot\|$. There will be no ambiguity because all 1-linear forms in the present work take the form of gradients. 

Finally, $\lfloor t \rfloor$ and $\lceil t \rceil$ denote the floor and ceiling functions, respectively:
\be
\lfloor t \rfloor:=\max\{n\in\BZ:n\le t\},\ \lceil t \rceil:=\min\{n\in\BZ:n\ge t\},\ t\in\br.
\ee

\subsection{Assumptions and problem setting}

Let $D\subset \br^{d+1}$ be a closed domain with a sufficiently smooth boundary. In this paper, we derive the asymptotics of integrals
\be\label{intD}
\int_{D}g(u)e^{-\la z(u)}du,\qquad d,\la\gg1,
\ee 
such that the minimizer of $z$ over $D$ lies on the boundary $\pa D$, and is not a local minimizer. 

By a translation and rotation, we may assume without loss of generality that the \emph{instanton}, i.e. the minimizer of $z$ over $D$, is the origin $0_{d+1}\in\pa D$, and that the gradient $\nabla z(0_{d+1})$ is parallel to $(0_d, 1)$. In fact, by redefining $\la\to c\la$ and $z\to z/c$, we further assume that $\nabla z(0_{d+1})=(0_d, 1)$. Due to the special role of the $(d+1)$st coordinate, we distinguish it with the letter $t$, writing points $u\in \br^{d+1}$ as $u=(x,t)$, with $x\in\br^d$ and $t\in\br$. We summarize the above discussion, and introduce some conditions on the set $D$, in Assumption~\ref{assume1} below. First, we introduce some notation.
\begin{definition}\label{def:set}
Given an open set $\Omega\subset\br^d$ and a function $\psi:\Omega\to\br$, define $\Theta:\Omega\times\br^+\to\Omega\times\br$ by
\be\label{Thetadef}\Theta(x,y)=(x,\psi(x)+y).\ee For a matrix $H\succ0$, let
\bs\label{UH}
\us(r)&=\{x\in\br^d\,:\,\|x\|_H\leq r\},\\
\U(r)&=D\cap\{u\in\br^{d+1}\,:\,\|u\|_{H,1}\leq r\},
\es where $D$ is the rare event set of interest. Here, $\|\cdot\|_H$ and $\|\cdot\|_{H,1}$ are as in~\eqref{xH} and~\eqref{uH1}, respectively.
\end{definition}
\begin{assumption}\label{assume1}Suppose
\begin{enumerate}[label=\theassumption.\arabic*., ref=\theassumption.\arabic*]
\item $z\in C^2(D)$, $\arg\min_Dz(u)=0_{d+1}\in\pa D$, and $\nabla z(0_{d+1}) = (0_d, 1).$ \label{a11}
\item There exists an open neighborhood $\Omega$ of $0_d$ and a function $\psi:C^2(\Omega)\to\br$ such that $\psi(0_d)=0$, $\nabla\psi(0_d)=0_d$, and $\pa D$ is locally parameterized by $\psi$: $\pa D\cap (\Omega\times\br) = \{(x,\psi(x))\,:\,x\in\Omega\}$.\label{a12}
\item $H:=\nabla_x^2z(0_{d+1})+\nabla^2\psi(0_d)\succ0$. \label{a13}
\item  \label{a14}There exists $\rho_0>0$ such that $\us(\rho_0)\subseteq\Omega$ and
$$
\Theta(\us(\rho_0)\times(0,\rho_0))\subseteq D\cap (\us(\rho_0)\times\br) \subseteq \Theta(\us(\rho_0)\times(0,\infty)).
$$
\item By shrinking $\rho_0$ if necessary,  assume $\pa_tz(u)>0$ for all $u\in \U(\rho_0)$.\label{a15}
\end{enumerate}
\end{assumption}
Since $D$ is closed, by $z\in C^2(D)$ we mean that there exist an open set $D_1\supset D$ and a function $z_1\in C^2(D_1)$ such that $z_1\equiv z$ on $D$. Hence, all the derivatives of $z$ at the boundary $\pa D$ can be computed using $z_1$ instead.
The condition~\ref{a14} states that the portion of $D$ in the cylinder $\{(x,t)\,:x\in\us(\rho_0)\}$ lies entirely above the boundary $(x,\psi(x))$, and it must at least contain a strip of thickness $\rho_0>0$ above the boundary; see the curved blue rectangle in the top portion of Figure~\ref{fig:glob}. Since $z\in C^2(D)$ and $\pa_tz(0_{d+1})=1$ is positive, we can always find $\rho_0$ such that $\pa_tz(u)>0$ for all $u\in \U(\rho_0)$. That $\nabla\psi(0_d)=0_d$ is not an assumption at all, but follows from the fact that $0_d$ is the minimizer of $x\mapsto z(x,\psi(x))$ and $\nabla_xz(0_d)=0_d$. The matrix $H$ is the Hessian of $x\mapsto z(x,\psi(x))$ at $x=0_d$, so $H\succ0$ conveys that $0_{d+1}$ is the strict local minimizer of $z\vert_{\pa D}$.

\begin{remark}[Geometric interpretation of $H\succ0$]The boundary $(x,\psi(x))$ of $D$ is tangent to the level set $\{z(u)=z(0_{d+1})\}$ at $u=0_{d+1}$. Thus if this level set is locally parameterized as $(x,\psi_z(x))$, then $\nabla\psi_z(0_d)=0_d$. We then have $\psi(x)\approx\frac12x^\top\nabla^2\psi(0_d)x$ and $\psi_z(x)\approx \frac12x^\top\nabla^2\psi_z(0_d)x$ near the origin. Now, it is straightforward to show that $H$ from condition~\ref{a13} can also be expressed as $H=\nabla^2\psi(0_d)-\nabla^2\psi_z(0_d)$. Thus $H\succ0$ conveys the fact that points $u\in\pa D$ near the origin must lie \emph{above} the level set $\{z(u)=z(0_{d+1})\}$ and therefore $z(u)>z(0_{d+1})$, since $z$ increases as we move upward. This is precisely what is needed to ensure $z\vert_{\pa D}$ has a strict local minimizer at the origin. 
\end{remark}
\begin{remark}[General case] In Appendix~\ref{app:gen}, we consider the more general case $\int_{\bar D}e^{-\bar\la\bar z(u)}\dd u$, where $u^*=\arg\min_{u\in \bar D}z(u)\in\pa \bar D$ is not necessarily the origin, and $\nabla z(u^*)$ is not necessarily aligned with the $t$-axis, nor has unit norm. We assume we have a description of the set $\bar D$ as $\bar D=\{u\in\br^{d+1}\,:F(u)\geq F_0\}$, as is standard in reliability engineering~\cite{choi2007reliability}. By a rotation and translation, we show how to bring the integral into the form $\int_De^{-\la z(u)}\dd u$, where $z$ and $D$ satisfy Assumption~\ref{a11}, and we give the formula for the matrix $H$ in terms of the derivatives of $F$ and $\bar z$.\end{remark}
\begin{figure}[h]
{\centerline{
{\epsfig{file={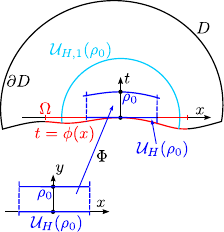}, width=6cm}}
}}
\caption{Illustration of various sets and the map $\Phi$ used in Assumption~\ref{assume1}.}
\label{fig:glob}
\end{figure}
%
Next, we assume $z$ grows linearly away from the origin, and that the linear growth starts sufficiently close to the origin.
\begin{assumption}[Global growth condition]\label{assume:global2}
There exists $\rho_1\in(0,\rho_0)$ and $s>0$ such that
\be
z(u)-z(0_{d+1})\geq s\|u\|_{H,1}\qquad\forall u\in D\setminus\U(\rho_1).
\ee  
\end{assumption}
The condition $\rho_1<\rho_0$ ensures that $\pa_tz(u)>0$ for points $u$ on the boundary $\{\|u\|_{H,1}=\rho_1\}$, beyond which the linear growth starts. Next, we define functions $\z$ and $\g$, which are compositions of $z$ and $g$ with the map $\Theta$, respectively.
\begin{definition}\label{def:wq}
Let $\rho_0$ be as in Assumption~\ref{assume1} and define $\z,\g:\us(\rho_0)\times[0,\rho_0)\to\br$ by
\bs\label{zg}
\z(x,y)&=z(x,\psi(x)+y),\\
\g(x,y)&=g(x,\psi(x)+y).
\es
\end{definition} 
Here, recall that $g$ is the observable from~\eqref{intD}, which we assume to be defined on $D$. The coefficients in the asymptotic expansion (AE) of the integral~\eqref{intD} will be defined in terms of coefficients of standard Laplace integral expansions. To that end, we have the following definition.
\begin{definition}\label{def:lap}
Recall $\Omega$ from Assumption~\ref{assume1}, and let $\z$ be as in Definition~\ref{def:wq}. For a function $h$, let $\nu_j(h)$ be the $j$th order term in the standard Laplace expansion of $\int_\Omega h(x)e^{-\la\z(x,0)}\dd x$. In other words,
\be\label{intlap}
\int_{\Omega}h(x)e^{-\la\z(x,0)}\dd x = e^{-\la\z(0_{d+1})}\frac{(2\pi/\la)^{d/2}}{\sqrt{\det H}}\left(\nu_0(h)+\nu_1(h)\la^{-1}+\nu_2(h)\la^{-2}+\dots\right)
\ee
For example, $\nu_0(h)=h(0_d)$. See~\cite{A24} and Appendix~\ref{app:lap} for more details.
\end{definition} Note that~\eqref{intlap} is indeed a classical Laplace-type integral, since the function $x\mapsto\z(x,0)=z(x,\psi(x))$ in the exponent has a single, strict global minimizer $x=0_d$ in the region $\Omega$ of integration. 

The coefficients in the AE of the integral~\eqref{intD} are defined as follows:
\begin{align}
 a_m &= d^{-2m} \sum_{k=1}^{m+1}\nu_{m+1-k}(\g_k),\qquad m=1,2,3,\dots,\label{amdef}\\
\g_k(x)&=\frac{1}{\pa_y\z(x,0)}(D_{\z}^{k-1}\g)(x,0),\quad k=1,2,3,\dots\,.
\label{gkdef1}
\end{align}
Here $D_{\z}$ is the operator defined by
\be\label{Dzdef}
(D_{\z}f)(x,y) =\pa_y(f(x,y)/\pa_y\z(x,y)),
\ee 
where $f$ is a function defined on $\us(\rho_0)\times[0 , \rho_0]$ which is differentiable with respect to $y$. According to the clarification following Assumption~\ref{assume1}, the derivatives of $\z$ at the boundary $y=0$ are computed by replacing $z$ with $z_1$ in \eqref{zg}. Derivatives of $\g$ at the boundary are handled similarly by using an extension of $g$.

To give an example of the $q_k$, we have
\be\label{g12def}
\g_1(x) = \frac{\g(x,0)}{\pa_y\z(x,0)},\qquad \g_2(x) =  \frac{\pa_y\g(x,0)}{\left(\pa_y\z(x,0)\right)^2}- \frac{\g(x,0)\pa_y^2\z(x,0)}{\left(\pa_y\z(x,0)\right)^3}
\ee 

Our remainder bound will be expressed in terms of the following quantities.
\begin{definition}[Derivative bounds on $\z$, $\g$, and $\psi$]\label{def:omega}For all $0\leq r,r'\leq\rho_0$, let
\bs\label{deldef}
\g_{0,j}(r,r') &=\sup_{u\in \us(r)\times[0,r']}|\pa_y^j\g(u)|,\\
\g_{\ell,j}(r,r') &= \sup_{u\in \us(r)\times[0,r']}\|\nabla_x^\ell\pa_y^j\g(u)\|_H, \quad\ell\geq1,\\
\omega_{0,j}(r,r') &= \sup_{u\in \us(r)\times[0,r']}|\pa_y^j\z(u)|,\\
\omega_{\ell,j}(r,r') &= \sup_{u\in \us(r)\times[0,r']}\|\nabla_x^\ell\pa_y^j\z(u)\|_H,\quad\ell\geq1,\\
\delta_0(r)&=\sup_{x\in\us(r)}|\psi(x)|,\\
 \delta_\ell(r)&=\sup_{x\in\us(r)}\|\nabla^\ell\psi(x)\|_H,\quad \ell\geq1.
\es
\end{definition}
Finally, we introduce one more quantity, which controls the behavior of $z$ in the region $\U(\rho_1)$. Recall that we have linear growth in the complement of this region, by Assumption~\ref{assume:global2}.
\be\label{C1def}
\cmin = \min\left(\inf_{u\in \U(\rho_1)}\pa_tz(u),\; \;\inf_{x\in\us(\rho_1)}(z(x,\psi(x))-z(0_{d+1}))/\tfrac12\|x\|_H^2\right).
\ee
Note that $\cmin>0$ since $\pa_tz$ and $(z(x,\psi(x))-z(0_{d+1}))/(\|x\|_H^2/2)$ are both positive and continuous in the regions over which we take their infimum. Continuity follows by the fact that $z\in C^2(D)$, and $H$ is the Hessian of $x\mapsto z(x,\psi(x))$ at $x=0_d$. Positivity follows because $0_{d+1}$ is the global minimizer of $z$ over $D$, and because we assumed $\rho_1<\rho_0$ in Assumption~\ref{assume:global2}, and $\pa_tz(u)>0$ for all $u\in \U(\rho_1)$, by Assumption~\ref{assume1}. Note that the closer $\rho_1$ is to zero, the closer $\cmin$ is to 1 (recall $\pa_tz(0_{d+1})=1$). 

\section{Main integral expansions}\label{sec:gen}

In the below proposition and throughout the paper,
\be
\e=\sqrt{d/\la}.
\ee

  \begin{proposition}\label{prop:main}
Let $N\geq1$, $\sup_{u\in D}|g(u)|\leq1$, and suppose Assumptions~\ref{assume1},~\ref{assume:global2} are satisfied. Suppose there is $R \geq 12/\cmin + 2(1+2N)(\log\la)/d$ and constants $\CR$, $C_{\g,\ell}$, $\ell=0,\dots,2N$, $C_{\z,\ell}$, $\ell=0,\dots,2N+2$ 
such that $\g\in C^{2N}(\us(R\e)\times[0,(R\e)^2))$, $\z\in C^{2N+2}(\us(R\e)\times[0,(R\e)^2))$, and
\begingroup
\addtolength{\jot}{0.3em}
\begin{align}
&\e\leq s,\qquad R\e\leq\min(1,\rho_0,2\rho_1, 2/\delta_2(\rho_1)),\label{A}\\
&R\e\leq1/(2C_{\z,0}+2C_{\z,1}),\qquad R^4d^2/\la\leq \CR,\label{BC}\\
\begin{split}\label{forq}
&\max_{j=0,\dots,N}\g_{0,j}(R\e,(R\e)^2)\leq C_{\g,0}, \\
&\max_{j=0,\dots,\lfloor N-\frac\ell2\rfloor}\g_{\ell,j}(R\e,0)\leq C_{\g,\ell} \,d^{\lceil\frac\ell2\rceil},\quad\ell=1,\dots,2N,
\end{split}\\
\begin{split}\label{forw}
&\omega_{0,2}(R\e,R\e)\vee \max_{j=3,\dots,N+1}\omega_{0,j}(R\e,(R\e)^2)\leq C_{\z,0},\\
&\max_{j=j_\ell,\dots,\lfloor N+1-\frac\ell2\rfloor}\omega_{\ell,j}(R\e,0)\leq C_{\z,\ell} \,d^{(\lceil\frac\ell2\rceil-2)_+},\quad\ell=1,\dots,2N+2.
\end{split}
\end{align}
\endgroup
Here, $j_1=j_2=1$, $j_\ell=0$ for $\ell\geq3$, and $(\cdot)_+=\max(\cdot, 0)$.  Then
\bs\label{expansion}
\int_{D}g(u)e^{-\la z(u)}\dd u =  \frac{(2\pi/\la)^{d/2}e^{-\la z(0_{d+1})}}{\la(\det H)^{1/2}}\left(g(0_{d+1})+\sum_{m=1}^{N-1}a_m\left(\frac{d^2}{\la}\right)^{m}+ \Rem_N\right),
\es where
\begin{align}
\max_{m=1,\dots,N-1}|a_m|&\leq C_{\g,\z,N},\qquad |\Rem_N|\leq C_{\g,\z,N,R} (d^2/\la)^{N}.\label{RemLmainbd}
\end{align}
The constant $C_{\g,\z,N}$ depends on $N$, $C_{\g,\ell}$, $\ell=0,\dots,2N$, and $C_{\z,\ell}$, $\ell=0,\dots,2N+2$. The constant $C_{\g,\z,N,R}$ depends on these same constants and also on $\CR$.
 \end{proposition}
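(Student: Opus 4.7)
The plan is to reduce $\int_D g(u)e^{-\la z(u)}\dd u$ to an iterated integral over the localized box $\us(R\e)\times[0,(R\e)^2]$, expand the inner $y$-integral via Watson's lemma at the boundary $y=0$, and then expand the outer $x$-integral using the high-dimensional interior-minimum Laplace theorem of~\cite{A24} recalled in Definition~\ref{def:lap}. Combining the two expansions and regrouping by powers of $\la^{-1}$ yields the series~\eqref{expansion} with the coefficients $a_m$ as defined in~\eqref{amdef}.

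\textbf{Localization.} First I would show that the contribution from $D\setminus\U(R\e)$, together with the ``top cap'' $\{u=\Theta(x,y)\,:\,x\in\us(R\e),\,y>(R\e)^2\}$, is absorbed into $\Rem_N$. Outside $\U(\rho_1)$, Assumption~\ref{assume:global2} and the bound $\e\leq s$ give a linear-in-$\|u\|_{H,1}$ lower bound on the exponent, while on $\U(\rho_1)\setminus\U(R\e)$ the definition~\eqref{C1def} of $\cmin$ together with $H\succ0$ yields a quadratic-in-$\|u\|_{H,1}$ lower bound on $z-z(0_{d+1})$; both integrate to $\mathcal O(e^{-cR^2d})$ terms, and the condition $R\geq 12/\cmin+2(1+2N)(\log\la)/d$ is precisely what makes this negligible compared to $\la^{-1-2N}(2\pi/\la)^{d/2}e^{-\la z(0_{d+1})}(\det H)^{-1/2}$. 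The top cap is handled similarly, using $\pa_tz(u)\geq \cmin$ on $\U(\rho_0)$ (Assumptions~\ref{a14}--\ref{a15}) and~\eqref{BC}.

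\textbf{Inner Watson expansion.} On the localized box, applying $u=\Theta(x,y)$ (which has unit Jacobian) converts the integral to $\int_{\us(R\e)}\int_0^{(R\e)^2}\g(x,y)e^{-\la\z(x,y)}\dd y\,\dd x$. For fixed $x$, since $\pa_y\z(x,0)>0$, iterated integration by parts based on the identity $e^{-\la\z}=-(\la\pa_y\z)^{-1}\pa_y e^{-\la\z}$ and the operator $D_\z$ of~\eqref{Dzdef} produces
\[
\int_0^{(R\e)^2}\g(x,y)e^{-\la\z(x,y)}\dd y=e^{-\la\z(x,0)}\sum_{k=1}^{N}\la^{-k}\g_k(x)+\la^{-N-1}\rho_N(x)+\Rem_N^{\mathrm{bdy}}(x),
\]
with $\g_k$ given by~\eqref{gkdef1}, $\rho_N(x)$ an explicit integral remainder involving $(D_\z^{N+1}\g)(x,y)$, and the boundary contribution $\Rem_N^{\mathrm{bdy}}(x)$ at $y=(R\e)^2$ super-exponentially small thanks to the lower bound $\pa_y\z\geq 1/2$ implied by $R\e\leq 1/(2C_{\z,0}+2C_{\z,1})$ in~\eqref{BC}.

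\textbf{Outer Laplace expansion and $d$-bookkeeping.} For each $k$, $\int_{\us(R\e)}\g_k(x)e^{-\la\z(x,0)}\dd x$ is a high-dimensional Laplace integral with strict interior minimum at $x=0_d$ and Hessian $H\succ0$, so Definition~\ref{def:lap} / the theorem of~\cite{A24} produces $(2\pi/\la)^{d/2}(\det H)^{-1/2}e^{-\la z(0_{d+1})}\sum_{j=0}^{N-k}\nu_j(\g_k)\la^{-j}$ plus a controlled remainder. Collecting by powers of $\la^{-1}$, defining $b_m:=\sum_{k=1}^{m+1}\nu_{m+1-k}(\g_k)$, and dividing by $d^{2m}$ as in~\eqref{amdef} yields~\eqref{expansion}; the leading coefficient is $a_0=\nu_0(\g_1)=\g(0_{d+1})/\pa_y\z(0_{d+1})=g(0_{d+1})$, using $\pa_y\z(0_{d+1})=\pa_tz(0_{d+1})=1$ from Assumption~\ref{a11}. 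The main obstacle is tracking how $d$ propagates through both expansions so that the final remainder genuinely scales as $(d^2/\la)^N$ and the coefficients $|a_m|$ are $d$-independent. Each application of $D_\z$ costs one $y$-derivative of $\g$ or $\z$ together with a factor of $1/\pa_y\z$, and each $x$-derivative of the resulting $\g_k$ costs at most $d^{\lceil\ell/2\rceil}$ by~\eqref{forq}--\eqref{forw}; these growth rates are exactly matched to the intrinsic $d^{2j}\la^{-j}$ scaling of $\nu_j$ in the high-dimensional Laplace theorem, producing the pairing $\la^{-k-j+1}b_{k+j-1}\sim \la^{-m-1}(d^2)^m$ at total order $m=k+j-1$. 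The Watson remainder $\rho_N(x)$ is bounded using the $\z,\g$ derivative hypotheses up to orders $N+1$ in $y$, and the condition $R^4d^2/\la\leq\CR$ in~\eqref{BC} absorbs the polynomial-in-$R\e$ Taylor-remainder factors from the enlarged region $\us(R\e)\times[0,(R\e)^2]$ without degrading the overall $(d^2/\la)^N$ scaling. Assembling the Watson and Laplace remainders and collecting the absolute constants into $C_{\g,\z,N}$ and $C_{\g,\z,N,R}$ then yields~\eqref{RemLmainbd}.
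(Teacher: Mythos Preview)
Your proposal is correct and follows essentially the same route as the paper: decompose $D$ into a local box $\Theta(\us(R\e)\times[0,(R\e)^2])$ plus negligible tail pieces, apply Watson's lemma in $y$ to produce the $\g_k$ of~\eqref{gkdef1}, then apply the high-dimensional Laplace theorem of~\cite{A24} in $x$ to each $\int_{\us(R\e)}\g_k(x)e^{-\la\z(x,0)}\dd x$, and finally regroup in powers of $d^2/\la$. A few minor slips (the Watson remainder involves $D_\z^{N}\g$ rather than $D_\z^{N+1}\g$; the top-cap bound uses $\pa_y\z\geq 1/2$ from a Taylor expansion rather than $\cmin$ directly) do not affect the argument, and the one step you gloss over---that~\eqref{forq}--\eqref{forw} propagate through the chain rule to give $\sup\|\nabla_x^\ell\g_k\|_H\les d^{\lceil\ell/2\rceil}$---is exactly the content of the paper's Lemma~\ref{lma:deriv-change}.
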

 \begin{remark}[Asymptotic formulation]\label{rk:asym}
This is a nonasymptotic statement, but can be understood in the following asymptotic set-up. Suppose we have a sequence $\la\to\infty$, dimensions $d=d_\la$, functions $g=g_\la$, $z=z_\la$, and regions $D=D_\la\subset\br^{d_\la}$, such that Assumptions~\ref{assume1},~\ref{assume:global2} are satisfied for each $(\la,d_\la)$, and that the constants $\rho_0,\rho_1, \cmin,s,\delta_2(\rho_1)$ can be fixed independently of $\la$. Furthermore, suppose~\eqref{forq},~\eqref{forw} can be satisfied for some constants $C_{\g,\ell},C_{\z,\ell}$ which are also independent of $\la$. Finally, suppose $d_\la/\la\to0$ and $d_\la^2/\la$ remains bounded by a constant as $\la\to\infty$, and take $R=12/\cmin + 2(1+2N)(\log\la)/d$. Then the conditions~\eqref{A},~\eqref{BC} will be satisfied for all $\la$ large enough, with $\CR=C(\cmin, N)$. Thus the conclusion~\eqref{expansion}-\eqref{RemLmainbd} holds for all $\la$ large enough. Note that, while~\eqref{expansion}-\eqref{RemLmainbd} is valid if $d_\la^2/\la <C$ for any arbitrary $C$, the expansion is useful when $d_\la^2/\la \ll 1$.
\end{remark}
\begin{remark}[Tight remainder bound]
Taking $N=2$ gives $\Rem_1 = a_1d^2/\la + \Rem_2$, with $|a_1|\leq C$ and $|\Rem_2|\les (d^2/\la)^2$. At the end of Section~\ref{sec:gauss}, we give an example where $a_1$ is not just bounded above by a constant but also bounded \emph{below} by a constant. This allows us to conclude that $\Rem_1\asymp d^2/\la$. Thus our remainder bound is tight. \end{remark}
\begin{remark}[The conditions~\eqref{A},~\eqref{BC}]
The conditions~\eqref{A},~\eqref{BC} are needed for a variety of reasons. To give one example, $s\geq\e$ gives a lower bound on the linear growth of $z$ at infinity, which is needed to control a tail integral. The full rationale for all these conditions can be found in the proof of Lemma~\ref{lma:mu} in Appendix~\ref{app:secgenproofs}; see also the discussion following the statement of Lemma~\ref{lma:mu}.
\end{remark}
  \begin{remark}[Base case $N=1$] The conditions~\eqref{forq} and~\eqref{forw} are nested as $N$ increases: the conditions for order $N'$ imply the conditions for order $N<N'$. Thus in particular, we always require the conditions from the $N=1$ case. It is helpful to write these out explicitly:
  \be\label{derivcondL2}\begin{gathered}
\g_{0,1}(R\e,(R\e)^2)\leq C, \quad (\g_{1,0}+\g_{2,0})(R\e,0)\les d,\\
\omega_{0,2}(R\e,R\e) +(\omega_{1,1}+\omega_{2,1}+\omega_{3,0}+\omega_{4,0})(R\e,0)\leq C,
 \end{gathered}\ee where we have used the same letter $C$ to denote all the different constants, for simplicity. 
 \end{remark}
%
\begin{remark}[Origin and interpretation of~\eqref{forq} and~\eqref{forw}]
The basic proof strategy is to first integrate $g(x,t)\exp(-\la (z(x,t)-z(x,\psi(x)))$ in the $t$ direction and apply Watson's lemma, since the exponent is approximately linear in $t$. This leads to an AE of the form 
$$
\sum_{k\ge1}\la^{-k}\int_\Omega\g_k(x)e^{-\la z(x,\psi(x))}\dd x, 
$$
where the functions $\g_k$ are as in~\eqref{gkdef1}. As discussed below Definition~\ref{def:lap}, each integral in this sum is a standard Laplace-type integral, with global minimizer of the exponent lying within the domain of integration. We then apply Corollary 2.17 of~\cite{A24}, which imposes conditions on the derivatives of $\g_k$ and $\z(x,0)=z(x,\psi(x))$ to deduce an AE of $\int\g_k(x)e^{-\la z(x,\psi(x))}\dd x$ in powers of $d^2/\la$. We then deduce conditions on the derivatives of $\g$ and $\z$ which imply the conditions on $\g_k$, and combine them with the conditions on $\z$. This is how~\eqref{forq} and~\eqref{forw} come about.


The first line of~\eqref{forq} and the first line of~\eqref{forw} impose bounds on pure $y$-partial derivatives in a neighborhood within the interior of $D$, whereas the second lines of~\eqref{forq} and~\eqref{forw} impose bounds on mixed $x,y$ partial derivatives containing at least one $x$ derivative, but only on the boundary of $D$. The region $\us(R\e)\times[0,(R\e)^2]$ is relevant because $\Theta(\us(R\e)\times[0,(R\e)^2])$ gives the dominant contribution to the integral $\int_Dge^{-\la z}$, whereas the integral in the complement of this region contributes only exponentially small terms. Note that there is one exception to the required pure-$y$ derivative bounds: namely, we require a bound on $\omega_{0,2}$ over the larger strip $\us(R\e)\times [0,R\e]$. We use this to show $\pa_tz(u)\geq1/2$ in $\Theta(\us(R\e)\times[(R\e)^2,R\e])$, which guarantees that the integral over this region is exponentially small.

Note that the conditions~\eqref{forq},~\eqref{forw} indirectly restrict the derivatives of the three functions $g$, $z$, and $\psi$, since $\g$ and $\z$ are compositions of $g$ and $z$ with $\Theta:(x,y)\mapsto(x,\psi(x)+y)$, respectively. If the boundary is flat, i.e. $\psi\equiv0$, then $\g=g$ and $\z=z$, so that~\eqref{forq},~\eqref{forw} constitute explicit bounds on the derivatives of $g$ and $z$. Another special case is considered in Section~\ref{sec:gauss} on Gaussian probabilities; there, $g\equiv1$ and $z$ is quadratic. The derivative bounds~\eqref{forq},~\eqref{forw} then reduce to bounds on $\psi$ derivatives alone; see~\eqref{deltas:gauss}.
\end{remark}

\begin{remark}[Relaxing~\eqref{forq},~\eqref{forw}]\label{rk:gen}
As we have just discussed, the conditions~\eqref{forq},~\eqref{forw} on $\g,\z$ derive from conditions on $\g_k,\z$ which, by Corollary 2.17 of~\cite{A24}, give an expansion of $\int\g_k(x)e^{-\la z(x,\psi(x))}\dd x$ in powers of $d^2/\la$. The conditions on $\g_k,\z$ are stated in~\eqref{w-deriv-cond},~\eqref{qk-deriv-cond}. Yet Corollary 2.17 actually contains additional flexibility. Specifically, let $\tau>0$ be some other possibly large parameter. If instead of~\eqref{w-deriv-cond},~\eqref{qk-deriv-cond}, it holds 
$$
\|\nabla_x^\ell\z(x,0)\|_H\les \tau^{\ell-2}d^{\lceil\ell/2\rceil-2},\qquad \|\nabla_x^\ell\g_k(x)\|_H\les \tau^\ell d^{\lceil\ell/2\rceil}$$ for an appropriate range of $\ell$ and $k$, then $\int\g_k(x)e^{-\la z(x,\psi(x))}\dd x$ admits an AE in powers of $\tau^2d^2/\la$, provided $\tau^2d^2/\la\ll1$. For example if $\tau=\sqrt d$ and $d^3\ll 1$, we obtain an AE in powers of $d^3/\la$. Thus by requiring a more stringent condition on $\la$, we can relax the condition on the growth of the derivatives of $\z$ and $\g_k$. In turn, this translates to weaker conditions on the derivatives of $\z$ and $\g$. We leave the formulation of these weaker conditions to future work.

A second avenue for weakening~\eqref{forq},~\eqref{forw} is discussed in Remark~\ref{rk:roomd}.
\end{remark}

\begin{remark}[Growth of derivative tensor norms with $d$]\label{rk:d1}Note that we allow for derivative tensor norms to grow as powers of $d$, where the power can depend on the order of the $x$-partial derivative but not on the order of the $y$-partial derivative. This is reasonable since $y$ is scalar and $x$ is $d$-dimensional. Indeed, using the number of entries as a rough proxy for order of magnitude of a tensor norm, note that the number of entries of $\nabla_x^\ell\pa_y^j\z$ is $d^\ell$, which depends on $\ell$ but not $j$. 
\end{remark}


 \begin{proposition}[Convex case]\label{main:cvx}Let $N\geq1$ and $\sup_{u\in D}|g(u)|\leq1$. Suppose Assumption~\ref{assume1} is satisfied and $D$ is star-shaped about $0_{d+1}$. Also, suppose the restriction of $z$ to any line segment in $D$ emanating from $0_{d+1}$ is convex. Let $R=24+2(1+2N)(\log\la)/d$ and suppose $\g\in C^{2N}(\us(R\e)\times[0,(R\e)^2))$, $\z\in C^{2N+2}(\us(R\e)\times[0,(R\e)^2))$. Suppose there are constants $\CR$, $C_{\g,\ell}$, $\ell=0,\dots,2N$ and $C_{\z,\ell}$, $\ell=0,\dots,2N+2$ such that
 \begin{align}
 R\e&\leq\min(1,\rho_0),\qquad R\e\delta_2(R\e/2)\leq 2,\label{D}\\
 R\e &\leq 3/C_{\z,3},\label{E}
 \end{align}
and~\eqref{BC},~\eqref{forq},~\eqref{forw}, are satisfied.
 Then the conclusion of Proposition~\ref{prop:main} follows.
 \end{proposition}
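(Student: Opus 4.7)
The strategy is to verify the hypotheses of Proposition~\ref{prop:main} with the explicit choice $\rho_1 := R\e/2$, and then invoke it. Three items need to be checked: the remaining parts of~\eqref{A}, the bound $\cmin \geq 1/2$ (so that $R = 24 + 2(1+2N)(\log\la)/d \geq 12/\cmin + 2(1+2N)(\log\la)/d$), and Assumption~\ref{assume:global2} with some $s \geq \e$. The geometric requirements $R\e \leq 2\rho_1$ and $R\e \leq 2/\delta_2(\rho_1)$ in~\eqref{A} are immediate from $\rho_1 = R\e/2$ together with $R\e\,\delta_2(R\e/2) \leq 2$ in~\eqref{D}.

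\textbf{Step 1 ($\cmin \geq 1/2$).} Any $u \in \U(\rho_1)$ has the form $u = \Theta(x, y)$ with $\|x\|_H \leq \rho_1$. Taylor expansion with $\psi(0)=0, \nabla\psi(0)=0$ gives $|\psi(x)| \leq \tfrac{1}{2}\delta_2(\rho_1)\|x\|_H^2 \leq R\e/4$ by~\eqref{D}, and hence $y = t - \psi(x) \leq 3R\e/4$. Taylor expanding $\pa_y\z$ in two stages (along $x$ at $y=0$, then along $y$) and applying the bounds $\omega_{1,1}(R\e, 0) \leq C_{\z, 1}$ and $\omega_{0, 2}(R\e, R\e) \leq C_{\z, 0}$ from~\eqref{forw}, together with $R\e(C_{\z,0} + C_{\z, 1}) \leq 1/2$ from~\eqref{BC}, gives $|\pa_y\z(x, y) - 1| < 1/2$, so $\pa_t z(u) = \pa_y\z(x, y) \geq 1/2$. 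For the second infimum in~\eqref{C1def}, Taylor expand $\z(\cdot, 0)$ to second order at $x = 0$: using $\nabla_x\z(0, 0) = 0$, $\nabla_x^2\z(0, 0) = H$, and the cubic remainder bound $\tfrac{1}{6}\omega_{3, 0}(R\e, 0)\|x\|_H^3 \leq \tfrac{1}{6}C_{\z, 3}\|x\|_H^3$, the hypothesis $R\e \leq 3/C_{\z, 3}$ in~\eqref{E} absorbs this remainder to yield $\z(x, 0) - \z(0, 0) \geq \tfrac{1}{4}\|x\|_H^2$ on $\us(\rho_1)$. Hence $\cmin \geq 1/2$.

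\textbf{Step 2 (Linear growth from convexity).} For $u \in D \setminus \U(\rho_1)$, set $v := (\rho_1/\|u\|_{H,1})\,u$. Star-shapedness of $D$ about $0_{d+1}$ gives $v \in [0_{d+1}, u] \subset D$, and by homogeneity $\|v\|_{H,1} = \rho_1$. Convexity of $t \mapsto z(tu)$ on $[0, 1]$, with minimum at $t = 0$, implies that the slopes $(z(tu) - z(0_{d+1}))/t$ are non-decreasing in $t > 0$; comparing $t = \rho_1/\|u\|_{H,1}$ to $t = 1$ gives
\begin{equation*}
z(u) - z(0_{d+1}) \geq \frac{\|u\|_{H,1}}{\rho_1}\bigl(z(v) - z(0_{d+1})\bigr).
\end{equation*}
Writing $v = \Theta(x_v, y_v)$, Step 1 yields $z(v) - z(0_{d+1}) \geq \tfrac{1}{4}\|x_v\|_H^2 + \tfrac{1}{2}y_v$. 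A two-case split according to whether $\|x_v\|_H \geq \rho_1/\sqrt 2$ or $|t_v| \geq \rho_1/\sqrt 2$ (one of which must hold, since $\|x_v\|_H^2 + t_v^2 = \rho_1^2$) then yields $z(v) - z(0_{d+1}) \geq \rho_1^2/8$: in the first case directly from the quadratic bound, and in the second using $|\psi(x_v)| \leq \rho_1/2$ to force $t_v \geq \rho_1/\sqrt 2$ (rather than $\leq -\rho_1/\sqrt 2$, which would contradict $y_v \geq 0$) and hence $y_v \geq (\sqrt 2 - 1)\rho_1/2$. Therefore Assumption~\ref{assume:global2} holds with $s = \rho_1/8 = R\e/16$, and $\e \leq s$ reduces to $R \geq 16$, which is satisfied. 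Invoking Proposition~\ref{prop:main} concludes the proof.

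\textbf{Main obstacle.} The delicate point is Step 2: the reference ellipsoid $\{\|v\|_{H, 1} = \rho_1\}$ is aligned with neither of the two directions in which $z$ is controlled separately in Step 1 (quadratic in $\|x_v\|_H$ along the boundary, linear in $y_v$ normal to it), so a case analysis is needed to guarantee at least one of these contributions is nondegenerate at $v$. The bound $|\psi| \leq \rho_1/2$ is essential to exclude the geometrically irrelevant half-space $t_v \leq -\rho_1/\sqrt 2$.
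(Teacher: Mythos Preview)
Your proposal is correct and follows essentially the same route as the paper. The paper also reduces the convex case to the machinery behind Proposition~\ref{prop:main}: it shows (via Lemma~\ref{lma:mu}, which in the convex setting invokes Lemma~\ref{lma:cvx} and Lemma~\ref{lma:extendlin}) that $\cmin(R\e/2)\geq 1/2$ by the same two Taylor estimates you use in Step~1, then derives linear growth on the sphere $\{\|u\|_{H,1}=R\e/2\}$ and extends it outward by convexity exactly as you do in Step~2. The only cosmetic difference is that the paper verifies the hypotheses of Corollary~\ref{corr:genprelim} directly and then reruns the proof of Proposition~\ref{prop:main}, whereas you package the argument as ``construct $\rho_1=R\e/2$ and $s=R\e/16$, check Assumption~\ref{assume:global2} and~\eqref{A}, and invoke Proposition~\ref{prop:main}''; your packaging is arguably cleaner. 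One small point you leave implicit: in Case~2 the bound $\tfrac12 y_v\geq \rho_1(\sqrt2-1)/4$ is linear in $\rho_1$, and to conclude $z(v)-z(0_{d+1})\geq \rho_1^2/8$ you need $\rho_1\leq 2(\sqrt2-1)$, which does follow from $\rho_1=R\e/2\leq 1/2$ but is worth stating.
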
 
 
 \begin{remark}The difference between these assumptions and those of Proposition~\ref{prop:main} is that here, we have dropped Assumption~\ref{assume:global2} and replaced~\eqref{A} by~\eqref{D}. We have also added the condition~\eqref{E}. The reason we do not need Assumption~\ref{assume:global2} is that convex functions $z$ automatically grow linearly, starting at any point. Correspondingly, the bounds from~\eqref{A} involving $\rho_1,s$ are not needed. The condition~\eqref{E} and the new bounds in~\eqref{D} relative to~\eqref{A}, are needed to ensure the slope of linear growth is sufficiently large.
 \end{remark}


In the rest of the paper, we focus on the case $g\equiv1$. We now give another more explicit bound in the case $N=1$, which makes clear the dependence of the remainder on the derivatives of $\z$. Recall Definition~\ref{def:omega} of $\omega_{\ell,j}(\cdot,\cdot)$. We introduce the shorthand
$$
\omega_{\ell,j}=\omega_{\ell,j}(0,0).
$$
\begin{proposition}\label{prop:explicitL2}
Let $M\geq1$. Suppose Assumptions~\ref{assume1},~\ref{assume:global2} are satisfied and there is some $R \geq 12/\cmin+2(1+2M)(\log\la)/d$ and a constant $\CL$ such that $\z\in C^4(\us(R\e)\times[0,(R\e)^2])$,~\eqref{A} is satisfied, and 
\bs\label{Rwc34eps}
&R\e(\omega_{1,1}+\omega_{0,2}(R\e,R\e)) + (R\e)^2\omega_{2,1}(R\e,0)\leq1/2,\\
&\left(\omega_{3,0}^2 +\omega_{4,0}(R\e,0)\right)\frac{d^2}{\la}  \leq \CL.
\es
Then
\begingroup
\addtolength{\jot}{0.7em}
\be\label{expansion-Rem2-g1}
\int_{D}e^{-\la z(u)}\dd u =  \frac{(2\pi/\la)^{d/2}e^{-\la z(0_{d+1})}}{\la\sqrt{\det H}}\left(1+ \Rem_1\right),
\ee \endgroup where 
\bs\label{bd-Rem2-g1}
|\Rem_1| \les_{R,\CL} &\left(\omega_{3,0}^2+\omega_{4,0}(R\e,0)\right)\frac{d^2}{\la} + \left(\omega_{1,1}^2+\omega_{2,1}(R\e,0)\right)\frac{d}{\la}+ \omega_{0,2}(R\e,(R\e)^2)\frac{1}{\la}  + \frac{1}{\la^M}.
\es
\end{proposition}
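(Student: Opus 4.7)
The plan is to carry out the two-stage ``Watson in $y$, then Laplace in $x$'' analysis underlying Proposition~\ref{prop:main}, but to expand only to the first nontrivial order and to track explicitly how each $x$- or $y$-derivative of $\z$ enters the remainder. First I would reduce the computation to the interior region $\Theta(\us(R\e)\times[0,(R\e)^2])$ and show that the complement contributes at most $\la^{-M}$ times the leading normalization. This uses Assumption~\ref{assume:global2} together with the lower bound $R\geq 12/\cmin+2(1+2M)(\log\la)/d$: linear growth yields a factor $e^{-c\la R\e}\leq\la^{-M}$, while on the intermediate strip $\us(R\e)\times[(R\e)^2,R\e]$ the smallness of $R\e\,\omega_{0,2}(R\e,R\e)$ from~\eqref{Rwc34eps} forces $\pa_y\z\geq 1/2$, producing the needed exponential decay there too.

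On the interior I would change variables via $\Theta$ and apply Watson's lemma in $y$. Writing $b(x):=\pa_y\z(x,0)$ and $\tilde v(x,y):=\z(x,y)-\z(x,0)-yb(x)$, one has $|\tilde v(x,y)|\leq\tfrac12\omega_{0,2}(R\e,(R\e)^2)\,y^2$, and the smallness condition in~\eqref{Rwc34eps} combined with a Taylor expansion of $b$ around the origin forces $b(x)\in[1/2,3/2]$ uniformly on $\us(R\e)$. Expanding $e^{-\la\tilde v}$ to first order in $\la\tilde v$ gives
\[
\int_0^{(R\e)^2}e^{-\la(\z(x,y)-\z(x,0))}\dd y \,=\, \frac{1}{\la b(x)} + r(x,\la),\qquad |r(x,\la)|\les \omega_{0,2}(R\e,(R\e)^2)/\la^2.
\]
Integrating the $r$-term against $e^{-\la\z(x,0)}$ and comparing with the leading normalization contributes the $\omega_{0,2}(R\e,(R\e)^2)/\la$ piece of~\eqref{bd-Rem2-g1}. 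The main term then reduces to the Laplace-type integral $\int \g_1(x)e^{-\la\z(x,0)}\dd x$ with $\g_1=1/b$.

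I would then apply Corollary~2.17 of~\cite{A24} to this Laplace-type integral at order one. The leading value is $\g_1(0_d)=1$, accounting for the $1$ in $(1+\Rem_1)$. The first correction $\nu_1(\g_1)/\la$ naturally splits into a trace term $\tfrac12\langle H^{-1},\nabla^2\g_1(0_d)\rangle$ and contributions of $\g_1(0_d)$ times third- and fourth-order $x$-derivatives of $\z(\cdot,0)$ at the origin. Since $\nabla^2\g_1 = -\nabla^2 b/b^2 + 2(\nabla b)(\nabla b)^\top/b^3$ has $H$-weighted norm $\les \omega_{2,1}(R\e,0)+\omega_{1,1}^2$, its trace is of size $\les d\,(\omega_{2,1}+\omega_{1,1}^2)$, while the remaining contributions are of size $\les (\omega_{3,0}^2+\omega_{4,0}(R\e,0))\,d^2$. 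Division by $\la$ produces the two remaining named terms in~\eqref{bd-Rem2-g1}. The higher-order Laplace and Watson remainders, together with the tail from step one, are all absorbed into the $\la^{-M}$ slot up to constants depending on $R$ and $\CL$, since each is either exponentially small in $\la$ or of order $(d^2/\la)^2$ times bounded derivative combinations that can be dominated by the explicit $\omega$-terms already listed.

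The hard part will be step three: verifying that Corollary~2.17 of~\cite{A24} indeed produces precisely this $d^2$/$d$/$O(1)$ separation in $\nu_1(\g_1)$, and confirming via chain-rule estimates on $\g_1=1/b$ and on the composition $\z(\cdot,0)=z(x,\psi(x))$ that the smallness conditions in~\eqref{Rwc34eps} suffice to guarantee the derivative hypotheses of that corollary (including the uniform lower bound on $b$ over $\us(R\e)$). A secondary technical point is ensuring that the $x$-dependence of the Watson remainder does not generate extra powers of $d$ upon integration against $e^{-\la\z(x,0)}$.
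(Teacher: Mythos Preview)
Your plan is essentially the paper's approach: localize to $D_1=\Theta(\us(R\e)\times[0,(R\e)^2])$, apply Watson in $y$ (yielding $\g_1=1/\partial_y\z(\cdot,0)$ plus a uniformly controlled $\omega_{0,2}/\la^2$ error), and then apply a first-order Laplace bound in $x$. The tail handling, the use of~\eqref{Rwc34eps} to force $\partial_y\z\ge 1/2$ on the relevant strip, and the identification of the $d^2$, $d$, and $O(1)$ contributions from $\nabla_x^{3,4}\z$, $\nabla_x^{1,2}\partial_y\z$, and $\partial_y^2\z$ respectively are all correct.

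One point of caution in your step three: you describe the argument as ``bound $\nu_1(\g_1)/\la$ explicitly, then absorb higher-order Laplace remainders.'' Taken literally this fails, because bounding $\Rem_2^{\Lap}$ separately would require $\z(\cdot,0)\in C^6$, whereas only $C^4$ is assumed. The paper instead invokes an \emph{explicit} first-order Laplace remainder bound (packaged here as Theorem~\ref{thm:lapexplicit}) that bounds the entire $\Rem_1^{\Lap}$ directly in terms of $c_3(0),c_4(R),c_{1,\g},c_{2,\g}$, producing
\[
|\Rem_1^{\Lap}(\g_1)|\ \lesssim_{R,\CL}\ \Big\{c_3(0)^2+c_4(R)+d^{-1}c_{2,\g_1}(R)+d^{-1}c_{1,\g_1}(R)\bar c_3(R)\Big\}\frac{d^2}{\la}+e^{-(R-1)^2d/4}.
\]
Your heuristic about the structure of $\nu_1$ is exactly the right intuition for why this bound has the $d^2/d$ separation, but the rigorous route is to take the whole $\Rem_1^{\Lap}$ at once. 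After substituting $c_{k,\g_1}$ via the chain-rule estimates you mention (Lemma~\ref{lma:D1L2} in the paper), the cross term $d^{-1}\omega_{1,1}\omega_{3,0}$ is absorbed into $d^{-1}\omega_{1,1}^2+\omega_{3,0}^2$ by AM--GM, and the exponential tail becomes $\la^{-M}$ via the lower bound on $R$. Your secondary concern about the $x$-dependence of the Watson remainder is handled exactly as you suspect: the bound $|r(x,\la)|\lesssim\omega_{0,2}/\la^2$ is uniform in $x$, and $\mathcal Z^{-1}\int_{\us(R\e)}e^{-\la\z(x,0)}\dd x\le C(R,\CL)$ by Corollary~\ref{corr:norm-const}.
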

\begin{proposition}[Convex version]\label{L2:cvx}
Suppose Assumption~\ref{assume1} is satisfied, and $D$ is star-shaped about $0_{d+1}$. Also, suppose the restriction of $z$ to any line segment in $D$ emanating from $0_{d+1}$ is convex. Let $M\ge1$, $R= 24+2(1+2M)(\log\la)/d$, suppose $\z\in C^4(\us(R\e)\times[0,(R\e)^2])$, \eqref{D} holds, and \eqref{Rwc34eps} holds for some $C_{34}$. Finally, suppose $R\e\omega_{3,0}(R\e,0)\leq 3$. Then the conclusion of Proposition~\ref{prop:explicitL2} follows.\end{proposition}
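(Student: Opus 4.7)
The strategy is to reduce Proposition~\ref{L2:cvx} to Proposition~\ref{prop:explicitL2} by showing that the convexity hypotheses together with the added bound $R\e\omega_{3,0}(R\e,0)\leq 3$ imply (i) the lower bound $\cmin \geq 1/2$, which converts the $R$-condition $R \geq 12/\cmin + 2(1+2M)(\log\la)/d$ of Proposition~\ref{prop:explicitL2} into the prescribed $R = 24 + 2(1+2M)(\log\la)/d$, and (ii) Assumption~\ref{assume:global2} with a slope $s\geq\e$. This parallels the deduction of Proposition~\ref{main:cvx} from Proposition~\ref{prop:main}.

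Set $\rho_1 := R\e/2$, so $\rho_1 < \rho_0$ by~\eqref{D}. Since the boundary function $F(x) := z(x, \psi(x)) - z(0_{d+1})$ satisfies $F(0_d) = 0$, $\nabla F(0_d) = 0$, $\nabla^2 F(0_d) = H$ by Assumption~\ref{a13}, and $\|\nabla^3 F(x)\|_H = \omega_{3,0}(R\e,0) \leq 3/(R\e)$ on $\us(R\e)$, Taylor's formula gives
\[
F(x) \geq \tfrac{1}{2}\|x\|_H^2 - \tfrac{1}{6}\|x\|_H^3\,\omega_{3,0}(R\e,0) \geq \tfrac{1}{4}\|x\|_H^2, \qquad x \in \us(\rho_1).
\]
Similarly, Taylor expanding $\pa_t z$ along segments from $0_{d+1}$ using $\pa_t z(0_{d+1}) = 1$ together with the first inequality of~\eqref{Rwc34eps} (after relating $\nabla_x \pa_t z$ and $\pa_t^2 z$ on $\U(\rho_1)$ to the $\omega_{1,1}, \omega_{0,2}(R\e,R\e)$ bounds via the chain rule $\nabla_x\pa_y\z = \nabla_x\pa_t z + \pa_t^2 z\,\nabla\psi$) yields $\pa_t z(u) \geq 1/2$ on $\U(\rho_1)$. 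Combined with the quadratic bound on $F$, this gives $\cmin \geq 1/2$.

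To verify Assumption~\ref{assume:global2}, I would combine the above estimates into $z(u) - z(0_{d+1}) \geq \tfrac{1}{4}\|x\|_H^2 + \tfrac{1}{2}y$ for $u = (x, \psi(x)+y) \in \U(\rho_1)$. Using $|\psi(x)| \leq \tfrac{1}{2}\delta_2(\rho_1)\|x\|_H^2 \leq \|x\|_H^2/(R\e) \leq \rho_1/2$ from~\eqref{D}, a case split on whether $\|x\|_H \geq \rho_1/\sqrt{2}$ (giving $z - z(0_{d+1}) \geq \rho_1^2/8$) or $t \geq \rho_1/\sqrt{2}$ (giving $y \geq \rho_1(\sqrt 2 - 1)/2$ and hence $z - z(0_{d+1}) \geq \rho_1/10$), with the case $t \leq -\rho_1/\sqrt 2$ excluded by $t \geq \psi(x) \geq -\rho_1/2$, produces
\[
z(u) - z(0_{d+1}) \geq \tfrac{\rho_1}{8}\|u\|_{H,1}, \qquad u \in D \cap \{\|u\|_{H,1} = \rho_1\}.
\]
For $v \in D$ with $\|v\|_{H,1} = \sigma > \rho_1$, star-shape of $D$ places $u := (\rho_1/\sigma)v$ on this sphere, and the convexity of $s \mapsto z(sv) - z(0_{d+1})$ on $[0,1]$, vanishing at $s = 0$, makes $s^{-1}[z(sv) - z(0_{d+1})]$ non-decreasing; applied at $s = \rho_1/\sigma$ and $s = 1$, this yields $z(v) - z(0_{d+1}) \geq (\sigma/\rho_1)[z(u) - z(0_{d+1})] \geq (\rho_1/8)\sigma$. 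Hence Assumption~\ref{assume:global2} holds with $s = \rho_1/8 = R\e/16 \geq \e$ since $R \geq 24$.

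The remaining items of~\eqref{A} in Proposition~\ref{prop:explicitL2}, namely $R\e \leq \min(1, \rho_0, 2\rho_1, 2/\delta_2(\rho_1))$, follow immediately from~\eqref{D} together with $\rho_1 = R\e/2$. Applying Proposition~\ref{prop:explicitL2} then gives~\eqref{expansion-Rem2-g1},~\eqref{bd-Rem2-g1}. The main technical subtlety lies in the quadratic-to-linear bookkeeping on the sphere $\|u\|_{H,1} = \rho_1$: the boundary contribution is quadratic in $\|x\|_H$ while the transverse contribution is linear in $y$, and the offset $\psi(x)$ enters asymmetrically, so constants must be tracked carefully to guarantee a uniform slope $s \geq \e$. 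Once this sphere estimate is secured, convexity along rays propagates it to all of $D \setminus \U(\rho_1)$ essentially for free.
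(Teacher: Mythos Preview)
Your approach is correct and closely parallels the paper's. The paper's proof is a one-liner: Lemma~\ref{lma:mu} verifies the conditions of Corollary~\ref{corr:genprelim} (the convex case being handled via Lemma~\ref{lma:cvx}, which in turn rests on the annulus estimate of Lemma~\ref{lma:extendlin}), after which the body of the proof of Proposition~\ref{prop:explicitL2} is copied verbatim. Your route instead reduces to Proposition~\ref{prop:explicitL2} as a black box, by constructing $\rho_1=R\e/2$, showing $\cmin\geq1/2$, and manufacturing Assumption~\ref{assume:global2} with slope $s=R\e/16\geq\e$. The substance is the same: your $\cmin\geq1/2$ estimate and sphere bound reconstruct Lemmas~\ref{lma:extendlin} and~\ref{lma:cvx}, and your convexity propagation is exactly the monotonicity-along-rays argument in the latter.

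One simplification worth noting: your handling of $\pa_tz\geq1/2$ via the chain rule relating $\nabla_x\pa_tz$ to $\nabla_x\pa_y\z$ is roundabout, because the $\omega$-bounds in~\eqref{Rwc34eps} control $\nabla_x\pa_y\z$ only at $y=0$ (via $\omega_{1,1}$ and $\omega_{2,1}(R\e,0)$), not along a general segment in $(x,t)$-space. The direct route, which the paper uses, is to Taylor expand $\pa_y\z(x,y)$ first in $y$ (bounded by $R\e\,\omega_{0,2}(R\e,R\e)$) and then in $x$ at $y=0$ (bounded by $R\e\,\omega_{1,1}+(R\e)^2\omega_{2,1}(R\e,0)$), giving $|\pa_y\z-1|\leq1/2$ on $\us(R\e)\times[0,R\e]$ directly from the first line of~\eqref{Rwc34eps}; then transfer to $\pa_tz$ via $\pa_tz\circ\Theta=\pa_y\z$ and the containment $\U(R\e/2)\subseteq\Theta(\us(R\e)\times[0,R\e])$ from the proof of Lemma~\ref{lma:D123}.
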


As is seen from the remainder bound \eqref{bd-Rem2-g1}, the expansion \eqref{expansion-Rem2-g1} is useful when $\Delta:=\omega_{3,0}^2 +\omega_{4,0}(R\e,0)d^2/\la$ is small. The inequality in the second line of \eqref{Rwc34eps} is then automatically satisfied if $\CL\sim1$. We nevertheless impose this inequality as a requirement because it allows us to simplify an intermediate bound on $\Rem_1$. The intermediate bound depends on functions of $\Delta$ which do not vanish if $\Delta$ is small (e.g. $\exp(\Delta)$). Thus they contribute only constant factors depending on $\CL$ and can therefore be suppressed in the final bound~\eqref{bd-Rem2-g1}.


\begin{remark}In Propositions~\ref{prop:explicitL2} and~\ref{L2:cvx}, $M$ is a free parameter which can be chosen to ensure $\la^{-M}$ is negligible compared to the rest of the remainder bound~\eqref{bd-Rem2-g1}.\end{remark}
\begin{remark}The suppressed constant in~\eqref{bd-Rem2-g1} depends on $R$, which in turn depends on $\cmin^{-1}$. It may be possible that in some cases, $\cmin$ is very small (if the linear growth in Assumption~\ref{assume:global2} starts very far from the origin), and it may even depend on $d$. A convenient feature of Proposition~\ref{L2:cvx} in the convex case is that $R$ no longer depends on $\cmin^{-1}$, and is bounded as long as $M$ and $\log\la/d$ are bounded.
\end{remark}
\begin{remark}It is helpful to interpret Proposition~\ref{prop:explicitL2} in the following asymptotic set-up, which differs slightly from the asymptotic set-up discussed in Remark~\ref{rk:asym}. Suppose we have a sequence of dimensions $d\to\infty$, a corresponding sequence of domains $D=D_d\subseteq\br^d$ and functions $z=z_d$ on $\br^d$ such that Assumptions~\ref{assume1},~\ref{assume:global2} are satisfied. We assume $d,D,z$ are independent of $\la$, and therefore the quantities $\rho_0,\rho_1, \cmin,s,\delta_2(\rho_1)$ all depend on $d,D,z$ but not on $\la$. Now, fix any two arbitrary constants $M$ and $\CL$. Then \emph{by taking $\la$ large enough}, and setting $R=12/\cmin+2(1+2M)(\log\la)/d$, we can always ensure~\eqref{A} and~\eqref{Rwc34eps} are satisfied. Proposition~\ref{L2:cvx} can be interpreted similarly. The take-away is that Propositions~\ref{prop:explicitL2} and~\ref{L2:cvx} hold for all $\la$ large enough relative to $d, D,z$. Remark~\ref{rk:brd} discusses precisely how large $\la$ needs to be in various asymptotic regimes.
\end{remark}
\begin{remark}\label{rk:brd}Propositions~\ref{prop:explicitL2} and~\ref{L2:cvx} are more broadly applicable than Propositions~\ref{prop:main} and~\ref{main:cvx} in the following two respects. First, suppose $\omega_{3,0},\omega_{4,0}(R\e,0)$ actually \emph{decay} with $d$. This can happen e.g. if the boundary $\psi$ is close to flat and $z$ is quadratic. Then to satisfy the second line of~\eqref{Rwc34eps}, and in order for the remainder bound~\eqref{bd-Rem2-g1} to be small, we do \emph{not} need $\la$ to be much larger than $d^2$. For example if $\omega_{3,0}\asymp d^{-1/2}$ and $\omega_{4,0}(R\e,0)\asymp d^{-1}$, then it suffices for $d\ll \la$. This is in contrast to Propositions~\ref{prop:main} and~\ref{main:cvx}, for which $d^2\ll\la$ is required for the AE to be meaningful. Second, suppose instead that $\omega_{3,0},\omega_{4,0}(R\e,0)$ \emph{grow} as a power of $d$. This setting is not allowed by Propositions~\ref{prop:main},~\ref{main:cvx}, as can be seen from the second line of~\eqref{derivcondL2}. However, it is allowed by Propositions~\ref{prop:explicitL2} and~\ref{L2:cvx}. Indeed, if for example $\omega_{3,0}\asymp d^{1/2}$ and $\omega_{4,0}(R\e,0)\asymp d$, then we can still satisfy the second line of~\eqref{Rwc34eps}, and ensure that the remainder bound~\eqref{bd-Rem2-g1} is small, by taking $\la\gg d^3$.
\end{remark}
\begin{remark}[Dominant error contribution]If $\omega_{1,1},\omega_{2,1}(R\e,0), \omega_{0,2}(R\e,(R\e)^2)$ can all be bounded above by a constant independent of $d,\la$, and $\omega_{3,0},\omega_{4,0}(R\e,0)$ can be bounded below by a constant independent of $d,\la$, then the dominant contribution to the error bound $\Rem_1$ is
$$
|\Rem_1|\les \left(\|\nabla_x^3\z(0_{d+1})\|_{H}^2+\sup_{x\in\us(R\e)}\|\nabla_x^4\z(x,0)\|_{H}\right)\frac{d^2}{\la}.
$$
\end{remark}
\begin{remark}[Rare event probabilities]Often one is interested in probabilities of rare events $D$ under a probability density $\pi(u)\propto e^{-\la z(u)}$:
$$\pi(D)=\int\ind_Dd\pi=\frac{\int_De^{-\la z(u)}\dd u}{\int_{\br^{d+1}}e^{-\la z(u)}\dd u}.$$ To approximate such probabilities, the asymptotic expansion of the numerator from this section can be combined with explicit formulas for the denominator or with an asymptotic expansion of the denominator. For example, in Section~\ref{sec:gauss} below, we consider Gaussian densities $\pi$, for which normalizing constants are available in closed form. And since the denominator is a Laplace-type integral, an asymptotic expansion is available as long as $z$ satisfies certain conditions, e.g., has a unique strict global minimizer. In particular, the work~\cite{A24} provides such an expansion in the high-dimensional regime.
\end{remark}
\begin{remark}\label{rk:regime}Since Propositions~\ref{prop:main},~\ref{main:cvx},~\ref{prop:explicitL2}, and~\ref{L2:cvx} are non-asymptotic, they can be applied to a number of different asymptotic regimes. For example, another relevant asymptotic regime, studied e.g. in~\cite{tong2023large} and~\cite{tong2021extreme}, is the case where the integrand is independent of $\la$ but the set $D=D_\la$ is increasingly extreme. Thus we are interested in integrals of the form $\int_{D_\la}e^{-\bar z(u)}\dd u$. ``Extremity" of $D_\la$ is characterized by the fact that $\la:=\|\nabla \bar z(\theta_\la)\|\gg1$, where $\theta_\la =\arg\min_{u\in D_\la}\bar z(u)\in\pa D_\la$. Such an integral can easily be brought into the form studied in this section, by defining $z_\la(u)=\la^{-1}(\bar z(u+\theta_\la)-\bar z(\theta_\la))$ (and rotating coordinates if necessary), so that 
$$
\int_{D_\la}e^{-\bar z(u)}\dd u=e^{-\bar z(\theta_\la)}\int_{D_\la-\theta_\la}e^{-\la z_\la(u)}\dd u.
$$ 
Note that the domain $D=D_\la-\theta_\la$ and the function $z=z_\la$ both depend on $\la$. Since our results are non-asymptotic, they can be applied to a wide range of $\la$-dependent $z$ and $D$ without any issue. 

There can be other ways to bring an integral $\int_{D_\la}e^{-\bar z(u)}\dd u$ into the form $\int_{D}e^{-\la z(u)}\dd u$. For example, in Section~\ref{sec:gauss} on standard Gaussian probabilities, we take advantage of the specific form of the Gaussian density to rewrite the integral as $\int_{D}e^{-\la z(u)}\dd u$ in a different way than above, for a function $z$ that does not depend on $\la$.  
 \end{remark}

\section{High-dimensional standard Gaussian probabilities}\label{sec:gauss}
Here, we apply the results of the preceding section to derive the asymptotics of rare probabilities $\mathbb P(\mathcal N(0_{d+1}, I_{d+1})\in  D_\la)$. Since the level sets of the standard Gaussian density are spheres about zero, the instanton is the point in $D_\la$ closest to the origin. This closest distance should be large for $D_\la$ to be a rare event. We take it to be $\sqrt\la$ for some $\la\gg1$. Furthermore, by the rotation invariance of the standard Gaussian, we assume without loss of generality that the instanton is the point $(0_d,\sqrt\la)$. Finally, it is natural to assume the boundary $\pa D_\la$ also scales with $\la$. We encode the appropriate scaling of the boundary and the instanton's distance from zero using a template set $D$:
\be\label{D-Dla}D_\la= \sqrt\la D+(0_d,\sqrt\la).\ee 
Formally, this setting differs from that studied in Section~\ref{sec:gen}, in which we studied integrals against increasingly concentrating (unnormalized) densities $e^{-\la z(u)}$, $\la\gg1$ over fixed sets $D$. In contrast, here we study a fixed density (the standard Gaussian density), over an increasingly extreme event $D_\la$, $\la\gg1$. However, the above structure of $D_\la$ easily allows us to bring the latter set-up into the former through a simple rescaling. 
\begin{remark}
In the results in this section, it is most natural to consider sets $D$ which do not explicitly depend on $\la$ (except possibly through $d$, which may grow large with $\la$). This implies that the scaling of $D_\la$ with $\la$ is encoded purely through the template $D_\la= \sqrt\la D+(0_d,\sqrt\la)$. However, as we will see in Example~\ref{ex:quad2}, our theory can accommodate the case where $D$ does depend on $\la$. 
\end{remark}
\begin{remark}
 Note that tail probabilities for non-standard Gaussian distributions can always be expressed as standard Gaussian tail probabilities via an affine transformation of the event of interest. 
\end{remark}
We now specify our assumptions on the set $D$.
\begin{assumption}\label{assume:D}Suppose $D\subset\br^{d+1}$ is star-shaped about $0_{d+1}\in \pa D$, with
\be\label{inst-la}0_{d+1}=\arg\min_{u}\|u-(0_d,1)\|.\ee Suppose $D$ satisfies Assumption~\ref{a12} with boundary $\psi$, and $H:=I_d+\nabla^2\psi(0_d)\succ0$. Finally, suppose Assumption~\ref{a14} is satisfied with some $\rho_0$, and by shrinking $\rho_0$ if necessary, that $\delta_2(\rho_0)\rho_0^2<2$. 
\end{assumption}Note that~\eqref{inst-la} is equivalent to $(0_d,\sqrt\la)$ being the point in $D_\la$ closest to the origin. Assumption~\ref{assume:D} is essentially equivalent to Assumption~\ref{assume1} with 
\be\label{z-gauss} z(x,t)=\|x\|^2/2 + (t+1)^2/2.\ee 
Indeed, we have omitted Assumption~\ref{a11} above, since it is immediate that $0_{d+1}$ minimizes $z$ over $D$ and that $\nabla z(0_{d+1})=(0_d,1)$. Furthermore, Assumption~\ref{a13} reduces precisely to $H=I_d+\nabla^2\psi(0_d)\succ0$. Finally, we show in Appendix~\ref{app:gauss:2} that the condition $\rho_0^2\delta_2(\rho_0)<2$ ensures $\pa_tz(u)>0$ for all $u\in\U(\rho_0)$, so that Assumption~\ref{a15} is also satisfied. 

The reason the function $z$ from~\eqref{z-gauss} arises is that $e^{-\la z}$ is proportional to the density of $\mathcal N((0_{d},-1),\la^{-1}I_{d+1})$, so that
\bs\label{gaussprob-prelim}
\mathbb P\left(\mathcal N\left(0_{d+1}, \,I_{d+1}\right)\in  D_\la\right) &=\mathbb P\left(\mathcal N((0_{d},-1),\la^{-1}I_{d+1})\in D\right)\\
&=(\la/2\pi)^{(d+1)/2}\int_{D}e^{-\la z(u)}\dd u.
\es
Since $z$ is convex, we can now apply Propositions~\ref{main:cvx} and~\ref{L2:cvx} to expand the Gaussian probability~\eqref{gaussprob-prelim}.

\begin{proposition}\label{prop:gauss:1}Suppose $D$ satisfies Assumption~\ref{assume:D} and let $D_\la$ be as in~\eqref{D-Dla}. Let $R = 24 + 2(1+2N)(\log\la)/d$. Suppose $\psi\in C^{2N+2}(\us(R\e))$ and there exist constants $\CR, C_2,\dots, C_{2N+2}$ such that
\begin{align}
 \delta_\ell(R\e)\leq C_\ell d^{(\lceil\ell/2\rceil-2)_+},\quad\ell=2,\dots,2N+2,\label{deltas:gauss}\\
d^2/\la\leq C_0, \qquad R\e \leq \min(1/3, 2/(4C_2+C_3), \rho_0),\label{cond:gauss}\end{align}
where $\rho_0$ is as in Assumption~\ref{assume:D} and the $\delta_\ell$ are as in \eqref{deldef}, with $H=I_d+\nabla^2\psi(0)$. Then
\bs\label{expansion-gauss}
\mathbb P\left(\mathcal N\left(0_{d+1}, \,I_{d+1}\right)\in  D_\la\right) = \frac{e^{-\la/2}\la^{-1/2}}{\sqrt{2\pi\det H}}\left(1+\sum_{m=1}^{N-1}a_m\left(\frac{d^2}{\la}\right)^{m} + \Rem_N\right),\es where 
\bs
\max_{m=1,\dots,N-1}|a_m|&\leq C(C_2,\dots,C_{2N+2}, N),\\
|\Rem_N|&\leq C(C_0, C_2,\dots,C_{2N+2}, N)\bigg(\frac{d^2}{\la}\bigg)^{N}.\es Here, $a_m$ is as in~\eqref{amdef}, with the $\g_k$ as in~\eqref{gkdef1}, where $\g(x)\equiv1$ and $\z(x,0)=\|x\|^2/2 +(\psi(x)+1)^2/2$. In particular, 
\bs\label{a1gengauss}
d^2a_1 = & - 1  -\frac12\mathrm{Tr}(\nabla^2\psi_H(0_d)) +\frac{1}{12}\|\nabla^3\psi_H(0_d)\|^2_{F} + \frac18\|\langle\nabla^3\psi_H(0_d),I_d\rangle\|^2 \\
&-\frac18\langle\nabla^4\psi_H(0_d), I_d\otimes I_d\rangle-\frac18 \mathrm{Tr}^2(\nabla^2\psi_H(0_d)) -\frac14\mathrm{Tr}\left(\nabla^2\psi_H(0_d)^2\right),
\es where $\psi_H(x)=\psi(H^{-1/2}x)$.
\end{proposition}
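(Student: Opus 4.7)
The plan is to apply the convex case, Proposition~\ref{main:cvx}, to the integral $\int_De^{-\la z(u)}\dd u$ obtained from~\eqref{gaussprob-prelim}, with $g\equiv1$ and $z(x,t)=\|x\|^2/2+(t+1)^2/2$ as in~\eqref{z-gauss}. The structural hypotheses (Assumption~\ref{assume1}, convexity of $z$, star-shapedness of $D$) follow from Assumption~\ref{assume:D} and the discussion immediately after it, so what remains is to verify~\eqref{D},~\eqref{BC},~\eqref{E} together with the $\z$-derivative conditions~\eqref{forw}; the $\g$-bounds~\eqref{forq} are trivial for $g\equiv1$.

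Since $\z(x,y)=\|x\|^2/2+(\psi(x)+y+1)^2/2$, direct differentiation gives $\pa_y^2\z\equiv1$, $\pa_y^j\z\equiv0$ for $j\geq3$, and $\pa_y\z=\psi(x)+y+1$, while the mixed derivatives $\nabla_x^\ell\pa_y^j\z$ with $j\in\{0,1,2\}$ reduce via the product rule to polynomial expressions in $\nabla^{\ell'}\psi(x)$ for $\ell'\leq\ell+1$ and in $\psi(x)+y+1$. On $\us(R\e)\times[0,(R\e)^2]$, the bound $R\e\leq 1/3$ together with $\delta_2(\rho_0)\rho_0^2<2$ from Assumption~\ref{assume:D} keeps $|\psi(x)+y+1|$ of order $1$, so the bounds~\eqref{deltas:gauss} translate into~\eqref{forw} with the required scaling $d^{(\lceil\ell/2\rceil-2)_+}$; the small-$R\e$ conditions~\eqref{D},~\eqref{BC},~\eqref{E} are immediate from~\eqref{cond:gauss}. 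Proposition~\ref{main:cvx} thus supplies the expansion of $\int_De^{-\la z}\dd u$, and multiplication by $(\la/2\pi)^{(d+1)/2}$ yields~\eqref{expansion-gauss} with the stated bounds on $|a_m|$ and $|\Rem_N|$.

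The remaining task, and the main obstacle, is to evaluate $a_1$ explicitly. From~\eqref{amdef}, $d^2a_1=\nu_1(\g_1)+\nu_0(\g_2)$, and~\eqref{g12def} combined with $\pa_y\z(x,0)=\psi(x)+1$, $\pa_y^2\z(x,0)=1$ gives $\g_1(x)=1/(\psi(x)+1)$ and $\g_2(x)=-1/(\psi(x)+1)^3$, so $\nu_0(\g_2)=-1$. For $\nu_1(\g_1)$ I invoke the classical first-order Laplace coefficient for $\int_\Omega\g_1(x)e^{-\la\z(x,0)}\dd x$, whose exponent has a strict minimum at $0_d$ with Hessian $H=I_d+\nabla^2\psi(0_d)$; substituting $x=H^{-1/2}\tilde x$ diagonalizes $H$ and turns each $\nabla^k\psi(0_d)$ into the $H$-weighted tensor $\nabla^k\psi_H(0_d)$ under contractions against $\tilde y$. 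Using $\nabla\psi(0_d)=0$, I then compute $\nabla\g_1(0_d)=0$, $\nabla^2\g_1(0_d)=-\nabla^2\psi(0_d)$, $\nabla^3\z(0_d,0)=\nabla^3\psi(0_d)$, and $\nabla^4\z(0_d,0)=\nabla^4\psi(0_d)+3(\nabla^2\psi(0_d))^{\otimes 2}$ in the symmetrized sense (coming from the one-variable identity $(\psi^2/2)^{(4)}|_0=3(\psi''(0))^2$).

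A Wick-theorem evaluation of the Gaussian moments---in particular $\E[\langle T,y^{\otimes 4}\rangle]=3\langle T,I_d\otimes I_d\rangle$ and $\E[\langle S,y^{\otimes 3}\rangle^2]=6\|S\|_F^2+9\|\langle S,I_d\rangle\|^2$ for symmetric tensors $T,S$---then converts these four inputs into the six $\psi_H$-terms of~\eqref{a1gengauss}. Specifically, the $\nabla^2\g_1$-contribution supplies $-\tfrac12\mathrm{Tr}(\nabla^2\psi_H(0_d))$; the fourth-derivative contribution produces $-\tfrac18\langle\nabla^4\psi_H(0_d),I_d\otimes I_d\rangle-\tfrac18\mathrm{Tr}^2(\nabla^2\psi_H(0_d))-\tfrac14\mathrm{Tr}(\nabla^2\psi_H(0_d)^2)$; the squared third-derivative contribution produces $\tfrac1{12}\|\nabla^3\psi_H(0_d)\|_F^2+\tfrac18\|\langle\nabla^3\psi_H(0_d),I_d\rangle\|^2$; and combining with $\nu_0(\g_2)=-1$ yields~\eqref{a1gengauss}. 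The bookkeeping of the third- and fourth-derivative contractions is the tedious step, but is purely mechanical.
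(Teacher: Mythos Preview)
Your proposal is correct and follows essentially the same route as the paper's proof: apply Proposition~\ref{main:cvx} with $g\equiv1$ and $z$ from~\eqref{z-gauss}, translate the $\delta_\ell$ bounds~\eqref{deltas:gauss} into the $\omega_{\ell,j}$ bounds~\eqref{forw} via the explicit formulas for $\nabla_x^\ell\pa_y^j\z$ in terms of $\psi$, then compute $d^2a_1=\nu_0(\g_2)+\nu_1(\g_1)$ using $\g_1=1/(\psi+1)$, $\g_2=-1/(\psi+1)^3$, and the first Laplace coefficient. Two minor remarks: (i) the mixed derivatives $\nabla_x^\ell\pa_y^j\z$ involve $\nabla^{\ell'}\psi$ only for $\ell'\leq\ell$, not $\ell'\leq\ell+1$; (ii) the conditions~\eqref{BC} and~\eqref{E} are not quite ``immediate'' from~\eqref{cond:gauss} until you have pinned down $C_{\z,0}=1$, $C_{\z,1}=1/2$, and $C_{\z,3}=\tfrac32(C_2+C_3)$, since those constants are what make $1/(2C_{\z,0}+2C_{\z,1})=1/3$ and $3/C_{\z,3}=2/(C_2+C_3)\geq 2/(4C_2+C_3)$ match the right-hand side of~\eqref{cond:gauss}. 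Your Wick-theorem evaluation of $\nu_1(\g_1)$ is equivalent to the paper's invocation of the explicit $\nu_1$ formula from~\cite{A24}.
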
 
The next proposition is the Gaussian analogue of Proposition~\ref{L2:cvx}.
\begin{proposition}\label{prop:gauss:3}Suppose $D$ satisfies Assumption~\ref{assume:D} and let $D_\la$ be as in~\eqref{D-Dla}. Let $R=24+6(\log\la)/d$, fix an arbitrary constant $c_{\psi}$, and suppose
\begin{align}\label{rho1-gauss3}
R\e\max(1,\delta_2(R\e),\delta_3(R\e))&\leq \min\left(1/3,\rho_0\right),\\
\label{eq:assume:gauss2}
(\delta_2^2+\delta_3^2+\delta_4( R\e))\frac {d^2}{\la} &\leq c_{\psi},
\end{align}
where $\rho_0$ is as in Assumption~\ref{assume:D}. Then
\bs\label{expansion-gauss2}
\mathbb P\left(\mathcal N\left(0_{d+1}, \,I_{d+1}\right)\in  D_\la\right) &= \frac{e^{-\la/2}\left(1+\Rem_1\right)}{\sqrt{2\pi\la\det H}},\\
|\Rem_1| &\leq C(c_\psi,\tfrac{\log\la}{d}) \left(\left(\delta_2^2+\delta_3^2 +\delta_4( R\e)\right)\frac{d^2}{\la} + \frac1\la\right).
\es \end{proposition}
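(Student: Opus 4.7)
The plan is to reduce the probability to a Laplace-type integral via~\eqref{gaussprob-prelim} with $z(x,t)=\|x\|^2/2+(t+1)^2/2$, and then invoke Proposition~\ref{L2:cvx} with $M=1$, so that the parameter $R=24+6(\log\la)/d=24+2(1+2M)(\log\la)/d$ matches the one in the hypotheses. The convexity hypotheses of Proposition~\ref{L2:cvx} are automatic: $D$ is star-shaped about $0_{d+1}$ by Assumption~\ref{assume:D}, and $z$ is a convex quadratic so its restriction to any line is convex. Assumption~\ref{assume1} is inherited from Assumption~\ref{assume:D} as discussed there, with the same $H=I_d+\nabla^2\psi(0_d)$. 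It remains to verify condition~\eqref{D}, condition~\eqref{Rwc34eps}, and the bound $R\e\,\omega_{3,0}(R\e,0)\leq 3$, and then to translate the remainder bound~\eqref{bd-Rem2-g1} into one involving only the $\delta_j$.

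The next step is to compute the relevant partials of $\z(x,y)=\|x\|^2/2+(\psi(x)+y+1)^2/2$. Pure $y$-derivatives are trivial: $\pa_y\z=\psi(x)+y+1$, $\pa_y^2\z\equiv 1$, $\pa_y^k\z\equiv 0$ for $k\geq3$, which gives $\omega_{0,2}(R\e,R\e)=1$. For mixed derivatives, $\nabla_x^\ell\pa_y\z=\nabla^\ell\psi$, so $\omega_{1,1}=\|\nabla\psi(0)\|_H=0$ and $\omega_{2,1}(R\e,0)=\delta_2(R\e)$. For pure $x$-derivatives, write $\alpha(x,y)=\psi(x)+y+1$ and expand $\nabla_x^\ell(\alpha^2/2)$ by the product rule; at $y=0$ each summand is a product of partials of $\psi$ with one factor of $\alpha$ (equal to $\psi(x)+1$). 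Bounding each factor in $H$-operator norm by the corresponding $\delta_k(R\e)$ and using $\delta_1(R\e)\leq R\e\,\delta_2(R\e)\leq 1/3$ and $\delta_0(R\e)\leq R\e\delta_1(R\e)\lesssim 1$, which follow from~\eqref{rho1-gauss3}, yields
\begin{align*}
\omega_{3,0}(R\e,0)&\lesssim \delta_3(R\e)+\delta_1(R\e)\delta_2(R\e),\\
\omega_{4,0}(R\e,0)&\lesssim \delta_4(R\e)+\delta_3(R\e)\delta_1(R\e)+\delta_2(R\e)^2.
\end{align*}
Condition~\eqref{D} and the first line of~\eqref{Rwc34eps} follow directly from~\eqref{rho1-gauss3}; the second line of~\eqref{Rwc34eps} and $R\e\,\omega_{3,0}(R\e,0)\leq 3$ then follow from~\eqref{eq:assume:gauss2} with $\CL$ an absolute multiple of $c_\psi$.

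Finally, plug these bounds into~\eqref{bd-Rem2-g1} with $M=1$. The dominant term $(\omega_{3,0}^2+\omega_{4,0}(R\e,0))d^2/\la$ becomes $(\delta_3(R\e)^2+\delta_4(R\e)+\delta_2(R\e)^2)d^2/\la$ plus products bounded using~\eqref{rho1-gauss3} and~\eqref{eq:assume:gauss2}; to convert $\delta_\ell(R\e)$ to $\delta_\ell$ at the origin, use the mean-value bound $\delta_\ell(R\e)\leq \delta_\ell+R\e\,\delta_{\ell+1}(R\e)$, giving $\delta_\ell(R\e)^2\,d^2/\la\leq 2\delta_\ell^2 d^2/\la+2(R\e)^2\,\delta_{\ell+1}(R\e)\cdot[\delta_{\ell+1}(R\e)d^2/\la]$, where the last bracket is controlled by $c_\psi$. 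The term $(\omega_{1,1}^2+\omega_{2,1}(R\e,0))d/\la\lesssim \delta_2(R\e)d/\la$ is absorbed via Young's inequality $\delta_2\,d/\la\leq \tfrac12\delta_2^2\,d^2/\la+\tfrac{1}{2\la}$, and both the $\omega_{0,2}/\la$ and $\la^{-M}$ terms contribute $1/\la$. Collecting yields the claimed bound with constant $C=C(c_\psi,\log\la/d)$.

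The main obstacle is the careful bookkeeping in bounding the $H$-weighted operator norms of $\nabla_x^3\z$ and $\nabla_x^4\z$, and in ensuring that all cross products of the form $\delta_j(R\e)\delta_k(R\e)$ and all discrepancies between $\delta_\ell$ and $\delta_\ell(R\e)$ can be absorbed into the three pure quantities $\delta_2^2,\delta_3^2,\delta_4(R\e)$ and the additive $1/\la$ term, using only the assumed smallness~\eqref{rho1-gauss3} and~\eqref{eq:assume:gauss2}.
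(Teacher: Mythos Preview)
Your approach is essentially the same as the paper's: reduce to a Laplace integral via~\eqref{gaussprob-prelim}, apply Proposition~\ref{L2:cvx} with $M=1$, compute the $\z$-derivatives (which the paper packages as Lemma~\ref{lma:omz}), and simplify~\eqref{bd-Rem2-g1} using Taylor expansions to pass from $\delta_\ell(R\e)$ to $\delta_\ell$. One small correction: the bound $R\e\,\omega_{3,0}(R\e,0)\leq 3$ does \emph{not} follow from~\eqref{eq:assume:gauss2} but from~\eqref{rho1-gauss3}; indeed $\omega_{3,0}(R\e,0)\leq (1+\delta_0(R\e))\delta_3(R\e)+3\delta_1(R\e)\delta_2(R\e)$, and~\eqref{rho1-gauss3} gives $R\e\delta_3(R\e)\leq 1/3$, $R\e\delta_2(R\e)\leq 1/3$, $\delta_0(R\e)\leq (R\e)^2\delta_2(R\e)/2\leq 1/18$, so $R\e\,\omega_{3,0}(R\e,0)\leq (1+1/18)\cdot\tfrac13+3\cdot(1/3)^2<3$, whereas~\eqref{eq:assume:gauss2} alone only yields $R\e\delta_3\lesssim R\sqrt{c_\psi/d}$, which need not be below $3$.
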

\begin{remark}\label{gauss-psi-d-la}
In contrast to Proposition~\ref{prop:gauss:1}, here we do not impose explicit constraints on the magnitude of the derivative norms of $\psi$. In particular, the conditions~\eqref{deltas:gauss} may be violated, as long as $\la$ is large enough to ensure~\eqref{rho1-gauss3} and~\eqref{eq:assume:gauss2} are satisfied. Conversely, if the derivative norms of $\psi$ are small, then a bound on $d^2/\la$ (as in~\eqref{cond:gauss}) may not be needed. This is exactly parallel to the observations from Remark~\ref{rk:brd}. See the third case in Example~\ref{ex:quad2} for more on the setting where the derivative norms of $\psi$ are small.
\end{remark}
\begin{example}[Pure quartic boundary]\label{ex:quart}
We apply Proposition~\ref{prop:gauss:1} to the case 
\be\label{psi-S}\psi(x)=\frac{1}{4!}\langle S, x^{\otimes4}\rangle\ee for a symmetric tensor $S$. Then $\nabla^2\psi(0_d)$ is the zero matrix, so $H=I_d$. We show in Appendix~\ref{app:gauss} that if Assumptions~\ref{a12},~\ref{a14} are satisfied for some $\rho_0\leq\frac13(1\vee\|S\|)^{-1}$, and 
\be\label{Re-quart}
R\e =24\sqrt{d/\la} + 2(1+2N)\frac{\log\la}{\sqrt{d\la}}\leq \rho_0, \qquad \frac{d^2}{\la}\leq C_0,
\ee
then
\be\label{P-quart}\mathbb P\left(\mathcal N\left(0_{d+1}, \,I_{d+1}\right)\in  D_\la\right) = \frac{e^{-\la/2}}{\sqrt{2\pi\la}}\left(1-(1+\tfrac18\langle S, I_d\otimes I_d\rangle)\la^{-1}+\sum_{m=2}^{N-1}a_m\left(\frac{d^2}{\la}\right)^{m} + \Rem_N\right)\ee for all $N\geq2$,
where 
\be\label{amquart}
\max_{m=1,\dots,N-1}|a_m|\leq C(N,\|S\|),\qquad|\Rem_N|\leq C(N, C_0, \|S\|)\bigg(\frac{d^2}{\la}\bigg)^{N}.\ee
It is interesting to note that the leading order term $\frac{e^{-\la/2}}{\sqrt{2\pi\la}}$ does not contain $d$ at all. It may therefore seem, misleadingly, that a bound on $d$ is not required for the approximation $\mathbb P(\mathcal N(0_{d+1}, \,I_{d+1})\in  D_\la) \approx \frac{e^{-\la/2}}{\sqrt{2\pi\la}}$ to be asymptotically accurate. But in fact, the first remainder term $(1+\langle S, I_d\otimes I_d\rangle/8)/\la$ can grow with $d$. For example, we show in Appendix~\ref{app:a2:gauss}, that in the case $\psi(x)=\|x\|^4/24$, which can be written as~\eqref{psi-S} for $S=\mathrm{Sym}(I_d\otimes I_d)$, it holds $\langle S, I_d\otimes I_d\rangle = \frac13d^2 + \frac23d$. Thus
\be\label{quart-tight}
\mathbb P\left(\mathcal N\left(0_{d+1}, \,I_{d+1}\right)\in  D_\la\right) = \frac{e^{-\la/2}}{\sqrt{2\pi\la}}\left(1 -\frac{d^2+2d+24}{24\la} + \mathcal O((d^2/\la)^2)\right).
\ee Therefore, the $d$-independent leading order term $\frac{e^{-\la/2}}{\sqrt{2\pi\la}}$ is truly leading order \emph{only if} $d^2\ll \la$.
\end{example}

\begin{example}[Quadratic boundaries]\label{ex:quad2}
 In this example, we apply Proposition~\ref{prop:gauss:3} to the case $\psi(x) = \frac12x^\top Bx$. Then $H=I_d+B$. We have $\delta_\ell(\cdot)\equiv0$ for all $\ell\geq3$, and $\delta_2(\cdot)\equiv\delta_2=\|B\|_{I_d+B}$. We suppose Assumption~\ref{assume:D} is satisfied, and to ensure $\delta_2(\rho_0)\rho_0^2<2$, we suppose $\rho_0\leq 1/\sqrt{\delta_2}$. We also set $c_\psi=1$ in~\eqref{eq:assume:gauss2}. Proposition~\ref{prop:gauss:3} now gives that if
\be\label{gauss-quad}
R\e=24\sqrt{d/\la} + 6\frac{\log\la}{\sqrt{d\la}} \leq \frac{\frac13\wedge\rho_0}{1\vee\delta_2},\qquad (\delta_2d)^2/\la \leq 1,
\ee then
\bs\label{gauss-quad-rem}
\mathbb P\left(\mathcal N\left(0_{d+1}, \,I_{d+1}\right)\in  D_\la\right) &=  \frac{e^{-\la/2}\left(1+\Rem_1\right)}{\sqrt{2\pi\la\det(I_d+B)}},\qquad |\Rem_1|\leq C((\log\la)/d)\frac{((\delta_2d)\vee1)^2}{\la}.
\es
We now consider three particular quadratic functions $\psi$, and in each case, we assume the boundary parameterization of $D$ holds globally. In other words, $D=\{(x,t)\in\br^{d+1}\,:\,t\geq\frac12x^\top Bx\}$. Thus Assumption~\ref{a14} is satisfied for $\rho_0 =1/\sqrt{\delta_2}$ (which guarantees $\delta_2(\rho_0)\rho_0^2<2$). The set $D_\la$ is given by $D_\la =\{(x,t)\in\br^{d+1}\,:t\geq \sqrt\la + \frac{1}{2\sqrt\la}x^\top Bx\}$, so that
$$\mathbb P\left(\mathcal N\left(0_{d+1}, \,I_{d+1}\right)\in  D_\la\right)= \mathbb P\left(T\geq  \sqrt\la + \frac{1}{2\sqrt\la}X^\top BX\right),$$ where $X\sim\mathcal N(0_d, I_d)$ and $T\sim\mathcal N(0,1)$ is independent of $X$.

First, suppose $B = \sqrt\la I_d$, i.e. $\psi(x)=\sqrt\la\|x\|^2/2$, in which case the set $D$ actually depends on $\la$ itself. This can be easily accommodated by the nonasymptotic Proposition~\ref{prop:gauss:3}. Then $\delta_2=\|\sqrt\la I_d\|_{(\sqrt\la+1)I_d} = \sqrt\la/(1+\sqrt\la) \asymp 1$, so that~\eqref{gauss-quad} is satisfied if $d^2/\la$ is a sufficiently small absolute constant. From~\eqref{gauss-quad-rem}, we obtain
\be\label{rem1-1}
\mathbb P\left(T\geq\sqrt\la + \frac{\|X\|^2}{2}\right) =  \frac{e^{-\la/2}\left(1+\Rem_1\right)}{\sqrt{2\pi\la(1+\sqrt\la)^d}},\qquad |\Rem_1|\leq C((\log\la)/d)\frac{d^2}{\la}.
\ee 
Next, consider $B=I_d$. We then have $\delta_2=\| I_d\|_{2I_d} =1/2$. Thus~\eqref{gauss-quad} is again satisfied if $d^2/\la$ is a sufficiently small absolute constant, and~\eqref{gauss-quad-rem} reduces to
\be\label{rem1-2}
\mathbb P\left(T\geq\sqrt\la + \frac{\|X\|^2}{2\sqrt\la}\right) =  \frac{e^{-\la/2}\left(1+\Rem_1\right)}{\sqrt{2\pi\la\cdot 2^d}},\qquad |\Rem_1|\leq C((\log\la)/d)\frac{d^2}{\la}.
\ee 
Note that the leading order term is significantly larger in~\eqref{rem1-2} than in~\eqref{rem1-1}, but in both cases, the leading order term dominates the remainder under the same condition $d^2\ll \la$ (provided $(\log\la)/d\les1$). 

Finally, consider $B=d^{-1}I_d$. We then have $\delta_2=\|d^{-1}I_d\|_{(1+d^{-1})I_d}\asymp d^{-1}$. Then~\eqref{gauss-quad} is satisfied if $d/\la$ is less than a sufficiently small absolute constant, and~\eqref{gauss-quad-rem} reduces to
\be\label{rem1-3}
\mathbb P\left(T\geq\sqrt\la + \frac{\|X\|^2}{2d\sqrt\la}\right) =  \frac{e^{-\la/2}\left(1+\Rem_1\right)}{\sqrt{2\pi\la(1+1/d)^d}},\qquad |\Rem_1|\leq C((\log\la)/d)\frac{1}{\la}.
\ee 
The leading order term is larger here than in~\eqref{rem1-1} and~\eqref{rem1-2}, and is only weakly dependent on $d$. Furthermore, if $(\log\la)/d\lesssim C$, then we obtain $|\Rem_1|\les 1/\la$ which does not depend on $d$ at all, unlike in the other two cases. This is natural, since the probability in ~\eqref{rem1-3} is closest to the extreme case of a flat boundary, $\mathbb P(T\geq\sqrt\la)$. The accuracy of an asymptotic approximation to this one-dimensional integral should not depend on $d$. Note that we derived ~\eqref{rem1-3} under the assumption that $d/\la$ is sufficiently small. Since the remainder bound itself is small even if $d/\la$ is not small, we conjecture that the restriction on $d/\la$ is an artifact of the proof and could be dropped using a different proof technique. 
\end{example}
We now show that in~\eqref{rem1-2}, we actually have $\Rem_1\asymp d^2/\la$. This shows that $d^2/\la$ is a sharp upper bound.
\begin{proposition}\label{prop:lb}
Let $\Rem_1$ be as in~\eqref{rem1-2}. There exist absolute constants $c\in(0,1)$ and $0<C<C'$ such that if
\be\label{la23}
d^2/\la \leq c, \qquad 24\sqrt{d/\la}+10\frac{\log\la}{\sqrt{d\la}}\leq 1/3,
\ee then
\be
C\frac{d^2}{\la} \leq |\Rem_1| \leq C'\frac{d^2}{\la}.
\ee
\end{proposition}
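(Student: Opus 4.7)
The upper bound $|\Rem_1|\leq C'd^2/\la$ is already given by~\eqref{rem1-2}, so only the matching lower bound needs work. The plan is to apply Proposition~\ref{prop:gauss:1} with $N=2$, which produces one more term in the expansion than Proposition~\ref{prop:gauss:3} does, and then to compute the coefficient $a_1$ explicitly and show that $|a_1|$ is bounded below by an absolute positive constant.

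First I would verify the hypotheses of Proposition~\ref{prop:gauss:1} for $\psi(x)=\|x\|^2/2$, $H=I_d+\nabla^2\psi(0_d)=2I_d$, $\rho_0=\sqrt 2$. Since $\nabla^2\psi\equiv I_d$ and $\nabla^\ell\psi\equiv 0$ for $\ell\geq3$, we have $\delta_2(r)=\|I_d\|_{2I_d}=1/2$ and $\delta_\ell(r)=0$ for $\ell\geq3$, so~\eqref{deltas:gauss} holds with $C_2=1/2$, $C_3=\dots=C_6=0$. Then $\min(1/3,\,2/(4C_2+C_3),\,\rho_0)=1/3$, and with $R=24+10(\log\la)/d$ and $N=2$, the first condition in~\eqref{cond:gauss} becomes $R\e=24\sqrt{d/\la}+10(\log\la)/\sqrt{d\la}\leq 1/3$, which is exactly the second hypothesis in~\eqref{la23}. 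The bound $d^2/\la\leq c$ supplies the other half of~\eqref{cond:gauss}. Hence Proposition~\ref{prop:gauss:1} yields
\[
\mathbb P\!\left(\mathcal N(0_{d+1},I_{d+1})\in D_\la\right)=\frac{e^{-\la/2}}{\sqrt{2\pi\la\cdot 2^d}}\left(1+a_1\tfrac{d^2}{\la}+\Rem_2\right),\qquad|\Rem_2|\leq C\bigl(d^2/\la\bigr)^{2},
\]
with $C$ an absolute constant. Comparing with~\eqref{rem1-2} gives $\Rem_1=a_1(d^2/\la)+\Rem_2$.

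Next I would compute $d^2a_1$ via~\eqref{a1gengauss}. With $H=2I_d$ and $\psi_H(x)=\psi(x/\sqrt 2)=\|x\|^2/4$, we have $\nabla^2\psi_H(0_d)=\tfrac12 I_d$ and all higher $\nabla^k\psi_H$ vanish. Plugging in,
\[
d^2a_1=-1-\tfrac12\cdot\tfrac{d}{2}-\tfrac18\cdot\tfrac{d^2}{4}-\tfrac14\cdot\tfrac{d}{4}=-1-\tfrac{5d}{16}-\tfrac{d^2}{32}.
\]
All terms have the same sign, so for every $d\geq1$,
\[
|a_1|\geq\tfrac{1}{32}\quad\text{and}\quad|a_1|\leq 1+\tfrac{5}{16}+\tfrac{1}{32}<2.
\]

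Finally I would assemble the bound. Using the triangle inequality,
\[
|\Rem_1|\geq |a_1|\tfrac{d^2}{\la}-|\Rem_2|\geq\tfrac{1}{32}\tfrac{d^2}{\la}-C\bigl(d^2/\la\bigr)^{2}=\tfrac{d^2}{\la}\bigl(\tfrac{1}{32}-C\tfrac{d^2}{\la}\bigr).
\]
Choosing the absolute constant $c$ in~\eqref{la23} so that $Cc\leq 1/64$ gives $|\Rem_1|\geq d^2/(64\la)$, which is the desired lower bound. The upper bound $|\Rem_1|\leq (2+Cc)d^2/\la$ follows from the reverse triangle inequality in the same way.

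The main work is administrative: checking the hypotheses of Proposition~\ref{prop:gauss:1} with $N=2$ are satisfied (trivial here since $\psi$ is quadratic), and carrying out the closed-form evaluation of~\eqref{a1gengauss}. There is no conceptual obstacle; the whole point of having the explicit coefficient formula~\eqref{a1gengauss} in Proposition~\ref{prop:gauss:1} is precisely to make examples of this kind routine.
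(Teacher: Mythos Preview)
Your proposal is correct and follows essentially the same route as the paper: apply Proposition~\ref{prop:gauss:1} with $N=2$ to the quadratic boundary $\psi(x)=\|x\|^2/2$, identify $\Rem_1=a_1\,d^2/\la+\Rem_2$ with $|\Rem_2|\les(d^2/\la)^2$, compute $d^2a_1=-1-\tfrac{5d}{16}-\tfrac{d^2}{32}$, and conclude $|a_1|\geq 1/32$. The only cosmetic differences are that the paper evaluates $a_1$ via the specialized quadratic formula~\eqref{a1quad} rather than directly from~\eqref{a1gengauss}, and takes $\rho_0=2$ (your choice $\rho_0=\sqrt2$ is in fact cleaner, since it satisfies the strict inequality $\delta_2(\rho_0)\rho_0^2<2$ required in Assumption~\ref{assume:D}).
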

See Appendix~\ref{app:gauss} for the proof, which is a straightforward application of Proposition~\ref{prop:gauss:1} to order $N=1$ and $N=2$. This gives that $\Rem_1 = a_1d^2/\la + \Rem_2$, where $|\Rem_2|\les (d^2/\la)^2$, $|\Rem_1|\les d^2/\la$. Furthermore, using~\eqref{a1gengauss} we can show that $a_1$  is upper- and lower-bounded by absolute constants to conclude that $|\Rem_1|\asymp d^2/\la$. 
\begin{remark}Note that~\eqref{quart-tight} provides another example, with a quartic boundary, in which $|\Rem_1|\asymp d^2/\la$.
\end{remark}

\section{Rare event sampling}\label{sec:sample}
In this section, we derive an asymptotic approximation $\hat\pi$ to a probability density $\pi\vert_D$ given by the restriction of a density $\pi\propto e^{-\la z}$ on $\br^{d+1}$ to a rare event $D$:
\be
\pi\vert_D(u) =\frac{ \ind\{u\in D\}\exp(-\la z(u))}{\int_D\exp(-\la z(u'))\dd u'}.
\ee 
We consider the same set-up as in Section~\ref{sec:gen}, as described in Assumption~\ref{assume1}. In particular, recall from this assumption that $H=\nabla_x^2z(0_{d+1})+\nabla^2\psi(0_d)$, where $(x,\psi(x))$ parameterizes the boundary $\pa D$ near the instanton $0_{d+1}=\arg\min_{u\in D}z(u)$. Consider the following product measure on $\br^{d+1}$ and coordinate transformation $\hat\Theta:\br^{d+1}\to\br^{d+1}$:
\bs\label{pi-H}
\pi_H(x,y)&=\frac{\la\sqrt{\det H}}{(2\pi/\la)^{d/2}} \exp\left(-\frac\la2x^\top Hx - \la y\right)\ind(y\geq0),\\
\hat\Theta(x,y) &= \left(x,y+\tfrac12x^\top\nabla^2\psi(0_d)x\right).
\es
The following density is our approximation to $\pi\vert_D$:
\be\label{pi-hat}\hat\pi:=\hat\Theta\#\pi_H,\ee the pushforward of $\pi_H$ under the coordinate transformation $\hat\Theta$. Note that we can sample from $\hat\pi$ as follows:
\begin{enumerate}
\item Draw $X \sim \mathcal N(0_d, (\la H)^{-1})$.
\item Draw $Y\sim\mathrm{Exp}(\la)$ independently of $X$.
\item Return $\hat\Theta(X,Y)$.
\end{enumerate} 
Below, recall that $\omega_{k,\ell}$ is shorthand for $\omega_{k,\ell}(0,0)$, and similarly for $\delta_k$.
\begin{proposition}\label{prop:TV}
Assume the conditions of either Proposition~\ref{prop:explicitL2} or Proposition~\ref{L2:cvx}, and suppose $\psi\in C^4(\us(R\e))$. If $R\e\delta_3(R\e)\leq3$ and $(\delta_3^2+\delta_4(R\e))d^2/\la\leq c_{\delta}$, then
\bs\label{eq:TV}
\mathrm{TV}(\pi\vert_D, \hat\pi)\les_{R,\CL,c_\delta} &\,(\omega_{3,0}+\delta_3)\frac{d}{\sqrt\la}+\frac{\omega_{1,1}}{\sqrt\la}+\left(\omega_{1,1}^2+\omega_{2,1}(R\e,0)\right)\frac{d}{\la}\\
& + \omega_{0,2}(R\e,(R\e)^2)\frac1\la+\left(\omega_{4,0}(R\e,0)+\delta_4(R\e)\right)\frac{d^2}{\la} + \la^{-M}.\es
\end{proposition}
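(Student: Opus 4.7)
The plan is to reduce $\mathrm{TV}(\pi\vert_D,\hat\pi)$ to an integral comparison of two densities on a common coordinate system aligned with the boundary, and then to bound that integral using a Cauchy--Schwarz/$L^2$ estimate together with Taylor expansions of $\z$ and $\psi$ and the normalization expansion of Proposition~\ref{prop:explicitL2}.

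\textbf{Reduction.} First truncate both measures to $\Theta(\us(R\e)\times[0,R\e])$; the discarded mass is $\les \la^{-M}$, by the tail estimates already used in the proof of Proposition~\ref{prop:explicitL2} for $\pi\vert_D$ and by the explicit Gaussian--exponential tails of $\pi_H$ for $\hat\pi$. Then push both truncated measures back through $\Theta^{-1}$ and through the additional shift $(x,y')\mapsto(x,y'-\tilde\psi(x))$, where $\tilde\psi(x):=\psi(x)-\tfrac12 x^\top\nabla^2\psi(0_d)x$ is the non-quadratic part of $\psi$. Under this composition, the image of $\hat\pi$ becomes exactly $\pi_H$ on $\{y'\ge 0\}$, while $\pi\vert_D$ becomes $\mu\propto e^{-\la\z(x,y'-\tilde\psi(x))}\ind\{y'\ge\tilde\psi(x)\}$.

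\textbf{Core estimate.} Write $2\mathrm{TV}(\mu,\pi_H)=\int|\mu-\pi_H|$ and split into the common support $\{y'\ge\max(0,\tilde\psi(x))\}$ and the symmetric difference $\{y'\in[\min(0,\tilde\psi(x)),\max(0,\tilde\psi(x))]\}$. On the common support,
$$\mu-\pi_H=\pi_H\Bigl(\tfrac{\hat Z}{Z}\exp\bigl(-\la[r(x,y'-\tilde\psi(x))+\tilde\psi(x)]\bigr)-1\Bigr),$$
where $r(x,y):=\z(x,y)-z(0_{d+1})-y-\tfrac12 x^\top H x$ is the second-order Taylor remainder of $\z$ at $(0_d,0)$. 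Bound this $L^1$ contribution by $\sqrt{\int\pi_H(\cdots)^2}$ via Cauchy--Schwarz, expand the bracket with $(e^{-u}-1)^2\le u^2 e^{2|u|}$, and apply $\hat Z/Z=1+\Rem_1$ from Proposition~\ref{prop:explicitL2}. On the symmetric difference, bound each $x$-slice by $1-e^{-\la|\tilde\psi(x)|}$ and again apply Cauchy--Schwarz against $p_H(x)$. Both contributions reduce to $L^2$ expectations of the Taylor blocks of $r+\tilde\psi$ under $\pi_H$, which are computed via Wick/Isserlis using $\E_{\pi_H}\|X\|_H^{2k}\asymp(d/\la)^k$ and $\E_{\pi_H}Y^k\asymp\la^{-k}$, together with the tensor-norm inequality $\|T\|_F\le d^{(k-1)/2}\|T\|_H$ for symmetric $k$-tensors. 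In particular, for the cubic tensor $T$ appearing in $r+\tilde\psi$ (with $\|T\|_H\les\omega_{3,0}+\delta_3$), one obtains $\la^2\E_{\pi_H}\langle T,X^{\otimes 3}\rangle^2\les (\omega_{3,0}+\delta_3)^2 d^2/\la$, whose square root is the leading $(\omega_{3,0}+\delta_3)d/\sqrt\la$ term in~\eqref{eq:TV}. The quartic-in-$x$ tensor and the mixed blocks $\omega_{0,2}y^2$, $\omega_{1,1}\|x\|_H y$, $\omega_{2,1}(R\e,0)\|x\|_H^2 y$ produce the remaining terms after analogous Wick counts.

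\textbf{Main obstacle.} The hardest step is to execute the $L^2$/Hellinger-style comparison cleanly in the presence of the support mismatch $\{y'<\tilde\psi(x)\}$, where the Radon--Nikodym derivative of $\mu$ with respect to $\pi_H$ is infinite. The symmetric-difference contribution must therefore be handled separately while still enjoying square-root scaling; otherwise a naive $L^1$ bound of the form $\la\E_{\pi_H}|\tilde\psi(X)|\les(\omega_{3,0}+\delta_3)d^{3/2}/\sqrt\la$ would be obtained in place of the sharp $d/\sqrt\la$. A secondary technical point is to verify the Taylor remainder bounds for $r(x,y'-\tilde\psi(x))$ uniformly on the good region, which is precisely where the hypotheses $(\delta_3^2+\delta_4(R\e))d^2/\la\le c_\delta$ and $R\e\delta_3(R\e)\le 3$ enter, ensuring that $y'-\tilde\psi(x)$ remains inside the domain of Taylor expansion of $\z$.
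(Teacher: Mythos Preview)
Your overall architecture is close to the paper's: truncate tails, pass to boundary-adapted coordinates, split into a density comparison on the common support and a support-mismatch term, and finish with a Cauchy--Schwarz step and Gaussian moment computations. The paper works in $\Theta$-coordinates rather than $\hat\Theta$-coordinates, so its three remainder functions $q(x)=\z(x,0)-\tfrac12x^\top Hx$, $r(x,y)=\z(x,y)-\z(x,0)-y$, and $\Delta(x)=\psi(x)-\hat\psi(x)$ correspond exactly to the decomposition of your combined $r(\cdot,y'-\tilde\psi)-\tilde\psi$.

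The genuine gap is the control of the exponential weight. After your inequality $(e^{-u}-1)^2\le u^2e^{2|u|}$ you implicitly need $\E_{\pi_H}[u^2e^{2|u|}]\les\E_{\pi_H}[u^2]$, but this is not true as written. The $y$-dependent part of $u$ is the paper's $\la r(x,y)$, and by Lemma~\ref{lma:rxy} the best pointwise bound is $|r(x,y)|\le y/2$. Hence $e^{2\la|r|}\le e^{\la y}$, which against the $\pi_H$ marginal $\la e^{-\la y}$ leaves a flat integrand; integrating over $y\in[0,(R\e)^2]$ produces an extra factor $R^2d$, and after the square root an extra $\sqrt d$ that destroys the leading $d/\sqrt\la$ rate. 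The paper avoids this by \emph{ordering} the Cauchy--Schwarz differently: it first uses the mean-value identity $|e^f-e^g|=|f-g|\int_0^1e^{tf+(1-t)g}\dd t$ (Lemma~\ref{lma:t12}) to linearize, then absorbs $e^{\la|r|}\le e^{\la y/2}$ as a change of measure from $\mathrm{Exp}(\la)$ to $\mathrm{Exp}(\la/2)$ \emph{before} applying Cauchy--Schwarz, so that the remaining exponential factor $e^{\la(t_1q+t_2\Delta)}$ depends only on $x$. That $x$-only exponential moment is then bounded by a Gaussian concentration inequality (Lemma~\ref{lma:Ef2}, which relies on the log-Sobolev-type bound $\log\E e^{h(Z)}\le 2\E h(Z)+\sup\|\nabla h\|^2$), not by Wick/Isserlis. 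This concentration step is the second missing ingredient in your sketch; Wick counting alone does not control $\E e^{\la t q(X)}$.

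A minor point: the obstacle you flag as ``main'' (the support mismatch) is in fact handled by exactly the manoeuvre you propose, and the paper does the same (see the bound $2\E[(1-e^{-\la|\Delta|/2})e^{-\la q}]$ in the proof of Lemma~\ref{lma:BD}). Cauchy--Schwarz passes to $\la\sqrt{\E\Delta(X)^2}$, and the Wick bound $\E\langle T,Z^{\otimes3}\rangle^2\les\|T\|^2d^2$ already gives the sharp $\delta_3 d/\sqrt\la$, so no $d^{3/2}/\sqrt\la$ arises. The real technical crux is the exponential-moment issue above.
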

\begin{remark}[Region where $\hat\pi$ concentrates]\label{rk:conc}
A desirable feature of samples from a density approximating $\pi\vert_D$ is that they lie in the rare event set $D$ with high probability. Samples from $\hat\pi$ satisfy this property. Specifically, we show in the proof of Lemma~\ref{lma:tailAB} (see Remark~\ref{rk:bB}) that 
$$
\hat\pi(D) \geq 1-2e^{-(R-1)^2d/2},
$$  which is close to 1 whenever $R^2d\gg1$.  
\end{remark}
\begin{remark}[Uses for $\hat\pi$]
Sampling from $\hat\pi$ is much cheaper than sampling from $\pi\vert_D$, and can be used for the following purposes.
\begin{enumerate}
\item Exploration. It may be of interest to observe configurations (points $u$) likely to be generated under $\pi\vert_D$, beyond the instanton itself.  Samples of $\hat\pi$ can be used as proxies for such configurations.
\item Observable expectations. We may be interested in expectations of a functional $f$ over the rare event, e.g. damage caused by a natural disaster. Such an expectation is given by $\int fd\pi\vert_D= \int_D f(u)e^{-\la z(u)}\dd u/ \int_D e^{-\la z(u)}\dd u$. If $f$ is sufficiently smooth in a neighborhood of the instanton, then the asymptotic approximations from Section~\ref{sec:gen} can be used for the numerator and denominator. However, if $f$ is discontinuous near the instanton, then the theory of Section~\ref{sec:gen} does not apply. An alternative approach to estimate the expectation is based on sampling $\hat\pi$, and it does not require $f$ to be continuous. Namely, we have $\int fd\pi\vert_D\approx \int fd\hat\pi \approx \frac1N\sum_{i=1}^Nf(X_i)$, with $X_i\stackrel{\mathrm{i.i.d.}}{\sim}\hat\pi$. Proposition~\ref{prop:TV} can be used to control the error in the first approximation if $f$ is bounded, in which case $|\int fd\pi\vert_D- \int fd\hat\pi|\leq\|f\|_\infty\mathrm{TV}(\pi\vert_D, \hat\pi).$ The second approximation yields an unbiased estimator of $\int fd\hat\pi$, with variance scaling with $N$ as $1/N$. 
\item Rare-event probabilities. Samples from $\hat\pi$ can be used to construct an estimator of $\pi(D)$:
\be\label{imp}\pi(D)\approx p:=\sum_{i=1}^N\ind(X_i\in D)w(X_i),\qquad X_i\stackrel{\mathrm{i.i.d.}}{\sim}\hat\pi,\ee where common choices of the weights are $w(X_i)=\frac1N\frac{\pi(X_i)}{\hat\pi(X_i)}$ and $w(X_i)=\frac{(\pi/\hat\pi)(X_i)}{\sum_{j:\,X_j\in D}(\pi/\hat\pi)(X_j)}$~\cite{owen2013monte, agapiou2017importance}. The estimator~\eqref{imp} is an alternative to the asymptotic approximations from Section~\ref{sec:gen}, and may be more computationally convenient in certain cases. We leave the theoretical and numerical analysis of this estimator to future work. We note that $\hat\pi$ is a promising choice of proposal distribution compared to other proposals in the literature due to $\hat\pi$ being a canonical approximation to $\pi$.

As discussed in the introduction, existing literature does not exploit asymptotic properties of $\pi\vert_D$ in their construction of the proposal distributions to the extent that we do here. 

For example, the proposal distribution in~\cite{tong2023large} is chosen from the family of Gaussians over $\br^{d+1}$. But the Gaussian family is not well-suited for modeling a distribution which concentrates at a point on the boundary of a domain and which, at the same time, assigns zero mass below the boundary. 
\end{enumerate}
\end{remark}
\begin{remark}[Pointwise approximation to $e^{-\la z}$]Note that $\pi\propto\ind_De^{-\la z}$ depends on the set $D$ only via the normalizing constant, and via the fact that outside $D$, the density is zero. Otherwise, $\pi$ depends solely on the function $z$. It is reasonable to expect that any good approximation $\hat\pi\approx\pi$ shares this property: it should depend on $D$ only through the normalizing constant and through the set where $\hat\pi$ is nonzero, but it should otherwise only depend on $z$. In other words, we essentially expect the density of $\hat\pi$ to be a pointwise approximation to the function $e^{-\la z}$, up to normalizing constant. Our $\hat\pi$ from~\eqref{pi-hat} indeed has this property, since for $(x,t)$ in its support, we have
\bs\label{hatThetadens}
\hat\pi(x,t) &\propto \exp\left(-\la x^\top Hx/2 -\la (t-x^\top\nabla^2\psi(0_d)x/2)\right) \\
&= \exp\left(-\la \left[\frac12x^\top \nabla_x^2z(0_{d+1})x +\pa_tz(0_{d+1})t\right]\right),
\es recalling that $H=\nabla_x^2z(0_{d+1})+\nabla^2\psi(0_d)$ and $\pa_tz(0_{d+1})=1$. Thus $\hat\pi$ \emph{simply replaces $z(u)$ in the exponent by its Taylor approximation}. In this way, $\hat\pi$ can be viewed as a canonical approximation.

Note that the Taylor approximation only keeps one of the three quadratic terms, namely, $x^\top \nabla_x^2z(0_{d+1})x/2$, and drops the other two: the cross term $t\nabla_x\pa_tz(0_{d+1})^\top x$ and the pure $t$ quadratic $\pa_t^2z(0_{d+1})t^2/2$. To see why this is justified, recall from Remark~\ref{rk:conc} that $\hat\pi$ concentrates in the set where $\|x\|_H\leq R\e$ and $t-\psi(x)\in [0,(R\e)^2]$. Note that such $t$ are of order $(R\e)^2$ provided $\|\nabla^2\psi(0)\|_H$ is bounded by a constant, since $\psi(x)\approx\frac12x^\top\nabla^2\psi(0_d)x$. Thus, roughly speaking, $\hat\pi$ concentrates in a region where $\|x\|_H\leq R\e$ and $|t|\leq (R\e)^2$. But in this region, the quadratic cross term scales as $(R\e)^3$ and the pure $t$ quadratic scales as $(R\e)^4$. They are therefore negligible compared to the first quadratic term $x^\top \nabla_x^2z(0_{d+1})x/2$ and the linear term $\pa_tz(0_{d+1})t$, both of which are of order $(R\e)^2$. 
\end{remark}
\begin{remark}[Case of general instanton $u^*$ and direction $\nabla z(u^*)$]In Appendix~\ref{app:gen:sample}, we give the procedure for sampling from a Laplace-type density $\pi(u)\propto e^{-\la z(u)}$ conditional on a rare event $D$, in the more general case where the instanton $u^*=\arg\min_{u\in D}z(u)$ is not necessarily the origin and $\nabla z(u^*)$ is not necessarily parallel to the $t$-axis. 
\end{remark}

\begin{remark}A result similar to Proposition~\ref{prop:TV} was obtained in~\cite[Theorem 6]{lapinski2019multivariate} in the case of fixed $d$ and a flat boundary.
\end{remark}
We now consider the Gaussian setting from Section~\ref{sec:gauss}. Thus we are interested in approximately sampling from the restriction of the standard Gaussian density to a set $D_\la=\sqrt\la D + (0_d,\sqrt\la)$, where $\pa D$ is locally parameterized by $\psi$. 
\begin{corollary}[Gaussian case]\label{corr:gTV}Suppose the assumptions of Proposition~\ref{prop:gauss:3} are satisfied, and let $\pi_\la$ be the restriction of $\mathcal N(0_{d+1}, I_{d+1})$ to $D_\la$. Then
\bs\label{eq:gauss:TV}
\mathrm{TV}(\pi_\la,\hat\pi_\la)\leq C(c_\delta,(\log\la)/d)\left( \delta_3\frac{d}{\sqrt\la} +(\delta_2^2+\delta_4(R\e))\frac{d^2}{\la}+\frac1\la\right),
\es Here, $\hat\pi_\la =\hat\Theta_\la\#\pi_H$, where $\hat\Theta_\la(x,y)=\sqrt\la\hat\Theta(x,y)+(0_d,\sqrt\la)$ and $\pi_H,\hat\Theta$ are as in~\eqref{pi-H}, with $H,\psi$ as in Assumption~\ref{assume:D}. \end{corollary}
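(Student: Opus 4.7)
The plan is to reduce Corollary~\ref{corr:gTV} to a single application of Proposition~\ref{prop:TV} via the rescaling used throughout Section~\ref{sec:gauss}, and then translate the remainder bound \eqref{eq:TV} from quantities $\omega_{\ell,j}$ of the Gaussian $\z$ into quantities $\delta_\ell$ of $\psi$. First, the affine bijection $T:u\mapsto\sqrt\la u+(0_d,\sqrt\la)$ sends $D$ to $D_\la$ and, by \eqref{gaussprob-prelim} at the level of densities, satisfies $T_\#(\pi\vert_D)=\pi_\la$ where $\pi\propto e^{-\la z}$ with $z(x,t)=\|x\|^2/2+(t+1)^2/2$. The definition $\hat\Theta_\la=T\circ\hat\Theta$ gives $T_\#\hat\pi=\hat\pi_\la$; since total variation is invariant under bijective pushforward, $\mathrm{TV}(\pi_\la,\hat\pi_\la)=\mathrm{TV}(\pi\vert_D,\hat\pi)$, reducing matters to Proposition~\ref{prop:TV}. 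I would use the convex branch Proposition~\ref{L2:cvx} as the underlying integral expansion, since $z$ is strictly convex and $D$ is star-shaped about $0_{d+1}$. Verifying its hypotheses reproduces the computation already carried out in the proof of Proposition~\ref{prop:gauss:3}, while the extra hypotheses $R\e\delta_3(R\e)\leq 3$ and $(\delta_3^2+\delta_4(R\e))d^2/\la\leq c_\delta$ of Proposition~\ref{prop:TV} are part of the corollary's standing assumptions.

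For the specific $\z(x,y)=\|x\|^2/2+(\psi(x)+y+1)^2/2$, direct differentiation yields $\pa_y^2\z\equiv 1$ (all higher pure-$y$ derivatives vanish), $\pa_y\nabla_x\z=\nabla\psi$, $\pa_y\nabla_x^2\z=\nabla^2\psi$, and — using $\psi(0)=\nabla\psi(0)=0$ — $\nabla_x^3\z(0,0)=\nabla^3\psi(0)$. This gives $\omega_{0,2}(\cdot,\cdot)=1$, $\omega_{1,1}=0$, $\omega_{2,1}(R\e,0)\leq\delta_2(R\e)$, and $\omega_{3,0}=\delta_3$. A Leibniz expansion of $\nabla_x^4\z(x,0)$, combined with the Taylor bounds $|\psi(x)|,\|\nabla\psi(x)\|_H\les R\e\delta_2(R\e)$ on $\us(R\e)$ and the inequalities $R\e\delta_2(R\e)\leq 1/3$, $R\e\delta_3(R\e)\leq 1/3$ from \eqref{rho1-gauss3}, yields $\omega_{4,0}(R\e,0)\les \delta_2(R\e)^2+\delta_3(R\e)^2+(1+\delta_2(R\e))\delta_4(R\e)$. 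The analogous expansion for $\nabla_x^3\z(x,0)$ gives $\omega_{3,0}(R\e,0)\les R\e\delta_2(R\e)^2+\delta_3(R\e)$, from which the auxiliary hypothesis $R\e\omega_{3,0}(R\e,0)\leq 3$ of Proposition~\ref{L2:cvx} follows.

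Substituting these into \eqref{eq:TV}, choosing $M$ large enough that $\la^{-M}\leq 1/\la$, and applying the Taylor bounds $\delta_2(R\e)\leq\delta_2+R\e\delta_3(R\e)$ and $\delta_3(R\e)\leq\delta_3+R\e\delta_4(R\e)$ together with AM--GM (to absorb cross terms such as $\omega_{2,1}d/\la\les \delta_2^2 d^2/\la+1/\la$) produces the announced bound. The main obstacle is exactly this final bookkeeping: the $\omega_{4,0}$ contribution introduces a $\delta_3(R\e)^2 d^2/\la$ term, which must be split via $\delta_3(R\e)\leq\delta_3+R\e\delta_4(R\e)$ and then absorbed into $\delta_3\, d/\sqrt\la$ (using the standing bound $\delta_3^2 d^2/\la\leq c_\delta$, so that $\delta_3^2 d^2/\la\leq \sqrt{c_\delta}\cdot\delta_3 d/\sqrt\la$) together with $\delta_4(R\e)d^2/\la$, with all $c_\delta$-dependence tucked into the overall prefactor $C(c_\delta,(\log\la)/d)$.
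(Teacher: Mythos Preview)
Your proposal is correct and follows essentially the same route as the paper: reduce to $\mathrm{TV}(\pi\vert_D,\hat\pi)$ via the affine bijection (total variation is invariant under bijections), invoke Proposition~\ref{prop:TV} through the convex branch Proposition~\ref{L2:cvx}, and translate the $\omega_{\ell,j}$ into $\delta_\ell$ using the explicit Gaussian $\z$ (this is the paper's Lemma~\ref{lma:omz}). One small point: you speak of ``choosing $M$ large enough that $\la^{-M}\le 1/\la$,'' but $M$ is not free here---the standing assumption $R=24+6(\log\la)/d$ of Proposition~\ref{prop:gauss:3} forces $M=1$ in Proposition~\ref{L2:cvx}, so $\la^{-M}=\la^{-1}$ automatically.
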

\begin{remark}
Just as the set-up of Section~\ref{sec:gen} differs from the set-up of Section~\ref{sec:gauss}, the set-up of Proposition~\ref{prop:TV} differs from that of Corollary~\ref{corr:gTV}: the latter involves a fixed density restricted to an increasingly extreme event. Correspondingly, the scaling of $\hat\pi_\la$ is different from the scaling of $\hat\pi$. Specifically, to sample from $\hat\pi_\la$, we draw $(X,Y)$ as described in (1),(2) above Proposition~\ref{prop:TV}, but then return $\hat\Theta_\la(X,Y)$ rather than $\hat\Theta(X,Y)$.
\end{remark}

\section{Proof outlines}\label{sec:proofs}
\subsection{Outline for proof of Propositions~\ref{prop:main},~\ref{main:cvx},~\ref{prop:explicitL2},~\ref{L2:cvx}}\label{app:outline:int}
Recall $\e=\sqrt{d/\la}$, and the definition~\eqref{UH} of $\us$ and $\U$. We break up the domain $D$ into three overlapping regions $D_1,D_2,D_3$, where
\bs\label{D123}
D_1&=\Theta(\us(R\e)\times[0, (R\e)^2]),\\
D_2&=\Theta(\us(R\e)\times[(R\e)^2, R\e]),\\
D_3 &=D\setminus \U(R\e/2),
\es and $R$ will be chosen throughout the course of the proof. See Figure~\ref{fig:local}. In the next four lemmas, we implicitly assume Assumption~\ref{assume1} is satisfied, and that
\be R\e\leq\rho_0\wedge1.\ee
Recall from Assumption~\ref{a14} that $\us(\rho_0)\subseteq\Omega$, which is the region where $\psi$ exists. Since $R\e\leq\rho_0$, we conclude $\psi(x)$ is well-defined for $x\in\us(R\e)$. This is needed for $D_1,D_2$ above to be well-defined.  

\begin{figure}[h]
{\centerline{
{\epsfig{file={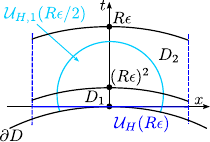}, width=6cm}}
}}
\caption{Illustration of various sets used in the proofs in Section~\ref{sec:proofs}. The set $D_1$ is bounded by $\pa D$ and the surface parallel to $\pa D$ passing through the point $(0_d,(R\e)^2)$.  The set $D_2$ is bounded by the surfaces parallel to $\pa D$ and passing through the points $(0_d,(R\e)^2)$ and $(0_d,R\e)$, respectively. The set $D_3$ is the part of $D$ located outside the cyan circle, i.e. $D_3=D\setminus \U(R\e/2)$. The set $D_3$ intersects $D_1$ and $D_2$.}
\label{fig:local}
\end{figure}

 \begin{lemma}\label{lma:D123}Suppose $R\e\delta_2(R\e/2)\leq 4$. Then $D= D_1\cup D_2\cup D_3$. 
\end{lemma}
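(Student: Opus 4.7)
\textbf{Plan for proof of Lemma~\ref{lma:D123}.} My plan is to verify both inclusions $D_1\cup D_2\cup D_3\subseteq D$ and $D\subseteq D_1\cup D_2\cup D_3$ separately. The first is essentially immediate: $D_3\subseteq D$ by definition, while Assumption~\ref{a14} combined with $R\e\leq\rho_0\wedge 1$ (so that $(R\e)^2\leq R\e\leq\rho_0$) together with the closedness of $D$ yields $\Theta(\us(R\e)\times[0,R\e])\subseteq D$, hence $D_1\cup D_2\subseteq D$.

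The substantive content is the reverse inclusion. I would fix $u=(x,t)\in D$ and split on whether $\|u\|_{H,1}>R\e/2$ or $\|u\|_{H,1}\leq R\e/2$. In the first case $u\notin\U(R\e/2)$, so $u\in D_3$ and there is nothing further to prove. In the second case, $\|x\|_H\leq R\e/2\leq\rho_0$, so by the upper inclusion in Assumption~\ref{a14}, I can write $u=\Theta(x,y)=(x,\psi(x)+y)$ for some $y\geq 0$. To place $u$ in $D_1\cup D_2$, it then suffices to verify $y\leq R\e$, after which the dichotomy $y\in[0,(R\e)^2]$ versus $y\in[(R\e)^2,R\e]$ puts $u$ in $D_1$ or $D_2$, respectively.

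The key step is thus an estimate on $|\psi(x)|$. Since $\psi(0_d)=0$ and $\nabla\psi(0_d)=0_d$ by Assumption~\ref{a12}, second-order Taylor expansion with integral remainder gives
\[
\psi(x)=\int_0^1(1-s)\,\langle\nabla^2\psi(sx),x\otimes x\rangle\,\mathrm{d}s,
\]
and since $sx\in\us(R\e/2)$ for all $s\in[0,1]$, the definition of $\delta_2$ yields
\[
|\psi(x)|\leq\tfrac12\delta_2(R\e/2)\|x\|_H^2\leq\tfrac18\delta_2(R\e/2)(R\e)^2.
\]
The hypothesis $R\e\,\delta_2(R\e/2)\leq 4$ then produces $|\psi(x)|\leq R\e/2$, and combining with $|t|\leq R\e/2$ gives $y=t-\psi(x)\leq R\e$ as required. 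I do not anticipate any serious obstacle: the argument is a short case analysis whose only nontrivial ingredient is the quadratic Taylor bound on $\psi$, which the hypothesis $R\e\,\delta_2(R\e/2)\leq 4$ is tailored precisely to control.
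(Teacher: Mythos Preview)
Your proposal is correct and follows essentially the same approach as the paper: both arguments reduce to showing that any $u=(x,t)\in D$ with $\|u\|_{H,1}\le R\e/2$ lies in $D_1\cup D_2$, and both accomplish this via the second-order Taylor bound $|\psi(x)|\le\tfrac12\delta_2(R\e/2)\|x\|_H^2\le\tfrac18\delta_2(R\e/2)(R\e)^2\le R\e/2$, which combined with $|t|\le R\e/2$ yields $t-\psi(x)\le R\e$. The paper phrases this as the set inclusion $\U(R\e/2)\subseteq\Theta(\us(R\e)\times[0,R\e])$, while you do a point-by-point case split, but the content is identical.
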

\begin{proof}
We first show $D_1\cup D_2=\Theta(\us(R\e)\times[0, R\e])\subseteq D$. Indeed, since $R\e\leq \rho_0$, we have $D_1\cup D_2\subseteq\Theta(\us(\rho_0)\times[0,\rho_0])\subseteq D$, by Assumption~\ref{a14}. It remains to show $\U(R\e/2)\subseteq \Theta(\us(R\e)\times[0, R\e])$, i.e. that if $(x,t)\in D$ and $\|(x,t)\|_{H,1}\leq R\e/2$, then $\|x\|_H\leq R\e$ and $0\leq t-\psi(x)\leq R\e $. That $\|x\|_H\leq R\e $ is obvious by the definition~\eqref{uH1} of $\|\cdot\|_{H,1}$. That $t-\psi(x)\geq 0$ holds by Assumption~\ref{a14}. It remains to show $t-\psi(x)\leq R\e $. But $|t|\leq R\e/2$ and $x\in\us(R\e/2)$, so $$t-\psi(x) \leq R\e/2 + \sup_{x\in\us(R\e/2)}|\psi(x)| \leq \frac12R\e  +\frac12\delta_2(R\e/2)(R\e/2)^2\leq R\e.$$ Here, we used a Taylor expansion and the fact that $R\e\delta_2(R\e/2)\leq 4$.
\end{proof}
Define 
\bs\label{Z-rho-def}
\mathcal Z = \frac{(2\pi/\la)^{d/2}}{e^{\la z(0)}\sqrt{\det H}},\qquad\mu = \frac{1}{\mathcal Z}\int_{\us(R\e)}e^{-\la z(x,\psi(x))}\dd x=\frac{1}{\mathcal Z}\int_{\us(R\e)}e^{-\la\z(x,0)}\dd x.
\es
We now state three lemmas which handle the three regions.

\begin{lemma}[Decomposition of integral over $D_1$]\label{lma:D1}
Let $N\ge 1$ and suppose $\pa_tz(u)\geq1/2$ for all $u\in D_1$. Then
\be\label{D1intdecomp}
\frac{1}{\mathcal Z}\int_{D_1}g(u)e^{-\la z(u)}\dd u = \sum_{k=1}^{N}\la^{-k}\frac{1}{\mathcal Z}\int_{\us(R\e )}\g_k(x)e^{-\la \z(x,0)}\dd x +\Rem_{N+1}^{\Wat},
\ee
where
\be
|\Rem_{N+1}^{\Wat}|\leq 2\mu\left(\|D_{\z}^{N}\g\|_\infty\la^{-N-1}+e^{-R^2d/2}\max_{1\leq k\leq N}\|D_{\z}^{k-1}\g\|_\infty\la^{-k}\right). 
\ee Here, $\|f\|_\infty = \sup_{(x,y)\in \us(R\e)\times[0, (R\e)^2]}|f(x,y)|$.
\end{lemma}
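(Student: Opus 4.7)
The plan is to first change variables to flatten the boundary, and then carry out a Watson-type repeated integration by parts in the transverse direction. The coordinate change $u=\Theta(x,y)=(x,\psi(x)+y)$ is a diffeomorphism from $\us(R\e)\times[0,(R\e)^2]$ onto $D_1$ with unit Jacobian, so
\[
\int_{D_1} g(u)e^{-\la z(u)}\,\mathrm{d}u \;=\; \int_{\us(R\e)}\int_0^{(R\e)^2}\g(x,y)\,e^{-\la \z(x,y)}\,\mathrm{d}y\,\mathrm{d}x.
\]
For each fixed $x$, the hypothesis $\pa_t z(u)\geq 1/2$ on $D_1$ translates to $\pa_y\z(x,y)\geq 1/2$, so in particular $\pa_y\z>0$ and the identity $e^{-\la\z}=-(\la\pa_y\z)^{-1}\pa_y e^{-\la\z}$ is available for integration by parts in $y$.

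A single integration by parts produces three contributions: the boundary term at $y=0$, which is exactly $\la^{-1}\g_1(x)e^{-\la\z(x,0)}$ by the definition of $\g_1$; the boundary term at $y=(R\e)^2$; and a new integral whose integrand is $\la^{-1}(D_{\z}\g)(x,y)e^{-\la\z(x,y)}$. Iterating this operation $N-1$ times on the residual integral and using the recursion $\g_k(x)=(D_{\z}^{k-1}\g)(x,0)/\pa_y\z(x,0)$ gives the Laplace sum $\sum_{k=1}^N \la^{-k}\g_k(x)e^{-\la\z(x,0)}$, plus $N$ boundary terms at $y=(R\e)^2$ of the form $\pm\la^{-k}(D_{\z}^{k-1}\g)(x,(R\e)^2)/\pa_y\z(x,(R\e)^2)\cdot e^{-\la\z(x,(R\e)^2)}$, plus a final tail integral $\la^{-N}\int_0^{(R\e)^2}(D_{\z}^N\g)(x,y)e^{-\la\z(x,y)}\,\mathrm{d}y$. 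Integrating in $x$ over $\us(R\e)$ recovers the main sum in \eqref{D1intdecomp}; everything else is packaged into $\Rem_{N+1}^{\Wat}$.

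It remains to bound these error pieces, and here the only analytic input is $\pa_y\z\geq 1/2$. This immediately gives $\z(x,(R\e)^2)-\z(x,0)\geq (R\e)^2/2 = R^2d/(2\la)$, so that $e^{-\la\z(x,(R\e)^2)}\leq e^{-R^2d/2}e^{-\la\z(x,0)}$, and more generally $\z(x,y)\geq \z(x,0)+y/2$, hence $\int_0^{(R\e)^2}e^{-\la\z(x,y)}\mathrm{d}y\leq (2/\la)e^{-\la\z(x,0)}$. Combined with $1/\pa_y\z\leq 2$, the $k$th boundary contribution is pointwise bounded by $2\la^{-k}\|D_{\z}^{k-1}\g\|_\infty e^{-R^2d/2}e^{-\la\z(x,0)}$, and the $D_{\z}^N\g$ tail by $2\la^{-N-1}\|D_{\z}^N\g\|_\infty e^{-\la\z(x,0)}$. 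Integrating in $x$ over $\us(R\e)$, dividing by $\mathcal Z$, and invoking the definition $\mu=\mathcal Z^{-1}\int_{\us(R\e)}e^{-\la\z(x,0)}\mathrm{d}x$ yields the claimed inequality, once the sum over the $N$ boundary terms is absorbed into the maximum (a mild loss which is harmless because each summand carries the exponentially small prefactor $e^{-R^2d/2}$). The only real obstacle is careful bookkeeping of signs and prefactors through the IBP iteration; no further analytic ingredient is needed beyond the pointwise control of $\pa_y\z$ which is given by hypothesis.
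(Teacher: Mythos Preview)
Your proposal is correct and follows essentially the same route as the paper. The paper simply packages the iterated integration-by-parts argument into a separately stated Watson-type lemma (Lemma~\ref{lma:gen-wat}) and then invokes it, whereas you spell out the IBP steps directly; the change of variables, the bound $e^{-\la\z(x,(R\e)^2)}\le e^{-R^2d/2}e^{-\la\z(x,0)}$ from $\pa_y\z\ge 1/2$, and the final $x$-integration against $\mu$ are identical in both.
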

\begin{proof}Using the $\z,\g$ parameterization, we have
\be\label{Iloc}
\frac{1}{\mathcal Z}\int_{D_1}g(u)e^{-\la z(u)}\dd u = \frac{1}{\mathcal Z}\int_{\us(R\e )}\int_{0}^{(R\e)^2} \g(x,y)e^{-\la \z(x,y)}\dd y\dd x.
\ee Here, note that the Jacobian of the transformation $\Theta(x,y)=(x, \psi(x)+y)$ has determinant 1. We now apply Watson's lemma, formulated in Lemma~\ref{lma:gen-wat}. The lemma applies because $\pa_y\z(x,y)=\pa_tz(x,\psi(x)+y)\geq1/2$ for all $(x,y)\in\us(R\e)\times[0,(R\e)^2]$. We obtain
\be\label{lma:1:ir}
\int_0^{(R\e)^2} \g(x,y)e^{-\la \z(x,y)}\dd y =e^{-\la \z(x,0)}\left(\sum_{k=1}^{N}\frac{(D_{\z}^{k-1}\g)(x,0)}{\pa_y\z(x,0)}\la^{-k}+ \Rem_{N+1}^{\Wat}(x)\right),\ee where
\bs\label{wat rem L}
\sup_{x\in\us(R\e)}|\Rem_{N+1}^{\Wat}(x)|\leq 2\|D_{\z}^{N}\g\|_\infty\la^{-N-1}+2e^{-R^2d/2}\max_{1\leq k\leq N}\|D_{\z}^{k-1}\g\|_\infty\la^{-k}.
\es Finally, we integrate~\eqref{lma:1:ir} over $x\in\us(R\e)$ and normalize by $\mathcal Z$. We use that $$\frac{1}{\mathcal Z}\int_{\us(R\e)}|\Rem_{N+1}^{\Wat}(x)|e^{-\la\z(x,0)}\dd x\leq \mu\sup_{x\in\us(R\e)}|\Rem_{N+1}^{\Wat}(x)|$$ and the bound~\eqref{wat rem L} to conclude.
\end{proof}
\begin{lemma}[Bound on integral over $D_2$]\label{lma:D2}
Suppose $\pa_tz(u)\geq1/2$ for all $u\in D_2$. Then
\be
\frac{1}{\mathcal Z}\int_{D_2}e^{-\la z(u)}\dd u \leq \frac{2\mu}{ \la }e^{-Rd/2}.
\ee
\end{lemma}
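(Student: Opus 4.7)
The plan is to parametrize $D_2$ via $\Theta$ and reduce the bound to an elementary one-dimensional estimate, using the assumption $\partial_t z\geq 1/2$ to control the decay in the transverse direction.

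First I would pass to $(x,y)$ coordinates. Since $\Theta(x,y)=(x,\psi(x)+y)$ has Jacobian one, and $D_2=\Theta(\us(R\e)\times[(R\e)^2,R\e])$,
\[
\int_{D_2}e^{-\la z(u)}\dd u = \int_{\us(R\e)}\int_{(R\e)^2}^{R\e}e^{-\la\z(x,y)}\dd y\dd x,
\]
with $\z(x,y)=z(x,\psi(x)+y)$ and $\partial_y\z(x,y)=\partial_tz(\Theta(x,y))$.

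Next I would use the hypothesis $\partial_tz(u)\geq1/2$ on $D_2$ to get $\partial_y\z(x,y)\geq1/2$ for $(x,y)\in\us(R\e)\times[(R\e)^2,R\e]$. Writing $\z(x,y)=\z(x,0)+\int_0^y\partial_{y'}\z(x,y')\dd y'$, and using nonnegativity of $\partial_y\z$ on $[0,(R\e)^2]$ (which follows from $\partial_tz>0$ on $D_1\subset\U(\rho_0)$ by Assumption~\ref{a15}), I obtain $\z(x,y)\geq \z(x,0)+\tfrac12(y-(R\e)^2) + \tfrac12(R\e)^2$ at least on $y\in[(R\e)^2,R\e]$; more directly, $\z(x,y)\geq \z(x,0)+y/2$ throughout this interval. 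Hence
\[
\int_{(R\e)^2}^{R\e}e^{-\la\z(x,y)}\dd y \leq e^{-\la\z(x,0)}\int_{(R\e)^2}^{\infty}e^{-\la y/2}\dd y = \frac{2}{\la}\,e^{-\la\z(x,0)}e^{-\la(R\e)^2/2}.
\]

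The final step is to recognize $\la(R\e)^2/2=R^2d/2$ (since $\e^2=d/\la$), pull the constant out of the $x$-integral, and normalize by $\mathcal Z$ to recover $\mu$:
\[
\frac{1}{\mathcal Z}\int_{D_2}e^{-\la z}\dd u \leq \frac{2}{\la}\,e^{-R^2d/2}\,\mu.
\]
Since the hypotheses of the propositions force $R\geq 1$ (indeed $R\geq 12/\cmin\geq 12$ in the non-convex case and $R\geq 24$ in the convex case), $R^2\geq R$, so $e^{-R^2d/2}\leq e^{-Rd/2}$, giving the claimed bound $\tfrac{2\mu}{\la}e^{-Rd/2}$.

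The argument is genuinely short; there is no real obstacle. The only point worth flagging is the mild sharpening $e^{-R^2d/2}\leq e^{-Rd/2}$, which the statement deliberately absorbs into the weaker and more uniform form $e^{-Rd/2}$ — presumably because the same exponential rate $e^{-Rd/2}$ appears in the other tail bounds handling $D_3$, so stating all three regions with a common decay rate streamlines later use.
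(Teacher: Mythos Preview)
Your approach is essentially identical to the paper's: pass to $(x,y)$ coordinates via $\Theta$, use the lower bound $\partial_t z\geq 1/2$ to get $\z(x,y)\geq\z(x,0)+y/2$, integrate the resulting one-dimensional exponential, and recognize $\mu$. The paper in fact stops at the sharper $e^{-R^2d/2}$ (and uses that form downstream), whereas you weaken to $e^{-Rd/2}$ via $R\geq1$; both are fine.

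One small wobble: from mere \emph{nonnegativity} of $\partial_y\z$ on $[0,(R\e)^2]$ you cannot extract the contribution $+\tfrac12(R\e)^2$ you wrote; that term needs $\partial_y\z\geq1/2$ there as well. Your ``more directly'' line $\z(x,y)\geq\z(x,0)+y/2$ is exactly what the paper asserts, and it tacitly requires $\partial_tz\geq1/2$ on $D_1\cup D_2$, not just $D_2$. This stronger hypothesis is precisely what is assumed when the lemma is invoked (Corollary~\ref{corr:genprelim}), so no harm is done, but you should be aware the stated hypothesis of the lemma alone is slightly weaker than what both you and the paper actually use.
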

\begin{proof}
We have $z(x,t)\geq z(x,\psi(x)) + \frac12(t-\psi(x))$ for all $(x,t)\in D_2$, and therefore
\bs
\frac{1}{\mathcal Z}\int_{D_2}e^{-\la z(u)}\dd u& \leq \frac{1}{\mathcal Z}\int_{\us(R\e)}e^{-\la z(x,\psi(x))}\int_{\psi(x)+ (R\e)^2}^\infty e^{-\la(t-\psi(x))/2}\dd t\dd x\\
&= \mu\int_{(R\e)^2}^\infty e^{-\la y/2}\dd y = \frac{2\mu}{\la}e^{-R^2d/2},
\es
 as desired. Here, recall the definition of $\mu$ from~\eqref{Z-rho-def}.
\end{proof} 
\begin{lemma}[Bound on integral over $D_3$]\label{lma:D3}
Suppose $z(u)-z(0_{d+1})\geq \e\|u\|_{H,1}$ for all $u\in D_3$. Then 
\be\label{D3bd}
\frac{1}{\mathcal Z}\int_{D_3}e^{-\la z(u)}\dd u \les \frac{d}{\sqrt\la}e^{d-Rd/4}.
\ee 
\end{lemma}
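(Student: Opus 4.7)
\textbf{Proof plan for Lemma~\ref{lma:D3}.} The idea is to replace the integrand by its upper envelope coming from the linear-growth hypothesis, drop the constraint $u\in D$ in favor of the larger half-space $\{\|u\|_{H,1}>R\e/2\}$, and reduce the resulting integral to a one-dimensional incomplete Gamma via a rotation and polar coordinates.

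First, from the hypothesis $z(u)-z(0_{d+1})\geq\e\|u\|_{H,1}$ on $D_3$, together with $D_3\subseteq\{u\,:\,\|u\|_{H,1}>R\e/2\}$, I would bound
\[
\int_{D_3}e^{-\la z(u)}\dd u\leq e^{-\la z(0_{d+1})}\int_{\|u\|_{H,1}>R\e/2}e^{-\la\e\|u\|_{H,1}}\dd u.
\]
Applying the linear change of variables $(x,y)\mapsto(H^{1/2}x,y)$ converts $\|u\|_{H,1}$ into the standard Euclidean norm on $\br^{d+1}$ and introduces a Jacobian $1/\sqrt{\det H}$, which cancels the $\sqrt{\det H}$ in $\mathcal Z^{-1}e^{-\la z(0_{d+1})}=\la^{d/2}\sqrt{\det H}/(2\pi)^{d/2}$. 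Hence
\[
\frac{1}{\mathcal Z}\int_{D_3}e^{-\la z(u)}\dd u\leq \frac{\la^{d/2}}{(2\pi)^{d/2}}\int_{\|w\|>R\e/2}e^{-\la\e\|w\|}\dd w.
\]

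Next, I would pass to spherical coordinates in $\br^{d+1}$ and substitute $s=\la\e r=\sqrt{d\la}\,r$, noting $\la\e\cdot R\e/2=Rd/2$, to rewrite the remaining integral as
\[
\frac{2\pi^{(d+1)/2}}{\Gamma((d+1)/2)\,(d\la)^{(d+1)/2}}\int_{Rd/2}^{\infty}s^{d}e^{-s}\dd s.
\]
For the incomplete Gamma I would use the splitting $s^{d}e^{-s}=(s^{d}e^{-s/2})\,e^{-s/2}$; since $s\mapsto s^{d}e^{-s/2}$ attains its maximum $(2d/e)^{d}$ at $s=2d$, this gives
\[
\int_{Rd/2}^{\infty}s^{d}e^{-s}\dd s\;\leq\;(2d/e)^{d}\int_{Rd/2}^{\infty}e^{-s/2}\dd s\;=\;2(2d/e)^{d}e^{-Rd/4}.
\]

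Finally, I would collect everything and invoke Stirling's lower bound $\Gamma((d+1)/2)\gtrsim\sqrt d\,(d/(2e))^{d/2}$. After the $d^{d/2}$ and $e^{d/2}$ factors cancel, the dimensional prefactor collapses to a constant times $d^{-1}\,e^{d(\log 2-1/2)}$, and the mismatch between $\la^{d/2}$ and $(d\la)^{(d+1)/2}$ contributes exactly the advertised $1/\sqrt\la$. Since $\log 2-1/2<1$, this prefactor is comfortably dominated by $d\cdot e^{d}$, yielding $\les \frac{d}{\sqrt\la}e^{d-Rd/4}$.

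The main obstacle is purely clerical: keeping track of the cancellations among the many factors of $\pi$, $2$, $\sqrt{\det H}$, $\la$, and $d$ produced by normalization, the rotation, spherical coordinates, and Stirling, and confirming that the Stirling constant does not introduce any hidden $d$- or $\la$-dependence. No genuinely new analytic idea is needed; the linear-growth assumption reduces the problem to an off-the-shelf Gaussian tail bound in $\br^{d+1}$.
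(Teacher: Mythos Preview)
Your plan is exactly the paper's proof: bound by the linear envelope, change variables $(x,t)\mapsto(H^{1/2}x,t)$ to kill $\sqrt{\det H}$, go to spherical coordinates in $\br^{d+1}$, rescale to an incomplete Gamma integral at $Rd/2$, and absorb everything into $\tfrac{d}{\sqrt\la}e^{d-Rd/4}$ via Stirling. The paper uses the ready-made bounds $S_d/(2\pi)^{(d+1)/2}\leq (e/(d+1))^{(d-1)/2}$ and $\int_a^\infty s^{b-1}e^{-s}\dd s\leq e^{-a/2}(2b)^b$ in place of your explicit Gamma-function manipulations, but the content is identical.

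One slip to fix: your Stirling lower bound is off by $\sqrt d$. Stirling gives $\Gamma((d+1)/2)\sim\sqrt{2\pi/e}\,((d+1)/(2e))^{d/2}$, so the correct lower bound is $\Gamma((d+1)/2)\gtrsim (d/(2e))^{d/2}$ with only a constant in front, not $\sqrt d$. Using the correct estimate, the prefactor collapses to $\sim d^{-1/2}e^{d(\log 2-1/2)}/\sqrt\la$ rather than $d^{-1}e^{d(\log 2-1/2)}/\sqrt\la$; since $\log 2-\tfrac12<1$ this is still comfortably $\les \tfrac{d}{\sqrt\la}e^d$, so the lemma's bound follows, but as written your chain of inequalities is not valid.
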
 See Appendix~\ref{app:secgenproofs} for the proof. 

Lemmas~\ref{lma:D1}-\ref{lma:D3} demonstrate that the integral over $D_1$ provides the dominant contribution to the Laplace integral in \eqref{intD}, while the contributions from $D_2$ and $D_3$ are exponentially small. The contribution from $D_2$ is negligible due to the rapid exponential decay in the $y$ direction near the instanton. The contribution from $D_3$ is suppressed by at least linear growth of $z(u)$ with $\|u\|_{H,1}$ as $u$ moves away from the instanton $0_{d+1}$.

We now use the above lemmas to prove a preliminary decomposition and remainder bound. To that end, for each $k=1,\dots,N$, define $\Rem_{N+1-k}^\Lap(\g_k)$ implicitly through the following equation.
\be\label{lap-gz-L}
\frac{1}{\mathcal Z}\int_{\us(R\e)}\g_k(x)e^{-\la \z(x,0)}\dd x = \sum_{\ell=0}^{N-k}\nu_{\ell}(\g_k)\la^{-\ell}+\Rem_{N+1-k}^{\mathrm{Lap}}(\g_k).
\ee 
The coefficients $\nu_\ell(\g_k)$ are the standard coefficients of the ($\mathcal Z$-normalized) Laplace integral on the left in \eqref{lap-gz-L}.
See Appendix~\ref{app:lap} for more details.
\begin{corollary}\label{corr:genprelim}Suppose $|g(x)|\leq 1$ for all $x\in D$, and Assumption~\ref{assume1} holds. Let $N\geq1,M\geq1$, and
\be\label{RdM}
R \geq  4\left(1+\frac{\log d}{d}+(M+1/2)\frac{\log\la}{d}\right).\ee 
Suppose $R\e \leq\rho_0\wedge1$, $R\e\delta_2(R\e/2)\leq 4$,  $\pa_tz(u)\geq1/2$ for all $u\in D_1\cup D_2=\Theta(\us(R\e)\times[0,R\e])$, and $z(u)-z(0_{d+1})\geq \e\|u\|_{H,1}$ for all $u\in D_3=D\setminus \U(R\e/2)$. Then
\bs\label{eq:genprelim}
\frac{1}{\mathcal Z}\int_{D} g(u)e^{-\la z(u)}\dd u &=\frac{1}{\la}\left(\sum_{m=1}^{N}c_m\la^{-(m-1)} +\Rem_{N}\right),\\
 c_m &= \sum_{k=1}^m\nu_{m-k}(\g_k),\quad m=1,\dots,L-1.
\es The remainder $\Rem_N$ is bounded as follows, with $\|f\|_\infty=\sup_{x\in \us(R\e),y\in[0 ,(R\e)^2]}|f(x,y)|$:
\bs\label{cm-remL-tail}
|\Rem_{N}| \les\; &\sum_{k=1}^{N}|\Rem_{N+1-k}^\Lap(\g_k)|\la^{-(k-1)} + \mu\|D_{\z}^N\g\|_\infty\la^{-N}\\
&+(1\vee\mu)\big(1+\max_{1\leq k\leq N}\|D_{\z}^{k-1}\g\|_\infty\la^{-(k-1/2)}\big)\la^{-M}.
\es Here, $\Rem_{N+1-k}^\Lap(\g_k)$ is as in~\eqref{lap-gz-L}.
\end{corollary}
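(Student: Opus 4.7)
The plan is to combine Lemmas~\ref{lma:D123}--\ref{lma:D3} with the Laplace expansion~\eqref{lap-gz-L}, then regroup the resulting double sum by powers of $\la^{-1}$ and use the growth hypothesis on $R$ to convert the exponentially small contributions into $\la^{-M}$-type remainders.

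First I would invoke Lemma~\ref{lma:D123} (which applies because $R\e\delta_2(R\e/2)\le 4$) to write $D=D_1\cup D_2\cup D_3$ and split
\[
\frac{1}{\mathcal Z}\int_D g(u) e^{-\la z(u)}\dd u=\frac{1}{\mathcal Z}\int_{D_1} g(u) e^{-\la z(u)}\dd u+\frac{1}{\mathcal Z}\int_{D\setminus D_1} g(u) e^{-\la z(u)}\dd u,
\]
with $D\setminus D_1\subseteq D_2\cup D_3$. Since $|g|\le1$, the second integral is controlled by $\frac{1}{\mathcal Z}\int_{D_2}e^{-\la z}+\frac{1}{\mathcal Z}\int_{D_3}e^{-\la z}$, which Lemmas~\ref{lma:D2} and~\ref{lma:D3} bound by $\frac{2\mu}{\la}e^{-R^2 d/2}$ and a constant multiple of $\frac{d}{\sqrt\la}e^{d-Rd/4}$, respectively.

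For the main piece over $D_1$, Lemma~\ref{lma:D1} gives the Watson-type expansion
\[
\frac{1}{\mathcal Z}\int_{D_1} g e^{-\la z}\dd u=\sum_{k=1}^{N}\la^{-k}\,\frac{1}{\mathcal Z}\int_{\us(R\e)}\g_k(x)e^{-\la\z(x,0)}\dd x+\Rem_{N+1}^{\Wat},
\]
and for each $k=1,\dots,N$ the Laplace expansion~\eqref{lap-gz-L} replaces the inner integral by $\sum_{\ell=0}^{N-k}\nu_\ell(\g_k)\la^{-\ell}+\Rem_{N+1-k}^{\Lap}(\g_k)$. Substituting and reindexing via $m=k+\ell$, the double sum collapses to
\[
\sum_{k=1}^{N}\sum_{\ell=0}^{N-k}\la^{-(k+\ell)}\nu_\ell(\g_k)=\sum_{m=1}^{N}\la^{-m}\sum_{k=1}^{m}\nu_{m-k}(\g_k)=\sum_{m=1}^{N}c_m\la^{-m},
\]
which, after factoring out $\la^{-1}$, produces the leading term in~\eqref{eq:genprelim}. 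The residual $\Rem_N$ then assembles from three sources: (i) the Laplace remainders $\la^{-(k-1)}\Rem_{N+1-k}^{\Lap}(\g_k)$, (ii) the amplified Watson remainder $\la\,\Rem_{N+1}^{\Wat}$, and (iii) $\frac{\la}{\mathcal Z}\int_{D\setminus D_1}g e^{-\la z}\dd u$.

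The last step, and the main bookkeeping obstacle, is matching these three pieces to the stated bound~\eqref{cm-remL-tail}. Piece (ii) directly yields the term $\mu\|D_{\z}^N\g\|_\infty\la^{-N}$ plus an $e^{-R^2 d/2}$ piece; piece (iii) yields $2\mu e^{-R^2 d/2}$ and a constant times $\sqrt{d\la}\,e^{d-Rd/4}$. The hypothesis $R\ge 4(1+\tfrac{\log d}{d}+(M+\tfrac12)\tfrac{\log\la}{d})$ forces $Rd/4\ge d+\log d+(M+\tfrac12)\log\la$, hence $e^{d-Rd/4}\le d^{-1}\la^{-(M+1/2)}$ and, since $R\ge 4$, $e^{-R^2 d/2}\le e^{-2Rd}\le\la^{-(2M+1)}$. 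After multiplying by $\la$ and by the prefactors $\max_k\|D_{\z}^{k-1}\g\|_\infty\la^{-k}$ inherited from Lemma~\ref{lma:D1}, one finds that every exponentially small contribution is dominated by $(1\vee\mu)\bigl(1+\max_{1\le k\le N}\|D_{\z}^{k-1}\g\|_\infty\la^{-(k-1/2)}\bigr)\la^{-M}$, which is exactly the last line of~\eqref{cm-remL-tail}. The remaining term $\mu\|D_{\z}^N\g\|_\infty\la^{-N}$ and the Laplace remainders account for the first two contributions in~\eqref{cm-remL-tail}, completing the proof. The main subtleties are keeping track of the factor $\la$ produced when clearing the global $\la^{-1}$, and verifying that the $D\setminus D_1$ overlap with $D_1$ (which could lead to double counting) does not appear because we split strictly as $D_1$ and $D\setminus D_1$.
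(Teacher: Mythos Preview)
Your proposal is correct and follows essentially the same approach as the paper: split $D$ into $D_1$ and $D\setminus D_1\subseteq D_2\cup D_3$, apply Lemmas~\ref{lma:D1}--\ref{lma:D3} together with the Laplace expansion~\eqref{lap-gz-L}, regroup the double sum by $m=k+\ell$, and use the lower bound on $R$ to absorb the exponentially small pieces into the $(1\vee\mu)(\cdots)\la^{-M}$ term. Your bookkeeping of the exponential bounds is slightly more explicit than the paper's, but the structure is identical.
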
 See Appendix~\ref{app:secgenproofs} for the proof. To prove Propositions~\ref{prop:main},~\ref{main:cvx},~\ref{prop:explicitL2}, and~\ref{L2:cvx}, it remains to bound terms of the form $\Rem_k^\Lap$ and $\|D_{\z}^k\g\|_\infty$ and $\mu$. But first, we verify the conditions of the above corollary under the conditions of the four propositions, and show $\mu$ is bounded by a constant.
\begin{lemma}\label{lma:mu}The conditions of Corollary~\ref{corr:genprelim} are satisfied under the conditions of Propositions~\ref{prop:main}, \ref{main:cvx},~\ref{prop:explicitL2}, and~\ref{L2:cvx}, respectively, with $M=N$ in the first two of these propositions. Under the assumptions of Proposition~\ref{prop:main} and~\ref{main:cvx}, we have $\mu\leq C(\CR, C_{\z,3},C_{\z,4})$. Under the assumptions of Proposition~\ref{prop:explicitL2} and~\ref{L2:cvx}, we have $\mu\leq C(R, \CL)$. \end{lemma}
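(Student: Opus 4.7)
The plan is to verify each of the five hypotheses of Corollary~\ref{corr:genprelim}—the lower bound on $R$, the bounds $R\e\leq\rho_0\wedge1$ and $R\e\,\delta_2(R\e/2)\leq 4$, the inequality $\pa_tz\geq 1/2$ on $D_1\cup D_2$, and the linear growth $z(u)-z(0_{d+1})\geq\e\|u\|_{H,1}$ on $D_3$—and then to bound $\mu$ via a Laplace expansion. The first three are essentially algebraic. With $M=N$, the identity $4(M+1/2)=2(1+2N)$ aligns the $(\log\la)/d$ terms in both bounds on $R$, and the constant parts ($12/\cmin$ in the general case, $24$ in the convex case) dominate $4(1+\log d/d)$, using $\log d/d\leq 1/e$ and $\cmin\leq\pa_tz(0_{d+1})=1$ (since $0_{d+1}\in\U(\rho_1)$). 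The bounds $R\e\leq\rho_0\wedge 1$ and $R\e\,\delta_2(R\e/2)\leq 4$ then follow directly from \eqref{A} (using $R\e\leq 2\rho_1$ and monotonicity of $\delta_2$) or from \eqref{D}.

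For $\pa_tz\geq 1/2$ on $D_1\cup D_2=\Theta(\us(R\e)\times[0,R\e])$, note that $\pa_tz(x,\psi(x)+y)=\pa_y\z(x,y)$ with $\pa_y\z(0,0)=1$. A first-order Taylor expansion in both variables yields
\begin{equation*}
|\pa_y\z(x,y)-1|\leq \|x\|_H\,\omega_{1,1}(R\e,0)+y\,\omega_{0,2}(R\e,R\e)\leq R\e\bigl(\omega_{1,1}(R\e,0)+\omega_{0,2}(R\e,R\e)\bigr).
\end{equation*}
Under Propositions~\ref{prop:main} and~\ref{main:cvx}, this is $\leq R\e(C_{\z,1}+C_{\z,0})\leq 1/2$ by \eqref{BC} and \eqref{forw}. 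Under Propositions~\ref{prop:explicitL2} and~\ref{L2:cvx}, replacing the first-order $x$-expansion by a second-order one produces exactly the left-hand side of the first line of \eqref{Rwc34eps}, giving the same conclusion.

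The linear growth on $D_3$ is the main obstacle. In the general case, Assumption~\ref{assume:global2} together with $s\geq\e$ from \eqref{A} handles $u\in D\setminus\U(\rho_1)$ directly. The delicate part is the annular region $\U(\rho_1)\setminus\U(R\e/2)$. There I will write $u=\Theta(x,y)$ and use the definition~\eqref{C1def} of $\cmin$ to obtain $z(u)-z(0_{d+1})\geq\cmin(\tfrac12\|x\|_H^2+y)$. A Taylor bound $|\psi(x)|\leq\tfrac12\delta_2(\rho_1)\|x\|_H^2$, combined with $\|x\|_H\leq\rho_1$ and the bound $R\e\leq 2/\delta_2(\rho_1)$ from \eqref{A}, yields $\|u\|_{H,1}\leq C\|x\|_H+y$ for a constant $C$ depending only on $\delta_2(\rho_1)\rho_1$. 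The elementary inequality $\|x\|_H^2/2\geq\tfrac{R\e}{4}\|x\|_H$ (valid when $\|x\|_H\geq R\e/2$), together with a case split on whether $\|x\|_H$ or $y$ dominates once $\|u\|_{H,1}\geq R\e/2$, then extracts a slope $\geq\e$ provided $R\geq 12/\cmin$. In the convex setting (Propositions~\ref{main:cvx},~\ref{L2:cvx}) Assumption~\ref{assume:global2} is absent but the restriction of $z$ to any ray from $0_{d+1}$ is convex; the same local quadratic analysis establishes the required slope at radius $R\e/2$, and convexity propagates it to all larger radii. Tracking the constants carefully enough that the prescribed $R\geq 12/\cmin$ (or $R\geq 24$) suffices is the crux of this step.

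Finally, $\mu=\sqrt{\det H}\,(2\pi/\la)^{-d/2}\int_{\us(R\e)}e^{-\la(\z(x,0)-z(0_{d+1}))}\dd x$ is a normalized Laplace integral whose exponent has minimum $0$ at $x=0$ with Hessian $H$. Under Propositions~\ref{prop:main} and~\ref{main:cvx}, the pure-$x$ derivative bounds (the $j=0$ cases of the second line of \eqref{forw}) supply the hypotheses of Corollary 2.17 of~\cite{A24} for $x\mapsto\z(x,0)$, whose leading-order expansion gives $\mu=1+\mathcal O(d^2/\la)=\mathcal O(1)$ with constant depending on $\CR$, $C_{\z,3}$, $C_{\z,4}$. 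Under Propositions~\ref{prop:explicitL2} and~\ref{L2:cvx}, the same conclusion follows from the direct Taylor estimate
\begin{equation*}
\z(x,0)-z(0_{d+1})\geq\tfrac12 x^\top Hx-\tfrac16\omega_{3,0}\|x\|_H^3-\tfrac1{24}\omega_{4,0}(R\e,0)\|x\|_H^4,
\end{equation*}
whose cubic and quartic corrections are absorbed on $\us(R\e)$ using the second line of \eqref{Rwc34eps}, yielding $\mu\leq C(R,\CL)$.
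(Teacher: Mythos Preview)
Your proposal tracks the paper's proof closely for the first four hypotheses of Corollary~\ref{corr:genprelim}: the bound on $R$, the inequalities $R\e\le\rho_0\wedge1$ and $R\e\,\delta_2(R\e/2)\le4$, and $\pa_y\z\ge1/2$ are handled essentially as the paper does. For the linear growth on $D_3$, the paper organizes the annular-region argument into separate Lemmas~\ref{lma:extendlin} and~\ref{lma:cvx}, using a two-case split on $\|x\|_H$ versus $\rho/2$ rather than your bound $\|u\|_{H,1}\le C\|x\|_H+y$, but the underlying mechanism is the same and your sketch is workable.

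There is, however, a genuine gap in your bound on $\mu$ under Propositions~\ref{prop:explicitL2} and~\ref{L2:cvx}. The pointwise Taylor lower bound
\[
\z(x,0)-z(0_{d+1})\ge\tfrac12\|x\|_H^2-\tfrac16\omega_{3,0}\|x\|_H^3-\tfrac1{24}\omega_{4,0}(R\e,0)\|x\|_H^4
\]
does \emph{not} give $\mu\le C(R,\CL)$. On $\us(R\e)$ the cubic correction satisfies only $\omega_{3,0}\|x\|_H\le\omega_{3,0}R\e\le R\sqrt{\CL/d}$, so the effective quadratic coefficient becomes $1-O(1/\sqrt d)$, and the resulting Gaussian integral is bounded by $(1-O(1/\sqrt d))^{-d/2}=\exp(O(\sqrt d))$ rather than a constant. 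The second line of~\eqref{Rwc34eps} controls $\omega_{3,0}^2d^2/\la$, not $\omega_{3,0}R\e$; these differ by a factor of $\sqrt d$. The paper instead applies the Laplace-expansion result Corollary~\ref{corr:norm-const} (derived from~\cite{A24}), checking~\eqref{Rc34eps} with $\Clap=C(R,\CL)$. The key point there is that the cubic form $\langle\nabla_x^3\z(0),x^{\otimes3}\rangle$ is odd and contributes only through its \emph{square} in the expansion, which is precisely what $\omega_{3,0}^2d^2/\la$ controls. Your argument for Propositions~\ref{prop:main} and~\ref{main:cvx} already invokes the right machinery from~\cite{A24}; you should do the same for Propositions~\ref{prop:explicitL2} and~\ref{L2:cvx} rather than attempt a pointwise absorption.
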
 
See Appendix~\ref{app:secgenproofs} for the proof. Here, we briefly discuss the proof of the first statement. Most of the conditions of Corollary~\ref{corr:genprelim} are automatically satisfied as long as $R\e$ is small enough. For example, since $\pa_tz(0_{d+1})=1$, the assumption that $\pa_tz(u)\geq1/2$ in $\Theta (\us(R\e)\times[0,R\e])$ is indeed satisfied for $R\e$ sufficiently small. The only nontrivial assumption is that $z(u)-z(0_{d+1})\geq \e\|u\|_{H,1}$ for all $u\in D\setminus \U(R\e/2)$. We must show that this holds either under Assumption~\ref{assume:global2} (as in Propositions~\ref{prop:main},~\ref{prop:explicitL2}) or under convexity of $z$ (as in Propositions~\ref{main:cvx},~\ref{L2:cvx}). In the latter case, this is straightforward, since convexity guarantees linear growth starting at any point. In the former case, Assumption~\ref{assume:global2} only guarantees linear growth for all $\|u\|_{H,1}\geq\rho_1$ (and the coefficient is indeed at least $\e$ starting at this point, since we assume $s\geq\e$ in~\eqref{A}). We can then prove that we also have linear growth in the local region $R\e/2\leq\|u\|_{H,1}\leq \rho_1$ using a Taylor expansion, since $z$ is approximately quadratic in $x$ and linear in $t$. Recall from~\eqref{C1def} that $\cmin$ controls the behavior of $z(u)$ for $\|u\|_{H,1}\leq \rho_1$. Ensuring that the coefficient of linear growth in $R\e/2\leq\|u\|_{H,1}\leq \rho_1$ is at least $\e$ explains why we impose the condition $R\geq12/\cmin$ in Propositions~\ref{prop:main},~\ref{prop:explicitL2}. 

We now turn to the proof of Propositions~\ref{prop:main} and~\ref{main:cvx}. We start with an important preliminary lemma, which is proved in Appendix~\ref{app:deriv}. 
\begin{lemma}\label{lma:deriv-change}The derivative bounds~\eqref{forq},~\eqref{forw} of Proposition~\ref{prop:main} imply 
\begin{align}
\sup_{x\in\us(R\e)}\|\nabla_x^\ell\z(x,0)\|_H&\leq C_{\z}d^{\lceil\ell/2\rceil-2},\quad\ell=3,\dots,2N+2,\label{w-deriv-cond}\\
\sup_{x\in\us(R\e)}\|\nabla_x^\ell\g_k(x)\|_H&\leq C_{\z,\g}d^{\lceil\ell/2\rceil},\quad \ell=0,\dots,2(N+1-k),\;k=1,\dots,N,\label{qk-deriv-cond}\\
\sup_{x\in \us(R\e),y\in[0, (R\e)^2]}|D_{\z}^{k-1}\g(x,y)|&\leq C_{\z,\g},\quad k=1,\dots,N+1.\label{qk-0-cond}
\end{align} The constant $C_{\z}$ depends on $C_{\z,\ell}$, $\ell=0,\dots,2N+2$ from~\eqref{forw}. The constant $C_{\z,\g}$ is similar and also depends on $C_{\g,\ell}$, $\ell=0,\dots,2N$ from~\eqref{forq}.
\end{lemma}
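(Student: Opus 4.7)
The three bounds split by technical difficulty, so I plan to treat them in increasing order. Claim \eqref{w-deriv-cond} is essentially immediate: since $j_\ell=0$ is admissible for $\ell\ge 3$ in \eqref{forw}, the hypothesis evaluated at $j=0$ reads $\|\nabla_x^\ell\z(x,0)\|_H\le C_{\z,\ell}d^{(\lceil\ell/2\rceil-2)_+}$, and for $\ell\ge 3$ the exponent $\lceil\ell/2\rceil-2$ is already nonnegative, so the truncation $(\,\cdot\,)_+$ is inactive and the bound is exactly what is claimed.

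For claim \eqref{qk-0-cond}, I would induct on $k$ using the recursion $D_{\z}f=\pa_yf/\pa_y\z-f\pa_y^2\z/(\pa_y\z)^2$. The induction shows that $D_{\z}^{k-1}\g$ is a rational expression whose numerator is a polynomial in $\pa_y^i\g$, $i=0,\dots,k-1$, and $\pa_y^i\z$, $i=1,\dots,k$, and whose denominator is a positive power of $\pa_y\z$. The pure-$y$-partial hypotheses $\g_{0,j}(R\e,(R\e)^2)\le C_{\g,0}$ and $\omega_{0,j}(R\e,(R\e)^2)\le C_{\z,0}$ (together with $\omega_{0,2}(R\e,R\e)\le C_{\z,0}$, which dominates $\omega_{0,2}(R\e,(R\e)^2)$ by monotonicity) bound the numerator. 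The lower bound $\pa_y\z\ge 1/2$ on $\us(R\e)\times[0,(R\e)^2]$, a consequence of $\pa_tz\ge 1/2$ on $D_1\cup D_2$ established in Lemma~\ref{lma:mu}, controls the denominator. The matching upper bound on $|\pa_y\z|$ in this region follows from a one-step Taylor expansion out of $\pa_y\z(0_{d+1})=1$, using the smallness of $R\e$ together with the bounds on $\omega_{1,1}$ and $\omega_{0,2}$.

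The bulk of the work is claim \eqref{qk-deriv-cond}. I would apply the Leibniz rule to the product structure of $\g_k$ and Faà di Bruno to the reciprocal factors $1/(\pa_y\z)^p$, producing a sum of terms of the schematic form
\[\nabla_x^{\alpha_0}\pa_y^{j_0}\g(x,0)\cdot\prod_{i\ge 1}\nabla_x^{\alpha_i}\pa_y^{j_i}\z(x,0),\qquad \sum_i\alpha_i=\ell,\]
with \emph{exactly one} $\g$-factor (since $D_{\z}^{k-1}\g$ is linear in $\g$ and its $y$-partials), $j_0\le k-1$, and $\z$-factors with $1\le j_i\le k$. Admissibility in \eqref{forq},\eqref{forw} requires $j_0\le\lfloor N-\alpha_0/2\rfloor$ and $j_i\le\lfloor N+1-\alpha_i/2\rfloor$; since $\ell\le 2(N+1-k)$ and $\alpha_i\le\ell$, one has $k-1\le N-\alpha_0/2$ and $k\le N+1-\alpha_i/2$, so the hypotheses apply to every factor. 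The per-factor bounds then produce the total $d$-exponent $\lceil\alpha_0/2\rceil+\sum_{i\ge 1}(\lceil\alpha_i/2\rceil-2)_+$, and the whole claim reduces to the combinatorial inequality
\[\lceil\alpha_0/2\rceil+\sum_{i\ge 1}(\lceil\alpha_i/2\rceil-2)_+\le\lceil\ell/2\rceil\quad\text{whenever}\quad\sum_i\alpha_i=\ell.\]
This accounting is the step I expect to require the most care and is the main obstacle. I would prove it from two elementary facts: $(\lceil a/2\rceil-2)_+\le\lfloor a/2\rfloor$ for $a\ge 0$, and $\lceil a/2\rceil+\lfloor b/2\rfloor\le\lceil(a+b)/2\rceil$. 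Summing the first across $i\ge 1$ via subadditivity of $\lfloor\cdot/2\rfloor$ yields $\sum_{i\ge 1}(\lceil\alpha_i/2\rceil-2)_+\le\lfloor(\ell-\alpha_0)/2\rfloor$, and the second then closes the inequality. The number of terms in the full Leibniz/Faà di Bruno expansion depends only on $\ell$ and $k$, contributing only an $N$-dependent multiplicative constant to the final bound $C_{\z,\g}d^{\lceil\ell/2\rceil}$.
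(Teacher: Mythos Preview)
Your proposal is correct and follows the same overall strategy as the paper: \eqref{w-deriv-cond} is immediate from \eqref{forw} at $j=0$; \eqref{qk-0-cond} follows from the rational structure of $D_{\z}^{k-1}\g$ (the paper isolates this as a separate Lemma~\ref{lma:Dwk}) combined with the pure-$y$-derivative bounds and the lower bound $\pa_y\z\ge 1/2$; and \eqref{qk-deriv-cond} is the substantive step.

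The one genuine difference is in how the combinatorics for \eqref{qk-deriv-cond} is packaged. The paper introduces two function classes $\mathcal D_M^{\z}$ and $\mathcal D_M^{\g}$ (functions whose $\ell$th $x$-derivative is $\lesssim d^{(\lceil\ell/2\rceil-2)_+}$ or $d^{\lceil\ell/2\rceil}$, respectively), proves that $\mathcal D_M^{\z}$ is closed under products and reciprocals, and then shows $\g_k$ is a sum of terms $W\cdot\pa_y^j\g$ with $W\in\mathcal D_{2N+2-2k}^{\z}$ and $\pa_y^j\g\in\mathcal D_{2N+2-2k}^{\g}$. The paper's closure-under-products argument is a case analysis on how many odd versus even $k_j$ appear in $\sum_j k_j=\ell$. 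Your approach collapses all of this into the single inequality
\[
\lceil\alpha_0/2\rceil+\sum_{i\ge 1}(\lceil\alpha_i/2\rceil-2)_+\le\lceil\ell/2\rceil,
\]
proved via $(\lceil a/2\rceil-2)_+\le\lfloor a/2\rfloor$, subadditivity of $\lfloor\cdot/2\rfloor$, and $\lceil a/2\rceil+\lfloor b/2\rfloor\le\lceil(a+b)/2\rceil$. This is a cleaner route to the same conclusion; the paper's modular packaging, on the other hand, makes the roles of the ``$\z$-type'' and ``$\g$-type'' factors more transparent and would be easier to reuse if one wanted the looser bounds discussed in Remark~\ref{rk:roomd}.
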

We now apply Theorem~\ref{thm:lap:orig} with $L=N+1-k$, $\g=\g_k$, and $\z=\z(\cdot,0)$. The conditions of the theorem are satisfied thanks to Lemma~\ref{lma:deriv-change}.
 \begin{corollary}\label{corr:nu-ell-qk}Let $R \geq 12/\cmin(\rho_1) + 2(1+2N)(\log\la)/d$ as in Proposition~\ref{prop:main},  and let $\Rem_{N+1-k}^{\mathrm{Lap}}(\g_k)$ be as in~\eqref{lap-gz-L}. Under the conditions of either Proposition~\ref{prop:main} or Proposition~\ref{main:cvx}, it holds
\bs\label{nu-ell-rem}
|\nu_\ell(\g_k)|&\leq C_{\z,\g}d^{2\ell},\quad\ell=0,1,\dots,N-k,\\
\Rem_{N+1-k}^\Lap(\g_k)&\leq  C_{\z,\g,R}\left\{(d^2/\la)^{N+1-k}+ e^{-(R-1)^2d/4}\right\},\quad k=1,\dots, N.
\es Here, $C_{\z,\g}$ depends on the same constants as in Lemma~\ref{lma:deriv-change}, and $C_{\z,\g,R}$ also depends on $\CR$.
 \end{corollary}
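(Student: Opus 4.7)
\medskip

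\noindent\textbf{Proof plan.} The strategy is exactly the one signalled by the sentence preceding the statement: we reduce the claim to a direct application of the high-dimensional interior-minimum Laplace expansion of \cite{A24} (stated here as Theorem~\ref{thm:lap:orig}), applied separately to each $k\in\{1,\dots,N\}$ with the choices $L = N+1-k$, integrand $h = \g_k$, and exponent $\z(\cdot,0)$ on the domain $\us(R\e)$. We first observe that, by definition, $\frac{1}{\mathcal Z}\int_{\us(R\e)}\g_k e^{-\la\z(\cdot,0)}\dd x$ is a classical Laplace-type integral whose exponent $x\mapsto\z(x,0)=z(x,\psi(x))$ has a unique non-degenerate minimum at $0_d$ with Hessian $H\succ0$; this is exactly the form of integral to which Theorem~\ref{thm:lap:orig} applies, and its $\mathcal Z$-normalized expansion is~\eqref{lap-gz-L}, defining $\nu_\ell(\g_k)$ and $\Rem^{\Lap}_{N+1-k}(\g_k)$.

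To invoke Theorem~\ref{thm:lap:orig} one must verify (i) the weighted derivative bounds on $\z(\cdot,0)$ up to order $2(N+1-k)+2\le 2N+2$, (ii) the weighted derivative bounds on $\g_k$ up to order $2(N+1-k)$, (iii) the local radius condition on $R\e$, and (iv) a global growth/tail condition on $\z(\cdot,0)$ guaranteeing that the mass outside $\us(R\e)$ contributes only $e^{-(R-1)^2 d/4}$. Items (i) and (ii) are furnished by Lemma~\ref{lma:deriv-change}, which converts the bounds~\eqref{forq},~\eqref{forw} on mixed $x,y$ derivatives of $\z$ and $\g$ into exactly the bounds~\eqref{w-deriv-cond}–\eqref{qk-deriv-cond} that Theorem~\ref{thm:lap:orig} requires; the required range of $\ell$ and $k$ matches term-by-term. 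Item (iii) follows from the assumptions $R\e\le1\wedge\rho_0$ together with $R\e(2C_{\z,0}+2C_{\z,1})\le1$ from \eqref{BC}/\eqref{D}, which ensure that the sublevel structure used by the theorem is valid. For item (iv), in the setting of Proposition~\ref{main:cvx} (and hence Proposition~\ref{prop:explicitL2}'s convex counterpart), convexity of $z$ restricted to rays automatically produces linear growth of $\z(\cdot,0)$ starting at the origin, with slope controlled by the lower bound on $H$; in the setting of Proposition~\ref{prop:main}, the linear growth on $D\setminus\U(\rho_1)$ given by Assumption~\ref{assume:global2}, combined with the local lower bound encoded in $\cmin$, yields the required global growth, and the choice $R\ge 12/\cmin+2(1+2N)(\log\la)/d$ is precisely what is needed to push the tail contribution down to the claimed $e^{-(R-1)^2 d/4}$ scale after the $L=N+1-k$ weighting by $\la$.

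Once the hypotheses of Theorem~\ref{thm:lap:orig} are checked, its conclusion yields both assertions simultaneously: the coefficient bound $|\nu_\ell(\g_k)|\le C_{\z,\g}d^{2\ell}$ comes directly from the theorem's general formula for $\nu_\ell$ as a universal polynomial in the tensors $\nabla_x^i\z(0_d,0)$ and $\nabla_x^j\g_k(0_d)$, whose contractions against $H^{-1}$ produce at most a factor $d^{2\ell}$ when combined with the $H$-weighted derivative bounds of Lemma~\ref{lma:deriv-change} (each contraction with $\mathrm{Tr}(H^{-1}H)=d$ contributing one power of $d$, and the total number of such contractions at order $\ell$ being $2\ell$). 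The remainder bound $\Rem^{\Lap}_{N+1-k}(\g_k)\le C_{\z,\g,R}\{(d^2/\la)^{N+1-k}+e^{-(R-1)^2d/4}\}$ is the standard form of Theorem~\ref{thm:lap:orig}'s remainder, where the first term is the usual next-order $d^{2(N+1-k)}\la^{-(N+1-k)}$ contribution and the second is the cutoff tail at radius $R\e$.

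The main obstacle is bookkeeping rather than analysis: one must carefully check that the ranges of $\ell$ in Lemma~\ref{lma:deriv-change} are exactly those demanded by Theorem~\ref{thm:lap:orig} at the specific value $L=N+1-k$, and that the constant $C_{\z,\g,R}$ absorbs $\CR$ (from \eqref{BC}) so that the $R^4 d^2/\la$-type factors arising in the theorem's remainder are bounded. No new estimates are required; the only non-mechanical point is verifying the global growth (iv) in the non-convex case of Proposition~\ref{prop:main}, where one stitches together the local-quadratic behavior controlled by $\cmin$ with the far-field linear lower bound from Assumption~\ref{assume:global2}.
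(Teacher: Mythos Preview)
Your approach is essentially the paper's: apply Theorem~\ref{thm:lap:orig} with $L=N+1-k$, integrand $\g_k$, exponent $\z(\cdot,0)$, and feed in the derivative bounds~\eqref{w-deriv-cond}--\eqref{qk-deriv-cond} from Lemma~\ref{lma:deriv-change}. That is exactly what the paper does, and the bounds~\eqref{nu-ell-rem} are read off directly from~\eqref{A2kRemL-orig}.

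However, your item (iv) is unnecessary and reflects a misreading. The integral in~\eqref{lap-gz-L} is over the bounded ball $\us(R\e)$, not over $\Omega$ or $\br^d$; and Theorem~\ref{thm:lap:orig} as formulated in Appendix~\ref{app:lap} is precisely for this truncated integral (its proof notes that integrating over $\us(R\e)$ removes the tail term $\nu_{\us^c}$, the requirement $R\ge 40$, and the need for $0_d$ to be a global minimizer). Consequently no global growth of $\z(\cdot,0)$ is needed here, and the term $e^{-(R-1)^2d/4}$ is not a ``cutoff tail at radius $R\e$'' but rather arises internally from Gaussian tail bounds on the truncated expectations. The stitching argument you describe (local quadratic behavior via $\cmin$ plus Assumption~\ref{assume:global2}) is used elsewhere, in Lemma~\ref{lma:mu} to control the integral over $D_3$, but plays no role in the present corollary. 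Similarly, your item (iii) is off target: the only radius-type hypothesis in Theorem~\ref{thm:lap:orig} is $R^4 d^2/\la\le \CR$ and $R\ge 1$, which is~\eqref{BC}; the conditions $R\e\le 1\wedge\rho_0$ and $R\e(2C_{\z,0}+2C_{\z,1})\le 1$ are not needed to invoke the theorem.
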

The proof is immediate from Theorem~\ref{thm:lap:orig}. Substituting the bounds from Lemma~\ref{lma:deriv-change} and Corollary~\ref{corr:nu-ell-qk} into the preliminary bound~\eqref{cm-remL-tail} finishes the proof of Propositions~\ref{prop:main} and~\ref{main:cvx}; see Appendix~\ref{app:secgenproofs} for the details.

Next, we turn to the proof of Proposition~\ref{prop:explicitL2}.
\begin{proof}[Proof of Proposition~\ref{prop:explicitL2}] Lemma~\ref{lma:mu} shows the assumptions of Corollary~\ref{corr:genprelim} are satisfied. We apply the corollary with $g\equiv1$ and $N=1$. Note that $g\equiv1$ implies $\g\equiv1$ and thus $\g_1(x)=1/\pa_y\z(x,0)$, recalling~\eqref{g12def}. Since $\pa_y\z(0_{d+1})=1$, we have $\g_1(0)=1$. Thus $c_1=\nu_0(\g_1)=\g_1(0)=1$. We obtain
\be\label{1ZDg}
\frac{1}{\mathcal Z}\int_{D} e^{-\la z(u)}\dd u =\frac{1}{\la}\left(\g_1(0)+\Rem_1\right) = \frac1\la(1+\Rem_1),
\ee where
\bs\label{Rem2lapg1}
|\Rem_1|&\les |\Rem_1^\Lap(\g_1)|+\mu\|D_{\z}\g\|_\infty\la^{-1}+(\mu\vee1)(1+\|\g\|_\infty\la^{-1/2})\la^{-M}\\
&\les_{R,\CL} |\Rem_1^\Lap(1/\pa_y\z(\cdot,0))|+\|\pa_y^2\z/(\pa_y\z)^2\|_\infty\la^{-1}+\la^{-M}\\
&\les_{R,\CL} |\Rem_1^\Lap(1/\pa_y\z(\cdot,0))|+\omega_{0,2}(R\e,(R\e)^2)\la^{-1}+\la^{-M}.
\es To get the second line, we used that $\|\g\|_\infty=1$, that $\g_1(x)=1/\pa_y\z(x,0)$, and that $D_{\z}\g = \pa_y(1/\pa_y\z) =-\pa_y^2\z/(\pa_y\z)^2$. We also used Lemma~\ref{lma:mu} to absorb $\mu$ into the constant. To get the third line, we used that $\pa_y\z\geq1/2$ in $\Theta (\us(R\e)\times[0,R\e])$ (since the assumptions of Corollary~\ref{corr:genprelim} are verified) and therefore $\|\pa_y^2\z/(\pa_y\z)^2\|_\infty\les\omega_{0,2}(R\e, (R\e)^2)$. To conclude, we apply Lemma~\ref{lma:D1L2} below.\end{proof}
\begin{proof}[Proof of Proposition~\ref{L2:cvx}]
Lemma~\ref{lma:mu} shows the assumptions of Corollary~\ref{corr:genprelim} are satisfied. We now proceed exactly as in the above proof of Proposition~\ref{prop:explicitL2}.
\end{proof}

\begin{lemma}\label{lma:D1L2}
Under the assumptions of Proposition~\ref{prop:explicitL2}, we have 
\bs
|\Rem_1^\Lap(1/\pa_y\z(\cdot,0))| \les_{R,\CL} &\left(\omega_{3,0}^2+\omega_{4,0}(R\e,0)\right)\frac{d^2}{\la} + \left(\omega_{1,1}^2+\omega_{2,1}(R\e,0)\right)\frac{d}{\la} + \la^{-M}.
\es 
\end{lemma}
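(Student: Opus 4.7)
The plan is to apply a first-order specialization of Theorem~\ref{thm:lap:orig} (the explicit high-dimensional Laplace expansion that underlies Corollary~\ref{corr:nu-ell-qk}) to the single integral
\[
\frac{1}{\mathcal Z}\int_{\us(R\e)} \g_1(x)\, e^{-\la\z(x,0)}\,\dd x,
\]
with observable $\g_1(x)=1/\pa_y\z(x,0)$ and phase $\varphi(x):=\z(x,0)$. Only the leading Laplace coefficient $\nu_0(\g_1)=\g_1(0_d)=1$ is retained, so the quantity to be bounded is the first-order Laplace remainder; the whole task reduces to tracking how the generic derivative norms appearing in that remainder specialize to the $\omega$-quantities on the right-hand side of the claimed bound.

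First, I would reduce the $\g_1$-side derivatives to derivatives of $\pa_y\z$. Since the hypotheses of Proposition~\ref{prop:explicitL2} (combined with Lemma~\ref{lma:mu}) imply $\pa_y\z(x,0)\ge 1/2$ on $\us(R\e)$, the chain rule gives
\[
\|\nabla_x\g_1(x)\|_H \les \|\nabla_x\pa_y\z(x,0)\|_H, \qquad
\|\nabla_x^2\g_1(x)\|_H \les \|\nabla_x^2\pa_y\z(x,0)\|_H + \|\nabla_x\pa_y\z(x,0)\|_H^2,
\]
so that $\sup_{\us(R\e)}\|\nabla_x^2\g_1\|_H \les \omega_{2,1}(R\e,0) + \omega_{1,1}^2$, with the analogous bound at $x=0_d$ controlled by $\omega_{1,1}$ alone, and $\|\g_1\|_\infty \les 1$. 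Second, I would invoke the first-order Laplace remainder bound itself: for a phase $\varphi$ with positive-definite Hessian $H$ at $0_d$ and observable $h$, the $L=1$ case of Theorem~\ref{thm:lap:orig} yields, up to constants depending on $R$ and $\CL$,
\[
|\Rem_1^\Lap(h)| \les |h(0_d)|\Big(\|\nabla_x^3\varphi(0_d)\|_H^2 + \sup_{\us(R\e)}\|\nabla_x^4\varphi\|_H\Big)\frac{d^2}{\la} + \sup_{\us(R\e)}\|\nabla_x^2 h\|_H\,\frac{d}{\la} + (\text{exp.\ small in }d).
\]
Specializing $h=\g_1$, $\varphi=\z(\cdot,0)$, substituting the derivative reductions above, and absorbing the exponentially small tail into $\la^{-M}$ via the lower bound $R\ge 12/\cmin + 2(1+2M)(\log\la)/d$ produces exactly the claimed estimate.

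The main technical obstacle is confirming that the first-order Laplace remainder truly splits into this clean two-block form, with the phase-derivative block weighted by $d^2/\la$ and the observable-derivative block weighted by $d/\la$, and no surviving intermediate cross term of order $d^{3/2}/\la$ such as $\nabla_x^3\varphi(0_d)\otimes\nabla_x h(0_d)$. In the Taylor-plus-Gaussian-integration derivation of the Laplace expansion, one expands $h(x)\exp\bigl(-\la[\varphi(x)-\varphi(0_d)-\tfrac12 x^\top Hx]\bigr)$ about $x=0_d$ and integrates against the Gaussian with covariance $(\la H)^{-1}$; the vanishing of odd Gaussian moments is what decouples the two error sources, and the scaling $x\sim (\la H)^{-1/2}$ is what fixes the $d$- and $\la$-powers above. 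The condition on $\CL$ in \eqref{Rwc34eps} is precisely what keeps the exponential-in-Taylor-error factors (and hence the suppressed constants depending on $R,\CL$) bounded during this bookkeeping.
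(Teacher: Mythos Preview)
Your overall strategy is exactly the paper's: apply the explicit first-order Laplace remainder bound (Theorem~\ref{thm:lapexplicit}, not Theorem~\ref{thm:lap:orig}) with observable $\g_1=1/\pa_y\z(\cdot,0)$ and phase $\z(\cdot,0)$, reduce the $\g_1$-derivatives to $\omega_{1,1},\omega_{2,1}$ via the chain rule and $\pa_y\z\ge 1/2$, and absorb the exponentially small tail into $\la^{-M}$ using the lower bound on $R$. All of that matches the paper.

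There is one genuine gap. Your claim that the cross term ``$\nabla_x^3\varphi(0_d)\otimes\nabla_x h(0_d)$'' vanishes by odd-Gaussian-moment parity is wrong: that contribution is a product of a cubic (from the phase remainder) and a linear (from the observable), hence quartic in $x$, and its Gaussian expectation does \emph{not} vanish. Indeed, this term is explicitly present both in the formula for $\nu_1$ (see~\eqref{nu1-gen}) and in the remainder bound of Theorem~\ref{thm:lapexplicit}, where it appears as $d^{-1}c_{1,\g}(R)\bar c_3(R)\,d^2/\la$. In the paper's notation this becomes $d^{-1}(s_1+\e s_2)(s_3+(d/\sqrt\la)s_4)\,d^2/\la$, and the paper disposes of it by Young's inequality, e.g.\ $d^{-1}s_1s_3\les (d^{-1}s_1)^2+s_3^2\le d^{-1}s_1^2+s_3^2$, which routes the cross term into the two blocks $\omega_{1,1}^2\,d/\la$ and $\omega_{3,0}^2\,d^2/\la$ already on the right-hand side. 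So the ``two-block'' form you want is correct, but it is obtained by an elementary inequality after the fact, not by a parity cancellation inside the Laplace expansion.

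A minor side point: your bound $\sup_{\us(R\e)}\|\nabla_x\g_1\|_H\les \omega_{1,1}$ should a priori involve $\omega_{1,1}(R\e,0)$; the paper closes this by the Taylor step $\omega_{1,1}(R\e,0)\le \omega_{1,1}+R\e\,\omega_{2,1}(R\e,0)$, which is harmless after the same absorption.
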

See Appendix~\ref{app:secgenproofs} for the proof, which is a direct application of Theorem~\ref{thm:lapexplicit} with $\g=1/\pa_y\z(\cdot,0)$ and $\z=\z(\cdot,0)$. 

\subsection{Outline for proof of Proposition~\ref{prop:TV}}\label{app:outline:TV}In this section, we assume without loss of generality that $z(0_{d+1})=0$, and we use the notation $$\us:=\us(R\e),\qquad I:= [0,(R\e)^2].$$ Throughout the section, we implicitly assume the conditions of Proposition~\ref{prop:TV} hold. Omitted proofs can be found in Appendix~\ref{app:sec:sample}. 

Recall from Section~\ref{app:outline:int} that the crux of the approach to the expansion of $\int_D ge^{-\la z}$ is to first integrate in the $t$ direction. The result is a sum of standard Laplace-type integrals $\int_{\us}\g_k(x) e^{-\la\z(x,0)}\dd x$ with minimum in the interior of the region of integration; see the righthand side of~\eqref{D1intdecomp}. We then apply results of~\cite{A24} on the expansion of high-dimensional Laplace integrals with interior minimum.

The proof of Proposition~\ref{prop:TV} is analogous to this approach. The error arising from the $t$ direction is new, while the error arising in the $x$ direction bears some resemblance to that arising in~\cite{katskew} on the accuracy of the Laplace approximation (LA) to an unrestricted probability density $\mu(x)\propto\exp(-\la \z(x,0))$, for which the global minimizer $x=0_d$ of $\z(x,0)$ lies in the interior of the density's support. Thus, some of the techniques of that work can be leveraged to handle the error in the $x$ direction. To explain this in more detail, we zoom in on one of the two key sources of the TV error, namely, the integral $\frac1B\int_{D_1}|a(u)-b(u)|\dd u$ in~\eqref{sum} below. As shown in the proof of Lemma~\ref{lma:BD}, it can be written as follows:
\be\label{ab}\begin{gathered}
\frac1B\int_{D_1}|a(u)-b(u)|\dd u=\E\left[\left|e^{-\la(q(X)+r(X,Y))} - e^{-\la\Delta(X)}\right|\ind_{\us}(X)\ind_{I}(Y)\right],\\
q(x)=\z(x,0)-\frac12x^\top Hx, \qquad r(x,y)=\z(x,y)-\z(x,0)-y,\\
X\sim\mathcal N(0_d,(\la H)^{-1}),\;Y\sim\mathrm{Exp}(\la).
\end{gathered}
\ee
Here, $X$ and $Y$ are independent, and $\us=\us(R\e)$, $I=[0,(R\e)^2]$. We must show that both $e^{-\la(q(X)+r(X,Y))}$ and $e^{-\la\Delta(X)}$ are close to 1 in expectation. The random variable $\exp(-\la q(X))$ arises in~\cite{katskew} when studying the LA error. Indeed, note that $e^{-\la q(x)}$ is the ratio of $\exp(-\la \z(x,0))$ and $\exp(-\la x^\top Hx/2)$. This is precisely the ratio of an unnormalized density and its LA, obtained by replacing the exponent of the density by its quadratic Taylor approximation at the global minimizer. The term $e^{-\la r(X,Y)}$ is attributable to the Taylor approximation in the $y$ direction, and must be handled using new techniques. The term $e^{-\la\Delta(X)}$, stemming from the quadratic boundary approximation, also does not arise in the LA proof. However, since $\Delta(x)$ is itself the error from a quadratic Taylor approximation, we can control $e^{-\la\Delta(X)}$ as though it came from a LA, using the techniques of~\cite{katskew}.

Another difference with the LA proof of~\cite{katskew} is that an additional integral must be controlled, stemming from the differing supports of $\pi$ and $\hat\pi$. This is the term $\frac1B\int_{D_1\setminus \hat D}a(u)\dd u$ in~\eqref{sum} below. We show in the proof of Lemma~\ref{lma:BD} that it can also be written in terms of the random variables $e^{-\la q(X)}$ and $e^{-\la\Delta(X)}$, and tackled using the techniques of~\cite{katskew}.\\

We now present a series of lemmas which completes the proof of Proposition~\ref{prop:TV}. We remark that, while the proof is inspired by that of~\cite{katskew} on the LA (as discussed above), we will instead rely on results from~\cite{A24} in some of these lemmas. This is because~\cite{A24} distills the results of~\cite{katskew} in a convenient way. First, using simple manipulations, we break up the TV distance between $\pi\vert_D$ and $\hat\pi$ into the following component bounds. 
\begin{lemma}\label{lma:AB}
Let $\hat\psi(x)=\frac12x^\top\nabla^2\psi(0_d)x$ and
\be\begin{gathered}
A = \int_De^{-\la z(u)}\dd u,\quad B =  \frac{(2\pi/\la)^{d/2}}{\la\sqrt{\det H}},\\
a(u) = e^{-\la z(u)}, \qquad b(x,t)=e^{-\la x^\top Hx/2 - \la (t-\hat\psi(x))}.
\end{gathered}\ee 
Then 
\bs\label{sum}
\mathrm{TV}(\pi\vert_D,\hat\pi) \leq &|1-A/B| + \frac1B\int_{D_2\cup D_3}a(u)\dd u + \frac1B\int_{\hat D\setminus D_1}b(u)\dd u \\
&+\frac1B\int_{D_1\setminus \hat D}a(u)\dd u + \frac1B\int_{D_1}|a(u)-b(u)|\dd u,
\es where $D_1,D_2,D_3$ are as in~\eqref{D123} and $\hat D = \{(x,t)\in\br^{d+1}\,:\,t\geq\hat\psi(x)\}$.
\end{lemma}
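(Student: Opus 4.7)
The plan is to identify the density of $\hat\pi$ and then apply a standard triangle-inequality decomposition that separates the normalization error $|1-A/B|$ from a density-comparison term, which is in turn split using the geometry $D\subseteq D_1\cup D_2\cup D_3$ provided by Lemma~\ref{lma:D123}.

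First, I would compute the pushforward density of $\hat\pi$. The map $\hat\Theta(x,y)=(x,y+\hat\psi(x))$ has Jacobian identically one and is a bijection from $\br^d\times[0,\infty)$ onto $\hat D$, so the density of $\hat\pi=\hat\Theta\#\pi_H$ at a point $(x,t)\in\br^{d+1}$ equals $\pi_H(x,t-\hat\psi(x))$. Unpacking the definition~\eqref{pi-H} of $\pi_H$, this gives $\hat\pi(x,t)=b(x,t)\,\ind_{\hat D}(x,t)/B$, while by definition $\pi\vert_D(u)=a(u)\,\ind_D(u)/A$. Using the bound $\mathrm{TV}(\mu,\nu)\le\int|p-q|\,\dd u$ together with the algebraic identity
\[
\frac{\ind_D a}{A}-\frac{\ind_{\hat D}b}{B}=\frac{\ind_D a}{AB}(B-A)+\frac{1}{B}\bigl(\ind_D a-\ind_{\hat D}b\bigr),
\]
integrating the absolute value and using $\int \ind_D a\,\dd u=A$ yields
\[
\mathrm{TV}(\pi\vert_D,\hat\pi)\le |1-A/B|+\tfrac{1}{B}\int|\ind_D a-\ind_{\hat D}b|\,\dd u,
\]
so it remains to bound the integral on the right by the sum of the four remaining terms on the right-hand side of~\eqref{sum}.

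To this end, I would split $\br^{d+1}=D_1\sqcup D_1^c$. Since Assumption~\ref{a14} (used in the first step of Lemma~\ref{lma:D123}) shows $D_1\subseteq D$, on $D_1$ the integrand equals $|a-\ind_{\hat D}b|$; further splitting $D_1=(D_1\cap\hat D)\sqcup(D_1\setminus\hat D)$ bounds this pointwise by $|a-b|\,\ind_{D_1}+a\,\ind_{D_1\setminus\hat D}$. On $D_1^c$, the triangle inequality gives $|\ind_D a-\ind_{\hat D}b|\le\ind_{D\setminus D_1}a+\ind_{\hat D\setminus D_1}b$, and Lemma~\ref{lma:D123} ensures $D\setminus D_1\subseteq D_2\cup D_3$. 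Combining the four resulting contributions and dividing by $B$ produces exactly the four integrals appearing in the statement.

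This lemma is essentially bookkeeping, so there is no serious analytic obstacle. The only points requiring care are (i) verifying the pushforward density formula for $\hat\pi$, which relies on $\hat\Theta$ being a unit-Jacobian bijection onto $\hat D$, and (ii) correctly attributing the set inclusions $D_1\subseteq D$ and $D\subseteq D_1\cup D_2\cup D_3$ to Assumption~\ref{a14} and Lemma~\ref{lma:D123}, respectively. The substantive work in the overall proof of Proposition~\ref{prop:TV} occurs in the subsequent lemmas that estimate each of the five terms on the right-hand side of~\eqref{sum}, using the techniques from~\cite{katskew} and~\cite{A24} sketched in the paragraphs preceding this lemma.
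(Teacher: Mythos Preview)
Your proposal is correct and follows essentially the same route as the paper's proof: identify $\hat\pi(u)=b(u)\ind_{\hat D}(u)/B$, split off the normalization error via the same triangle-inequality identity to obtain $|1-A/B|+\tfrac{1}{B}\int|a\ind_D-b\ind_{\hat D}|$, and then decompose the remaining integral using $D_1\subseteq D$ and $D\setminus D_1\subseteq D_2\cup D_3$ from Lemma~\ref{lma:D123}. Your write-up is slightly more explicit about the pushforward density and the set-level splits than the paper's, but there is no substantive difference.
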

The three summands on the righthand side in the first line are straightforward to handle. The first term, $1-A/B$, is precisely $\Rem_1$ from Proposition~\ref{prop:explicitL2} (recall that we have assumed $z(0_{d+1})=0$). Thus we have
\bs\label{1AE} |1-A/B| \les_{R,\CL} &\left(\omega_{3,0}^2+\omega_{4,0}(R\e,0)\right)\frac{d^2}{\la} \\
&+ \left(\omega_{1,1}^2+ \omega_{2,1}(R\e,0)\right)\frac{d}{\la}+ \omega_{0,2}(R\e,(R\e)^2)\frac{1}{\la}  + \la^{-M}.
\es
The second summand in~\eqref{sum} is a tail integral and can be bounded using Lemmas~\ref{lma:D2} and~\ref{lma:D3}. For the third summand, note that $b(u)/B$ is precisely the density of $\hat\pi$ (inside its support), which is a pushforward of the density of a pair of independent Gaussian and exponential densities, for which tail bounds are readily available. Thus the third summand can also be easily bounded. Specifically, we have the following lemma.  
 \begin{lemma}\label{lma:tailAB} We have
\begin{align}
\frac{1}{B}\int_{D_2\cup D_3}a(u)du&\les_{R, \CL}\,\la^{-M},\label{agbd} \\
 \frac1B\int_{\hat D\setminus D_1}b(u)\dd u &\les\la^{-M}.\label{agbd2}
\end{align}
\end{lemma}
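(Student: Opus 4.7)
The bound~\eqref{agbd} on the tail of $\pi$ follows by combining Lemmas~\ref{lma:D2} and~\ref{lma:D3} with the identity $\mathcal Z=\la B$, which uses $z(0_{d+1})=0$. Specifically, $\frac{1}{B}\int_{D_2}a=\la\cdot\frac{1}{\mathcal Z}\int_{D_2}a\leq 2\mu\,e^{-R^2d/2}$ and $\frac{1}{B}\int_{D_3}a\les d\sqrt{\la}\,e^{d-Rd/4}$; Lemma~\ref{lma:mu} bounds $\mu$ by $C(R,\CL)$. Because $\pa_tz(0_{d+1})=1$ forces $\cmin\leq 1$, the choice $R\geq 12/\cmin+2(1+2M)(\log\la)/d$ yields $Rd/4-d\geq 2d+(1+2M)(\log\la)/2$ and $R^2d/2\geq (1+2M)\log\la$, so after absorbing the polynomial prefactors $d\sqrt\la$ into a slightly larger $M$, both terms are $\les_{R,\CL}\la^{-M}$.

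For~\eqref{agbd2}, we exploit the pushforward structure $\hat\pi=\hat\Theta\#\pi_H$ to write
\[
\frac{1}{B}\int_{\hat D\setminus D_1}b(u)\dd u=\mathbb{P}\!\left(\hat\Theta(X,Y)\notin D_1\right),
\]
where $X\sim\mathcal N(0_d,(\la H)^{-1})$ and $Y\sim\mathrm{Exp}(\la)$ are independent. Setting $\Delta(x):=\psi(x)-\hat\psi(x)$, membership in $D_1$ is equivalent to $X\in\us(R\e)$ and $Y\in[\Delta(X),\Delta(X)+(R\e)^2]$. The failure event decomposes into three pieces: (i) $X\notin\us(R\e)$; (ii) $X\in\us(R\e)$ with $Y>(R\e)^2+\Delta(X)$; (iii) $X\in\us(R\e)$ with $0\leq Y<\Delta(X)$, which forces $\Delta(X)>0$. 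A Gaussian $\chi^2$ concentration bound yields $\mathbb{P}(\|X\|_H>R\e)\leq e^{-(R-1)^2d/2}\les\la^{-M}$, settling case (i). For case (ii), the hypothesis $R\e\,\delta_3(R\e)\leq 3$ together with the third-order Taylor bound $|\Delta(x)|\leq\delta_3(R\e)\|x\|_H^3/6$ gives $|\Delta(X)|\leq(R\e)^2/2$ on $\us(R\e)$, reducing (ii) to $Y>(R\e)^2/2$ and an exponential-tail probability $\leq e^{-R^2d/2}\les\la^{-M}$.

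The main obstacle is case (iii), which contributes $\E\!\left[\bigl(1-e^{-\la\Delta(X)_+}\bigr)\ind\{X\in\us(R\e)\}\right]$. The brute bound $1-e^{-\la\Delta(X)_+}\leq\la|\Delta(X)|\leq \la\delta_3(R\e)\|X\|_H^3/6$ combined with $\E\|X\|_H^3\asymp(d/\la)^{3/2}$ yields only $O\!\left(\delta_3(R\e)d^{3/2}/\sqrt\la\right)$, which is too coarse. To reach the claimed $\la^{-M}$ rate, decompose
\[
\Delta(X)=\tfrac{1}{6}\langle\nabla^3\psi(0_d),X^{\otimes 3}\rangle+R_4(X),\qquad |R_4(x)|\leq\tfrac{1}{24}\delta_4(R\e)\|x\|_H^4,
\]
and exploit the mean-zero property of the cubic Gaussian chaos. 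Cauchy--Schwarz together with the Wick identity $\E\langle T,Z^{\otimes 3}\rangle^2=6\|T\|_F^2+9\|\langle T,I_d\rangle\|^2$ for $Z\sim\mathcal N(0_d,I_d)$ reduces control of the cubic part to tensor-norm quantities governed by $\delta_3$. Combining this with the quartic hypothesis $(\delta_3^2+\delta_4(R\e))d^2/\la\leq c_\delta$ and splitting $X$ into a typical region $\|X\|_H\les\sqrt{d/\la}$, where $\la|\Delta|$ is small enough that the linearization $1-e^{-\la\Delta_+}\approx\la\Delta_+$ is sharp, versus a Gaussian-tail region handled by concentration, collects all remainders into the $C(R,\CL,c_\delta)\la^{-M}$ budget.
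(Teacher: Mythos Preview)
Your treatment of~\eqref{agbd} is correct and is essentially the paper's argument.

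For~\eqref{agbd2}, your three-case decomposition is right, and your handling of cases (i) and (ii) matches the paper. The problem is case (iii). You correctly identify its contribution as $\E\bigl[(1-e^{-\la\Delta(X)_+})\ind_{\us}(X)\bigr]$, but the route you sketch to a $\la^{-M}$ bound cannot work: taking the \emph{positive part} destroys the mean-zero cancellation you invoke. Although $\E\bigl[\langle\nabla^3\psi(0_d),X^{\otimes3}\rangle\bigr]=0$, the expectation of its positive part is comparable to its standard deviation, so $\la\,\E\bigl[\Delta(X)_+\ind_{\us}(X)\bigr]$ is generically of order $\delta_3 d/\sqrt\la$, not $\la^{-M}$. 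No typical/tail split recovers the claimed rate, because the typical-region contribution already saturates this size.

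In fact the paper's own proof sidesteps the issue via an incorrect set inclusion: the decomposition~\eqref{hatD} omits precisely your case (iii), namely the region $\{(x,t):x\in\us,\ \hat\psi(x)\le t<\psi(x)\}$, so the paper's bound on $\hat D\setminus D_1$ is not justified as written. The statement~\eqref{agbd2} appears to be too strong; the correct bound carries an additional $O\bigl(\delta_3 d/\sqrt\la+\delta_4(R\e)d^2/\la\bigr)$ from case (iii), obtainable via $1-e^{-\la\Delta_+}\le\la|\Delta|$ together with the estimate~\eqref{laDel} of Lemma~\ref{lma:Eb2}. Since these terms already appear in the final TV bound of Proposition~\ref{prop:TV}, the downstream conclusions survive, but the lemma itself should be amended rather than proved as stated.
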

Next, we simplify the final two summands in~\eqref{sum} as follows, using simple manipulations.
\begin{lemma}\label{lma:BD}
Let $X\sim\mathcal N(0_d, (\la H)^{-1})$ and $Y\sim\mathrm{Exp}(\la)$ be independent. Let $q(x)= \z(x,0)-\frac12x^\top Hx$,  $\Delta(x)=\psi(x)-\hat\psi(x)$, and $r(x,y)=\z(x,y) - \z(x,0)-y$. Finally, let $\Delta, q, r, \ind_{\us\times I}$ be shorthand for $\Delta(X),q(X),r(X,Y), \ind_{\us\times I}(X,Y)$, respectively. Then
\bs\label{eq:bd}
Q&:=\frac1B\int_{D_1\setminus \hat D_1}a(u)\dd u + \frac1B\int_{D_1}|a(u)-b(u)|\dd u\\
&\leq 2\max_{|t_1|,|t_2|\leq 1}\E\left[\la(|\Delta|+|q| +|r|)e^{\la(t_1\Delta +t_2q+|r|)}\ind_{\us\times I}\right].
\es
\end{lemma}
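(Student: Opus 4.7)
The plan is to rewrite both integrals constituting $Q$ as expectations over $(X,Y)$ using the change of variables $\Theta$, and then apply mean-value-type pointwise bounds to match the admissible template $e^{\la(t_1\Delta+t_2 q+|r|)}$.

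First, I would perform the substitution $u=\Theta(x,y)=(x,\psi(x)+y)$, which has unit Jacobian and identifies $D_1$ with $\us\times I$. Using $\z(x,y)=\tfrac12 x^\top Hx+q(x)+y+r(x,y)$ and $t-\hat\psi(x)=y+\Delta(x)$ on the image, one computes $a(\Theta(x,y))=e^{-\la x^\top Hx/2-\la y}e^{-\la(q(x)+r(x,y))}$ and $b(\Theta(x,y))=e^{-\la x^\top Hx/2-\la y}e^{-\la\Delta(x)}$. The joint density of $(X,Y)$ equals $B^{-1}e^{-\la x^\top Hx/2-\la y}\ind(y\geq 0)$, and $u\in\hat D$ corresponds exactly to $y+\Delta(x)\geq 0$, so
\begin{align*}
\tfrac1B\textstyle\int_{D_1}|a-b|\,\dd u&=\E\!\left[|e^{-\la(q+r)}-e^{-\la\Delta}|\,\ind_{\us\times I}\right],\\
\tfrac1B\textstyle\int_{D_1\setminus\hat D}a\,\dd u&=\E\!\left[e^{-\la(q+r)}\,\ind(Y+\Delta<0)\,\ind_{\us\times I}\right].
\end{align*}

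For the first expectation, the inequality $|e^u-e^v|\leq|u-v|\max(e^u,e^v)\leq|u-v|(e^u+e^v)$ applied with $u=-\la(q+r)$ and $v=-\la\Delta$ yields $|e^{-\la(q+r)}-e^{-\la\Delta}|\leq\la(|\Delta|+|q|+|r|)(e^{-\la(q+r)}+e^{-\la\Delta})$. Each exponential matches an admissible $(t_1,t_2)$: $e^{-\la(q+r)}\leq e^{\la(0\cdot\Delta+(-1)q+|r|)}$ via $-r\leq|r|$, and $e^{-\la\Delta}\leq e^{\la((-1)\Delta+0\cdot q+|r|)}$ via $|r|\geq 0$. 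Thus $\tfrac1B\int_{D_1}|a-b|\,\dd u$ is a sum of two expectations, each at most $\max_{|t_1|,|t_2|\leq 1}\E[\la(|\Delta|+|q|+|r|)e^{\la(t_1\Delta+t_2 q+|r|)}\ind_{\us\times I}]$.

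For the second expectation, write $a=b+(a-b)$ to split $\tfrac1B\int_{D_1\setminus\hat D}a\leq\tfrac1B\int_{D_1\setminus\hat D}b+\tfrac1B\int_{D_1}|a-b|$; the latter is already controlled. For $\tfrac1B\int_{D_1\setminus\hat D}b$, integrating out $Y$ at fixed $X$ gives $\int_0^{(R\e)^2}\ind(y+\Delta(x)<0)\la e^{-\la y}\,\dd y=(1-e^{-\la\min(-\Delta(x),(R\e)^2)})\ind(\Delta(x)<0)\leq\la|\Delta(x)|\ind(\Delta(x)<0)$, using $1-e^t\leq|t|$ for $t\leq 0$ when $-\Delta(x)\leq(R\e)^2$ and $1\leq\la(R\e)^2=R^2d\leq\la|\Delta(x)|$ otherwise. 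Multiplying by $e^{-\la\Delta(x)}=e^{\la|\Delta(x)|}\leq e^{\la((-1)\Delta(x)+|r(x,Y)|)}$ (since $|r|\geq 0$) and converting $\ind_\us(X)$ to $\ind_{\us\times I}(X,Y)$ via the independent tail event $\{Y>(R\e)^2\}$ of probability $e^{-R^2 d}\leq 1/2$ bounds $\tfrac1B\int_{D_1\setminus\hat D}b$ by $\max_{|t_1|,|t_2|\leq 1}\E[\ldots]$ realized at $(t_1,t_2)=(-1,0)$. Summing the two contributions to $Q$ and tracking the factor of $2$ from pairing them against the common maximum gives the stated bound.

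The main obstacle is ensuring that each pointwise mean-value estimate produces an exponential realizable with a scalar admissible pair $(t_1,t_2)\in[-1,1]^2$ rather than one depending on the signs of $\Delta(X)$ or $q(X)$. The decomposition $\max(e^u,e^v)\leq e^u+e^v$ converts the case-dependent exponent into a sum of two clean terms, each handled by a specific admissible pair; the remainder is elementary bookkeeping around the exponential density of $Y$.
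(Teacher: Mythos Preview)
Your argument is sound and delivers $Q\leq C\max_{|t_1|,|t_2|\leq 1}\E[\la(|\Delta|+|q|+|r|)e^{\la(t_1\Delta+t_2q+|r|)}\ind_{\us\times I}]$, but the constant you actually obtain is larger than~$2$: the estimate $|e^u-e^v|\leq|u-v|(e^u+e^v)$ already contributes two summands for $\tfrac1B\int_{D_1}|a-b|$, and your decomposition $a=b+(a-b)$ then reuses that two-term bound and adds the $b$-piece, which itself carries a factor $(1-e^{-R^2d})^{-1}$ from the $\ind_\us\to\ind_{\us\times I}$ conversion. The honest tally is roughly $6\max$, not $2\max$. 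This is cosmetic (the lemma is only used under $\les$ afterwards), but the closing sentence overstates what your steps give.

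The paper's route differs in two places. For $\tfrac1B\int_{D_1}|a-b|$ it uses the integral identity $|e^f-e^g|=|f-g|\int_0^1 e^{tf+(1-t)g}\,dt$ (Lemma~\ref{lma:t12}), which packages the comparison into a \emph{single} expectation dominated by the max rather than the sum $e^u+e^v$. For $\tfrac1B\int_{D_1\setminus\hat D}a$ it does not split $a=b+(a-b)$; instead it uses $\pa_y\z\geq\tfrac12$ to bound $e^{-\la\z(x,y)}\leq e^{-\la\z(x,0)-\la y/2}$, integrates $y$ over $[0,|\Delta(x)|]$, and applies Lemma~\ref{lma:t12} once more to $(1-e^{-\la|\Delta|/2})e^{-\la q}$. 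Your approach is more elementary---it avoids both Lemma~\ref{lma:t12} and the input $\pa_y\z\geq\tfrac12$---at the price of a larger absolute constant.
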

Next, we bound the function $r$ over $\us\times I$.
\begin{lemma}\label{lma:rxy}Let $r(x,y)$ be as in Lemma~\ref{lma:BD}. For all $x\in\us,\,y\in I$, it holds
\bs
|r(x,y)|\leq \left\{|x^\top\nabla_x\pa_y\z(0_{d+1})| +\omega_{2,1}(R\e,0)\|x\|_H^2\right\}y + \omega_{0,2}(R\e, (R\e)^2)y^2\leq y/2.
\es
\end{lemma}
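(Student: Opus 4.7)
The plan is to exploit two layers of Taylor expansion in $r(x,y)=\z(x,y)-\z(x,0)-y$. First I would observe that $r(x,0)\equiv 0$, so that $r(x,y)=\int_0^y\pa_y r(x,s)\dd s$, where $\pa_y r(x,s)=\pa_y\z(x,s)-1$. Since $\psi(0_d)=0$ by Assumption~\ref{a12} and $\pa_t z(0_{d+1})=1$ by Assumption~\ref{a11}, we have $\pa_y\z(0_{d+1})=\pa_t z(0_{d+1})=1$, which makes the decomposition
\[
\pa_y\z(x,s)-1=\bigl[\pa_y\z(x,0)-\pa_y\z(0_{d+1})\bigr]+\bigl[\pa_y\z(x,s)-\pa_y\z(x,0)\bigr]
\]
a natural starting point.

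The first bracket depends only on $x$. I would apply a first-order Taylor expansion in $x$ about $0_d$ along the boundary slice $y=0$, yielding $x^{\top}\nabla_x\pa_y\z(0_{d+1})$ plus a quadratic integral remainder bounded by $\tfrac12\omega_{2,1}(R\e,0)\|x\|_H^2$ on $\us(R\e)\times\{0\}$, by definition of $\omega_{2,1}$. The second bracket is handled purely in $y$ via the fundamental theorem of calculus: it equals $\int_0^s\pa_y^2\z(x,s')\dd s'$, whose absolute value is at most $\omega_{0,2}(R\e,(R\e)^2)\,s$ for $(x,s)\in\us(R\e)\times I$. Inserting both bounds into $r(x,y)=\int_0^y\pa_y r(x,s)\dd s$ and integrating in $s$ from $0$ to $y$ produces
\[
|r(x,y)|\leq\Bigl\{|x^{\top}\nabla_x\pa_y\z(0_{d+1})|+\tfrac12\omega_{2,1}(R\e,0)\|x\|_H^2\Bigr\}y+\tfrac12\omega_{0,2}(R\e,(R\e)^2)y^2,
\]
which implies the first inequality of the lemma (after absorbing the harmless factors of $1/2$).

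For the second inequality $|r(x,y)|\leq y/2$, I would factor out $y$ from the bound just derived and use Cauchy--Schwarz to dominate $|x^{\top}\nabla_x\pa_y\z(0_{d+1})|\leq\omega_{1,1}\|x\|_H\leq\omega_{1,1}R\e$ on $\us(R\e)$. Since $y\leq(R\e)^2$ on $I$ and $R\e\leq 1$ by~\eqref{A} (and by monotonicity $\omega_{0,2}(R\e,(R\e)^2)\leq\omega_{0,2}(R\e,R\e)$), one gets
\[
|r(x,y)|/y\leq R\e\,\omega_{1,1}+(R\e)^2\omega_{2,1}(R\e,0)+R\e\,\omega_{0,2}(R\e,R\e),
\]
and the right-hand side is $\leq 1/2$ by the first line of~\eqref{Rwc34eps}. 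No substantive obstacle is expected; the only bookkeeping point is the two scales $(R\e)^2$ (on which $y$ ranges) versus $R\e$ (on which $\omega_{0,2}$ in~\eqref{Rwc34eps} is evaluated), which is resolved by the monotonicity of $\omega_{0,2}$ in its second argument together with $R\e\leq 1$.
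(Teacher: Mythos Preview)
Your proof is correct and follows essentially the same approach as the paper's. The paper writes the same telescoping decomposition of $r(x,y)$ into the explicit linear piece $x^{\top}\nabla_x\pa_y\z(0_{d+1})\,y$, the second-order $x$-remainder of $\pa_y\z(\cdot,0)$ times $y$, and the second-order $y$-remainder of $\z(x,\cdot)$, then invokes the first line of~\eqref{Rwc34eps} exactly as you do, including the implicit use of $(R\e)^2\le R\e$ and the monotonicity $\omega_{0,2}(R\e,(R\e)^2)\le\omega_{0,2}(R\e,R\e)$ that you made explicit.
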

\begin{proof}We have 
\bsn
r(x,y)=& \z(x,y)-\z(x,0)-\pa_y\z(x,0)y \\
&+ (\pa_y\z(x,0)-\pa_y\z(0_{d+1}) - x^\top \nabla_x\pa_y\z(0_{d+1}))y + x^\top \nabla_x\pa_y\z(0_{d+1})y,\esn and we conclude the first inequality by a Taylor remainder bound. To prove the second inequality from the first, note that for all $x\in\us,\,y\in I$, it holds
\bsn
\big\{|x^\top\nabla_x\pa_y\z(0_{d+1})| &+\omega_{2,1}(R\e,0)\|x\|_H^2\big\}y + \omega_{0,2}(R\e, (R\e)^2)y^2\\
&\leq \left[R\e\omega_{1,1}+\omega_{2,1}(R\e,0)(R\e)^2 +  \omega_{0,2}(R\e, (R\e)^2)R\e\right]y \leq y/2.
\esn by the first line of~\eqref{Rwc34eps}.
\end{proof}
Recall that $Y\sim\mathrm{Exp}(\la)$, which has density $\la e^{-\la y}$. The fact that $e^{\la|r(x,y)|}\leq e^{\la y/2}$ means the term $e^{\la|r|}$ in~\eqref{eq:bd} acts as a change of measure.  Thus the upper bound on $Q$ in~\eqref{eq:bd} can be written as an expectation with respect to $X\sim\mathcal N(0_d, (\la H)^{-1})$ and $\bar Y\sim\mathrm{Exp}(\la/2)$, without the term $e^{\la |r|}$. We then apply Cauchy-Schwarz. This yields the following result.
\begin{corollary}
Let $X\sim\mathcal N(0_d, (\la H)^{-1})$ and $\bar Y\sim\mathrm{Exp}(\la/2)$ be independent, and $Q$ be as in~\eqref{eq:bd}. Then
\bs\label{Qbd}
Q\les &\,\la\left(\E[\Delta(X)^2\ind_{\us}(X)]^{1/2} + \E[ q(X)^2\ind_{\us}(X)]^{1/2}+\E[ r(X,\bar Y)^2\ind_{\us\times I}(X,\bar Y)]^{1/2}\right)\\
&\times \max_{|t_1|,|t_2|\leq2}\E\left[e^{\la(t_1\Delta+t_2q)(X)}\ind_{\us}(X)\right].
\es
\end{corollary}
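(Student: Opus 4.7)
I would prove the corollary by applying three maneuvers to the upper bound on $Q$ from Lemma~\ref{lma:BD}: the pointwise bound $|r(x,y)|\le y/2$ on $\us\times I$ supplied by Lemma~\ref{lma:rxy}, a change of measure in the $y$-coordinate from $Y\sim\mathrm{Exp}(\la)$ to $\bar Y\sim\mathrm{Exp}(\la/2)$, and Cauchy--Schwarz. The first two steps absorb the factor $e^{\la|r|}$ on the right-hand side of \eqref{eq:bd}, and the third decouples the linear factors $|\Delta|,|q|,|r|$ from the exponential factor $e^{\la(t_1\Delta+t_2q)}$.

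For the first two steps, substitute $e^{\la|r(X,Y)|}\ind_{\us\times I}\le e^{\la Y/2}\ind_{\us\times I}$ (Lemma~\ref{lma:rxy}) into \eqref{eq:bd}. The $\mathrm{Exp}(\la)$ density $\la e^{-\la y}$ satisfies $\la e^{-\la y}\cdot e^{\la y/2}=2\cdot(\la/2)e^{-\la y/2}$, which is exactly twice the $\mathrm{Exp}(\la/2)$ density; hence $\E[f(X,Y)e^{\la Y/2}\ind_{\us\times I}]\le 2\,\E[f(X,\bar Y)\ind_{\us\times I}]$ for any nonnegative $f$. Applied to the bound of Lemma~\ref{lma:BD}, this yields
\[
Q\;\le\;4\max_{|t_1|,|t_2|\le 1}\E\!\left[\la\bigl(|\Delta(X)|+|q(X)|+|r(X,\bar Y)|\bigr)e^{\la(t_1\Delta+t_2q)(X)}\ind_{\us\times I}(X,\bar Y)\right].
\]

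For the Cauchy--Schwarz step, split the expectation into three summands corresponding to $|\Delta|,|q|,|r|$. On the $\Delta$ summand, use $\ind_{\us\times I}\le \ind_\us$ together with the fact that $\Delta,q$ depend only on $X$ (so the $\bar Y$-integration contributes at most a factor of $\mathbb P(\bar Y\in I)\le 1$) to get
\[
\E[\la|\Delta|e^{\la(t_1\Delta+t_2q)}\ind_{\us\times I}]\;\le\;\la\,\E[\Delta^2\ind_\us]^{1/2}\,\E[e^{2\la(t_1\Delta+t_2q)}\ind_\us]^{1/2}.
\]
The $q$ summand is handled identically, and the $r$ summand produces $\E[r(X,\bar Y)^2\ind_{\us\times I}]^{1/2}$ in place of $\E[\Delta^2\ind_\us]^{1/2}$. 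Finally, relabeling $s_i:=2t_i$ converts $e^{2\la(t_1\Delta+t_2q)}$ with $|t_i|\le 1$ into $e^{\la(s_1\Delta+s_2q)}$ with $|s_i|\le 2$; combining the three resulting bounds produces \eqref{Qbd} (with the MGF factor $\max_{|s_i|\le 2}\E[e^{\la(s_1\Delta+s_2q)}\ind_\us]$ understood up to the square-root artifact of Cauchy--Schwarz, which is absorbed into the $\les$ constant in view of the $O(1)$ bound that will be established for this MGF in the target regime).

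The proof is almost entirely mechanical; the one delicate point is verifying that the indicator $\ind_{\us\times I}$ both licenses the pointwise bound $|r|\le Y/2$ and restricts $Y$ to the window $I$ where the $e^{\la Y/2}$ growth is cheaply absorbed by the reweighting $Y\rightsquigarrow\bar Y$. After this, Cauchy--Schwarz and the $s_i=2t_i$ relabeling are routine bookkeeping.
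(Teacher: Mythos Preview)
Your proposal is correct and follows the same route as the paper: absorb $e^{\la|r|}$ via the pointwise bound $|r|\le y/2$ and the change of measure $Y\rightsquigarrow\bar Y$, then apply Cauchy--Schwarz. One minor correction: the square-root on the MGF factor is not absorbed by the forthcoming \emph{upper} bound on the MGF (an upper bound on $M$ does not give $M^{1/2}\les M$), but rather by the trivial \emph{lower} bound $\max_{|t_i|\le 2}\E[e^{\la(t_1\Delta+t_2q)}\ind_\us]\ge\E[\ind_\us]=\mathbb P(X\in\us)$, which is at least $1/2$ under the standing assumptions on $R$, so that $M^{1/2}\les M$.
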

\begin{proof}
For any function $f(x,y)\geq 0$, we have 
\bsn
\E[f(x,y)&e^{\la|r(X,Y)|}\ind_{\us\times I}(X,Y)]\\
& \leq (2\pi/\la)^{-d/2}\sqrt{\det H}\int f(x,y)e^{\la y/2}(e^{-x^\top Hx/2}\la e^{-\la y})\ind_{\us\times I}(x,y)\dd x \dd y\\
&=2\E[f(X, \bar Y)\ind_{\us\times I}(X, \bar Y)].\esn We apply this to the expectation in the second line of~\eqref{eq:bd}, and conclude by Cauchy-Schwarz.
\end{proof}

Next, we use the first bound on $r$ in Lemma~\ref{lma:rxy} to bound $\la\E[ r(X,\bar Y)^2\ind_{\us\times I}(X,\bar Y)]^{1/2}$. This is a straightforward computation, since the upper bound on $r$ is a polynomial, whose expectation under Gaussian $X$ and exponential $\bar Y$ can be computed explicitly.
\begin{lemma}\label{lma:Er2}
Let $X\sim\mathcal N(0_d, (\la H)^{-1})$ and $\bar Y\sim\mathrm{Exp}(\la/2)$ be independent and $r$ be as in Lemma~\ref{lma:rxy}. Then
\be\label{eq:er2}
\la\E[ r(X,\bar Y)^2\ind_{\us\times I}(X,\bar Y)]^{1/2} \les \omega_{1,1}\frac{1}{\sqrt\la}+ \omega_{2,1}(R\e,0)\frac d\la + \omega_{0,2}(R\e, (R\e)^2)\frac1\la.
\ee
\end{lemma}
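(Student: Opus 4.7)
The plan is to bound $|r(X,\bar Y)|^2$ pointwise by the squared upper bound from Lemma~\ref{lma:rxy}, drop the indicator $\ind_{\us\times I}$ (a free upper bound since the quantities are nonnegative), expand the square into three polynomial terms, and compute each resulting expectation using independence of $X$ and $\bar Y$ together with the known moments of the Gaussian and exponential distributions.

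Concretely, squaring the pointwise bound of Lemma~\ref{lma:rxy} and applying $(a+b+c)^2 \leq 3(a^2+b^2+c^2)$, it suffices to control
\[
\E[(X^\top \nabla_x\pa_y\z(0_{d+1}))^2 \bar Y^2],\quad
\omega_{2,1}(R\e,0)^2\,\E[\|X\|_H^4 \bar Y^2],\quad
\omega_{0,2}(R\e,(R\e)^2)^2\,\E[\bar Y^4].
\]
Since $X$ and $\bar Y$ are independent, each expectation factors. For the Gaussian factors, write $X = (\la H)^{-1/2} Z$ with $Z\sim\mathcal N(0_d,I_d)$, so that $v^\top X = v^\top(\la H)^{-1/2}Z$ has variance $\|v\|_H^2/\la$ (recall the 1-form convention $\|v\|_H^2 = v^\top H^{-1}v$ from~\eqref{T-form}), giving
\[
\E[(X^\top\nabla_x\pa_y\z(0_{d+1}))^2] = \omega_{1,1}^2/\la,
\]
and $X^\top H X = \|Z\|^2/\la$, whence $\E[\|X\|_H^4] = (d^2+2d)/\la^2 \lesssim d^2/\la^2$. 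For the exponential factors, $\E[\bar Y^k] = k!(2/\la)^k$, so $\E[\bar Y^2] \lesssim 1/\la^2$ and $\E[\bar Y^4]\lesssim 1/\la^4$.

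Assembling the three contributions,
\[
\la^2 \E[r(X,\bar Y)^2\ind_{\us\times I}]
\lesssim
\la^2\!\left(\frac{\omega_{1,1}^2}{\la^3} + \omega_{2,1}(R\e,0)^2\frac{d^2}{\la^4} + \omega_{0,2}(R\e,(R\e)^2)^2\frac{1}{\la^4}\right),
\]
and taking square roots and using $\sqrt{a+b+c}\leq\sqrt a+\sqrt b+\sqrt c$ yields exactly~\eqref{eq:er2}. No serious obstacle is expected; the only point to watch is the 1-form convention in the definition of $\|\cdot\|_H$ so that the quadratic form in $\nabla_x\pa_y\z(0_{d+1})$ produces $\omega_{1,1}^2/\la$ rather than a norm in the wrong metric, and the fact that dropping $\ind_{\us\times I}$ is valid because the integrand is nonnegative.
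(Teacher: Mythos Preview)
Your proof is correct and follows essentially the same approach as the paper: apply the pointwise bound from Lemma~\ref{lma:rxy} on $\us\times I$, drop the indicator from the resulting nonnegative polynomial, and compute the Gaussian and exponential moments using independence. The paper phrases the splitting via the $L^2$ triangle inequality rather than $(a+b+c)^2\le 3(a^2+b^2+c^2)$, but the moment computations and final assembly are identical.
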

Next, we bound the remaining expectations in~\eqref{Qbd}, involving the functions $\Delta$ and $q$ defined in Lemma~\ref{lma:BD}. Note that both of these functions are given by Taylor remainders after a second-order expansion. Indeed, recall $\z(\cdot,0)$ is minimized at $x=0_d$, and we have assumed $\z(0_{d+1})=0$. Also, $H=\nabla_x^2\z(0_{d+1})$. Thus $\frac12x^\top Hx$ is the second-order Taylor expansion of $\z(x,0)$. Similarly, recall $\psi(0_d)=0$ and $\nabla\psi(0_d) =0_d$, so $\hat\psi(x)=\frac12x^\top\nabla^2\psi(0_d)x$ is the second-order Taylor expansion of $\psi(x)$. The following lemma is based on the more general Lemma~\ref{lma:Ef2}, which bounds expectations involving such Taylor remainders. 
\begin{lemma}\label{lma:Eb2}Let $X\sim\mathcal N(0_d, (\la H)^{-1})$ and $q, \Delta$ be as in Lemma~\ref{lma:BD}. We have
\begin{align}\label{labet}
\la\E[q(X)^2\ind_{\us}(X)]^{1/2}&\les\omega_{3,0}\frac{d}{\sqrt\la} + \omega_{4,0}(R\e,0)\frac{d^2}{\la},\\
\la\E[\Delta(X)^2\ind_{\us}(X)]^{1/2} &\les \delta_3\frac{d}{\sqrt\la} + \delta_4(R\e)\frac{d^2}{\la}\label{laDel},\\
\E \left[e^{\la(t_1q+t_2\Delta)(X)}\ind_{\us}(X)\right] &\leq C(R,\CL,c_\delta, |t_1|, |t_2|).\label{elat}
\end{align}
\end{lemma}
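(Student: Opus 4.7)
The plan is to derive this lemma as a direct application of the more general Lemma~\ref{lma:Ef2} to the functions $f=q$, $f=\Delta$, and to the linear combination $t_1 q+t_2\Delta$ in turn. The crucial structural observation is that both $q$ and $\Delta$ are third-order Taylor remainders at $x=0_d$: indeed $\z(0,0)=0$, $\nabla_x\z(0,0)=0_d$, and $\nabla_x^2\z(0,0)=H$, so $q(x)=\z(x,0)-\tfrac12 x^\top Hx$ is exactly the remainder after second-order Taylor expansion of $\z(\cdot,0)$; similarly $\psi(0_d)=0$, $\nabla\psi(0_d)=0_d$, and $\hat\psi(x)=\tfrac12 x^\top\nabla^2\psi(0_d)x$, so $\Delta$ is the remainder after second-order Taylor expansion of $\psi$. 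Thus Taylor's theorem yields, for $x\in\us$, the pointwise bounds
\[
|q(x)|\leq \tfrac{1}{6}\bigl|\langle \nabla_x^3\z(0,0),x^{\otimes 3}\rangle\bigr| + \tfrac{1}{24}\omega_{4,0}(R\e,0)\|x\|_H^4,
\]
and the analogous bound for $\Delta$ with $\nabla^3\psi(0_d)$ and $\delta_4(R\e)$ in place.

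For the $L^2$ bounds (\ref{labet}) and (\ref{laDel}), I would estimate $\E[q(X)^2\ind_\us(X)]^{1/2}$ by splitting into the cubic leading-term contribution and the quartic remainder contribution. The quartic piece is handled by direct Gaussian moment computation: $\E[\|X\|_H^8]^{1/2}\asymp d^2/\la^2$, which after multiplication by $\la$ contributes the $\omega_{4,0}(R\e,0)d^2/\la$ (respectively $\delta_4(R\e)d^2/\la$) terms. For the cubic piece, I would change variables to $Z=\sqrt\la H^{1/2}X\sim\mathcal N(0_d,I_d)$, reducing to $\la^{-3}\E\langle T_H,Z^{\otimes 3}\rangle^2$ where $\|T_H\|_{\mathrm{op}}=\omega_{3,0}$. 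Wick's theorem decomposes this into a sum over pairings yielding a Frobenius-type contraction and a trace-type contraction, each bounded by $d^2\|T_H\|_{\mathrm{op}}^2$ via operator-norm bounds on the symmetric tensor's slices. This produces the desired $\omega_{3,0}d/\sqrt\la$ (and $\delta_3 d/\sqrt\la$) contributions to the bound after multiplication by $\la$.

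For the exponential bound (\ref{elat}), the approach is to exploit that, under (\ref{Rwc34eps}) together with the bound $(\delta_3^2+\delta_4(R\e))d^2/\la\leq c_\delta$ hypothesized in Proposition~\ref{prop:TV}, the restriction of $\la(t_1 q+t_2\Delta)(X)$ to $\us$ decomposes as a zero-mean cubic Gaussian chaos plus a quartic remainder which is uniformly bounded on $\us$ by a constant depending only on $R,\CL,c_\delta$ (since $\omega_{4,0}(R\e,0)(R\e)^4\les \CL R^4$ and $\delta_4(R\e)(R\e)^4\les c_\delta R^4$). After decoupling $t_1$ and $t_2$ via Cauchy--Schwarz, each cubic-chaos exponential moment is controlled by combining the variance bound from Wick's formula with a truncated-MGF estimate on the region $\{\|Z\|\leq R\sqrt d\}$, which is precisely the situation handled by Lemma~\ref{lma:Ef2}.

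The main obstacle I anticipate is the exponential bound (\ref{elat}) rather than the $L^2$ bounds. A naive pointwise estimate $\la|q(X)|\les \omega_{3,0}R\e\cdot\la\|X\|_H^2$ leads to a sub-Gaussian-in-quadratic-form bound that degenerates when the hypothesis is saturated: since $\omega_{3,0}R\e$ decays only as $1/\sqrt d$, the MGF of this quadratic form grows like $\exp(\sqrt d)$, which is far from a constant. The resolution is to genuinely exploit the mean-zero cubic-chaos structure rather than treating $q$ and $\Delta$ as arbitrary functions of $\|X\|_H$; this is exactly the job of Lemma~\ref{lma:Ef2}. Once that lemma is in hand, the three bounds here follow by verifying its hypotheses using the Taylor-remainder decompositions above.
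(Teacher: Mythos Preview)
Your approach is correct and essentially matches the paper's: both reduce all three bounds to Lemma~\ref{lma:Ef2}, identifying $q$ and $\Delta$ as the second-order Taylor remainders of $\z(\cdot,0)$ and $\psi$ respectively, so that $f_3,f_4$ become $\omega_{3,0},\omega_{4,0}(R\e,0)$ and $\delta_3,\delta_4(R\e)$. The only minor difference is that for~\eqref{elat} the paper applies Lemma~\ref{lma:Ef2} directly with $f=t_1\z(\cdot,0)+t_2\psi$ (so that $h=t_1q+t_2\Delta$ and $f_3=t_1\omega_{3,0}+t_2\delta_3$, $f_4=t_1\omega_{4,0}(R\e,0)+t_2\delta_4(R\e)$), which makes your Cauchy--Schwarz decoupling step unnecessary.
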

\begin{proof}We apply~\eqref{Er2} of Lemma~\ref{lma:Ef2} with $f(x)=\z(x,0)$ and $f(x)=\psi(x)$ to obtain the bounds~\eqref{labet} and~\eqref{laDel}, respectively. In the first case we have $f_3 = \omega_{3,0}$, $f_4=\omega_{4,0}(R\e,0)$,  which satisfy~\eqref{f34} with $\Cf = \CL$, thanks to the second line of~\eqref{Rwc34eps}. In the second case we have $f_3=\delta_3$ and $f_4=\delta_4(R\e)$, which satisfy~\eqref{f34} with $\Cf = c_{\delta}$, thanks to the assumption in Proposition~\ref{prop:TV}.

To prove~\eqref{elat}, we apply~\eqref{Erexp} of Lemma~\ref{lma:Ef2}, with $f(x)=t_1\z(x,0) +t_2\psi(x)$, since then $t_1q+t_2\Delta$ corresponds to $h$ in the lemma statement. Note that we can take $f_3=t_1\omega_{3,0}+t_2\delta_3$, and $f_4=t_1\omega_{4,0}(R\e,0)+t_2\delta_4(R\e)$. Then~\eqref{f34} is satisfied with $\Cf=C(|t_1|, |t_2|, \CL, c_\delta)$, thanks to~\eqref{Rwc34eps} and the assumption $(\delta_3^2+\delta_4(R\e))d^2/\la\leq c_\delta$. Thus we obtain~\eqref{elat}.
\end{proof}
 
 \begin{proof}[Proof of Proposition~\ref{prop:TV}]
We substitute~\eqref{eq:er2},~\eqref{labet},~\eqref{laDel},~\eqref{elat} into~\eqref{Qbd} to get
\bs\label{Qbd2}
Q\les_{R,\CL,c_{\delta}}& \left(\omega_{3,0}+\delta_3\right)\frac{d}{\sqrt\la} + \left(\omega_{4,0}(R\e,0)+\delta_4(R\e)\right)\frac{d^2}{\la} \\
&+ \omega_{1,1}\frac{1}{\sqrt\la}+ \omega_{2,1}(R\e,0)\frac d\la + \omega_{0,2}(R\e, (R\e)^2)\frac1\la
\es
Next, recall $Q$ is defined as in~\eqref{eq:bd}, which in turn is given by the last two summands in~\eqref{sum}. Thus substituting~\eqref{Qbd2} in the second line of~\eqref{sum}, and substituting~\eqref{1AE},~\eqref{agbd},~\eqref{agbd2} in the first line of~\eqref{sum} now gives
\bs
\mathrm{TV}(\pi\vert_D,\hat\pi)\les_{R,\CL,c_{\delta}}& \left(\omega_{3,0}+\delta_3\right)\frac{d}{\sqrt\la} + \left(\omega_{3,0}^2+\omega_{4,0}(R\e,0)+\delta_4(R\e)\right)\frac{d^2}{\la} \\
&+ \omega_{1,1}\frac{1}{\sqrt\la}+ \left(\omega_{2,1}(R\e,0)+\omega_{1,1}^2\right)\frac d\la + \omega_{0,2}(R\e, (R\e)^2)\frac1\la+ \la^{-M}.
\es Finally, we use $\omega_{3,0}^2d^2/\la \les_{\CL}\omega_{3,0}d/\sqrt\la$ to conclude.
 \end{proof}
\appendix
\section{Omitted proofs from Section~\ref{app:outline:int}}\label{app:secgenproofs}
\begin{proof}[Proof of Lemma~\ref{lma:D3}]
Recalling from~\eqref{Z-rho-def} the definition of $\mathcal Z$, we have
\bs
I_3:=\frac{1}{\mathcal Z}\int_{D_3}e^{-\la z(u)}\dd u &= \left(\frac{\la}{2\pi}\right)^{d/2}\sqrt{\det H}\int_{D_3}e^{-\la (z(u)-z(0))}\dd u\\
&\leq \left(\frac{\la}{2\pi}\right)^{d/2}\sqrt{\det H}\int_{\|u\|_{H,1}\geq \frac R2\e }e^{-\la\e \|u\|_{H,1}}\dd u\\
&= \left(\frac{\la}{2\pi}\right)^{d/2}\int_{\|u\|\geq \frac R2\e }e^{-\sqrt{\la d}\|u\|}\dd u.
\es
We now switch to polar coordinates and then change variables. Let $S_d$ be the surface area of the $(d+1)$-dimensional sphere. Then
\bs
I_3&\leq \left(\frac{\la}{2\pi}\right)^{d/2}S_d\int_{\frac R2\sqrt{d/\la} }^\infty s^d e^{-\sqrt{\la d} s}\dd s\\
&= \left(\frac{\la}{2\pi}\right)^{d/2}(\sqrt{\la d})^{-(d+1)}S_d\int_{R d/2}^\infty s^de^{-s}\dd s\\
&= \sqrt{\frac{2\pi}{\la}}d^{-(d+1)/2}\frac{S_{d}}{(2\pi)^{(d+1)/2}} \int_{R d/2}^\infty s^de^{-s}\dd s.
\es
We use that $\frac{S_{d}}{(2\pi)^{(d+1)/2}} \leq (e/(d+1))^{\frac {d+1}{2}-1}$ and the inequality
$$
\int_{a}^\infty s^{b-1}e^{-s}\dd s\leq   e^{-a/2}(2b)^b 
$$ 
(see~\cite{A24}) to get
\bs
I_3 &\leq \sqrt{\frac{2\pi}{\la}}d^{-(d+1)/2}\frac{e^{(d+1)/2 -1}}{(d+1)^{(d+1)/2-1}}(2d+2)^{d+1}e^{-R d/4}\\
&= \frac1e\sqrt{\frac{2\pi}{\la}}\left(1+\frac1d\right)^{\frac{d+1}{2}}(d+1)\left(2\sqrt e\right)^{d+1}e^{-R d/4}\les \frac{d}{\sqrt\la}e^{d(1-R/4)}.\es
\end{proof}
\begin{proof}[Proof of Corollary~\ref{corr:genprelim}]Note that the assumptions of the above four lemmas are satisfied. Substituting~\eqref{lap-gz-L} into~\eqref{D1intdecomp} gives
\be\label{prelimdecomp}
\frac{1}{\mathcal Z}\int_{D_1} g(u)e^{-\la z(u)}\dd u =\sum_{m=1}^{N}c_m\la^{-m} + \left(\sum_{k=1}^{N}\Rem_{N+1-k}^\Lap(\g_k)\la^{-k} + \Rem_{N+1}^\Wat\right),
\ee where
\be
c_m = \sum_{k=1}^m\nu_{m-k}(\g_k),\quad m=1,\dots,N.
\ee Using~\eqref{prelimdecomp}, we have
\bsn
\frac{1}{\mathcal Z}\int_{D} &g(u)e^{-\la z(u)}\dd u = \frac{1}{\mathcal Z}\int_{D_1} g(u)e^{-\la z(u)}\dd u + \frac{1}{\mathcal Z}\int_{D\setminus D_1} g(u)e^{-\la z(u)}\dd u\\
&=\sum_{m=1}^{N}c_m\la^{-m} + \left(\sum_{k=1}^{N}\Rem_{N+1-k}^\Lap(\g_k)\la^{-k} + \Rem_{N+1}^\Wat+ \frac{1}{\mathcal Z}\int_{D\setminus D_1} g(u)e^{-\la z(u)}\dd u\right).
\esn The expression in parentheses is then by definition $\frac{1}{\la}\Rem_{N}$. Next, note that $D\setminus D_1\subset D_2\cup D_3$ by Lemma~\ref{lma:D123}. Using Lemmas~\ref{lma:D1}-\ref{lma:D3}, and the fact that $|g(u)|\leq1$ on $D$, gives
\bs\label{cm-remL-tail-0}
\frac{1}{\la}|\Rem_{N}|\les &\sum_{k=1}^{N}\Rem_{N+1-k}^\Lap(\g_k)\la^{-k} + \mu\left(\|D_{\z}^{N}\g\|_\infty\la^{-N-1}+e^{-Rd/2}\max_{1\leq k\leq N}\|D_{\z}^{k-1}\g\|_\infty\la^{-k}\right)\\
&+\frac{\mu}{\la} e^{- R^2d/2}+\frac{d}{\sqrt\la}e^{d-R d/4}\\
\les &\sum_{k=1}^{N}\Rem_{N+1-k}^\Lap(\g_k)\la^{-k} + \mu\|D_{\z}^{N}\g\|_\infty\la^{-N-1}\\
&+\frac{\mu\vee1}{\sqrt\la}\left(1+\max_{1\leq k\leq N}\|D_{\z}^{k-1}\g\|_\infty\la^{-(k-1/2)}\right)de^{d-Rd/4}.
\es 
Next, $R\geq 4(1+\frac{\log d}{d}+(M+1/2)\frac{\log\la}{d})$ implies $de^{d-Rd/4} \leq \la^{-M-1/2}$. Substituting this bound into the last line of~\eqref{cm-remL-tail-0} and then multiplying both sides by $\la$ concludes the proof.
\end{proof}

Next, we turn to the proof of Lemma~\ref{lma:mu}. We first focus on the subproblem of showing
\be\label{zeps}
z(u)-z(0_{d+1})\geq \e\|u\|_{H,1}\qquad\forall u\in D\setminus\U(R\e/2).
\ee under the conditions of Propositions~\ref{prop:main},~\ref{main:cvx},~\ref{prop:explicitL2}, and~\ref{L2:cvx}. We start with the following auxiliary result, which shows that we have linear growth in any bounded annulus. To that end, we first generalize the definition~\eqref{C1def} of $\cmin$: for any $\rho\leq\rho_0$, let
\be\label{C1def2}
\cmin(\rho) = \min\left(\inf_{u\in \U(\rho)}\pa_tz(u),\; \;\inf_{x\in\us(\rho)}(z(x,\psi(x))-z(0_{d+1}))/\tfrac12\|x\|_H^2\right).
\ee
\begin{lemma}[Linear growth in bounded annulus]\label{lma:extendlin}Suppose Assumption~\ref{assume1} holds and fix any $\rho,\rho'<\rho_0$. Then
\be\label{zu1}z(u)-z(0_{d+1}) \geq  \frac{\cmin(\rho)}{6}\min(\delta_2(\rho)^{-1},1,\rho')\|u\|_{H,1}\qquad\forall  \rho'\leq\|u\|_{H,1}\leq\rho,\,u\in D.\ee 
\end{lemma}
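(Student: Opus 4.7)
The approach is to parameterize $u\in D$ in a neighborhood of $0_{d+1}$ along the boundary, decompose $z(u)-z(0_{d+1})$ into a ``boundary'' contribution and a ``vertical'' contribution (each controlled by $\cmin(\rho)$), and then run a short case analysis on which of $\|x\|_H$, $y$, or $|\psi(x)|$ carries the bulk of $K:=\|u\|_{H,1}$.

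First, for $u=(x,t)\in D$ with $\|u\|_{H,1}\leq\rho<\rho_0$, Assumption~\ref{a14} gives $t=\psi(x)+y$ with $y\geq0$ and $x\in\us(\rho)\subseteq\us(\rho_0)$. The natural decomposition
\[
z(u)-z(0_{d+1}) = \bigl[z(x,\psi(x))-z(0_{d+1})\bigr] + \int_0^y \pa_t z(x,\psi(x)+s)\dd s,
\]
combined with the defining bounds in~\eqref{C1def2}, yields $z(u)-z(0_{d+1})\geq \cmin(\rho)\bigl(\tfrac12\|x\|_H^2 + y\bigr)$. The first summand uses $z(x,\psi(x))-z(0_{d+1})\geq \cmin(\rho)\|x\|_H^2/2$ directly from the definition, while the second uses the pointwise lower bound $\pa_t z\geq\cmin(\rho)$ along the vertical segment.

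Next, I would perform a case analysis on $K\in[\rho',\rho]$ via $K^2=\|x\|_H^2+t^2$ and $|t|\leq|\psi(x)|+y$. If $\|x\|_H^2\geq K^2/2$, the boundary piece yields $z(u)-z(0_{d+1})\geq \cmin(\rho)K^2/4\geq \cmin(\rho)K\rho'/4$ using $K\geq\rho'$. In the complementary regime $|t|\geq K/\sqrt{2}$, either $y\geq K/(2\sqrt{2})$, in which case the vertical piece gives $\cmin(\rho)K/(2\sqrt 2)$, or $|\psi(x)|\geq K/(2\sqrt{2})$, in which case the Taylor bound $|\psi(x)|\leq \delta_2(\rho)\|x\|_H^2/2$ forces $\|x\|_H^2/2\geq K/(2\sqrt 2\,\delta_2(\rho))$ and the boundary piece gives $\cmin(\rho)K/(2\sqrt 2\,\delta_2(\rho))$. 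The minimum of the three prefactors $\rho'/4$, $1/(2\sqrt 2)$, $1/(2\sqrt 2\,\delta_2(\rho))$ is bounded below by $\tfrac16\min(\rho',1,\delta_2(\rho)^{-1})$ (each of the three comparisons reduces to checking $\min(\rho',1,\delta_2(\rho)^{-1})\leq\rho'$, $\leq 3/\sqrt 2$, and $\leq 3/(\sqrt 2\,\delta_2(\rho))$ respectively), delivering the claim.

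The main technical hurdle is justifying $\pa_t z(x,\psi(x)+s)\geq \cmin(\rho)$ uniformly over $s\in[0,y]$, since this intermediate path may leave $\U(\rho)$. The key observation is that $s\mapsto \|x\|_H^2+(\psi(x)+s)^2$ is a parabola in $s$, so its maximum on $[0,y]$ is attained at an endpoint: the endpoint $s=y$ gives exactly $\|u\|_{H,1}^2\leq\rho^2$, while $s=0$ gives $\|x\|_H^2+\psi(x)^2\leq \rho^2\bigl(1+\delta_2(\rho)^2\rho^2/4\bigr)$ via the Taylor bound on $\psi$. Since $\rho<\rho_0$, this mildly enlarged radius still sits inside $\U(\rho_0)$, where $\pa_t z>0$ by Assumption~\ref{a15}; continuity of $\pa_t z$ together with the definition of $\cmin$ then lets one absorb the small excess into the constant $\tfrac16$, or equivalently work throughout with a slightly enlarged radius that leaves the form of the bound intact. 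This is essentially the only place where the interplay between $\rho$ and $\delta_2(\rho)\rho$ enters the proof.
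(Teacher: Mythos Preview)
Your starting point—the boundary-plus-vertical decomposition yielding $z(u)-z(0_{d+1})\geq \cmin(\rho)\bigl(\tfrac12\|x\|_H^2+y\bigr)$—is exactly the paper's. After that the two arguments diverge. The paper introduces an auxiliary parameter $\sigma\le(1\vee\delta_2)^{-1}$ and uses $y=t-\psi(x)\ge|t|-|\psi(x)|$ together with $|\psi(x)|\le\tfrac{\delta_2}{2}\|x\|_H^2$ to rewrite the lower bound as $\cmin\sigma|t|+\tfrac{\cmin}{2}(1-\sigma\delta_2)\|x\|_H^2$, which is directly in terms of the components of $\|u\|_{H,1}$; it then splits into two cases according to whether $\|x\|_H$ is below or above half the inner radius, and finally optimizes by setting $\sigma=\tfrac13(1\vee\delta_2)^{-1}$. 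Your three-way split on whether $\|x\|_H$, $y$, or $|\psi(x)|$ carries the bulk of $K$ is equally valid and lands on the same constant; the $\sigma$-trick merely packages the same trichotomy into a single parametric inequality. Your route is arguably more transparent, the paper's a bit slicker.

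Your ``technical hurdle'' paragraph, however, does not close. The claim that the enlarged radius $\rho\sqrt{1+\delta_2(\rho)^2\rho^2/4}$ still lies below $\rho_0$ is unsupported: nothing in the hypotheses of the lemma bounds $\delta_2(\rho)\rho$, so the boundary point $(x,\psi(x))$ can in principle sit outside $\U(\rho_0)$, and neither ``absorb into the $1/6$'' nor ``work with a slightly enlarged radius'' fixes this without changing the statement. That said, the paper's own proof writes $z(x,t)-z(x,\psi(x))\geq \cmin\,(t-\psi(x))$ with no further comment and thus glosses over the very same point; in every downstream application (Lemma~\ref{lma:mu} and its descendants) the lemma is invoked only under side conditions such as $R\e\,\delta_2(\cdot)\le C$ that supply the needed slack, so the issue is cosmetic in context even if real at the level of the isolated lemma.
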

Note that Assumption~\ref{assume:global2} can be combined with~\eqref{zu1}. Taking $\rho'=R\e/2$ and $\rho=\rho_1$, we  conclude that 
\be\label{zu1-Re}z(u)-z(0_{d+1}) \geq  \min\left(s,\tfrac16\cmin(\rho_1)\min(\delta_2(\rho_1)^{-1},1,\tfrac12R\e)\right)\|u\|_{H,1}\quad\forall u\in D\setminus \U(R\e/2).\ee We then need only show that this coefficient is bounded below by $\e$.
\begin{proof}[Proof of Lemma~\ref{lma:extendlin}]
Fix $\rho\leq\rho'\leq\rho_0$, and let $\cmin$ be shorthand for $\cmin(\rho')$. Also, in this proof only, let $\delta_2$ be shorthand for $\delta_2(\rho')$ rather than shorthand for $\delta_2(0)$ as usual. Let $u=(x,t)\in D$ be such that $\rho \leq \|u\|_{H,1}\leq \rho'$, provided such a $u$ exists. Then 
\bs
z(x,t) - z(0_{d+1})&= z(x,t)-z(x,\psi(x)) + z(x,\psi(x))-z(0_{d+1}) \\
&\geq \cmin (t-\psi(x)) + \frac{\cmin }{2}\|x\|_H^2,
\es by the definition~\eqref{C1def2} of $\cmin$. Next, a Taylor expansion gives
\be\label{Rdpsi}|\psi(x)| \leq \frac{\delta_2}{2}\|x\|_H^2,\qquad \forall \|x\|_H\leq\rho'.\ee Fix any $\sigma$ such that $(\delta_2\vee1)\sigma \leq 1 $. Since $t-\psi(x)\geq 0$, we have
\bs\label{Rdpsi2}
\cmin (t-\psi(x))&\geq \cmin\sigma (t-\psi(x))\geq \cmin\sigma (|t| -  |\psi(x)|) \geq \cmin\sigma \left(|t| - \frac{\delta_2}{2} \|x\|_H^2\right).
\es  Thus
\bs
z(x,t) - z(0_{d+1})&\geq \cmin\sigma |t| + \frac{\cmin}{2}\left(1 -\delta_2\sigma \right)\|x\|_H^2.\es Now, consider two cases: (1) $\|x\|_H\leq \rho/2 $, and (2) $\|x\|_H\geq\rho/2$. In the first case, since $\|x\|_H^2+t^2\geq \rho^2 $, we must have 
$$t^2\geq\frac34\rho^2 \geq 3\|x\|_H^2.$$ Therefore, $t^2\geq \frac34t^2+\frac34\|x\|_H^2 = \frac34\|(x,t)\|_{H,1}^2$. Thus
\bsn
z(x,t) - z(0_{d+1})&\geq \cmin\sigma |t| \geq  \cmin\frac{\sqrt3}{2}\sigma \|(x,t)\|_{H,1}.
\esn
 In the second case,
\bsn
z(x,t) - z(0_{d+1})&\geq  \cmin\sigma|t| + \frac{\cmin}{2}\left(1 -\delta_2\sigma\right)\|x\|_H^2\\
&\geq \cmin\sigma|t| + \cmin\left(1 -\delta_2\sigma \right)\frac{\rho}{4} \|x\|_H\\
&\geq\cmin\min\left(\sigma, (1 -\delta_2\sigma)\frac{\rho}{4}\right) \|(x,t)\|_{H,1}.\esn
Combining the above two displays and writing $u$ in place of $(x,t)$ gives
\bs\label{zxtd}
z(u)-z(0_{d+1})&\geq \cmin\min\left(\frac{\sqrt3}{2}\sigma, \left(1 -\delta_2\sigma \right)\frac{\rho}{4}\right) \|u\|_{H,1}.
\es
We now set $\sigma= \frac13(1\vee\delta_2)^{-1}$, which satisfies the above requirement that $(\delta_2\vee1)\sigma \leq 1$. With this $\sigma$, we get from~\eqref{zxtd} that
\be\label{zxtd1}
z(u)-z(0_{d+1})\geq \frac{\cmin}{6}\min((\delta_2\vee1)^{-1},\rho)\|u\|_{H,1},\qquad\forall u\in D,\;\rho \leq \|u\|_{H,1}\leq\rho',
\ee  as desired.
\end{proof}
If $z$ is convex, then we can show~\eqref{zeps} more directly. The key point is that the function $\rho\mapsto \inf_{\|u\|_{H,1}\geq\rho}(z(u)-z(0_{d+1}))/\|u\|_{H,1}$ is an increasing function when $z$ is convex, so it suffices to lower bound $(z(u)-z(0_{d+1}))/\|u\|_{H,1}$ over all $\|u\|_{H,1}=R\e/2$.
\begin{lemma}[Verifying~\eqref{zeps} under convexity]\label{lma:cvx}
Suppose Assumption~\ref{assume1} holds and $D$ is star-shaped about $0_{d+1}$, i.e. for all $u\in D$, the line segment between $0_{d+1}$ and $u$ is contained within $D$. Also, suppose the restriction of $z$ to any such line segment is convex. Suppose 
\be\label{eqcvx}\e\leq\frac1{12},\quad R\e\leq\rho_0,\quad \e\delta_2(R\e/2)\leq1/12,\quad \cmin(R\e/2)\geq \frac12,\quad R\geq24.\ee Then~\eqref{zeps} holds.
\end{lemma}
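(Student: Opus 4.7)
The plan is to leverage the convexity of $z$ along rays from the origin to reduce the desired linear-growth statement~\eqref{zeps} on the unbounded region $D\setminus \U(R\e/2)$ to a lower bound on $z$ at the single ``shell'' $\{\|u\|_{H,1}=R\e/2\}$, and then obtain the latter from Lemma~\ref{lma:extendlin} applied with $\rho=\rho'=R\e/2$. Throughout I take $z(0_{d+1})=0$ without loss of generality, as in the rest of Section~\ref{app:outline:int}.

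\textbf{Step 1 (radial reduction via convexity).} Fix $u\in D$ with $\|u\|_{H,1}\geq R\e/2$. Set $s=(R\e/2)/\|u\|_{H,1}\in(0,1]$ and $v=su$. Since $D$ is star-shaped about $0_{d+1}$, the segment from $0_{d+1}$ to $u$ lies in $D$, hence $v\in D$ and $\|v\|_{H,1}=R\e/2$. The function $f(t):=z(tu)$ is convex on $[0,1]$, so the difference quotient $(f(t)-f(0))/t$ is non-decreasing in $t>0$, giving $f(s)/s\leq f(1)$, i.e.
\[
z(u)\;\geq\;\frac{1}{s}\,z(v)\;=\;\frac{\|u\|_{H,1}}{R\e/2}\,z(v).
\]
Thus it suffices to show $z(v)\geq \e\,(R\e/2)$ for every $v\in D$ with $\|v\|_{H,1}=R\e/2$.

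\textbf{Step 2 (apply Lemma~\ref{lma:extendlin} at a single shell).} Since $R\e\leq\rho_0$ implies $R\e/2<\rho_0$, we may apply Lemma~\ref{lma:extendlin} with $\rho=\rho'=R\e/2$. For $\|v\|_{H,1}=R\e/2$ this yields
\[
z(v)\;\geq\;\frac{\cmin(R\e/2)}{6}\,\min\!\bigl(\delta_2(R\e/2)^{-1},\,1,\,R\e/2\bigr)\,\|v\|_{H,1}.
\]
It now remains to check that the coefficient on the right is at least $\e$. Using the four hypotheses in~\eqref{eqcvx}: $\cmin(R\e/2)\geq 1/2$; $\delta_2(R\e/2)^{-1}\geq 12\e$ (from $\e\delta_2(R\e/2)\leq 1/12$); $1\geq 12\e$ (from $\e\leq 1/12$); and $R\e/2\geq 12\e$ (from $R\geq 24$). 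Consequently the minimum is $\geq 12\e$, so
\[
z(v)\;\geq\;\tfrac{1}{2}\cdot\tfrac{1}{6}\cdot 12\e\cdot\|v\|_{H,1}\;=\;\e\,\|v\|_{H,1}.
\]
Plugging this back into Step 1 gives $z(u)\geq \e\,\|u\|_{H,1}$ for every $u\in D\setminus\U(R\e/2)$, which is exactly~\eqref{zeps}.

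\textbf{Main obstacle.} There is no serious technical obstacle once one realizes the right decoupling: convexity along rays lets one extract linear growth on the entire unbounded annulus $\|u\|_{H,1}\geq R\e/2$ from a \emph{bounded} annulus (here degenerated to a single shell), so the numerical constants $1/12$, $1/12$, $24$, $1/2$ appearing in~\eqref{eqcvx} are designed precisely so that each of the three quantities in the minimum of Lemma~\ref{lma:extendlin} is $\geq 12\e$. The only place where one must be a little careful is verifying that $\rho=\rho'=R\e/2$ is a legal choice in Lemma~\ref{lma:extendlin} (it is, since $R\e\leq\rho_0$ gives strict inequality $R\e/2<\rho_0$), and that the range $\rho'\leq\|u\|_{H,1}\leq\rho$ in that lemma, while collapsed to equality here, still delivers the bound on the shell $\|v\|_{H,1}=R\e/2$.
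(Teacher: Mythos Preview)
Your proof is correct and follows essentially the same approach as the paper: both apply Lemma~\ref{lma:extendlin} with $\rho=\rho'=R\e/2$ to obtain the linear-growth lower bound on the shell $\|u\|_{H,1}=R\e/2$, verify the resulting coefficient is at least $\e$ using the numerical conditions in~\eqref{eqcvx}, and then use convexity along rays from the origin to propagate the bound to all $\|u\|_{H,1}\geq R\e/2$. The only difference is the order of presentation (you do the convexity reduction first, the paper does the shell estimate first), and you are slightly more explicit in checking the constants.
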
 Since $\e$ is assumed small, we can control $\cmin(R\e/2)$ through bounds on the derivatives of $z$ near zero.
\begin{proof}[Proof of Lemma~\ref{lma:cvx}]Since $R\e/2\leq R\e\leq\rho_0$ we can apply~\eqref{zu1} from Lemma~\ref{lma:extendlin} with $\rho=\rho'=R\e/2$, to get $$
z(u)-z(0_{d+1}) \geq  \frac{\cmin(R\e/2)}{6}\min(\delta_2(R\e/2)^{-1},1,R\e/2)\|u\|_{H,1}\qquad\forall \|u\|_{H,1}=R\e/2,\,u\in D.
$$ The bounds~\eqref{eqcvx} now imply the above coefficient of linear growth is at least $\e$, so that $z(u)-z(0_{d+1})\geq \e\|u\|_{H,1}$ for all $\|u\|_{H,1}=R\e/2$ such that $u\in D$. 
%
Next, fix $u\in D$ such that $\|u\|_{H,1}\geq R\e/2$, and let $u_0$ be the point on the line segment from $0_{d+1}$ to $u$ such that $\|u_0\|_{H,1}=R\e/2$. Then $u_0\in D$ since $D$ is star-shaped about $0_{d+1}$. By convexity of $z$ on the line segment from $0_{d+1}$ to $u$, we have
\be
z(u_0) \leq \left(1-\frac{\rho_1}{\|u\|_{H,1}}\right)z(0_{d+1}) + \frac{\rho_1}{\|u\|_{H,1}}z(u) = z(0_{d+1}) + \frac{\rho_1}{\|u\|_{H,1}}(z(u)-z(0_{d+1})).
\ee
Rearranging, we get 
$$\frac{z(u)-z(0_{d+1})}{\|u\|_{H,1}} \geq \frac{z(u_0)-z(0_{d+1})}{\rho_1} = \frac{z(u_0)-z(0_{d+1})}{\|u_0\|_{H,1}}\geq \e,$$ as desired.\end{proof}
\begin{proof}[Proof of Lemma~\ref{lma:mu}]
Each of the propositions assumes either~\eqref{A} or~\eqref{D}, and both imply $R\e\leq1\wedge\rho_0$ and $R\e\delta_2(R\e/2)\leq 4$. In each proposition, we have at least that $R\geq 12+2(1+2M)(\log\la)/d$ (with $M=N$ in Propositions~\ref{prop:main} and~\ref{main:cvx}), recalling that $\cmin(\rho_1)\leq1$, as noted below the statement of Lemma~\ref{lma:extendlin}. Next, recall $\pa_tz(0_{d+1})=1$, and note that $\pa_y\z(u)=\pa_tz(\Theta(u))$. Thus $\pa_tz(u)\geq1/2$ for all $u\in\Theta (\us(R\e)\times[0,R\e])$ if and only if $\pa_y\z(u)\geq1/2$ for all $u\in \us(R\e)\times[0,R\e]$. Furthermore, a Taylor expansion gives 
\bs\label{wlb}
|\pa_y\z(x,y)-1|&=|\pa_y\z(x,y)-\pa_y\z(0_{d+1})|\\
&\leq |\pa_y\z(x,y)-\pa_y\z(x,0)| + |\pa_y\z(x,0)-\pa_y\z(0_{d+1})| \\
&\leq R\e(\omega_{0,2}(R\e,R\e)+\omega_{1,1}(R\e,0))
\es
Under~\eqref{BC} and~\eqref{forw}, assumed in Propositions~\ref{prop:main} and~\ref{main:cvx}, we have the further bound
\be\label{wlb2}
Re(\omega_{0,2}(R\e,R\e)+\omega_{1,1}(R\e,0)) \leq R\e(C_{\z,0}+C_{\z,1})\leq1/2.\ee Under~\eqref{Rwc34eps}, assumed in Propositions~\ref{prop:explicitL2} and~\ref{L2:cvx}, we have by a Taylor expansion,
\be\label{wlb3}
R\e(\omega_{0,2}(R\e,R\e)+\omega_{1,1}(R\e,0))\leq R\e(\omega_{0,2}(R\e,R\e)+\omega_{1,1})+(R\e)^2\omega_{2,1}(R\e,0) \leq 1/2.
\ee
Combining~\eqref{wlb} and~\eqref{wlb2}, or~\eqref{wlb} and~\eqref{wlb3}, we conclude $\pa_y\z(x,y)\geq 1/2$ for all $(x,y)\in \us(R\e)\times[0,R\e]$ under the assumptions of any one of the four propositions. Next, we show $z(u)-z(0)\geq\e\|u\|_{H,1}$ for all $u\in D_3$. Under the assumptions of Proposition~\ref{prop:main} or~\ref{prop:explicitL2}, the lower bound~\eqref{zu1-Re} holds. We therefore have that the coefficient of linear growth in $D_3$ is at least 
$$\min(s, \frac{\cmin(\rho_1)}{6}\min(\delta_2(\rho_1)^{-1},1,R\e/2)),$$ so it remains to bound this quantity below by $\e$. We assumed in~\eqref{A} that $s\geq\e$. Next, note that 
$$
 \frac{\cmin(\rho_1)}{6}\min(\delta_2(\rho_1)^{-1},1)\geq \frac{2}{R}\min(\delta_2(\rho_1)^{-1},1) \geq\e
$$
by~\eqref{A} and the lower bound on $R$. Finally, $ \frac{\cmin(\rho_1)}{6}(R\e/2)\geq\e$ by the lower bound on $R$. Next, consider the setting of Proposition~\ref{main:cvx} or~\ref{L2:cvx}. To show $z(u)-z(0)\geq\e\|u\|_{H,1}$ for all $u\in D_3$, we verify the conditions of Lemma~\ref{lma:cvx}. Since $R\e\leq1$ and $R\geq24$, we indeed have $\e\leq1/12$. Since $R\e\delta_2(R\e/2)\leq2$ and $R\geq24$, we indeed have $\e\delta_2(R\e/2)\leq1/12$. It remains to show $\cmin(R\e/2)\geq1/2$. To check this, recall from the proof of Lemma~\ref{lma:D123} that $R\e\delta_2(R\e/2)\leq4$ implies $\U(R\e/2)\subseteq \Theta (\us(R\e)\times[0,R\e])$. Therefore since we just showed $\pa_tz(u)\geq1/2$ for all $u\in\Theta (\us(R\e)\times[0,R\e])$, we conclude that $\pa_tz(u)\geq1/2$ for all $u\in\U(R\e/2)$. Thus the first infimum in the definition~\eqref{C1def2} of $\cmin(R\e/2)$ is bounded below by $1/2$. Next, a Taylor expansion gives that for all $x\in\us(R\e/2)$ we have
$$
\left|\frac{z(x,\psi(x))-z(0_{d+1})}{\|x\|_H^2/2}-1\right| =\left|\frac{\z(x,0)-\z(0_{d+1})}{\|x\|_H^2/2}-1\right| \leq \omega_{3,0}(R\e/2, 0)\frac{R\e}{6}.
$$ 
But now in the setting of Proposition~\ref{main:cvx} we have $\omega_{3,0}(R\e/2, 0)\frac{R\e}{6}\leq C_{\z,3}\frac{R\e}{6}\leq 1/2$ by~\eqref{forw} and~\eqref{E}. In the setting of Proposition~\ref{L2:cvx} we directly assumed $\omega_{3,0}(R\e,0)R\e/6\leq 1/2$. Either way, we conclude the  second infimum in the definition~\eqref{C1def} of $\cmin(\cdot)$ is bounded below by $1/2$, so $\cmin(R\e/2)\geq1/2$.

This concludes the proof that the conditions of Corollary~\ref{corr:genprelim} are satisfied under the conditions of any one of the four propositions. Next, we bound $\mu$. To do so, we apply Corollary~\ref{corr:norm-const}. We check the condition~\eqref{Rc34eps} for the function $\z(x)=z(x,0)$. In the context of Propositions~\ref{prop:main} and~\ref{main:cvx}, we use the definition of $C_{\z,3},C_{\z,4}$ from~\eqref{forw}, and the fact that $R^4d^2/\la\leq \CR$, to get
\bs\label{endupwithC34-2}
\left(R^2\sup_{x\in\us(R\e)}\|\nabla_x^3\z(x,0)\|_H\frac{d}{\sqrt\la}\right)^2 + \sup_{x\in\us(R\e)}\|\nabla_x^4\z(x,0)\|_H\frac{d^2}{\la}\leq \CR C_{\z,3}^2+C_{\z,4}=:C_{34}.
\es Thus the corollary applies, and we conclude that $\mu\leq C(\CR,C_{\z,3},C_{\z,4})$. The bound on $\mu$ in the setting of Proposition~\ref{prop:explicitL2} and~\ref{L2:cvx} is proved similarly. 
\end{proof}
\begin{proof}[Proof of Proposition~\ref{prop:main}]
Lemma~\ref{lma:mu} shows the assumptions of Corollary~\ref{corr:genprelim}, with $M=N$, are satisfied. Now, we rewrite the expansion~\eqref{eq:genprelim} as follows:
\bs
\frac{1}{\mathcal Z}\int_{D} g(u)e^{-\la z(u)}\dd u =&\frac{1}{\la}\left(\sum_{m=1}^{N}c_m\la^{-(m-1)} +\Rem_{N}\right) \\
= &\frac{1}{\la}\left( \sum_{m=1}^{N}[d^{-(2m-2)}c_m]\left(\frac{d^2}{\la}\right)^{m-1} +\Rem_{N}\right)\\
= &\frac{1}{\la}\left( \sum_{m=0}^{L-2}a_m\left(\frac{d^2}{\la}\right)^{m} +\Rem_{N}\right),
\es where 
\be
a_m = d^{-2m}c_{m+1} = d^{-2m} \sum_{k=1}^{m+1}\nu_{m+1-k}(\g_k),\qquad m=0,\dots,N-1.
\ee
This confirms the formula~\eqref{amdef} for the $a_m$. Corollary~\ref{corr:nu-ell-qk} now gives
\be
|a_m| \leq \sum_{k=1}^{m+1}|\nu_{m+1-k}(\g_k)| \leq C_{\z,\g}d^{2m}, \quad m=0,\dots,N-1,\ee proving the bound on $a_m$ in~\eqref{RemLmainbd}. To bound $\Rem_{N}$, we use~\eqref{cm-remL-tail} with $M=N$. By Lemma~\ref{lma:mu}, we have 
\be\label{rhobd-2}
\mu\leq C(\CR,C_{\z,3},C_{\z,4}).\ee 
Substituting the bounds~\eqref{rhobd-2},~\eqref{qk-0-cond} and~\eqref{nu-ell-rem} on $\mu$, $\|D_{\z}^{k-1}\g\|_\infty$, $\Rem_{N+1-k}^\Lap(q_k)$ into~\eqref{cm-remL-tail} with $M=N$, gives
\bs\label{RemLwatbd}
|\Rem_{N}| \leq &C_{\z,\g,R}\left\{\sum_{k=1}^{N}\left((d^2/\la)^{N+1-k}+e^{-(R-1)^2d/4}\right)\la^{-(k-1)} + \la^{-N}\right\}\\
\leq &C_{\z,\g,R}\left\{\left(\frac{d^2}{\la}\right)^{N} + e^{-(R-1)^2d/4}\right\},
\es where $C_{\z,\g,R}$ depends on the $C_{\z,\ell}$'s, the $C_{\g,\ell}$'s, and $\CR$. Finally, we use that $R\geq 4(2+(N+1/2)(\log\la)/d)$ to get that $e^{-(R-1)^2d/4}\leq \la^{-N}$. This concludes the proof.
\end{proof}
\begin{remark}[Room for growth with $d$]\label{rk:roomd}Neglecting constants and exponentially small terms,~\eqref{nu-ell-rem} shows that $|\Rem_{N+1-k}^\Lap(q_k)|\les (d^2/\la)^{N+1-k}$. 
Plugging this into~\eqref{cm-remL-tail} gives $|\sum_k\la^{-k}\Rem_{N+1-k}^\Lap(q_k)|\les\sum_k\la^{-(k-1)}(d^2/\la)^{N+1-k} \les (d^2/\la)^{N}$. But notice that there is slack in this bound, since only the summand corresponding to $k=1$ is of size $(d^2/\la)^N$, and all other summands are smaller. Specifically, if instead of~\eqref{nu-ell-rem} it held \be\label{lrb}|\Rem_{N+1-k}^\Lap(q_k)|\les d^{2k-2}(d^2/\la)^{N+1-k},\ee then this would lead to the same total bound on $|\sum_k\la^{-k}\Rem_{N+1-k}^\Lap(\g_k)|$. Similarly, note that if instead of the bound~\eqref{qk-0-cond} on $\|D_{\z}^{k-1}\g\|_\infty$ it held
\be\label{lrb2}\|D_{\z}^{k-1}\g\|_\infty \les d^{2k-2},\qquad\forall k=1,\dots,N+1,\ee then the contribution to $|\Rem_N|$ from $\|D_{\z}^N\g\|_\infty\la^{-N}$ in~\eqref{cm-remL-tail} would be $(d^2/\la)^N$ and the contribution from $\big(1+\max_{1\leq k\leq N}\|D_{\z}^{k-1}\g\|_\infty\la^{-(k-1/2)}\big)\la^{-N}$ would still be $\la^{-N}$.

Since $|\Rem_{N+1-k}^\Lap(\g_k)|$ is linear in $\g_k$, one can obtain the larger remainder bound~\eqref{lrb} by loosening each of the bounds~\eqref{qk-deriv-cond} on the derivatives of $\g_k$ by a factor $d^{2k-2}$. Recalling that $\g_k=D_{\z}^{k-1}\g/\pa_y\z$, the weaker versions of~\eqref{qk-deriv-cond} take the following form:
\be\sup_{x\in\us(R\e)}\|\nabla_x^\ell(D_{\z}^{k-1}\g/\pa_y\z)\|_H\les d^{2k-2}d^{\lceil\ell/2\rceil},\quad \ell=0,\dots,2(N+1-k),\;k=1,\dots,N.\label{qk-deriv-cond2}\ee Thus to summarize, the bounds~\eqref{lrb2} and~\eqref{qk-deriv-cond2}, which are looser than~\eqref{qk-0-cond} and~\eqref{qk-deriv-cond} by a factor of $d^{2k-2}$, would lead to the same overall bound $|\Rem_N|\les (d^2/\la)^N$. 

We now ask: what looser conditions on $\g$ and $\z$ (as compared to the original~\eqref{forq},~\eqref{forw}) imply the looser conditions~\eqref{lrb2} and~\eqref{qk-deriv-cond2}? Lemma~\ref{lma:Dwk} below shows that $D_{\z}^{k-1}\g$ is a sum of products of terms $\pa_y^j\g$, $\pa_y^{j'}\z$, and $1/\pa_y\z$, with $j,j'\leq k$. Thus to fully take advantage of~\eqref{lrb2}, $\pa_y^j\g$, $\pa_y^{j'}\z$ should be allowed to grow as a power of $d$ which depends on $j$ and $j'$. Similarly, to fully take advantage of~\eqref{qk-deriv-cond2}, the growth rate of $\nabla_x^\ell\pa_y^j\g$ and $\nabla_x^\ell\pa_y^{j'}\z$ with $d$ should be allowed to depend on $j$ and $j'$. 

This implies that the way in which the original conditions~\eqref{forq},~\eqref{forw} may be relaxed is by allowing for increasing orders of $y$ derivatives to grow as increasing powers of $d$. Such a situation does not seem typical because $y$ is univariate. As discussed in Remark~\ref{rk:d1}, it is reasonable to expect e.g. $\|\nabla_x^\ell\z\|_H$ to grow as a larger power of $d$ than does $\|\nabla_x^{\ell'}\z\|_H$ for $\ell>\ell'$ simply because $\nabla_x^\ell\z$ is a tensor containing a larger number of elements: $d^\ell$ compared to $d^{\ell'}$. However, $\nabla_x^\ell\nabla_y^j\z$ and $\nabla_x^\ell\nabla_y^{j'}\z$ are both tensors of the same size.

Nevertheless, there may be other reasons why the operator norm of $\nabla_x^\ell\nabla_y^j\z$ may grow as a $j$-dependent power of $d$. We leave the specification of the relaxed conditions on $\g$ and $\z$ to future work.
\end{remark}

\begin{proof}[Proof of Proposition~\ref{main:cvx}]
Lemma~\ref{lma:mu} shows the assumptions of Corollary~\ref{corr:genprelim}, with $M=N$, are satisfied. We now proceed exactly as in the above proof of Proposition~\ref{prop:main}.
\end{proof}
\begin{proof}[Proof of Lemma~\ref{lma:D1L2}]
In the proof, we use the following notation:
\bs\label{s1234-def}
s_1&=\omega_{1,1}=\|\nabla_x\pa_y\z(0,0)\|_{\HH},\qquad s_2=\omega_{2,1}(R\e,0)=\sup_{x\in\us(R\e)}\|\nabla_x^2\pa_y\z(x,0)\|_{\HH},\\
s_3&=\omega_{3,0}=\|\nabla_x^3\z(0,0)\|_{\HH},\qquad s_4=\omega_{4,0}(R\e,0)=\sup_{x\in\us(R\e)}\|\nabla_x^4\z(x,0)\|_{\HH}.
\es Note that 
\bs\label{sups12}
\sup_{x\in\us(R\e)}\|\nabla_x\pa_y\z(x,0)\|_{\HH} &\leq s_1 + R\e s_2,\\
\sup_{x\in\us(R\e)}\|\nabla_x^3\z(x,0)\|_{\HH}& \leq s_3 + R\e s_4.\es Also, note that the derivative bounds~\eqref{Rwc34eps} imply
\be\label{s1234-bd}
\frac{1}{\sqrt\la}s_1 \leq \frac12,\quad \frac d\la s_2\leq \frac12,\quad s_3\frac{d}{\sqrt{\la}} \leq \sqrt{\CL},\quad s_4\frac{d^2}{\la} \leq \CL.
\ee
We now apply Theorem~\ref{thm:lapexplicit} with $\g=1/\pa_y\z(\cdot,0)$ and $\z=\z(\cdot,0)$. Note that $|\g(0_{d+1})|=1$, and $c_{0,\g}(R)=\sup_{x\in\us(R\e)}|1/\pa_y\z(x,0)|\leq2$, since the assumptions of Corollary~\ref{corr:genprelim} are satisfied. We obtain
\bs\label{rem1lapg1}
|\Rem_1^\Lap&(1/\pa_y\z(\cdot,0))|\\
 \les_{R,\CL} &\bigg\{d^{-1}\sup_{\us(R\e)}\|\nabla_x^2(1/\pa_y\z(\cdot,0))\|_{\HH} + d^{-1}\sup_{\us(R\e)}\|\nabla_x(1/\pa_y\z(\cdot,0))\|_{\HH}\big(s_3+\frac{d}{\sqrt\la} s_4\big)\\
 &+s_3^2+s_4\bigg\}\frac{d^2}{\la} + \left(1+ \frac{d}{\sqrt{\la}}|\nabla_x(1/\pa_y\z(0_{d+1}))\|_{\HH}\right)e^{-(R-1)^2d/4}.
\es The suppressed constant depends on $R$ and $\CL$ because~\eqref{Rc34eps} is satisfied for some $\Clap=C(R, \CL)$. Now, the first two $x$ derivatives of $1/\pa_y\z$ are given by
\bs\label{nablaxx}
\nabla_x(1/\pa_y\z) &= -\frac{\nabla_x\pa_y\z}{(\pa_y\z)^2},\\
\nabla_x^2(1/\pa_y\z) &=  -\frac{\nabla_x^2\pa_y\z}{(\pa_y\z)^2} + 2\frac{(\nabla_x\pa_y\z)^{\otimes2}}{(\pa_y\z)^3}.
\es Again using that $\pa_y\z(x,0)>1/2$ for $x\in\us(R\e)$, the first line of~\eqref{nablaxx} gives
\be\label{rem1lapg1-1}
\|\nabla_x(1/\pa_y\z(0,0))\|_{\HH}\les \|\nabla_x\pa_y\z(0,0)\|_H= s_1.
\ee
Similarly,
\be\label{rem1lapg1-2}
\sup_{x\in\us(R\e)}\|\nabla_x(1/\pa_y\z(x,0))\|_{\HH} \les \sup_{x\in\us(R\e)}\|\nabla_x\pa_y\z(x,0)\|_{\HH} \les_R\, s_1 + \e \,s_2,\ee
and
\bs\label{rem1lapg1-3}
\sup_{x\in\us(R\e)}\|\nabla_x^2&(1/\pa_y\z(x,0))\|_{\HH} \les \sup_{x\in\us(R\e)}\|\nabla_x^2\pa_y\z(x,0)\|_{\HH}+\sup_{x\in\us(R\e)}\|\nabla_x\pa_y\z(x,0)\|_{\HH}^2\\
\les_R&\;s_2 + (s_1 + \e \,s_2)^2 \les\; (1+\e^2s_2)s_2 + s_1^2 \les_{R} s_2+s_1^2.
\es 
Let
\be
T:= d^{-1}(s_2+s_1^2)+ d^{-1} (s_1+\e \,s_2)\big(s_3+\frac{d}{\sqrt\la} s_4\big)+s_3^2+s_4.\ee Substituting the bounds~\eqref{rem1lapg1-1},~\eqref{rem1lapg1-2},~\eqref{rem1lapg1-3} into~\eqref{rem1lapg1} gives
\bs\label{Rem1Lkappa}
|\Rem_1^\Lap(\g_1)|&\les_{R} \; Td^2/\la + (1+s_1d/\sqrt\la)de^{-(R-1)^2d/4}\\
&\les_{R}Td^2/\la + de^{-(R-1)^2d/4}\\
&\les_{R}Td^2/\la + \la^{-M}.
\es 
The last line follows using $R\geq4(1+\log d/d + (M+1/2)(\log\la)/d)$, which gives $ de^{-(R-1)^2d/4}\leq de^{-Rd/4}\leq\la^{-M}$. We now simplify $T$. Rearranging terms and applying the bounds~\eqref{s1234-bd}, we have
\bs\label{Tbd}
T &= d^{-1}s_2\left(1+\e (s_3+(d/\sqrt{\la})s_4)\right) + \left(d^{-1}s_1^2+d^{-1}s_1s_3+s_3^2\right) + s_4(1+s_1/\sqrt{\la})\\
&\les_{\CL} d^{-1}s_2 +  \left(d^{-1}s_1^2+d^{-1}s_1s_3+s_3^2\right) + s_4.
\es For the term in parentheses in the middle, note that $d^{-1}s_1s_3 \les (d^{-1}s_1)^2+s_3^2 \leq d^{-1}s_1^2+s_3^2$. Substituting this into~\eqref{Tbd}, we obtain
\be\label{Tbd2}
T\les_{\CL} d^{-1}(s_1^2+s_2) + s_3^2+s_4.
\ee Substituting~\eqref{Tbd2} into~\eqref{Rem1Lkappa} concludes the proof.
\end{proof}

\section{Proofs from Section~\ref{sec:gauss}}\label{app:gauss}
\subsection{Connection to Section~\ref{sec:gen}}\label{app:gauss:2}
Recall that $z(x,t)=\frac12\|x\|^2+\frac12(t+1)^2$ and $D$ satisfies Assumption~\ref{assume:D}. We show that $z,D$ satisfy Assumption~\ref{assume1}. As discussed below Assumption~\ref{assume:D}, it remains only to show $\pa_tz(u)>0$ for all $u\in\U(\rho_0)$. Note that $\pa_tz(x,t)  = t+1$. Next, $(x,t)\in\U(\rho_0)$ implies $x\in\us(\rho_0)\subseteq\Omega$, and therefore $t\geq\psi(x)\geq -\max_{x\in\us(\rho_0)}|\psi(x)| \geq -\delta_2(\rho_0)\rho_0^2/2$, by a Taylor expansion. Thus $\pa_tz(x,t)  \geq 1 - \delta_2(\rho_0)\rho_0^2/2>0$, since we assumed $\delta_2(\rho_0)\rho_0^2<2$ in Assumption~\ref{assume:D}.

\subsection{Elementary calculations}\label{app:gauss:elem}
We first take note of a few derivatives of $\z$. We have
\bs\label{w-quadz-genpsi}
\z(x,y) &= z(x,\psi(x)+y) = \frac12\|x\|^2 + \frac12(\psi(x)+y+1)^2.
\es Therefore
\bs\label{gen-psi-derivs}
\pa_y\z(x,y) &= \psi(x)+y+1,\\
\nabla_x^\ell\pa_y\z(x,y)&=\nabla^\ell\psi(x),\qquad\ell\geq1,\\
\omega_{\ell,1}(\rho,0)&=\delta_\ell(\rho),\qquad\ell\geq1.
\es Next, note that $\nabla_x^\ell\z(x,y)=\frac12\nabla_x^\ell(\psi(x)+y+1)^2$ for all $\ell\geq3$. Taking $y=0$ and omitting the $x$ argument, we have
\bs
\frac12\nabla_x(\psi+1)^2&=(\psi+1)\nabla\psi,\\
\frac12\nabla_x^2(\psi+1)^2&=(\psi+1)\nabla^2\psi +\nabla\psi\nabla\psi^\top,\\
\nabla_x^3\z(\cdot,0)=\frac12\nabla_x^3(\psi+1)^2&=(\psi+1)\nabla^3\psi +3\mathrm{Sym}\left(\nabla\psi\otimes\nabla^2\psi \right),\\
\nabla_x^4\z(\cdot,0)=\frac12\nabla_x^3(\psi+1)^2&=(\psi+1)\nabla^4\psi +\mathrm{Sym}\left(3\nabla^2\psi\otimes\nabla^2\psi +4\nabla\psi\otimes\nabla^3\psi\right).
\es Here, for a $d\times d\times d$ tensor $T$, the tensor $\mathrm{Sym}(T)$ is its symmetrization, given by 
\be\label{sym}\mathrm{Sym}(T)_{ijk} = \frac16(T_{ijk}+T_{ikj}+T_{jik}+T_{jki}+T_{kij}+T_{kji}).\ee The definition of $\mathrm{Sym}(T)$ for a $d\times d\times d\times d$ tensor $T$ is analogous. It is then straightforward to show that in general,
\be
\|\nabla_x^\ell\z(x,0)\|_H \les \|\nabla^\ell\psi(x)\|_H + \sum_{k=0}^\ell\|\nabla\psi^k(x)\|_H\|\nabla^{\ell-k}\psi(x)\|_H,\qquad\ell\geq3.
\ee Thus
\bs\label{w-deltas-gauss}
\omega_{3,0}(\rho,0)&\leq \delta_3(\rho)(1+\delta_0(\rho))+3(\delta_1\delta_2)(\rho),\\
\omega_{\ell,0}(\rho,0)&\les \delta_\ell(\rho)(1+\delta_0(\rho))+\sum_{k=1}^{\ell-1}(\delta_k\delta_{\ell-k})(\rho),\qquad\ell\geq4.
\es Note also that, since $\psi(0)=0$, $\nabla\psi(0)=0$, we have
\be\label{del12tay}
\delta_0(\rho)\leq \frac{\rho^2}{2}\delta_2(\rho),\qquad \delta_1(\rho)\leq \rho\delta_2(\rho).
\ee
\\

\subsection{Proposition proofs and omitted details in Example~\ref{ex:quart}}\label{app:gauss:pf}
\begin{proof}[Proof of Proposition~\ref{prop:gauss:1}]
We check the conditions of Proposition~\ref{main:cvx}, with $g\equiv1$, $z$ as in~\eqref{z-gauss}, and $D$ as in~\eqref{D-Dla}.  We showed in Section~\ref{app:gauss:2} that Assumption~\ref{assume1} is satisfied, and we supposed in Assumption~\ref{assume:D} that $D$ is star-shaped. The function $z(x,t)=\|x\|^2/2+(t+1)^2/2$ is obviously convex. 

The conditions~\eqref{D} are satisfied thanks to~\eqref{cond:gauss} and the fact that $\delta_2(R\e)\leq C_2$, by~\eqref{deltas:gauss}. The condition $R^4d^2/\la \leq \CR$ from~\eqref{BC} is satisfied with $\CR=C(C_0, N)$, using~\eqref{cond:gauss} and the definition of $R$. Next we check~\eqref{forq},~\eqref{forw}, which entails computing the constants $C_{\z,\ell}$; we will then return to check the first bound in~\eqref{BC} and the bound~\eqref{E}, which depend on the values of $C_{\z,0},C_{\z,1},C_{\z,3}$.

The conditions~\eqref{forq} are automatically satisfied since $\g\equiv1$. To check~\eqref{forw}, note using~\eqref{del12tay}, the fact that $R\e\delta_2(R\e)\leq R\e C_2\leq 1/2$, and the fact that $R\e\leq 1/3$, that
\be\label{de12tay}
\delta_0(R\e) \leq \frac{(R\e)^2}{2}\delta_2(R\e) \leq 1/12, \qquad \delta_1(R\e)\leq R\e\delta_2(R\e) \leq 1/2.
\ee Now, recall from~\eqref{gen-psi-derivs} that $\pa_y\z(x,y)=\psi(x)+1+y$. Therefore, $\pa_y^2\z\equiv1$ and $\pa_y^j\z\equiv0$ for all $j\geq3$. Thus $\omega_{0,2}(R\e,R\e)=1$ and $\omega_{0,j}(R\e,(R\e)^2)=0$ for $j\geq3$, so the first line of~\eqref{forw} is satisfied with $C_{\z,0}=1$. Next, consider $\ell=1$, $j=1,\dots, N$. We have $\nabla_x\pa_y\z(x,y)=\nabla\psi(x)$ and therefore $\nabla_x\pa_y^j\z\equiv0$ for all $j\geq2$. Since $\delta_1(R\e)\leq1/2$ by~\eqref{de12tay}, we conclude that for $\ell=1$, the required bound in the second line of~\eqref{forw} is satisfied with $C_{\z,1}=1/2$. Next, consider $\ell=2$, $j=1,\dots, N$. Since $\delta_2(R\e)\leq C_2$, by the same reasoning as for $\ell=1$ we get that for $\ell=1$, the required bound is satisfied with $C_{\z,2}=C_2$. Next, let $\ell=3$, and consider $j=0,\dots, \lfloor N+1-\ell/2\rfloor$. For $j=0$ we have using~\eqref{w-deltas-gauss} and~\eqref{de12tay} that
$$
\omega_{3,0}(R\e,0)\leq \delta_3(R\e)(1+\delta_0(R\e)) + 3(\delta_1\delta_2)(R\e) \leq \frac{13}{12}C_3 +\frac32C_2\leq\frac32(C_3+C_2)
$$ For $j=1$ we have using~\eqref{gen-psi-derivs} that $\omega_{3,1}(R\e,0)= \delta_3(R\e)\leq C_3$. For $j\geq2$ we have $\nabla_x^3\pa_y^j\z\equiv0$, so $\omega_{3,j}(R\e,0)=0$. Thus for $\ell=3$, the required bound is satisfied with $C_{\z,3}=\frac32(C_3+C_2)$. Finally, consider $\ell\geq 4$ and $j=0,\dots, \lfloor N+1-\ell/2\rfloor$. Using~\eqref{w-deltas-gauss} and~\eqref{de12tay} we have
\be\label{Cwell}
\omega_{\ell,0}(R\e,0)\les \delta_\ell(R\e)(1+\delta_0(R\e))+\sum_{k=1}^{\ell-1}(\delta_k\delta_{\ell-k})(R\e)\leq C(C_2,\dots, C_\ell).
\ee For $j=1$ we have using~\eqref{gen-psi-derivs} that $\omega_{\ell,1}(R\e,0)= \delta_\ell(R\e)\leq C_\ell$. For $j\geq2$ we have $\omega_{\ell,j}(R\e,0)=0$. Thus we can take $C_{\z,\ell}=C(C_2,\dots, C_\ell)$. Thus all the conditions of~\eqref{forw} are verified. We return to the first condition in~\eqref{BC} and the bound~\eqref{E}.  Substituting $C_{\z,0}=1,C_{\z,1}=1/2$, we have $1/(2C_{\z,0}+2C_{\z,1}) = 1/3\geq R\e$, by~\eqref{cond:gauss}. Thus~\eqref{BC} is verified. Also, we have $3/C_{\z,3} = 2/(C_2+C_3) \geq 2/(4C_2+C_3)\geq R\e$, as desired.

We now use~\eqref{gaussprob-prelim} and then~\eqref{expansion}. We obtain
\bs
\mathbb P\left(\mathcal N\left(0_{d+1}, \,I_{d+1}\right)\in  D_\la\right) &=(\la/2\pi)^{(d+1)/2}\int_{D}e^{-\la z(u)}\dd u \\
&= (\la/2\pi)^{(d+1)/2}\frac{(2\pi/\la)^{d/2}e^{-\la z(0)}}{\la(\det H)^{1/2}}\left(1+\sum_{m=1}^{N-1}a_m\left(\frac{d^2}{\la}\right)^{m}+ \Rem_N\right)\\
&=\frac{e^{-\la/2}}{\sqrt{2\pi\la\det H}}\left(1+\sum_{m=1}^{N-1}a_m\left(\frac{d^2}{\la}\right)^{m}+ \Rem_N\right),
\es 
with
$$\max_{m=0,\dots,N-1}|a_m|\leq C(C_{\z,0},\dots, C_{\z,2N+2}, N),\qquad |\Rem_L|\leq C(C_{\z,0},\dots, C_{\z,2N+2}, N, \CR)(d^2/\la)^{N}.$$
In turn, the $C_{\z,\ell}$'s depend on $C_2,\dots,C_{2N+2}$ and $\CR$ depends on $C_0$ and $N$, as we have shown above. The proof of~\eqref{a1gengauss} is given in Appendix~\ref{app:term2}. This concludes the proof.
\end{proof}
 \begin{lemma}\label{lma:omz}
Let $z(x,t)=\|x\|^2/2+(t+1)^2/2$ and $\z(x,y)=z(x,y+\psi(x))$. Under the conditions~\eqref{rho1-gauss3},~\eqref{eq:assume:gauss2}, we have
\bs
&R\e(\omega_{1,1}+\omega_{0,2}(R\e,R\e)) + (R\e)^2\omega_{2,1}(R\e,0)\leq1/2,\\
&R\e\omega_{3,0}(R\e,0) \leq 3,\quad \omega_{3,0}\leq\delta_3,\\
&\omega_{3,0}^2+\omega_{4,0}(R\e,0)\les\delta_4(R\e)+ \delta_2^2 +\delta_3^2.
\es
\end{lemma}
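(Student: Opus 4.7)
The plan is to exploit the explicit Gaussian structure recorded in Appendix~\ref{app:gauss:elem}. From \eqref{gen-psi-derivs}, $\pa_y\z(x,y)=\psi(x)+y+1$, so $\pa_y^2\z\equiv 1$ and $\nabla_x^\ell\pa_y\z=\nabla^\ell\psi$ for $\ell\ge 1$. Combined with $\psi(0_d)=0$ and $\nabla\psi(0_d)=0_d$, this immediately identifies $\omega_{1,1}=0$, $\omega_{0,2}(R\e,R\e)=1$, $\omega_{2,1}(R\e,0)=\delta_2(R\e)$, and (using the formula for $\nabla_x^3\z(\cdot,0)$ in Appendix~\ref{app:gauss:elem} evaluated at $x=0_d$, where the two cross terms involving $\psi(0_d)$ and $\nabla\psi(0_d)$ vanish) $\omega_{3,0}=\delta_3$. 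In particular, $\omega_{3,0}\leq\delta_3$ is immediate.

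With these identifications, the first inequality reduces to $R\e+(R\e)^2\delta_2(R\e)\leq 1/2$, which follows directly from $R\e\leq 1/3$ and $R\e\delta_2(R\e)\leq 1/3$ in \eqref{rho1-gauss3}. For the bound $R\e\omega_{3,0}(R\e,0)\leq 3$, I would invoke \eqref{w-deltas-gauss},
\[
\omega_{3,0}(R\e,0)\leq\delta_3(R\e)(1+\delta_0(R\e))+3\delta_1(R\e)\delta_2(R\e),
\]
and apply the Taylor bounds $\delta_0(R\e)\leq(R\e)^2\delta_2(R\e)/2$ and $\delta_1(R\e)\leq R\e\delta_2(R\e)$ from \eqref{del12tay} together with $R\e\delta_2(R\e)\leq 1/3$ and $R\e\delta_3(R\e)\leq 1/3$. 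Multiplying through by $R\e$ then yields a bound well below $3$.

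For the third inequality, $\omega_{3,0}^2=\delta_3^2$ is immediate. For $\omega_{4,0}(R\e,0)$ I would apply \eqref{w-deltas-gauss} at $\ell=4$ to obtain $\omega_{4,0}(R\e,0)\les\delta_4(R\e)+\delta_2(R\e)^2+\delta_1(R\e)\delta_3(R\e)$, then pass from $R\e$-sup norms to pointwise values at the origin via the cascade $\delta_2(R\e)\leq\delta_2+R\e\delta_3(R\e)$ and $\delta_3(R\e)\leq\delta_3+R\e\delta_4(R\e)$. The cross term $\delta_1(R\e)\delta_3(R\e)$ is handled similarly after bounding $\delta_1(R\e)\leq R\e\delta_2(R\e)\leq 1/3$.

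The main obstacle is the residual $(R\e)^2\delta_4(R\e)^2$ appearing when squaring $\delta_2(R\e)$ or $\delta_3(R\e)$. To absorb it I would write $(R\e)^2\delta_4(R\e)=R^2(d/\la)\delta_4(R\e)\leq R^2 c_\psi/d$ via \eqref{eq:assume:gauss2}, then use $R=24+6(\log\la)/d$ to conclude $R^2/d\les 1$ under the implicit assumption that $(\log\la)/d$ is controlled. This reduces $(R\e)^2\delta_4(R\e)^2\les\delta_4(R\e)$ and ultimately yields $\delta_2(R\e)^2,\delta_3(R\e)^2\les\delta_2^2+\delta_3^2+\delta_4(R\e)$, with the implicit constant depending on $c_\psi$ and $(\log\la)/d$—matching the constant dependence advertised in Proposition~\ref{prop:gauss:3}.
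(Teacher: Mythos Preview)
Your proposal is correct and follows essentially the same approach as the paper: identify the relevant $\omega_{\ell,j}$ via \eqref{gen-psi-derivs}, then combine \eqref{w-deltas-gauss} with \eqref{del12tay} and \eqref{rho1-gauss3}. The one point of divergence is your handling of the residual $(R\e)^2\delta_4(R\e)^2$ in the last inequality. You route through \eqref{eq:assume:gauss2} and the explicit form of $R$ to get $(R\e)^2\delta_4(R\e)\les R^2c_\psi/d$, which works but makes the implicit constant depend on $c_\psi$ and $(\log\la)/d$. The paper avoids this detour entirely: writing $\delta_3(R\e)^2\le\delta_3(R\e)\bigl(\delta_3+R\e\delta_4(R\e)\bigr)$ and using $R\e\delta_3(R\e)\le 1/3$ from \eqref{rho1-gauss3} on the second summand gives $\delta_3(R\e)^2\le\delta_3\delta_3(R\e)+\tfrac13\delta_4(R\e)$, and one more iteration (or Young's inequality on $\delta_3\delta_3(R\e)$) yields $\delta_3(R\e)^2\le\delta_3^2+\tfrac23\delta_4(R\e)$ with an absolute constant. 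Then $\delta_2(R\e)^2\le 2\delta_2^2+2(R\e)^2\delta_3(R\e)^2\le 2\delta_2^2+2\delta_3(R\e)^2$ finishes the job. Your version is adequate for the downstream application in Proposition~\ref{prop:gauss:3}, where the constant is already allowed to depend on $c_\psi$ and $(\log\la)/d$, but the paper's argument matches the absolute-constant convention of $\les$ in the lemma statement.
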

\begin{proof}Using~\eqref{gen-psi-derivs}, we have $\omega_{1,1}=0$, $\omega_{0,2}(R\e,R\e)=1$, $\omega_{2,1}(R\e,0)=\delta_2(R\e)$. Therefore, using~\eqref{rho1-gauss3}, we have
$R\e(\omega_{1,1}+\omega_{0,2}(R\e,R\e)) + (R\e)^2\omega_{2,1}(R\e,0)=R\e(1+R\e\delta_2(R\e))\leq \frac12$. Next, using~\eqref{del12tay}, we have $\delta_0(R\e) \leq (R\e)^2\delta_2(R\e)/2 \leq 1/12$ and $\delta_1(R\e)\leq R\e\delta_2(R\e)$. Thus by~\eqref{w-deltas-gauss} and~\eqref{rho1-gauss3} we have
\be
R\e\omega_{3,0}(R\e,0) \leq R\e\delta_3(R\e)(1+\delta_0(R\e)) + 3(R\e\delta_2(R\e))^2 \leq \frac{13}{12}+ \frac 34 \leq 3.
\ee
Next, using~\eqref{w-deltas-gauss} and noting $\delta_1(0)=\|\nabla\psi(0)\|_H=0$ and $\delta_0(0)=\psi(0)=0$, we have
$$\omega_{3,0}\leq \delta_3(0)(1+\delta_0(0))+3(\delta_1\delta_2)(0) = \delta_3(0)=\delta_3.$$ Using~\eqref{w-deltas-gauss} with $\ell=4$ and that $\delta_0(R\e)\leq 1/12$, we have
\bs
\omega_{4,0}(R\e,0)+\omega_{3,0}^2 &\les\delta_4(R\e) + (\delta_1\delta_3)(R\e) + \delta_2(R\e)^2+\delta_3^2\\
&\les \delta_4(R\e) + R\e\delta_2(R\e)\delta_3(R\e) + \delta_2(R\e)^2+\delta_3^2\\
&\les \delta_4(R\e) + \delta_2(R\e)^2+\delta_3(R\e)^2 \les \delta_4(R\e)+\delta_2^2+\delta_3^2.
\es To get the second line we used~\eqref{del12tay}. To get the last line we used that $R\e\leq1$, and a Taylor expansion to bound $\delta_2(R\e),\delta_3(R\e)$.
 \end{proof}

\begin{proof}[Proof of Proposition~\ref{prop:gauss:3}]We check the conditions of Proposition~\ref{L2:cvx}, with $M=1$, $z(x,t)=\frac12\|x\|^2+\frac12(t+1)^2$. We showed in Section~\ref{app:gauss:2} that Assumption~\ref{assume1} is satisfied, and we supposed in Assumption~\ref{assume:D} that $D$ is star-shaped. The function $z$ is obviously convex. 

The conditions~\eqref{D} are satisfied by~\eqref{rho1-gauss3}. Lemma~\ref{lma:omz} and~\eqref{eq:assume:gauss2} show the condition $R\e\omega_{3,0}(R\e,0)\leq3$ and~\eqref{Rwc34eps} are satisfied, with $\CL=C(c_\delta)$. We conclude that
\be
\mathbb P\left(\mathcal N\left(0_{d+1}, \,I_{d+1}\right)\in  D_\la\right) =(\la/2\pi)^{(d+1)/2}\int_{D}e^{-\la z(u)}\dd u = \frac{e^{-\la/2}(1+\Rem_1)}{\sqrt{2\pi\la\det H}}.
\ee
where $\Rem_1$ is bounded as in~\eqref{bd-Rem2-g1}. Thus it remains to simplify the bound~\eqref{bd-Rem2-g1} on $\Rem_1$. Using Lemma~\ref{lma:omz} and the derivative calculations in~\eqref{gen-psi-derivs}, this remainder bound becomes
\bs
|\Rem_1| \les_{R,c_\delta}& \,\left(\delta_4(R\e) + \delta_3^2+\delta_2^2\right)\frac{d^2}{\la} + \delta_2(R\e)\frac{d}{\la}+\frac{1}{\la}\\
\les_{R,c_\delta} &\,\left(\delta_4(R\e) +\delta_3^2+ \delta_2^2 \right)\frac{d^2}{\la} + \left(\delta_2+\e\delta_3+(d/\la)\delta_4(R\e)\right)\frac{d}{\la}+\frac{1}{\la}.
\es To get the second line, we used that $\delta_2(R\e) \leq\delta_2+R\e\delta_3+(R\e)^2\delta_4(R\e)\les_R \delta_2+\e\delta_3+\e^2\delta_4(R\e)$, by a Taylor expansion. Finally, we collect like terms. We have
\bs\label{d24}
\frac{d^2}{\la}\delta_4(R\e) + \frac{d^2}{\la^2}\delta_4(R\e)&\les \frac{d^2}{\la}\delta_4(R\e),\\
\frac{1}{\la}\left(1+d(\delta_2+\e\delta_3) + d^2(\delta_2^2+\delta_3^2)\right) &\les\frac{1}{\la}+\frac{d^2}{\la}(\delta_2^2+\delta_3^2).
\es Summing the terms on the righthand sides of~\eqref{d24} gives
\be
|\Rem_1| \les_{R,c_\delta} \left(\delta_4(R\e) + \delta_3^2 + \delta_2^2\right)\frac{d^2}\la + \frac1\la.
\ee
Finally, since $R=24+6(\log\la)/d$, the dependence on $R$ can be written as a dependence on $(\log\la)/d$. This concludes the proof.
\end{proof}

\noindent\textbf{Details from Example~\ref{ex:quart}.} Recall that $\psi(x)=\frac{1}{4!}\langle S, x^{\otimes4}\rangle$ and $H=I_d+\nabla^2\psi(0_d)=I_d$. Note that $\nabla^2\psi(x)=\frac12\langle S, x^{\otimes2}\rangle$ and $\nabla^3\psi(x)=\langle S, x\rangle$. Thus 
\be\label{delquart}\delta_2(\rho)=\|S\|\rho^2/2,\quad\delta_3(\rho)=\|S\|\rho,\quad\delta_4(\cdot)\equiv\|S\|,\ee and $\delta_\ell(\cdot)\equiv0$ for all $\ell\geq5$. Suppose that Assumption~\ref{a14} is satisfied for some $\rho_0\leq\frac13(1\vee\|S\|)^{-1}$, in which case $\delta_2(\rho_0)\rho_0^2/2<2$ from Assumption~\ref{assume:D} is also satisfied. Next, suppose $R\e \leq  \rho_0$ and $d^2/\la\leq C_0$. Then, using the bound on $\rho_0$ and~\eqref{delquart}, we see that~\eqref{deltas:gauss} is satisfied with $C_2=\|S\|/18$, $C_3=\|S\|/3$, $C_4=\|S\|$, and $C_\ell=0$ for all $\ell\geq5$. Then $R\e\leq\rho_0$ implies~\eqref{cond:gauss} is satisfied, since $\rho_0\leq \min(1/3, 1/\|S\|, \rho_0) \leq \min(1/3, 2/(4C_2+C_3), \rho_0)$.

 Finally, note that~\eqref{a1gengauss} gives $d^2a_1=-1-\frac18\langle S, I_d\otimes I_d\rangle$. We conclude by Proposition~\ref{prop:gauss:1} that under~\eqref{Re-quart}, the expansion~\eqref{P-quart} and remainder bounds~\eqref{amquart} hold. 

\begin{proof}[Proof of Proposition~\ref{prop:lb}]Recall from Example~\ref{ex:quad2} that $\psi(x)=\|x\|^2/2$ and the boundary parameterization holds globally, so that $D=\{(x,t)\,:t\geq\|x\|^2/2\}$. Also, recall that $\delta_2(\cdot)\equiv\delta_2=1/2$. We check the conditions of Proposition~\ref{prop:gauss:1} with $N=2$. Thus we define $R=24+2(1+2N)(\log\la)/d=24+10(\log\la)/d$. It is clear that $D$ satisfies Assumption~\ref{assume:D}; in particular, since $\delta_2(\cdot)\equiv1/2$, we can take $\rho_0=2$ to satisfy $\delta_2(\rho_0)\rho_0^2<2$. Also, note that $\delta_\ell\equiv0$ for all $\ell\geq2$. Thus~\eqref{deltas:gauss} holds with $C_2=1/2$ and $C_\ell=0$ for all $\ell\geq3$. The bounds~\eqref{cond:gauss} then hold, with $C_0=1$, since we assumed $d^2\leq\la$ and $R\e\leq 1/3$, and we have $\min(1/3, 2/(4C_2+C_3), \rho_0) = 1/3$. Thus Proposition~\ref{prop:gauss:1} with $N=2$ applies, and we obtain
\bs\label{expansion-gauss-tight}
\mathbb P(Z/\sqrt{\la}\in D_0) = \frac{e^{-\la/2}}{\sqrt{2\pi\la\det(2I_d)}}(1+\Rem_1) = \frac{e^{-\la/2}2^{-d/2}}{\sqrt{2\pi\la}}(1+\Rem_1),
\es 
where
\bs\label{R1}
\Rem_1& =\frac{d^2}{\la}a_1 + \Rem_2, \\
|a_1| &\leq C, \quad |\Rem_2|\leq C\left(\frac{d^2}{\la}\right)^2.
\es for an absolute constant $C$ (since $N$, $\CR$, and the $C_\ell$'s are all absolute).~\eqref{R1} implies
\be\label{Cda1}
\frac{d^2}{\la}|a_1| - C\left(\frac{d^2}{\la}\right)^2 \leq |\Rem_1| \leq C\frac{d^2}{\la}.
\ee Now, we apply~\eqref{a1quad}, recalling that $b_i$ are the eigenvalues of $B$, where $\psi(x)=\frac12x^\top Bx$. In our case $B=I_d$, so $b_i=1$ for all $i$.  Thus~\eqref{a1quad} gives
\be
-d^2a_1=1 + \frac12\cdot\frac{d}{2} +\frac18(d/2)^2 + \frac 14\cdot\frac d4 = \frac{d^2}{32} + \frac{5}{16}d + 1,
\ee  and therefore $|a_1|\geq 1/32$. The proof follows from this,~\eqref{Cda1}, and the fact that $d^2/\la$ is assumed small enough.
\end{proof}
\section{Omitted proofs from Appendix~\ref{app:outline:TV} and Section~\ref{sec:sample} }\label{app:sec:sample}
Recall the notation $\us:=\us(R\e), I:= [0,(R\e)^2]$ from Appendix~\ref{app:outline:TV}, and that we assume without loss of generality that $z(0_{d+1})=0$. Recall from Lemma~\ref{lma:mu} that the conditions of Proposition~\ref{prop:explicitL2} and~\ref{L2:cvx}, respectively, imply the conditions of Corollary~\ref{corr:genprelim}. In turn, the conditions of the corollary imply the conditions of Lemmas~\ref{lma:D123}-\ref{lma:D3}. Thus we are justified in applying these lemmas, which we do below without further comment. 
\begin{proof}[Proof of Lemma~\ref{lma:AB}]
Note that $\pi\vert_D(u) = a(u)\ind_D(u)/A$ and $\hat\pi(u)=b(u)\ind_{\hat D}(u)/B$. Now, we observe that
\bs\label{sum-pf}
\mathrm{TV}(\pi\vert_D, \hat\pi) =&\frac12\int_{\br^{d+1}}|\pi\vert_D(u)-\hat\pi(u)|\dd u \\
 \leq&\int_{\br^{d+1}}|a(u)\ind_D(u)/A-b(u)\ind_{\hat D}(u)/B|\dd u\\
\leq &|A^{-1}-B^{-1}|\int_{\br^{d+1}}a(u)\ind_D(u)\dd u + B^{-1}\int_{\br^{d+1}} |a(u)\ind_D(u)-b(u)\ind_{\hat D}(u)|\dd u \\
= &|1-A/B| + \frac1B\int_{\br^{d+1}} |a(u)\ind_D(u)-b(u)\ind_{\hat D}(u)|\dd u\\
\leq &|1-A/B| + \frac1B\int_{D_2\cup D_3}a(u)\dd u + \frac1B\int_{\hat D\setminus D_1}b(u)\dd u \\
&+\frac1B\int_{D_1\setminus \hat D}a(u)\dd u + \frac1B\int_{D_1}|a(u)-b(u)|\dd u,
\es as desired. Here, we used that $D\setminus D_1\subset D_2\cup D_3$, by Lemma~\ref{lma:D123}.
\end{proof}
\begin{proof}[Proof of Lemma~\ref{lma:tailAB}]
Note that $B=\mathcal Z/\la$, recalling from~\eqref{Z-rho-def} the definition of $\mathcal Z$. Thus using Lemmas~\ref{lma:D2},~\ref{lma:D3}, we have
\bs
 \frac1B\int_{D_2\cup D_3}a(u)du&= \frac{\la}{\mathcal Z}\int_{D_2\cup D_3}e^{-\la z(u)}\dd u \\
 &\leq \mu e^{-Rd/2} + d\sqrt{\la}e^{d-R d/4} \les (\mu\vee1)d\sqrt{\la}e^{d-R d/4}.
\es
Using that $R\geq 4(2+(M+1/2)(\log\la)/d)$, we have $Rd/4 -d-\log d \geq (M+1/2)\log\la$, and therefore $de^{d-Rd/4}\leq \la^{-M-1/2}$. Also, we have from Lemma~\ref{lma:mu} that $\mu\leq C(R, \CL)$. We conclude~\eqref{agbd}. Next, note that
\bs\label{hatD}
\hat D\setminus D_1 \subseteq \{(x,t)\in&\br^{d+1}\,:\, x\in\us,\, t-\psi(x)\geq(R\e)^2,\, t\geq\hat\psi(x)\}\\
&\;\cup \;\{(x,t)\in\br^{d+1}\,:\, x\notin\us, t\geq\hat\psi(x)\}
\es
Furthermore, using the assumption that $R\e\delta_3(R\e)\leq3$, we have
\be\label{supus}\sup_{x\in\us}|\psi(x)-\hat\psi(x)|\leq \frac16(R\e)^3\delta_3(R\e)\leq (R\e)^2/2.\ee Therefore, the first set on the righthand side of~\eqref{hatD} is contained within the set $\{(x,t)\in\br^{d+1}\,:\, t-\hat\psi(x)\geq(R\e)^2/2\}$. Thus
\bs\label{1BE}
 \frac1B\int_{\hat D\setminus D_1}b(u)\dd u \leq  &\frac{\sqrt{\det H}}{(2\pi/\la)^{d/2}}\int_{\br^d}e^{-\frac{\la}{2}x^\top Hx}\dd x\int_{t\geq\hat\psi(x)+(R\e)^2/2}\la e^{-\la(t-\hat\psi(x))}\dd t\\
 &+\frac{\sqrt{\det H}}{(2\pi/\la)^{d/2}}\int_{\us^c}e^{-\frac{\la}{2}x^\top Hx}\dd x\int_{t\geq\hat\psi(x)}\la e^{-\la(t-\hat\psi(x))}\dd t\\
 = & e^{-R^2d/2} +  \mathbb P((\la H)^{-1/2}Z \notin \us)\\
  \leq& e^{-R^2d/2}+e^{-(R-1)^2d/2} \les \la^{-M}.
\es To get the second-to-last bound, we used the Gaussian tail inequality~\eqref{gausstail}. To get the last bound we used $(R-1)^2d/2\geq Rd\geq M\log\la$. Thus,~\eqref{agbd2} is satisfied.
\end{proof}
\begin{remark}\label{rk:bB}
Note that $ \frac1B\int_{\hat D\setminus D_1}b(u)\dd u =\hat\pi(\hat D\setminus D_1)$, and furthermore, $\hat\pi(\hat D)=1$. Thus~\eqref{1BE} implies $\hat\pi(\hat D\cap D_1)\geq 1-2e^{-(R-1)^2d/2}$. Finally, recall from Lemma~\ref{lma:D123} that $D_1\subset D$. Then $\hat D\cap D_1\subseteq D$ as well, so $\hat\pi(D)\geq 1-2e^{-(R-1)^2d/2}$, as claimed in Remark~\ref{rk:conc}.
\end{remark}
For the proof of Lemma~\ref{lma:BD}, we first prove the following auxiliary result.
\begin{lemma}\label{lma:t12}
Let $X$ be a random variable. Then
\be
\E[|e^{f(X)}-e^{g(X)}|\ind_{U}(X)] \leq \max_{t_1,t_2\in[0,1]} \E[|f(X)-g(X)|\exp((t_1f+t_2g)(X))\ind_U(X)].
\ee
\end{lemma}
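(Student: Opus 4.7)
The plan is to reduce the lemma to a pointwise inequality obtained from the fundamental theorem of calculus, and then take expectation. Specifically, for each $x$ I would write
\[
e^{f(x)} - e^{g(x)} = \int_0^1 \frac{d}{ds}\, e^{s f(x) + (1-s) g(x)}\, \mathrm ds = (f(x)-g(x)) \int_0^1 e^{s f(x) + (1-s) g(x)}\, \mathrm ds,
\]
so that after taking absolute values one gets
\[
|e^{f(x)} - e^{g(x)}| \leq |f(x)-g(x)| \int_0^1 e^{s f(x) + (1-s) g(x)}\, \mathrm ds.
\]

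Next, I would multiply by $\ind_U(x)$, take expectation in $X$, and swap the order of integration by Fubini (assuming the integrand is integrable, which is implicitly needed wherever this lemma is applied). This gives
\[
\E[|e^{f(X)} - e^{g(X)}|\ind_U(X)] \leq \int_0^1 \E\bigl[|f(X)-g(X)|\, e^{s f(X) + (1-s) g(X)}\ind_U(X)\bigr]\, \mathrm ds.
\]
Bounding the $\mathrm ds$-integral by the supremum of its integrand over $s\in[0,1]$ and setting $t_1 = s^\star$, $t_2 = 1 - s^\star$ at the optimizing $s^\star$, one has $(t_1,t_2)\in[0,1]^2$, so the supremum is dominated by $\max_{t_1,t_2\in[0,1]}\E[|f(X)-g(X)| e^{(t_1 f + t_2 g)(X)}\ind_U(X)]$, which is exactly the claimed bound.

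There is essentially no hard step here; the only subtlety to flag is that the FTC identity requires $f$ and $g$ to be real-valued (so the convex combination $sf+(1-s)g$ is well-defined), and the interchange of expectation and the $\mathrm ds$-integral requires the integrand to be jointly integrable, both of which hold in the context of the intended application (where $f,g$ are deterministic smooth functions evaluated at $X$, and $U$ is a bounded set on which the exponentials are controlled). No further auxiliary results from the paper are needed.
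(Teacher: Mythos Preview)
Your proof is correct and follows essentially the same route as the paper's: interpolate $e^{sf+(1-s)g}$, pull out the factor $(f-g)$, swap expectation with the $\mathrm ds$-integral by Fubini, and bound the integral by the supremum of its integrand, then relax the diagonal $\{(s,1-s)\}$ to the full square $[0,1]^2$. The only cosmetic difference is that the paper writes the pointwise identity as an equality $|e^f-e^g|=|f-g|\int_0^1 e^{tf+(1-t)g}\,\mathrm dt$ rather than an inequality, since the integral is positive.
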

\begin{proof}We have
$$e^f-e^g =e^g \int_0^1(f-g)e^{t(f-g)}\dd t = \int_0^1(f-g)e^{tf+(1-t)g}\dd t$$ and similarly, $e^g-e^f = \int_0^1(g-f)e^{tf+(1-t)g}\dd t$. Thus $|e^f-e^g| =|f-g|\int_0^1e^{tf+(1-t)g}\dd t$. Multiplying by $\ind_U$ and taking the expectation on both sides gives
\bs
\E[|e^{f(X)}-e^{g(X)}|\ind_{U}(X)]&=\E\left[|f(X)-g(X)|\int_0^1e^{(tf+(1-t)g)(X)}\dd t \,\ind_{U}(X)\right] \\
&=\int_0^1\E\left[|f(X)-g(X)|e^{(tf+(1-t)g)(X)}\ind_{U}(X)\right]\dd t\\
&\leq \max_{t_1,t_2\in[0,1]} \E[|f(X)-g(X)|\exp((t_1f+t_2g)(X))\ind_U(X)],
\es as desired.
\end{proof}

\begin{proof}[Proof of Lemma~\ref{lma:BD}]
 Note that $D_1\setminus \hat D_1 = \{(x,t)\in\br^{d+1}\,:\,x\in\us, \psi(x)\leq t \leq \hat\psi(x)\}$. Thus
 \bs\label{1BD}
\frac1B\int_{D_1\setminus \hat D_1}a(u)\dd u &= \frac{\la}{\mathcal Z}\int_{\substack{x\in\us\,:\\ \Delta(x)<0}}\int_{t-\psi(x)\in[0,-\Delta(x)]}e^{-\la z(x,t)}\dd t\dd x\\
&= \frac{\la}{\mathcal Z}\int_{\substack{x\in\us\,:\\ \Delta(x)<0}}\int_{0}^{-\Delta(x)}e^{-\la \z(x,y)}\dd y\dd x\\
&\leq\frac{\la}{\mathcal Z}\int_{\us}\int_{0}^{|\Delta(x)|}e^{-\la \z(x,y)}\dd y\dd x.
\es Next,~\eqref{supus} gives that $|\Delta(x)|\leq (R\e)^2/2$ for all $x\in \us$, and we know $\pa_y\z(x,y)\geq1/2$ for all $x\in\us, 0\leq y\leq (R\e)^2$. Indeed, this is one of the conditions of Lemma~\ref{lma:D1}, which as discussed above, are implied by the conditions of Proposition~\ref{prop:explicitL2} and~\ref{L2:cvx}. Thus, recalling that $q(x) = \z(x,0)-\frac12x^\top Hx$ and $X\sim\mathcal N(0_d, (\la H)^{-1})$, we have
\bs\label{1BD2}
\frac{\la}{\mathcal Z}\int_{\us}\int_{0}^{|\Delta(x)|}&e^{-\la \z(x,y)}\dd y\dd x\leq \frac{1}{\mathcal Z}\int_{\us}e^{-\la\z(x,0)}\int_{0}^{|\Delta(x)|}\la e^{-\la y/2}\dd y\dd x\\
&=2\frac{\sqrt{\det H}}{(2\pi/\la)^{d/2}}\int_{\us}\left(1-e^{-\la|\Delta(x)|/2}\right)e^{-\la\z(x,0)}\dd x \\
&=2\E\left[\left(1-e^{-\la|\Delta(X)|/2}\right)e^{-\la q(X)}\ind_{\us}(X)\right]\\
&\leq \max_{|t_1|,|t_2|\leq1}\E\left[\la|\Delta|e^{\la(t_1\Delta+t_2q)}\ind_{\us}\right],
\es using Lemma~\ref{lma:t12} to get the last line. Next, let $\beta(x,y)=r(x,y)+q(x)= \z(x,y) - x^\top Hx/2 -  y$, and note that $\frac{\la\sqrt{\det H}}{(2\pi/\la)^{d/2}}\exp(-\la x^\top Hx/2 - \la y)$ is the joint density of $(X,Y)$. Therefore,
\bs\label{1BD3}
 \frac1B\int_{D_1}|a(u)-b(u)|\dd u&=\frac1B\int_{\us}\int_{I} |a(x,\psi(x)+y)-b(x,\psi(x)+y)|\dd y \dd x\\
 &= \frac{\la\sqrt{\det H}}{(2\pi/\la)^{d/2}}\int_{\us}\int_{I}\left|e^{-\la\z(x,y)} - e^{-\frac{\la}{2}x^\top Hx - \la(y+\Delta(x))}\right|\dd y\dd x\\
 &=\E\left[\left|e^{-\la\beta(X,Y)} - e^{-\la\Delta(X)}\right|\ind_{\us}(X)\ind_{I}(Y)\right]\\
 &\leq \max_{|t_1|,|t_2|\leq 1}\E\left[\la(|\beta| +|\Delta|)e^{\la(t_1\Delta +t_2\beta)}\ind_{\us\times I}\right]\\
 &\leq \max_{|t_1|,|t_2|\leq 1}\E\left[\la(|q| +|r|+|\Delta|)e^{\la(t_1\Delta +t_2q + |r|)}\ind_{\us\times I}\right]
\es To get the last two lines we used Lemma~\ref{lma:t12} followed by the fact that $\beta(X,Y)=q(X)+r(X,Y)$. Combining~\eqref{1BD},~\eqref{1BD2}, and~\eqref{1BD3} gives the desired result.
\end{proof}
\begin{proof}[Proof of Lemma~\ref{lma:Er2}]
Let $v=\nabla_x\pa_y\z(0_{d+1})$. Note that for $\bar Y\sim \mathrm{Exp}(\la/2)$ and $X\sim\mathcal N(0_d, (\la H)^{-1})$, we have 
\be\label{EYX}\begin{gathered}
\E[\bar Y^2]\les\la^{-2},\quad\E[\bar Y^4]\les\la^{-4},\\
\E[(X^\top v)^2]=\la^{-1}\|H^{-1/2}v\|^2=\la^{-1}\omega_{1,1}^2,\\
\E[\|H^{1/2}X\|^4]\leq 3d^2/\la^2.
\end{gathered}\ee Thus using the bound in Lemma~\ref{lma:rxy}, we have
\bs
\E[ r&(X,\bar Y)^2\ind_{\us\times I}(X,\bar Y)]^{1/2} \\
&\les \E\left[(X^\top v)^2\bar Y^2\right]^{1/2} +\omega_{2,1}(R\e,0)\E\left[\|H^{1/2}X\|^4\bar Y^2\right]^{1/2} +\omega_{0,2}(R\e, (R\e)^2)\E\left[\bar Y^4\right]^{1/2}\\
&\les \la^{-1/2}\omega_{1,1}\times\la^{-1}+ \omega_{2,1}(R\e,0)\frac d\la \times \la^{-1} + \omega_{0,2}(R\e, (R\e)^2)\la^{-2}.
\es Multiplying through by $\la$ concludes the proof.
\end{proof}

\begin{proof}[Proof of Corollary~\ref{corr:gTV}]Let $Z\sim \pi_\la$ and $\bar Z\sim \hat\pi_\la$. Let $U=\la^{-1/2}Z - (0_d,1)$ and $\bar U=\la^{-1/2}\bar Z - (0_d,1)$. Then by the invariance of TV distance to bijective coordinate changes, we have
\be
\mathrm{TV}(\pi_\la,\hat\pi_\la) = \mathrm{TV}(\mathrm{Law}(Z),\mathrm{Law}(\bar Z)) = \mathrm{TV}(\mathrm{Law}(U),\mathrm{Law}(\bar U)).
\ee Let $\pi\vert_D$ be the density of $U$ and $\hat\pi$ be the density of $\bar U$. Since $\pi_\la$ is the restriction of $\mathcal N(0_{d+1},I_{d+1})$ to $D_\la=\sqrt\la D+(0_d,\sqrt\la)$, we have that $\pi\vert_D$ is the restriction of $\mathcal N((0_d,-1), \la^{-1} I_{d+1})$ to $D$. Thus $\pi\vert_D(u)\propto e^{-\la z(u)}\ind_D(u)$, where $z(x,t)=\|x\|^2/2+(t+1)^2/2$. Next, let $\pi_H,\hat\Theta$ be as in~\eqref{pi-H} and note that $\hat\Theta=\la^{-1/2}\hat\Theta_\la-(0_d,1)$. Since the law of $U$ is $\hat\pi_\la=\hat\Theta_\la\#\pi_H$, we conclude the law of $\bar U$ is $\hat\pi = \hat\Theta\#\pi_H$. Thus $\mathrm{TV}(\pi_\la,\hat\pi_\la)=\mathrm{TV}(\pi\vert_D,\hat\pi)$, where $\pi\vert_D,\hat\pi$ are as in Proposition~\ref{prop:TV}, with $z(x,t)=\|x\|^2/2+(1+t)^2/2$. Now, we show in the proof of Proposition~\ref{prop:gauss:3} that the conditions of that proposition imply the conditions of Proposition~\ref{L2:cvx}, with $M=1$ and $\CL=C(c_\delta)$. Thus, we may apply Proposition~\ref{prop:TV}. We have from~\eqref{gen-psi-derivs} and Lemma~\ref{lma:omz} that
\be\label{omegam}\begin{gathered}
\omega_{1,1}=0,\quad\omega_{0,2}(R\e,(R\e)^2)=1,\quad\omega_{2,1}(R\e,0)=\delta_2(R\e),\quad\omega_{3,0} \leq\delta_3,\\
\omega_{4,0}(R\e,0)\leq \omega_{3,0}^2+\omega_{4,0}(R\e,0) \les_{R,c_{\psi}} \delta_4(R\e)+ \delta_2^2 +\delta_3^2.\end{gathered}\ee
Substituting~\eqref{omegam} into~\eqref{eq:TV} 
gives 
\bs
\mathrm{TV}(\pi\vert_D,\hat\pi) &\les_{R,c_{\psi}} \delta_3\frac{d}{\sqrt\la} +\delta_2(R\e)\frac d\la +\frac1\la + \left( \delta_4(R\e)+ \delta_2^2 +\delta_3^2\right)\frac{d^2}{\la}\\\
&\les_{R,c_{\psi}} \delta_3\frac{d}{\sqrt\la}+\delta_4(R\e)\frac{d^2}{\la} +\frac1\la\left(1+d\delta_2(R\e)+d^2\delta_2^2\right).
\es
Next, we have by a Taylor expansion and the fact that $R\e\leq1$ that
$$\delta_2(R\e)\frac d\la \leq (\delta_2+\delta_3+\delta_4(R\e))\frac d\la \leq \delta_2\frac d\la + \delta_3\frac{d}{\sqrt\la}+ \delta_4(R\e)\frac{d^2}{\la}.$$ Substituting this above gives
\bs
\mathrm{TV}(\pi\vert_D,\hat\pi) &\les_{R,c_{\psi}}\delta_3\frac{d}{\sqrt\la}+\delta_4(R\e)\frac{d^2}{\la} +\frac1\la\left(1+d\delta_2+d^2\delta_2^2\right)\\
&\les_{R,c_{\psi}}\delta_3\frac{d}{\sqrt\la}+\delta_4(R\e)\frac{d^2}{\la} +\frac1\la\left(1+d^2\delta_2^2\right).
\es
Finally, since $R=24+6(\log\la)/d$, we can write the suppressed constant in terms of $c_\delta$ and $(\log\la)/d$. This gives the desired bound.
\end{proof}

\section{Derivative bounds} \label{app:deriv}
We prove Lemma~\ref{lma:deriv-change}, which shows that the bounds~\eqref{forq},~\eqref{forw} on $\g$ and $\z$ imply the bounds needed to apply the high-dimensional Laplace expansion of $\int\g_ke^{-\la\z}\dd x$ used in the proof of Proposition~\ref{prop:main}. In this section only, we use $a\les b$ to denote $a\leq Cb$ for some constant $C$ that may depend on $C_{\z,\ell}$, $\ell=0,\dots,2N+2$ and on $C_{\g,\ell}$, $\ell=0,\dots,2N$ from~\eqref{forq} and~\eqref{forw}. We rewrite Lemma~\ref{lma:deriv-change} here for convenience, as a proposition, using the new meaning of $\les$.

\begin{proposition}\label{prop:deriv-change}The derivative bounds~\eqref{forq},~\eqref{forw} of Proposition~\ref{prop:main} imply 
\begin{alignat}{2}
\sup_{x\in\us(R\e)}\|\nabla_x^\ell\z(x,0)\|_H&\les d^{\lceil\ell/2\rceil-2}, &\quad&\ell=3,\dots,2N+2,\label{w0bd}\\
\sup_{(x,y)\in \us(R\e)\times[0,(R\e)^2]}|D_{\z}^{k-1}\g(x,y)|&\les 1,& &k=1,\dots,N+1,\label{qk-0-cond-1}\\
\sup_{x\in\us(R\e)}\|\nabla_x^\ell\g_k(x)\|_H&\les d^{\lceil\ell/2\rceil},& &\ell=0,\dots,2(N+1-k),\;k=1,\dots,N.\label{qk-0-cond-2}
\end{alignat} 
\end{proposition}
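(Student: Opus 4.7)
The backbone of the argument is a structural lemma for $D_\z^{k-1}\g$: proceeding by induction on $k$ using the identity $D_\z f=\pa_y f/\pa_y\z-f\,\pa_y^2\z/(\pa_y\z)^2$, one shows that
\[
D_\z^{k-1}\g=\sum_\alpha c_\alpha\,(\pa_y\z)^{-n_\alpha}\,\pa_y^{j_{0,\alpha}}\g\,\prod_{i=1}^{M_\alpha}\pa_y^{j_{i,\alpha}}\z,
\]
where the integers $n_\alpha, M_\alpha, j_{0,\alpha}\leq k-1, j_{i,\alpha}\leq k$ are bounded in terms of $k$, and the number of summands is $O_k(1)$. Crucially, each summand is \emph{linear} in $\g$, since $D_\z$ is linear in its argument. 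The same recursion gives $\g_k=(\pa_y\z)^{-1}D_\z^{k-1}\g$ the same structural form.

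Item~\eqref{w0bd} is then immediate: for $\ell\geq 3$ one has $\lceil\ell/2\rceil\geq 2$, so $(\lceil\ell/2\rceil-2)_+=\lceil\ell/2\rceil-2$ and the bound follows from~\eqref{forw} at $j=0$. For~\eqref{qk-0-cond-1}, we bound each factor of the structural decomposition uniformly on $\us(R\e)\times[0,(R\e)^2]$: $|\pa_y^{j_0}\g|\les 1$ by the first line of~\eqref{forq}; $|\pa_y^{j'}\z|\les 1$ by the first line of~\eqref{forw} together with the trivial bound $|\pa_y\z|\leq 3/2$; and $(\pa_y\z)^{-1}\leq 2$ using the inequality $\pa_y\z\geq 1/2$ on the strip $\us(R\e)\times[0,R\e]$ established in the proof of Lemma~\ref{lma:mu}. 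Summing over the $O_k(1)$ summands yields~\eqref{qk-0-cond-1}.

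The main task is~\eqref{qk-0-cond-2}. Differentiating the structural form of $\g_k$ by Leibniz, each term in $\nabla_x^\ell\g_k(x)$ (evaluated at $y=0$) is a product of factors of the form $\nabla_x^{\ell_0}\pa_y^{j_0}\g$, $\nabla_x^{\ell_i}\pa_y^{j_i'}\z$ ($j_i'\geq 1$), and $x$-derivatives of powers of $(\pa_y\z)^{-1}$. A short induction reduces the latter to products of $\nabla_x^{\ell_i'}\pa_y\z$ divided by bounded powers of $\pa_y\z$, so it suffices to control products of $\g$- and $\z$-factors whose $x$-derivative orders $\ell_0,\ell_1,\dots,\ell_m$ sum to $\ell$. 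The admissibility $\ell_0+2j_0\leq 2N$ and $\ell_i+2j_i'\leq 2(N+1)$ required for~\eqref{forq},~\eqref{forw} to apply is routine to check given $\ell\leq 2(N+1-k)$, $j_0\leq k-1$, $j_i'\leq k$. Using~\eqref{forq},~\eqref{forw} on each factor yields the product bound
\[
d^{\lceil\ell_0/2\rceil}\prod_{i=1}^m d^{(\lceil\ell_i/2\rceil-2)_+},\qquad \ell_0+\sum_i\ell_i=\ell.
\]

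The central obstacle is then the combinatorial inequality
\[
\lceil\ell_0/2\rceil+\sum_{i=1}^m(\lceil\ell_i/2\rceil-2)_+\leq\lceil\ell/2\rceil.
\]
This is not entirely trivial because an additive bound on $\sum\lceil\ell_i/2\rceil$ alone would overshoot; it is precisely the $-2$ per $\z$-factor (reflecting the two fewer $y$-derivatives implicit in the $\z$-bound relative to the $\g$-bound) that buys back the deficit. To prove it, iterate the elementary estimate $\lceil a/2\rceil+\lceil b/2\rceil\leq\lceil(a+b)/2\rceil+1$ to get $\sum_{j=0}^r\lceil a_j/2\rceil\leq\lceil(\sum a_j)/2\rceil+r$. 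Applying this over the index set $S=\{i\geq 1:\ell_i\geq 3\}$ (the only indices on which the positive part is active, since $(\lceil\ell_i/2\rceil-2)_+=0$ for $\ell_i\leq 2$),
\[
\lceil\ell_0/2\rceil+\sum_{i\in S}\lceil\ell_i/2\rceil-2|S|\leq\lceil(\ell_0+\textstyle\sum_{i\in S}\ell_i)/2\rceil+|S|-2|S|\leq\lceil\ell/2\rceil-|S|\leq\lceil\ell/2\rceil,
\]
which completes the proof of~\eqref{qk-0-cond-2}.
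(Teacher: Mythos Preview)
Your proof is correct and follows essentially the same strategy as the paper: the structural decomposition of $D_\z^{k-1}\g$ into products of $\pa_y^{j_0}\g$, $\pa_y^{j'}\z$, and powers of $(\pa_y\z)^{-1}$ is the same (Lemma~\ref{lma:Dwk}), and the proofs of~\eqref{w0bd} and~\eqref{qk-0-cond-1} coincide. For~\eqref{qk-0-cond-2} the paper packages the argument via auxiliary classes $\mathcal D_M^\z,\mathcal D_M^\g$ with closure lemmas (inverse, product), whereas you expand Leibniz directly and reduce to the single ceiling inequality $\lceil\ell_0/2\rceil+\sum_{i\in S}(\lceil\ell_i/2\rceil-2)\leq\lceil\ell/2\rceil$; the content is identical, and your iterated bound $\sum_j\lceil a_j/2\rceil\leq\lceil(\sum a_j)/2\rceil+r$ is a cleaner one-line replacement for the paper's odd/even case analysis in Lemma~\ref{lma-w-inv}.
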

Here, recall that
\be
\g_k(x) = \frac{(D_{\z}^{k-1}\g)(x,0)}{\pa_y\z(x,0)},\quad (D_{\z}\g)(x,y) =\pa_y\left(\frac{\g(x,y)}{\pa_y\z(x,y)}\right).
\ee
The bounds~\eqref{w0bd} are immediate by taking $\ell=3,\dots,2N+2$ and $j=j_\ell =0$ in~\eqref{forw}. To prove~\eqref{qk-0-cond-1} and~\eqref{qk-0-cond-2}, we start with a lemma about the form of $D_{\z}^{k-1}\g$.
\begin{lemma}\label{lma:Dwk}For all $k=1,2,3,\dots$, the function $D_w^{k-1}\g$ is a sum of products of terms of the form $\pa_y^j\g \prod_{j=1}^pw_j$, where $j\in\{0,1,\dots,k-1\}$ and each of the $w_j$ is either $(\pa_yw)^{-1}$ or $\pa_y^{\ell}w$ for $\ell\in\{2,\dots,k\}$. The function $\g_k = (D_w^{k-1}\g)/\pa_yw$ is therefore of the same form.
\end{lemma}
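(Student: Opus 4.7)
The plan is to prove Lemma~\ref{lma:Dwk} by a direct induction on $k$. The base case $k=1$ is immediate: $D_{\z}^{0}\g = \g$ already has the claimed form (one factor $\pa_y^{0}\g$ and no $w_{j}$'s, so the index constraints $j\in\{0\}$ and $\ell\in\emptyset$ are vacuously satisfied).

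For the inductive step, assume $D_{\z}^{k-1}\g$ is a finite sum of terms of the required form, so each summand looks like $\pa_y^{j}\g\prod_{i=1}^{p} w_{i}$ with $j\in\{0,\dots,k-1\}$ and each $w_{i}$ either $(\pa_y\z)^{-1}$ or $\pa_y^{\ell}\z$ for some $\ell\in\{2,\dots,k\}$. By the definition~\eqref{Dzdef},
\[
D_{\z}^{k}\g \;=\; \pa_y\!\left(\frac{D_{\z}^{k-1}\g}{\pa_y\z}\right).
\]
Dividing by $\pa_y\z$ simply appends one extra factor $(\pa_y\z)^{-1}$ to each summand, which is still an allowed $w_{i}$. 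It remains to apply $\pa_y$ and check that every resulting term sits in the admissible class for level $k$ (i.e.\ $j\in\{0,\dots,k\}$, $\ell\in\{2,\dots,k+1\}$).

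This is done termwise by the product rule: $\pa_y$ lands on exactly one factor, producing three cases. (i) $\pa_y(\pa_y^{j}\g)=\pa_y^{j+1}\g$, and since $j\le k-1$ we get $j+1\le k$, which is allowed. (ii) $\pa_y\bigl((\pa_y\z)^{-1}\bigr)=-(\pa_y^{2}\z)(\pa_y\z)^{-2}$, producing a product of admissible factors (one $\pa_y^{2}\z$ with $\ell=2$ and two $(\pa_y\z)^{-1}$'s). (iii) $\pa_y(\pa_y^{\ell}\z)=\pa_y^{\ell+1}\z$, and since $\ell\le k$ we obtain $\ell+1\le k+1$, again allowed. In every case the differentiated summand is a finite product of factors of the prescribed shape, so $D_{\z}^{k}\g$ is a finite sum of such products, closing the induction.

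The final assertion about $\g_{k}=(D_{\z}^{k-1}\g)/\pa_y\z$ is then automatic: multiplying each summand by $(\pa_y\z)^{-1}$ only appends one more admissible $w_{i}$. The argument is purely a bookkeeping induction and there is no real obstacle; the only thing to watch is that the upper bound on the $y$-derivative order of $\z$ is $k$ at level $D_{\z}^{k-1}\g$ but increases to $k+1$ at level $D_{\z}^{k}\g$, which is exactly what the statement allows.
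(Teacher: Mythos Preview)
Your proof is correct and follows essentially the same approach as the paper: both argue by induction on $k$, with the base case $D_\z^0\g=\g$ and the inductive step handled via the product rule applied to $\pa_y\bigl((D_\z^{k-1}\g)/\pa_y\z\bigr)$. You give a slightly more explicit case analysis of the three possible factors hit by $\pa_y$, whereas the paper simply notes that the claim is clear upon differentiating a typical summand, but the argument is the same.
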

\begin{proof}We proceed by induction. For $k=1$ we have the function $\g$, which is of the required form. Now suppose $h:=D_w^{k-1}\g$ has the required form. Then $D_w^k\g = D_wh = \pa_y(h/\pa_y w) = \pa_yh/\pa_yw -h\pa_y^2w/(\pa_yw)^2$. To conclude, it suffices to show $\pa_yh$ is of the desired form, and this is clear upon differentiating $\pa_y^j\g \prod_{j=1}^pw_j$.
\end{proof}
We can now prove~\eqref{qk-0-cond-1}.
\begin{proof}[Proof of~\eqref{qk-0-cond-1}]
Using the form of $D_w^{k-1}\g$ given in Lemma~\ref{lma:Dwk}, we see that the functions $D_w^{k-1}\g$, $k=1,\dots,N+1$ are products of subsets of the following functions:
 $(\pa_yw)^{-1}$, $\pa_y^j\g$, $j=0,\dots,N$, and $\pa_y^j \z$, $j=2,\dots, N+1$. The assumption that $R\e(C_{\z,0}+C_{\z,1})\leq1/2$ from~\eqref{BC} of Proposition~\ref{prop:main} implies $(\pa_y\z)^{-1}\leq 2$ for all $(x,y)\in \us(R\e)\times[0,R\e]$; this is shown in~\eqref{wlb}. Also, from~\eqref{forq} and~\eqref{forw}, we have that $\pa_y^j\g$, $j=0,\dots,N$, and $\pa_y^j w$, $j=2,\dots, N+1$ are bounded in $ \us(R\e)\times[0,(R\e)^2]$. Thus products of such terms are also bounded. 
\end{proof}
To prove~\eqref{qk-0-cond-2}, we introduce a new convention.
\begin{definition}We say $f\in\mathcal D_M^{\z}$ if
\be\label{fDM}
\sup_{x\in\us(R\e)}\|\nabla^\ell f(x)\|_H\les {d}^{(\lceil \ell/2\rceil-2)_+},\quad\forall \ell=0,1,\dots,M.
\ee We say $f\in\mathcal D_M^{\g}$ if
\be\label{fDMq}
\sup_{x\in\us(R\e)}\|\nabla^\ell f(x)\|_H\les {d}^{\lceil \ell/2\rceil},\quad\forall \ell=0,1,\dots,M.
\ee
\end{definition}
Note that the bounds~\eqref{qk-0-cond-2} we need to prove are the statement that $\g_k\in\mathcal D_{2N+2-2k}^{\g}$ for all $k=1,\dots,N$.
\begin{lemma}Under~\eqref{forq} and~\eqref{forw} we have $\pa_y^j\g\in\mathcal D_{2N-2j}^{\g}$ for all $j=0,\dots, N-1$ and $\pa_y^j w\in \mathcal D_{2N+2-2j}^{\z}$ for all $j=1,\dots, N$.
\end{lemma}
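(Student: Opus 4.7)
The plan is essentially a bookkeeping exercise: unpack both conclusions against the definitions of $\mathcal D_M^\g,\mathcal D_M^\z$ and then match them term-by-term with the hypotheses~\eqref{forq},~\eqref{forw}. The only real work will be reconciling the ceiling/floor index ranges and patching one boundary case by a Taylor estimate.

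First I would unpack $\pa_y^j\g\in\mathcal D_{2N-2j}^\g$: for each $j\in\{0,\dots,N-1\}$ and each $\ell\in\{0,\dots,2N-2j\}$ the required bound $\sup_{x\in\us(R\e)}\|\nabla_x^\ell\pa_y^j\g(x,0)\|_H\les d^{\lceil\ell/2\rceil}$ is, for $\ell\geq1$, exactly the estimate $\g_{\ell,j}(R\e,0)\leq C_{\g,\ell}\,d^{\lceil\ell/2\rceil}$ given by the second line of~\eqref{forq}. The only point to verify is that the range of valid indices there, $j\leq\lfloor N-\ell/2\rfloor$, coincides with $\ell\leq 2N-2j$; this is immediate since, for integer $j$, the condition $2j+\ell\leq 2N$ is equivalent to $j\leq\lfloor N-\ell/2\rfloor$. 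For $\ell=0$ the target $|\pa_y^j\g(x,0)|\les 1$ follows from the first line of~\eqref{forq} because $(x,0)\in\us(R\e)\times[0,(R\e)^2]$ and $j\leq N-1\leq N$.

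The verification of $\pa_y^j\z\in\mathcal D_{2N+2-2j}^\z$, $j\in\{1,\dots,N\}$, is analogous. For $\ell\geq1$ the target $\les d^{(\lceil\ell/2\rceil-2)_+}$ is the second line of~\eqref{forw}; the assignment $j_1=j_2=1$, $j_\ell=0$ ($\ell\geq 3$) is designed precisely so that $j_\ell\leq j$ holds automatically whenever $j\geq 1$, and $j\leq\lfloor N+1-\ell/2\rfloor\Leftrightarrow \ell\leq 2N+2-2j$ by the same arithmetic as above. For $\ell=0$ and $j\geq 2$, the first line of~\eqref{forw} supplies $|\pa_y^j\z(x,0)|\leq C_{\z,0}$ (the strips $[0,R\e]$ and $[0,(R\e)^2]$ both include $y=0$). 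The lone remaining case is $(\ell,j)=(0,1)$: here I would use $\pa_y\z(0_{d+1})=\pa_t z(0_{d+1})=1$ (Assumption~\ref{a11}) together with the Taylor bound $|\pa_y\z(x,0)-1|\leq \|x\|_H\,\omega_{1,1}(R\e,0)\leq C_{\z,1}R\e\leq 1/2$, the last inequality coming from the first bound in~\eqref{BC}. This gives $|\pa_y\z(x,0)|\leq 3/2$ on $\us(R\e)$ and completes the verification. The main obstacle, insofar as there is one, is this Taylor patch for the missing $(\ell,j)=(0,1)$ entry; everything else is pure index alignment.
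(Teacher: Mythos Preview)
Your proposal is correct and follows essentially the same approach as the paper's own proof: both unpack the definitions of $\mathcal D_M^\g,\mathcal D_M^\z$, match the index ranges against~\eqref{forq},~\eqref{forw} via the equivalence $j\leq\lfloor N-\ell/2\rfloor\Leftrightarrow 2j+\ell\leq 2N$, and handle the single missing case $(\ell,j)=(0,1)$ for $\z$ by the Taylor bound $|\pa_y\z(x,0)-1|\leq R\e\,\omega_{1,1}(R\e,0)$ combined with $\pa_y\z(0_{d+1})=1$. The only cosmetic difference is that you invoke~\eqref{BC} to get the explicit bound $3/2$ whereas the paper records $1+C_{\z,1}$; either constant suffices.
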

\begin{proof}
We need to show
\begin{align}
\sup_{x\in\us(R\e)}\|\nabla_x^\ell\pa_y^j\g(x,0)\|_H&\les d^{\lceil\ell/2\rceil},\quad\ell=0,\dots,2N-2j,\; j=0,\dots,N-1,\label{forqk-q-2}\\
\sup_{x\in\us(R\e)}\|\nabla_x^\ell\pa_y^j\z(x,0)\|_H&\les d^{(\lceil\ell/2\rceil-2)_+},\quad\ell=0,\dots,2N+2-2j,\; j=1,\dots,N\label{forqk-w-2}
\end{align} 
To see why~\eqref{forqk-q-2} is true, note that the cases $\ell=0$, $j=0,\dots,N-1$ in~\eqref{forqk-q-2} follows from the first line of~\eqref{forq}, while the remaining cases in~\eqref{forqk-q-2} follow from the second line of~\eqref{forq}. This is because the set of integer pairs $(j,\ell)$ such that $1\leq\ell\leq2N-2j$ and $0\leq j\leq N-1$ is the same as the set of integer pairs $(j,\ell)$ such that $0\leq j\leq \lfloor N-\ell/2\rfloor$ and $1\leq\ell\leq 2N$.

To check~\eqref{forqk-w-2}, we note that the cases $\ell=0$, $j=2,\dots,N$ follow from the first line of~\eqref{forw}. The case $\ell=0,j=1$ follows by noting that since $\pa_y\z(0_{d+1})=1$, we have
\bs
\sup_{x\in\us(R\e)}|\pa_y\z(x,0)|\leq 1+\sup_{x\in\us(R\e)}|\pa_y\z(x,0)-\pa_y\z(0_{d+1})|\leq 1+R\e\omega_{1,1}(R\e,0) \leq 1+C_{\z,1}.
\es
The remaining cases in~\eqref{forqk-w-2} can be equivalently described as all $\ell\in\{1,\dots,2N+2-2\}$ and $j\in\{1,\dots,\lfloor N+1-\ell/2\rfloor\}$. These cases are covered by the second line of~\eqref{forw}.
\end{proof}

\begin{lemma}\label{lma-w-inv}Let $f(x)>1/2$ for all $x\in\us(\rho)$. If $f\in\mathcal D_M^{\z}$ then $1/f\in \mathcal D_M^{\z}$.
\end{lemma}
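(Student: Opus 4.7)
The plan is to apply the multivariate Faà di Bruno formula to the composition $1/f=g\circ f$ with $g(t)=1/t$, and then verify a short combinatorial inequality governing the powers of $d$.

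For a scalar function $g:\br\to\br$ and $f:\br^d\to\br$, Faà di Bruno expresses $\nabla^\ell(g\circ f)$ as a linear combination of symmetric tensors of the form
\[
g^{(|m|)}(f)\,\mathrm{Sym}\,\bigotimes_{i=1}^\ell (\nabla^i f)^{\otimes m_i},
\qquad \sum_{i=1}^\ell i\,m_i=\ell,\qquad |m|:=\sum_i m_i,
\]
with universal combinatorial coefficients. For $g(t)=1/t$ we have $|g^{(k)}(t)|=k!/t^{k+1}$, which is bounded by a constant depending only on $\ell$ because $f>1/2$ on $\us(\rho)$. Submultiplicativity of the $H$-weighted operator norm on symmetric tensor products then yields, uniformly for $x\in\us(R\e)$,
\[
\|\nabla^\ell(1/f)(x)\|_H \lesssim \sum_{\sum i m_i=\ell}\prod_{i=1}^\ell \|\nabla^i f(x)\|_H^{m_i}.
\]

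Next I would invoke the hypothesis $f\in\CD_M^{\z}$ to bound each factor by $d^{\alpha(i)}$ with $\alpha(i):=(\lceil i/2\rceil-2)_+$. Since $d\ge1$, it then suffices to prove the combinatorial inequality
\[
\sum_{i=1}^\ell m_i\,\alpha(i)\le \alpha(\ell)\qquad \text{whenever } \sum_{i=1}^\ell i\,m_i=\ell.
\]
I would split into three cases according to $k_I:=\sum_{i\ge5}m_i$. When $k_I=0$ the left side vanishes. When $k_I=1$ there is a unique $i_0\ge5$ with $m_{i_0}=1$, so $\ell\ge i_0$ and monotonicity of $\alpha$ gives $\sum m_i\alpha(i)=\alpha(i_0)\le\alpha(\ell)$. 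When $k_I\ge2$, the elementary inequality $\alpha(i)\le(i-3)/2$ for $i\ge5$ combined with $\ell\ge 5k_I\ge10$ yields
\[
\sum_{i\ge5}m_i\alpha(i)\le \tfrac12(\ell-3k_I)\le \tfrac12(\ell-6)\le \tfrac12(\ell-4)\le \alpha(\ell).
\]

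The $\ell=0$ case is immediate from $|1/f|\le2$. Combining with the cases $\ell=1,\dots,M$ yields the required estimate $\|\nabla^\ell(1/f)\|_H\lesssim d^{\alpha(\ell)}$ and hence $1/f\in\CD_M^{\z}$. The only substantive step is the combinatorial inequality, but the case split above makes it routine; no further obstacles are anticipated.
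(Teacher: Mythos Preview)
Your proof is correct and follows essentially the same strategy as the paper: apply Fa\`a di Bruno to $1/f$, bound the derivatives of $t\mapsto 1/t$ using $f>1/2$, reduce via submultiplicativity of $\|\cdot\|_H$ on symmetric tensor products to a combinatorial inequality on exponents of $d$, and verify that inequality. The only difference is bookkeeping in the combinatorial step: the paper absorbs factors with derivative order $\le 2$ and then bounds the remaining exponent via the counts $O,E$ of odd/even orders, whereas you absorb orders $\le 4$ (where $\alpha=0$) and split cases on $k_I=\sum_{i\ge 5}m_i$; both routes yield $\sum m_i\alpha(i)\le\alpha(\ell)$.
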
 
\begin{proof}Fix $\ell\in\{0,1,\dots,M\}$. Note that $\nabla ^\ell (f^{-1})$ is given by a linear combination of terms of the form
\bs
f^{-k}\bigotimes_j\nabla ^{k_j}f,
\es where $\sum_jk_j=\ell$, and each of the $k_j\leq M$. Thus the derivative bounds~\eqref{fDM} apply for each $k=k_j$. Since $\pa_y\z$ is bounded from below by $1/2$, we conclude that
\be\label{grad-winv}
\|\nabla ^\ell(1/f)\|_H \les \sum_{k_j\geq0,\sum_jk_j=\ell}\prod_j\|\nabla ^{k_j}f\|_H.
\ee
If $\ell\leq2$ then so are the $k_j$, and using that $\|\nabla ^{k}f\|_H\leq C$ for $k=0,1,2$, we obtain $\|\nabla ^\ell(1/f)\|_H\leq C$. Next consider $\ell\geq3$. We can absorb factors $\|\nabla ^{k_j}\pa_y\z\|_H$ with $k_j\leq2$ into the $\les$ sign, to get
\be\label{grad-winv-2}
\|\nabla ^\ell(1/f)\|_H \les \sum_{k_j\geq3,\sum_jk_j\leq \ell}\prod_j\|\nabla ^{k_j}f\|_H.
\ee
Now, consider any of the summands in the above sum, which corresponds to fixing a set of $k_j$'s. Let $O$ and $E$ be the number of them that are odd and even, respectively. Note that $O+E\geq1$. Also, let $L=\sum_jk_j$, so that $3\leq L\leq \ell$. Using the bounds~\eqref{fDM} we have
\be
\prod_j\|\nabla ^{k_j}f\|_H \les {\sqrt d}^{L- 3O-4E}.
\ee
For a given $L$, this upper bound is maximized by taking $O$ and $E$ as small as possible. If $L$ is even, the maximum is achieved either at $O=2,E=0$ or $O=0,E=1$, and plugging in the numbers, we see that $O=0,E=1$ gives a larger result, namely, ${\sqrt d}^{L- 4}$. If $L$ is odd, the maximum is achieved at $O=1,E=0$ and we get ${\sqrt d}^{L- 3}$. Finally, note that $L$ can range between $3$ and $\ell$. Thus is it clear that the maximum possible value achieved in each summand of~\eqref{grad-winv-2} is ${\sqrt d}^{\ell- 3}$ if $\ell$ is odd, and ${\sqrt d}^{\ell- 4}$ if $\ell$ is even. This proves $1/f$ satisfies the bounds~\eqref{fDM}.\end{proof}

\begin{lemma}\label{lma:Wderiv} Let $\z_1,\dots, \z_p\in \mathcal D_M^{\z}$. Then $W:=\z_1\dots\z_p\in \mathcal D_M^{\z}$.
\end{lemma}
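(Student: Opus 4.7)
The plan is to apply the generalized Leibniz rule to the scalar product $W=\z_1\cdots\z_p$ and then reduce the bound on $\|\nabla^\ell W\|_H$ to a single combinatorial inequality about the exponent $\phi(k):=(\lceil k/2\rceil-2)_+$ that defines $\mathcal D_M^{\z}$. First I would fix $\ell\in\{0,\dots,M\}$ and a vector $v\in\br^d$ with $\|v\|_H=1$, and write the Leibniz expansion contracted against $v^{\otimes\ell}$:
$$
\langle \nabla^\ell W(x),\,v^{\otimes\ell}\rangle=\sum_{k_1+\cdots+k_p=\ell}\frac{\ell!}{k_1!\cdots k_p!}\prod_{j=1}^p\langle\nabla^{k_j}\z_j(x),\,v^{\otimes k_j}\rangle,
$$
which holds because each $\z_j$ is a scalar function and contracting against a tensor power of a single vector $v$ commutes with taking products. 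Taking absolute values, using the definition~\eqref{TA} of $\|\cdot\|_H$, and then the supremum over $\|v\|_H=1$, I obtain
$$
\|\nabla^\ell W(x)\|_H\les\sum_{k_1+\cdots+k_p=\ell}\prod_{j=1}^p\|\nabla^{k_j}\z_j(x)\|_H,
$$
with the multinomial coefficients and the total count of multi-indices absorbed into $\les$, since both depend only on $\ell\le M$ and $p$, not on $d$.

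Next I would invoke the hypothesis $\z_j\in\mathcal D_M^{\z}$ to bound $\|\nabla^{k_j}\z_j(x)\|_H\les d^{\phi(k_j)}$ uniformly on $\us(R\e)$, which gives
$$
\|\nabla^\ell W(x)\|_H\les\sum_{k_1+\cdots+k_p=\ell}d^{\phi(k_1)+\cdots+\phi(k_p)}.
$$
The desired conclusion $W\in\mathcal D_M^{\z}$ therefore reduces to showing the super-additivity bound $\sum_{j}\phi(k_j)\leq\phi\big(\sum_j k_j\big)=\phi(\ell)$ whenever $k_j\geq0$ and $\sum_j k_j=\ell$, since this will yield $\|\nabla^\ell W(x)\|_H\les d^{\phi(\ell)}$ as required by~\eqref{fDM}.

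The hard part is this combinatorial inequality, but I expect it to follow from a short case analysis of the pairwise version $\phi(a)+\phi(b)\leq\phi(a+b)$, iterated over $p$. I anticipate three cases: if $a,b\leq 3$ the left-hand side vanishes and the inequality is trivial; if exactly one of $a,b$ is $\leq 3$ the inequality reduces to monotonicity of $\phi$; and if both $a,b\geq4$ it follows from the elementary ceiling identity $\lceil a/2\rceil+\lceil b/2\rceil\leq\lceil(a+b)/2\rceil+1$, which gives $\phi(a)+\phi(b)=\lceil a/2\rceil+\lceil b/2\rceil-4\leq\lceil(a+b)/2\rceil-3\leq\phi(a+b)$. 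No $d$-dependence leaks into the implicit constants at any stage, so once super-additivity is in hand the result is immediate.
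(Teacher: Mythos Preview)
Your proposal is correct and follows essentially the same route as the paper: both start from the Leibniz bound
\[
\|\nabla^\ell W\|_H \les \sum_{k_1+\cdots+k_p=\ell}\prod_{j=1}^p\|\nabla^{k_j}\z_j\|_H
\]
and then reduce to showing $\sum_j(\lceil k_j/2\rceil-2)_+\leq(\lceil\ell/2\rceil-2)_+$. The only difference is in how this exponent inequality is established: the paper (via the proof of Lemma~\ref{lma-w-inv}) absorbs the $k_j\leq2$ factors and then optimizes $\sqrt{d}^{\,L-3O-4E}$ over the counts $O,E$ of odd and even indices among the remaining $k_j\geq3$, whereas you prove the cleaner pairwise super-additivity $\phi(a)+\phi(b)\leq\phi(a+b)$ and iterate---both arguments are short and equivalent in strength.
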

\begin{proof}Note that
\bs
\|\nabla_x^\ell W\|_H \leq\sum_{\substack{k_1,\dots,k_p\geq0\\ k_1+\dots+k_p=\ell}}\prod_{j=1}^p\|\nabla_x^{k_j}\z_j\|_H.
\es 
We now proceed exactly as in the proof of Lemma~\ref{lma-w-inv}.
\end{proof}
\begin{proof}[Proof of~\eqref{qk-0-cond-2}]
It suffices to show $q_k\in\mathcal D_{2N+2-2k}^{\g}$ for each $k=1,\dots,N$. Fix $k\in\{1,\dots,N\}$. Using Lemma~\ref{lma:Dwk}, $\g_k$ is given by sums of the form $W\pa_y^j\g$, where $j=0,1,\dots,k-1$ and $W=\prod_{j=1}^pw_j$, where each of the $w_j$ is either $(\pa_yw)^{-1}$ or $\pa_y^{\ell}w$ for some $\ell\in\{2,\dots,k\}$. Thus it suffices to show that $W\pa_y^j\g \in\mathcal D_{2N+2-2k}^{\g}$ for all $j=0,\dots,k-1$.

Since $\pa_yw\in\mathcal D_{2N+2-2}^{\z}$, we have $(\pa_yw)^{-1}\in\mathcal D_{2N+2-2}^{\z}\subseteq\mathcal D_{2N+2-2k}^{\z}$, by Lemma~\ref{lma-w-inv}. Also, for each $\ell\in\{2,\dots,k\}$ we have $\pa_y^\ell w\in\mathcal D_{2N+2-2\ell}^{\z}\subseteq\mathcal D^{\z}_{2N+2-2k}$. Since $W$ is a product of functions in $\mathcal D_{2N+2-2k}^{\z}$, we conclude by Lemma~\ref{lma:Wderiv} that $W\in\mathcal D_{2N+2-2k}^{\z}$. Next, for each $j\in\{0,\dots,k-1\}$ we have that $\pa_y^j\g\in\mathcal D_{2N-2j}^{\g}\subseteq\mathcal D_{2N+2-2k}^{\g}$.

Thus to summarize, $W\in\mathcal D_{2N+2-2k}^{\z}$ and $\pa_y^j\g\in\mathcal D_{2N+2-2k}^{\g}$. To check that $W\pa_y^j\g\in\mathcal D_{2N+2-2k}^{\g}$, we fix $\ell\in\{0,1,\dots,2N+2-2k\}$ and bound $\|\nabla_x^\ell(W\pa_y^j \g)\|_H$. Using the definitions~\eqref{fDM} and~\eqref{fDMq} of $W$ and $\pa_y^j\g$ belonging to $\mathcal D_{2N+2-2k}^{\z}$ and $\mathcal D_{2N+2-2k}^{\g}$, respectively, we have
\bs
\|\nabla_x^\ell(W\pa_y^j \g)\|_H\les &\sum_{m=0}^\ell\|\nabla_x^mW\|_H\|\nabla_x^{\ell-m}\pa_y^j\g\|_H\\
\les &\sum_{m=0}^2 d^{\lceil(l-m)/2\rceil}+\sum_{m=3}^{\ell}d^{\lceil(l-m)/2\rceil}d^{\lceil m/2\rceil-2}\\
\les &d^{\lceil\ell/2\rceil}+d^{\ell/2-1}\les d^{\lceil\ell/2\rceil}.\es This confirms $W\pa_y^j\g\in\mathcal D_{2N+2-2k}^{\g}$.
\end{proof}

\section{Results from~\cite{A24}}\label{app:lap} 
In this section, we consider the integral 
$$\int_{\us(R\e)}\g(x)e^{-\la\z(x)}\dd x,\qquad \la\gg1,$$ where $H=\nabla^2\z(0_d)\succ0$, $\e=\sqrt{d/\la}$, and $\us$ is defined in~\eqref{UH}. Let
\be\mathcal Z=\frac{(2\pi/\la)^{d/2}}{e^{\la \z (0_d)}\sqrt{\det(H)}}.\ee 
\begin{theorem}\label{thm:lap:orig}Suppose $\g\in C^{2L}(\us(R\e)), \z \in C^{2L+2}(\us(R\e))$, $R^4d^2/\la\leq\CR$, $R\geq1$, and
\bs\label{ck-cond}
\sup_{x\in\us(R\e)}\|\nabla^\ell\z(x)\|_H&\leq C_{k,\z} d^{\lceil \ell/2\rceil-2}\qquad\forall \ell=3,\dots,2L+2,\\
\sup_{x\in\us(R\e)}\|\nabla^\ell\g(x)\|_H&\leq C_{\ell,\g} d^{\lceil \ell/2\rceil}\qquad\forall \ell=0,\dots,2L.
\es
Then
\bs\label{eq:lap:orig}
\frac{1}{\mathcal Z}\int_{\us(R\e)}\g(x)e^{-\la \z (x)}\dd x = \sum_{k=0}^{L-1}\nu_k(\g)\la^{-k} + \Rem_L,
\es where
\bs\label{A2kRemL-orig}
\max_{k=0,\dots,L-1}d^{-2k}|\nu_k(\g)|&\leq C_{L,\z,\g},\\
|\Rem_L|&\leq C_{L,\z,\g,R}\left((d^2/\la)^L+e^{-(R-1)^2d/4}\right).
\es Here, $C_{L,\z,\g}$ depends on $L$, on $C_{\ell,\z}$, $\ell=3,\dots,2L+2$, and on $C_{\ell,\g}$, $\ell=0,\dots,2L$. The constant $C_{L,\z,\g,R}$ is analogous and also depends on $\CR$. We have $\nu_0(\g)=\g(0_d)$, and the formula for $\nu_k(\g)$, $k\geq1$ is given by $A_{2k}$ in~\cite[(2.20)]{A24}.\end{theorem}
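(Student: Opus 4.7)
The plan is to carry out the classical Laplace expansion with rigorous tracking of dimension dependence, following the strategy in~\cite{A24}. First, I would make the change of variables $x = H^{-1/2}y/\sqrt\la$, which maps $\us(R\e)$ onto the Euclidean ball $\{\|y\|\leq R\sqrt d\}$ and converts the quadratic part of the exponent into $\tfrac12\|y\|^2$. The rescaled integral reads
\be\label{rescaled}
\frac{1}{\mathcal Z}\int_{\us(R\e)}\g(x)e^{-\la\z(x)}\dd x = \frac{1}{(2\pi)^{d/2}}\int_{\|y\|\leq R\sqrt d}\tilde\g(y)e^{-\|y\|^2/2}e^{-r_\la(y)}\dd y,
\ee
where $\tilde\g(y)=\g(H^{-1/2}y/\sqrt\la)$ and $r_\la(y)=\la[\z(H^{-1/2}y/\sqrt\la)-\z(0_d)]-\tfrac12\|y\|^2$ is the cubic-and-higher Taylor remainder of $\la\z$. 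Because $H^{-1/2}HH^{-1/2}=I$ cancels the quadratic contribution exactly, $r_\la$ admits the expansion $r_\la(y) = \sum_{k\geq3}\frac{\la^{1-k/2}}{k!}\langle\nabla^k\z(0_d),(H^{-1/2}y)^{\otimes k}\rangle$, a power series in $\la^{-1/2}$ starting at order $\la^{-1/2}$.

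Next, I would Taylor-expand both $e^{-r_\la(y)}$ and $\tilde\g(y)$ as polynomials in $\la^{-1/2}$ of degree $2L-1$, multiply them, and integrate term-by-term against the standard Gaussian on the ball. Crucially, the resulting polynomial is a sum of even and odd monomials in $y$: by parity, odd-degree monomials contribute zero modulo an exponentially small tail from the truncated ball, so only integer powers of $\la^{-1}$ survive. This yields $\sum_{k=0}^{L-1}\nu_k(\g)\la^{-k}$ with $\nu_0(\g)=\g(0_d)$. The higher $\nu_k(\g)$ are polynomial combinations of full Wick contractions of tensors $\nabla^\ell\z(0_d)$ and $\nabla^j\g(0_d)$ against $H^{-1}$. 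The bound $|\nu_k(\g)|\les d^{2k}$ follows by inspecting each admissible term: a contribution at order $\la^{-k}$ involves tensors of total order $2k$ beyond the leading term, and each Wick pairing produces at most one factor of $d$ via a trace of the form $\mathrm{tr}(H^{-1/2}\nabla^2\z\,H^{-1/2})\les d$; the tensor-norm bounds in~\eqref{ck-cond} are precisely calibrated so every such term is at most $d^{2k}$.

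The remainder $\Rem_L$ combines three sources: (i) truncation of the Taylor series of $r_\la$ and $\tilde\g$ at order $2L-1$, controlled by the $C^{2L+2}$ bounds on $\z$ and $C^{2L}$ bounds on $\g$ together with Gaussian moment inequalities, yielding a contribution of order $(d^2/\la)^L$; (ii) the Gaussian tail outside $\{\|y\|\leq R\sqrt d\}$, bounded by $\int_{\|y\|>R\sqrt d}e^{-\|y\|^2/2}\dd y \les e^{-(R-1)^2d/2}$ via the standard concentration of the $\chi^2$ distribution; and (iii) ensuring the integrand on the right of~\eqref{rescaled} is dominated by a Gaussian on the ball, which requires $|r_\la(y)|\leq\|y\|^2/4$ there. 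This last step uses $R^4d^2/\la\leq\CR$ combined with $\|\nabla^3\z\|_H,\|\nabla^4\z\|_H\les 1$ to show, via a third-order Taylor estimate on $\z$, that $r_\la$ is a small perturbation of the quadratic throughout $\{\|y\|\leq R\sqrt d\}$.

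The main obstacle is the combinatorial bookkeeping underlying the bound $|\nu_k(\g)|\les d^{2k}$. At order $\la^{-k}$ a proliferating number of monomials arise from the product of the truncated $\tilde\g$ series with the exponential expansion of $-r_\la$, each involving different multiplicities of factors $\nabla^\ell\z(0_d)$ of various orders with $\sum_i(\ell_i-2)=2k$, and each admitting many Wick pairings. Verifying uniformly across these configurations that pairings against $H^{-1}$ produce no more than $d^{2k}$ in aggregate is what dictates the exact scaling prescribed by~\eqref{ck-cond}; without the parity cancellation of odd-degree monomials the expansion would be in powers of $d/\sqrt\la$ rather than $d^2/\la$, and the usefulness of the theorem in the high-dimensional regime $d^2\ll\la$ would be lost.
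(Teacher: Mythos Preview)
Your overall strategy is the content of the proof in~\cite{A24}, whereas the paper's own argument is quite different in character: it simply \emph{invokes} Theorem~2.12 and Lemma~2.14 of~\cite{A24} as black boxes, checks that restricting the domain of integration from $\br^d$ to $\us(R\e)$ only removes the global-minimizer requirement and the outer tail term, and verifies that the quantity $\exp([R^4c_3(R)^2+c_4(R)]d^2/\la)$ appearing in those bounds is controlled by $\CR,C_{3,\z},C_{4,\z}$. The combinatorial bound $\CA_j(R)\leq C$ is likewise read off from Lemma~2.14 of~\cite{A24} with $\tau=1$. So the paper treats this theorem as a restatement of an existing result, not something to be proved from scratch.

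Your sketch of the underlying argument is largely sound, but step~(iii) contains a genuine gap. The pointwise bound $|r_\la(y)|\leq\|y\|^2/4$ on $\{\|y\|\leq R\sqrt d\}$ does \emph{not} follow from $R^4d^2/\la\leq\CR$ together with $\|\nabla^3\z\|_H,\|\nabla^4\z\|_H\les1$. A third-order Taylor estimate gives, at the boundary of the ball,
\[
|r_\la(y)|\;\lesssim\; \frac{1}{\sqrt\la}(R\sqrt d)^3+\frac{1}{\la}(R\sqrt d)^4
\;=\;R^3d^{3/2}\la^{-1/2}+R^4d^2\la^{-1},
\]
and the first term is \emph{not} bounded in $d$ under the stated hypotheses (since $R^4d^2/\la\leq\CR$ only forces $R^2d/\sqrt\la\leq\sqrt{\CR}$, leaving $R^3d^{3/2}/\sqrt\la\sim R\sqrt d$). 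Consequently $e^{-r_\la}$ is not dominated by a fixed Gaussian on the ball, and the naive domination argument fails. The device that actually works, and which the paper records as Lemma~\ref{lma:Erexp}, is Gaussian concentration: one bounds $\log\E[e^{-r_\la(Z)}\ind_{\|Z\|\leq R\sqrt d}]$ by $2\E[r_\la(Z)\ind]+\sup\|\nabla r_\la\|^2$. The expectation of the cubic part vanishes by parity, and the squared gradient contributes $R^4c_3^2d^2/\la+R^6c_4^2d^3/\la^2$, which \emph{is} controlled by $\CR$ and the derivative constants. This is precisely why the paper's constant involves $c_3(R)^2$ rather than $c_3(R)$.
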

\begin{proof}We use Theorem 2.12 of~\cite{A24}. Note that $\e=d/\sqrt{\la}$ in that theorem, unlike in the present paper where $\e=\sqrt{d/\la}$. We continue to use $\e$ to denote $\sqrt{d/\la}$. It is straightforward to see from the proof of the theorem that if we integrate over $\us(R\e)$ instead of over $\br^d$ then the only changes are that (1) the bound on $\Rem_L$ does not contain the term $\nu_{\us^c}$, (2) $R\geq40$ is not required, and (3) $0_d$ need not be the global minimizer of $\z$. Now, the fact that $\sup_{x\in\us(R\e)}\|\nabla^3\z(x)\|_H\leq C_{3,\z}$ and $\sup_{x\in\us(R\e)}\|\nabla^4\z(x)\|_H\leq C_{4,\z}$ and $R^4d^2/\la\leq\CR$ shows that $\exp([R^4c_3(R)^2+c_4(R)]d^2/\la)\leq f(\CR, C_{3,\z},C_{4,\z})$. Thus the bounds in Theorem 2.12 give
\bs\label{remLprelim}
|\nu_k(\g)|\la^{-k}\les_k &\CA_{2k}(0)(d^2/\la)^k,\quad k=1,\dots,L-1,\\
|\Rem_L|\les_L &f(\CR, C_{3,\z},C_{4,\z})\CA_{2L}(R)(d^2/\la)^L+ \max_{k=0,\dots,2L-1}\CA_k(0)(d^2/\la)^ke^{-(R-1)^2d/4}\\
\leq& f(L,\CR,C_{3,\z},C_{4,\z})\left(\CA_{2L}(R)(d^2/\la)^L+ \max_{k=0,\dots,2L-1}\CA_k(0)e^{-(R-1)^2d/4}\right).
\es To get the last line we used $d^2/\la\leq R^4d^2/\la\leq \CR$. Next, Lemma 2.14 of~\cite{A24} applied with $\tau=1$ shows that under~\eqref{ck-cond}, we have $\CA_j(0)\leq\CA_j(R)\leq C$ for all $j=0,\dots,2L$, where $C$ is some function of $C_{\ell,\z}$, $\ell=3,\dots,2L+2$ and $C_{\ell,\g}$, $\ell=0,\dots,2L$. Note that the condition (2.24) in Lemma 2.14 is not required for this statement, as is clear from the proof. Substituting these bounds into~\eqref{remLprelim} concludes the proof.
\end{proof}
In the next theorem, we use the following notation analogous to that used in~\cite{A24}:
\be\begin{gathered}
c_{0,\g}(R)=\sup_{x\in\us(R\e)}|\g(x)|,\\
c_k(R)=\sup_{x\in\us(R\e)}\|\nabla^k\z(x)\|_H,\qquad c_{k,\g}(R)=\sup_{x\in\us(R\e)}\|\nabla^k\g(x)\|_H,\\
\bar c_k(R)=c_k(0)+\frac{d}{\sqrt\la} c_{k+1}(R),\qquad\bar c_{k,\g}(R)=c_{k,\g}(0)+\frac{d}{\sqrt\la} c_{k+1,\g}(R).
\end{gathered}\ee
\begin{theorem}\label{thm:lapexplicit}Suppose $\g\in C^{2}(\us(R\e)), \z \in C^{4}(\us(R\e))$, $R\geq1$, and
\be\label{Rc34eps}
\left(R^2c_3(R)\frac{d}{\sqrt\la}\right)^2 + c_4(R)\frac{d^2}{\la}\leq \Clap.
\ee Then
$$
\frac{1}{\mathcal Z}\int_{\us(R\e)}\g(x)e^{-\la \z (x)}\dd x = \g(0_d)+ \Rem_1,
$$ where
\bs
|\Rem_1| \les_{\Clap} &\bigg\{c_{0,\g}(R)(c_3(0)^2+c_4(R))+d^{-1}c_{2,\g}(R) + d^{-1}c_{1,\g}(R)\big(c_3(0)+\frac{d}{\sqrt\la}c_4(R)\big)\bigg\}\frac{d^2}{\la} \\
&+ \left(|\g(0_d)|+ \frac{d}{\sqrt\la}c_{1,\g}(0)\right)e^{-(R-1)^2d/4}.
\es 
\end{theorem}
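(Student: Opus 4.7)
The strategy is to specialize the argument underlying Theorem~\ref{thm:lap:orig} to $L=1$ while retaining explicit dependence on each of the derivative quantities $c_{0,\g}(R), c_{1,\g}(0), c_{1,\g}(R), c_{2,\g}(R), c_3(0)$, and $c_4(R)$, rather than absorbing them into opaque constants. The three main steps are: rewrite the integral as a Gaussian expectation carrying a Taylor-remainder weight; decompose the integrand and Taylor-expand both $\g$ and $e^{-\la r}$, exploiting vanishing odd Gaussian moments; and handle the tail outside $\us(R\e)$ by standard sub-Gaussian estimates.

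Concretely, set $r(x):=\z(x)-\z(0_d)-\tfrac12 x^\top H x$ and let $X\sim\mathcal N(0_d,(\la H)^{-1})$, so
$$
\frac{1}{\mathcal Z}\int_{\us(R\e)}\g(x)e^{-\la\z(x)}\dd x \;=\; \E\!\left[\g(X)\,e^{-\la r(X)}\,\ind_{\us(R\e)}(X)\right].
$$
A Taylor remainder bound gives $|r(x)|\le\tfrac{1}{6}c_3(0)\|x\|_H^3+\tfrac{1}{24}c_4(R)\|x\|_H^4$ on $\us(R\e)$, and hypothesis~\eqref{Rc34eps} then yields $\sup_{\us(R\e)}\la|r|\les_{\Clap}1$. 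In particular $e^{\la|r(X)|}\ind_{\us(R\e)}$ is bounded by a constant depending only on $\Clap$, so such factors can be absorbed into the implicit $\les_{\Clap}$ throughout.

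The algebraic core is the pointwise decomposition
\begin{align*}
\g(x)e^{-\la r(x)}-\g(0_d) \;=\;& \g(x)\big[e^{-\la r(x)}-1+\la r(x)\big] \;-\; \g(0_d)\,\la r(x) \\
&\;-\;[\g(x)-\g(0_d)]\,\la r(x) \;+\;[\g(x)-\g(0_d)],
\end{align*}
whose expectation against $X$ on $\us(R\e)$ I would bound term by term. The first bracket is controlled by $|e^{-\la r}-1+\la r|\le\tfrac12(\la r)^2 e^{\la|r|}$ together with $\E\|X\|_H^{2k}\les(d/\la)^k$, giving a $c_{0,\g}(R)(c_3(0)^2+c_4(R))\,d^2/\la$ contribution. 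For $\g(0_d)\la r(X)$, the cubic piece $\tfrac\la 6\langle\nabla^3\z(0_d),X^{\otimes3}\rangle$ has zero Gaussian mean on all of $\br^d$, so it contributes only the tail error $|\g(0_d)|e^{-(R-1)^2d/4}$, while the quartic Taylor remainder of $r$ contributes $c_{0,\g}(R)c_4(R)\,d^2/\la$. Expanding $\g(x)-\g(0_d)$ through its second derivative and using $\E\|X\|_H^2\les d/\la$ yields the quadratic-in-$\g$ contribution $c_{2,\g}(R)\,d/\la$; the cross term $[\g(x)-\g(0_d)]\la r(x)$ reduces similarly to a Gaussian moment of a polynomial of degree at most six, whose leading order-four moment I would evaluate exactly to obtain $c_{1,\g}(R)(c_3(0)+(d/\sqrt\la)c_4(R))\,d/\la$. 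Finally, the bare linear-in-$\g$ contribution $\E[\nabla\g(0_d)^\top X\,\ind_{\us(R\e)}]$ vanishes on $\br^d$ and contributes only the tail $(d/\sqrt\la)c_{1,\g}(0)e^{-(R-1)^2d/4}$, via the Gaussian tail $\mathbb{P}(\|X\|_H>R\e)\les e^{-(R-1)^2d/2}$.

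The main obstacle is tracking the cancellations from odd Gaussian moments carefully enough to avoid spurious lower-order errors. A naive Cauchy--Schwarz estimate on the cross term $\la\E[\nabla\g(0_d)^\top X\cdot\langle\nabla^3\z(0_d),X^{\otimes 3}\rangle\,\ind_{\us(R\e)}]$ would give $c_{1,\g}(0)c_3(0)\,d^{3/2}/\la$, which is a factor $\sqrt{d}$ worse than the claimed $d/\la$. The remedy is to compute this degree-four Gaussian moment exactly via Isserlis/Wick's formula, which contracts two of the four factors of $X$ against the covariance $(\la H)^{-1}$ and yields the sharp $c_{1,\g}(0)c_3(0)\,d/\la$. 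The same mechanism, together with the exact vanishing of the odd Gaussian moment from $\g(0_d)\la r(X)$, is what explains why only $c_3(0)$ and $c_{1,\g}(0)$ (at the origin, rather than their suprema over $\us(R\e)$) appear in the exponentially small tail factor in the final bound on $\Rem_1$.
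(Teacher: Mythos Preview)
Your overall strategy (rewrite as a Gaussian expectation, Taylor-expand $\g$ and $e^{-\la r}$, exploit odd-moment cancellation and Wick's formula) is the right one, and is essentially what underlies Theorem~2.12 of~\cite{A24}, which the paper simply invokes as a black box and then simplifies algebraically. So you are reproving that theorem from scratch rather than citing it.

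However, there is a genuine gap. Your claim that hypothesis~\eqref{Rc34eps} yields $\sup_{\us(R\e)}\la|r|\les_{\Clap}1$ is false. On $\us(R\e)$ one has $\|x\|_H\le R\e$, so your own Taylor bound gives
\[
\la|r(x)|\ \le\ \tfrac{R^3}{6}c_3(0)\,\la\e^3+\tfrac{R^4}{24}c_4(R)\,\la\e^4
\ =\ \tfrac{R\sqrt d}{6}\cdot\big(R^2c_3(0)\tfrac{d}{\sqrt\la}\big)+\tfrac{R^4}{24}\cdot c_4(R)\tfrac{d^2}{\la},
\]
and~\eqref{Rc34eps} only controls the parenthesized factors; the first term is of order $R\sqrt d$ and the second of order $R^4$, neither of which is bounded by a constant depending only on $\Clap$. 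Consequently you cannot absorb $e^{\la|r(X)|}\ind_{\us(R\e)}$ into $\les_{\Clap}$ pointwise, and the estimate $|e^{-\la r}-1+\la r|\le\tfrac12(\la r)^2e^{\la|r|}$ does not immediately give what you need after taking expectations.

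What \emph{is} true, and is the key mechanism behind Theorem~2.12 of~\cite{A24}, is that $\E\big[e^{t\la r(X)}\ind_{\us(R\e)}(X)\big]\le C(\Clap)$ for bounded $t$, via Gaussian concentration: one bounds the log-MGF of $\la r(X)$ on $\us(R\e)$ by the squared sup of $\|\nabla(\la r)\|$ plus the mean (this is Lemma~\ref{lma:Erexp} in the paper), and the gradient bound $\sup_{\us(R\e)}\|\nabla(\la r)\|\les R^2c_3(R)\tfrac{d}{\sqrt\la}+R^3c_4(R)\tfrac{d^{3/2}}{\la}$ is exactly what~\eqref{Rc34eps} controls. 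With this in hand, a Cauchy--Schwarz step separates the exponential from the polynomial moments and your termwise analysis goes through. So the fix is to replace the pointwise sup argument by this concentration-of-measure argument; the paper's proof avoids the issue entirely by importing the finished bound from~\cite{A24} and then only doing the algebraic simplification of the $\CA_k$ coefficients.
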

\begin{proof}
Theorem 2.12 of~\cite{A24} and~\eqref{Rc34eps} give
\be\label{Rem1CA2}
|\Rem_1|\les_{\Clap} \CA_2(R)\frac{d^2}{\la} + \max(\CA_0(0), \CA_1(0)d/\sqrt\la)e^{-(R-1)^2d/4},
\ee  where the $\CA_k$ are defined as follows:
\bs
\CA_0(0)&=|\g(0_d)|,\\
 \CA_1(0) &=c_{1,\g}(0)+ |\g(0_d)|c_3(0),\\
\CA_2(R)& =\left(d^{-1}c_{2,\g}+d^{-1}\bar c_{1,\g}\bar c_3 + c_{0,\g}(\bar c_3^2+c_4)\right)(R).
\es Note that 
\bsn
\bar c_3(R)^2+c_4(R) &\les c_3(0)^2 + \frac{d^2}{\la}c_4(R)^2 + c_4(R) \\
&= c_3(0)^2 + (1+c_4(R)d^2/\la)c_4(R) \les_{C_{34}} c_3(0)^2 + c_4(R),
\esn 
using~\eqref{Rc34eps} to get the last line. Also, we write
\bsn
(\bar c_{1,\g}\bar c_3+c_{2,\g})(R) &= c_{1,\g}(0)\bar c_3(R) + c_{2,\g}(R)\left(1+\frac{d}{\sqrt\la}\bar c_3(R)\right) \\
&\les_{C_{34}}  c_{1,\g}(0)\bar c_3(R)+c_{2,\g}(R).
\esn To get the inequality, we used that $\frac{d}{\sqrt\la}\bar c_3(R)=c_3(0)\frac{d}{\sqrt\la} + c_4(R)\frac{d^2}{\la}\leq C_{34}$ by~\eqref{Rc34eps} and the fact that $R\geq1$. Thus to summarize, we have
\bs\label{CA2bd}
\CA_2(R) \les_{C_{34}} &\;d^{-1}c_{2,\g}(R) + d^{-1}c_{1,\g}(0)\left(c_3(0)+\frac{d}{\sqrt\la} c_4(R)\right) + c_{0,\g}(R)(c_3(0)^2 + c_4(R)).
\es
Finally, we have
\bs\label{CA01bd}
\max(\CA_0(0),\CA_1(0)d/\sqrt\la) &= \max(|\g(0_d)|, \frac{d}{\sqrt\la}c_{1,\g}(0) + |\g(0_d)|\frac{d}{\sqrt\la}c_3(0))\\
&\les_{C_{34}} |\g(0_d)|+ \frac{d}{\sqrt\la}c_{1,\g}(0).
\es  Combining~\eqref{Rem1CA2},~\eqref{CA2bd}, and~\eqref{CA01bd} finishes the proof.
\end{proof}
\begin{corollary}\label{corr:norm-const} Suppose $\z \in C^{4}(\us(R\e))$ and~\eqref{Rc34eps} holds, with $R\geq1$. Then
$$
\frac{1}{\mathcal Z}\int_{\us(R\e)}e^{-\la \z (x)}\dd x \leq C(C_{3,4}).
$$
\end{corollary}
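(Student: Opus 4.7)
The plan is to obtain this as an immediate consequence of Theorem~\ref{thm:lapexplicit} applied with the constant observable $\g \equiv 1$. With this choice of $\g$ we have $\g(0_d) = 1$, $c_{0,\g}(R) = 1$, and $c_{k,\g}(R) = 0$ for all $k \geq 1$. Feeding these values into the remainder bound of Theorem~\ref{thm:lapexplicit} causes all the terms containing $c_{1,\g}$ and $c_{2,\g}$ to vanish, leaving
\begin{equation*}
|\Rem_1| \lesssim_{\Clap} (c_3(0)^2 + c_4(R))\,\frac{d^2}{\la} + e^{-(R-1)^2 d/4}.
\end{equation*}

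Next I would bound each surviving piece using the hypothesis~\eqref{Rc34eps} together with $R \geq 1$. For the first term, note that $c_3(0) \leq c_3(R)$ and $R \geq 1$ give $c_3(0)^2\, d^2/\la \leq R^4 c_3(R)^2\, d^2/\la \leq \Clap$, while directly from~\eqref{Rc34eps} we have $c_4(R)\, d^2/\la \leq \Clap$. The exponential term $e^{-(R-1)^2 d/4}$ is bounded by $1$. Combining these gives $|\Rem_1| \leq C(\Clap)$.

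Finally, Theorem~\ref{thm:lapexplicit} yields
\begin{equation*}
\frac{1}{\mathcal Z}\int_{\us(R\e)} e^{-\la\z(x)}\dd x = 1 + \Rem_1,
\end{equation*}
so $\frac{1}{\mathcal Z}\int_{\us(R\e)} e^{-\la\z(x)}\dd x \leq 1 + |\Rem_1| \leq C(\Clap)$, which is the claim. There is no real obstacle here; the only subtlety is making sure to keep track of the implicit dependence on $\Clap$ coming from the $\lesssim_{\Clap}$ in Theorem~\ref{thm:lapexplicit}, but since all remaining quantities in the bound are themselves controlled by $\Clap$ via~\eqref{Rc34eps}, everything collapses into a single constant $C(\Clap)$.
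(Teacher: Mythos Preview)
Your proposal is correct and is precisely the intended argument: the paper states this corollary immediately after Theorem~\ref{thm:lapexplicit} without proof because it follows by taking $\g\equiv 1$ there, exactly as you do. Your bookkeeping of the vanishing $c_{1,\g}$, $c_{2,\g}$ terms and the use of~\eqref{Rc34eps} with $R\geq 1$ to absorb the surviving pieces into $C(\Clap)$ is accurate.
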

Next, we present some results which follow from Appendix D of~\cite{A24}. The key proof element in these results is to use the idea of high-dimensional Gaussian concentration, which states that, for $Z\sim\mathcal N(0_d, I_d)$, the variation of $h(Z)$ about its expectation is controlled by the maximum value of $\|\nabla h(x)\|$ achieved in the region of integration. This gives more precise control then do other approaches based on bounding the function $h(x)$ itself. See~\cite{A24} for more details.
\begin{lemma}\label{lma:Ef2}
Let $f$ be a function on $\br^d$ and $h(x) = f(x) -f(0_d)-\nabla f(0_d)^\top x - \frac12x^\top\nabla^2f(0_d)x$. Let $X\sim \mathcal N(0_d, (\la H)^{-1})$ and $\us = \us(R\e)$. Let $f_3 = \|\nabla^3f(0_d)\|_H$ and $f_4=\sup_{x\in\us}\|\nabla^4f(x)\|_H$. Then 
\begin{align}
\la\E[h(X)^2\ind\{X\in\us\}]^{1/2} &\les f_3\frac{d}{\sqrt\la} + f_4\frac{d^2}{\la},\label{Er2}
\end{align} Now, suppose that  $R$ is large enough that $\mathbb P(\|Z\|\leq R\sqrt d)\geq1/2$, and 
\be\label{f34}
(f_3^2+f_4)\frac{d^2}{\la}\leq \Cf.\ee Then
\begin{align}
\E[\exp(\la h(X))\ind\{X\in\us\}]&\leq C(R, \Cf).\label{Erexp}
\end{align}
\end{lemma}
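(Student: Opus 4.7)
}
By the integral form of Taylor's theorem, and because $\us$ is star-shaped about $0_d$, the function $h$ admits on $\us$ the decomposition
\[
h(x) = \tfrac16\langle \nabla^3 f(0_d), x^{\otimes 3}\rangle + r_4(x), \qquad r_4(x) := \tfrac16\int_0^1(1-t)^3\langle \nabla^4 f(tx),x^{\otimes 4}\rangle\,\mathrm dt,
\]
with the pointwise bound $|r_4(x)|\leq \tfrac{1}{24} f_4\|x\|_H^4$ whenever $x\in\us$. It will also be convenient to pass to the standardized variable $W=\sqrt\la\, H^{1/2}X\sim\mathcal N(0_d,I_d)$, under which $\|X\|_H=\la^{-1/2}\|W\|$ and $\{X\in\us\}=\{\|W\|\leq R\sqrt d\}$, so Gaussian moments can be computed explicitly, e.g.\ $\E[\|X\|_H^{2k}]\lesssim (d/\la)^k$.

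For the $L^2$ bound \eqref{Er2}, I would dispose of $r_4$ by the above pointwise bound and moment formula: $\E[r_4(X)^2\ind_\us]^{1/2}\lesssim f_4\,\E[\|X\|_H^8]^{1/2}\lesssim f_4\, d^2/\la^2$. For the cubic piece, set $g(x)=\tfrac16\langle\nabla^3 f(0_d),x^{\otimes 3}\rangle$. Since $g$ is odd, $\E g(X)=0$ and so $\E[g(X)^2]=\mathrm{Var}(g(X))$. A naive pointwise bound loses a factor $\sqrt d$, so instead I would invoke the Gaussian Poincar\'e inequality for $X\sim\mathcal N(0_d,(\la H)^{-1})$:
\[
\mathrm{Var}(g(X))\;\leq\;\tfrac{1}{\la}\,\E\bigl[\|\nabla g(X)\|_H^2\bigr]\;\leq\;\tfrac{1}{\la}\,f_3^2\,\E\bigl[\|X\|_H^4\bigr]\;\lesssim\; f_3^2\,\tfrac{d^2}{\la^3}.
\]
Since $\ind_\us\leq 1$, combining the two parts and multiplying by $\la$ gives exactly $f_3\, d/\sqrt\la + f_4\, d^2/\la$ as required. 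This is the step where the sharpness comes from: replacing the naive $\|x\|_H^3$ bound on $g$ by a control of $\|\nabla g\|_H^2$ is precisely the Gaussian-concentration idea highlighted above Lemma~\ref{lma:Ef2}.

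For the exponential bound \eqref{Erexp}, I would treat the quartic remainder and the cubic separately on $\us$. The quartic is trivial: on $\us$, $|\la r_4(X)|\leq \tfrac{\la f_4}{24}\|X\|_H^4\leq \tfrac{f_4 R^4 d^2}{24\la}\leq R^4\Cf/24$ by hypothesis \eqref{f34}, so $\exp(\la r_4(X))\ind_\us$ is pointwise bounded by a constant depending on $R$ and $\Cf$. The cubic $G(W):=\tfrac{\la}{6}\langle\nabla^3 f(0_d),X^{\otimes 3}\rangle=\tfrac{1}{6\sqrt\la}\langle T,W^{\otimes 3}\rangle$, with $\|T\|_{\mathrm{op}}=f_3$, is not globally Lipschitz, but satisfies $\|\nabla G(w)\|\leq \tfrac{f_3 R^2 d}{2\sqrt\la}$ uniformly for $\|w\|\leq R\sqrt d$, giving a Lipschitz constant $L$ with $L^2\lesssim R^4 f_3^2 d^2/\la\lesssim R^4\Cf$. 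The plan is then to replace $G$ by an odd extension $\tilde G$ obtained by radial truncation at $\|w\|=R\sqrt d$ (i.e.\ freezing values along outward rays), which preserves $\tilde G=G$ on the integration set, preserves the odd symmetry so that $\E\tilde G(W)=0$, and remains Lipschitz with constant $O(L)$. Then Gaussian Lipschitz concentration yields $\E[\exp(\tilde G(W))]\leq \exp(L^2/2)\leq C(R,\Cf)$, and restricting to $\us$ only decreases the integrand. Multiplying the cubic and quartic bounds (with Cauchy--Schwarz if helpful to decouple them) gives the desired constant $C(R,\Cf)$. The main obstacle is the extension/truncation of $G$: one must verify that the odd radial extension is indeed globally $O(L)$-Lipschitz, which is essentially the content of the Gaussian-concentration-with-indicator lemmas of \cite{A24} Appendix~D that the paper cites as the source of these results.
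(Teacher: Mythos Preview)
Your proposal is correct and follows the same overall strategy as the paper. For the $L^2$ bound~\eqref{Er2}, your use of the Gaussian Poincar\'e inequality on the cubic term is essentially what the paper does by citing Proposition~D.4 of~\cite{A24}, which gives $\E[\langle S,Z^{\otimes 3}\rangle^2]\lesssim(\|S\|d)^2$; these are the same computation. For the exponential bound~\eqref{Erexp}, there is a mild structural difference worth noting. You split $\la h$ into the quartic remainder (which you bound pointwise on $\us$ by $R^4\Cf/24$) and the cubic (which you handle by constructing an odd Lipschitz extension off the ball and applying standard Lipschitz concentration). The paper instead keeps $\la h$ intact and applies a single lemma (Corollary~D.3 of~\cite{A24}, restated as Lemma~\ref{lma:Erexp}) which bounds $\log\E[\exp(r(Z))\ind\{\|Z\|\leq R\sqrt d\}]$ directly in terms of $\sup_{\|x\|\leq R\sqrt d}\|\nabla r(x)\|^2$ and $\E[r(Z)\ind]$; the cubic drops out of the latter by oddness, and the gradient bound picks up both cubic and quartic contributions at once. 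Your route is slightly more hands-on (you build the extension yourself rather than invoking the indicator-aware concentration lemma), but the content is the same. One small simplification: since the quartic is pointwise bounded on $\us$, no Cauchy--Schwarz is needed to decouple---you can simply pull out $e^{R^4\Cf/24}$ and bound the remaining cubic expectation.
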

\begin{proof}A Taylor expansion gives that $h(x) = \frac16\langle\nabla^3f(0_d), x^{\otimes3}\rangle + q_4(x)$, where 
\be\label{q4b}
|q_4(x)| \leq f_4\|H^{1/2}x\|^4\qquad\forall x\in\us.\ee Let $Z\sim\mathcal N(0,I_d)$, so that we may write $X=(\la H)^{-1/2}Z$. We then have
\bs\label{laqdef}
\la h(X) &= \la h((\la H)^{-1/2}Z) = \la^{-1/2}\frac16\langle S, Z^{\otimes3}\rangle + r_4(Z),\\
r_4(x)&:=\la q_4((\la H)^{-1/2}x),
\es where $S$ is the tensor such that $\langle S, x^{\otimes3}\rangle = \langle \nabla^3f(0), (H^{-1/2}x)^{\otimes3}\rangle$ for all $x$. Thus
\bs\label{laqx}
\E[(\la h(X))^2\ind\{X\in\us\}]& = \E\left[\left( \la^{-1/2}\frac16\langle S, Z^{\otimes3}\rangle + r_4(Z)\right)^2\ind\{\|Z\|\leq R\sqrt d\}\right]\\
&\les \la^{-1}\E[\langle S, Z^{\otimes3}\rangle^2] + \E[r_4(Z)^2\ind\{\|Z\|\leq R\sqrt d\}].
\es Now, note that for all $\|x\|\leq R\sqrt d$, we have $(\la H)^{-1/2}x\in\us$, and therefore 
\be\label{r4b}|r_4(x)|\leq \la f_4\|H^{1/2}(\la H)^{-1/2}x\|^4 = \la^{-1}f_4\|x\|^4\qquad\forall \|x\|\leq R\sqrt d.\ee using~\eqref{q4b}. Thus 
\be\label{laqx4}
\E[r_4(Z)^2\ind\{\|Z\|\leq R\sqrt d\}]\leq \la^{-2}f_4^2\E[\|Z\|^8] \les \la^{-2}f_4^2d^4.\ee Also, Proposition D.4 of~\cite{A24} gives 
\be\label{laqx3}\E[\langle S, Z^{\otimes3}\rangle^2]\les (\|S\|d)^2 =(\|\nabla^3f(0)\|_Hd)^2=(f_3d)^2.\ee Substituting~\eqref{laqx4},~\eqref{laqx3} in~\eqref{laqx} now gives
$$
\E[(\la h(X))^2\ind\{X\in\us\}] \les f_3\frac{d^2}{\la}  + f_4^2\frac{d^4}{\la^2}.
$$ Taking the square root gives the desired bound~\eqref{Er2}. Next, note that
\bs
\E[\exp(\la h(X))\ind\{X\in\us\}] &= \E[\exp(r(Z))\ind\{\|Z\|\leq R\sqrt d\}],\\
 r(x)&:=\la h((\la H)^{-1/2}x).
\es We apply Lemma~\ref{lma:Erexp}. To do so, we bound $\|\nabla r\|$ over $\|x\|\leq R\sqrt d$. Fix $u$ such that $\|H^{1/2}u\|=1$. A Taylor expansion gives 
\bs
\nabla h(x)^\top u&=(\nabla f(x) - \nabla f(0))^\top u -x^\top\nabla^2f(0)u \\
&=\frac12\langle\nabla^3f(0), x\otimes x\otimes u\rangle + q_4(x,u),\\  
|q_4(x,u)| &\leq f_4\|H^{1/2}x\|^3\leq f_4(R\e)^3\qquad\forall x\in\us.\es We conclude that 
$$\sup_{x\in\us}\|H^{-1/2}\nabla h(x)\|=\sup_{x\in\us, \|H^{1/2}u\|=1}\nabla h(x)^\top u\leq (R\e)^2f_3 + (R\e)^3f_4.$$ Therefore,
\be\label{supnab}
\sup_{\|x\|\leq R\sqrt d}\|\nabla r(x)\| = \sqrt\la\sup_{x\in\us}\|H^{-1/2}\nabla h(x)\|\leq R^2f_3\frac{d}{\sqrt\la} + \frac{R^3}{\sqrt d}f_4\frac{d^2}{\la}.
\ee
Next, using~\eqref{laqdef} and~\eqref{r4b}, and recalling $r(Z)=\la h((\la H)^{-1/2}Z)$, we have 
\bs\label{Erz}
\left|\E\left[r(Z)\ind\left\{\|Z\|\leq R\sqrt d\right\}\right]\right| &=\left|\E\left[r_4(Z)\ind\left\{\|Z\|\leq R\sqrt d\right\}\right]\right|\leq f_4\frac{d^2}{\la}.
\es 
Using~\eqref{supnab} and~\eqref{Erz} in~\eqref{eq:Erexp} now gives
\be
\E[\exp(\la h(X))\ind\{X\in\us\}] \leq \exp\left(2f_4d^2/\la + \left(R^2f_3\frac{d}{\sqrt\la} + \frac{R^3}{\sqrt d}f_4\frac{d^2}{\la}\right)^2\right)
\ee
Using~\eqref{f34}, we conclude that~\eqref{Erexp} holds.
\end{proof}

\begin{lemma}\label{lma:Erexp}Let $r$ be $C^1$ in $\{\|x\|\leq R\sqrt d\}$, where $R$ is large enough that $\mathbb P(\|Z\|\leq R\sqrt d)\geq1/2$. Then
\be
\log\E\left[\exp(r(Z))\ind\{\|Z\|\leq R\sqrt d\}\right]\leq 2\E\left[r(Z)\ind\{\|Z\|\leq R\sqrt d\}\right]+\sup_{\|x\|\leq R\sqrt d}\|\nabla r(x)\|^2.\label{eq:Erexp}
\ee
\end{lemma}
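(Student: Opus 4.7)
The plan is to apply the classical Herbst entropy argument to a log-Sobolev inequality (LSI) for the Gaussian measure conditioned on the convex Euclidean ball $B:=\{x\in\br^d:\|x\|\le R\sqrt d\}$. Let $\nu:=\mathcal N(0_d,I_d)$, let $L:=\sup_{x\in B}\|\nabla r(x)\|$, and define the probability measure $\mu$ by $d\mu(x):=\nu(B)^{-1}\ind_B(x)\,d\nu(x)$. Because $B$ is convex and the Gaussian potential $\tfrac12\|x\|^2$ has Hessian $I_d$, the measure $\mu$ is $1$-strongly log-concave on a convex domain, and by the Bakry--Emery criterion it satisfies the LSI
\[
\mathrm{Ent}_\mu(f^2)\le 2\,\E_\mu\|\nabla f\|^2.
\]

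I would then apply the standard Herbst argument: substituting $f=e^{tr/2}$ into the LSI, dividing by $t^2\E_\mu[e^{tr(Z)}]$, and integrating the resulting differential inequality $\tfrac{d}{dt}[t^{-1}\log\E_\mu e^{tr(Z)}]\le L^2/2$ from $0$ to $1$ (noting $\lim_{t\downarrow 0}t^{-1}\log\E_\mu e^{tr(Z)}=\E_\mu[r(Z)]$) yields
\[
\log\E_\mu\big[e^{r(Z)}\big]\;\le\;\E_\mu[r(Z)]+\tfrac12 L^2.
\]
Unwinding the normalization via $\log\nu(B)\le 0$ and $\E_\mu[r(Z)]=\E[r(Z)\ind_B(Z)]/\nu(B)$ gives
\[
\log\E\big[e^{r(Z)}\ind_B(Z)\big]\;=\;\log\nu(B)+\log\E_\mu\big[e^{r(Z)}\big]\;\le\;\frac{\E[r(Z)\ind_B(Z)]}{\nu(B)}+\tfrac12 L^2.
\]
The hypothesis $\nu(B)\ge 1/2$ forces $\nu(B)^{-1}\le 2$, so the first term on the right is bounded by $2\,\E[r(Z)\ind_B(Z)]$, and $\tfrac12 L^2\le L^2$ then produces exactly the claimed bound \eqref{eq:Erexp}.

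The main technical point is justifying the LSI for the restricted measure $\mu$. The standard Bakry--Emery calculus on a domain with boundary formally requires Neumann (reflecting) boundary conditions, which are satisfied by a Euclidean ball with its strictly convex smooth boundary. Alternatively, to bypass the boundary subtleties altogether one can approximate $\ind_B$ by a log-concave smoothing $\phi_k\uparrow\ind_B$, apply the unconstrained Gaussian LSI to the tilted measures proportional to $e^{-\|x\|^2/2}\phi_k(x)$ (which remain $1$-log-concave by Prekopa--Leindler), and pass to the limit via monotone/dominated convergence on the entropy and on the Dirichlet form $\E\|\nabla f\|^2$. Once this LSI is in hand, the remainder of the argument is a short Herbst-style calculation, so the sole genuine input is the single log-Sobolev estimate for the restricted Gaussian.
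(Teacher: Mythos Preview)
Your approach via the log-Sobolev inequality for the Gaussian restricted to a convex ball, followed by the Herbst argument, is the standard route and is almost certainly what underlies the cited Corollary~D.3 of \cite{A24} (the paper itself gives no proof beyond that citation). The LSI step and the Herbst step are both fine, and you correctly arrive at
\[
\log\E\big[e^{r(Z)}\ind_B(Z)\big]\;\le\;\frac{\E[r(Z)\ind_B(Z)]}{\nu(B)}+\tfrac12 L^2.
\]

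The gap is in your very last inequality. You write that $\nu(B)^{-1}\le 2$ implies $\E[r(Z)\ind_B(Z)]/\nu(B)\le 2\,\E[r(Z)\ind_B(Z)]$, but this is only valid when $\E[r(Z)\ind_B(Z)]\ge 0$. If the expectation is negative the inequality reverses. In fact the lemma as literally stated is false without a sign hypothesis: take $r\equiv c<0$ constant, so that the claim reads $c+\log\nu(B)\le 2c\,\nu(B)$, i.e.\ $|c|(2\nu(B)-1)\le -\log\nu(B)$; for $\nu(B)$ close to~$1$ (large $R$) the left side is near $|c|$ while the right side is near~$0$.

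The fix is cosmetic: replace $2\,\E[r(Z)\ind_B(Z)]$ by $2\,\big|\E[r(Z)\ind_B(Z)]\big|$ on the right-hand side. Your bound then gives
\[
\frac{\E[r(Z)\ind_B(Z)]}{\nu(B)}\le\frac{|\E[r(Z)\ind_B(Z)]|}{\nu(B)}\le 2\,\big|\E[r(Z)\ind_B(Z)]\big|,
\]
and the argument goes through. This is also exactly how the lemma is used downstream in the paper: in the proof of Lemma~\ref{lma:Ef2} the authors bound $|\E[r(Z)\ind_B]|$ in \eqref{Erz} and then exponentiate $2$ times that absolute bound. So the intended inequality carries the absolute value, and with that correction your proof is complete.
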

This is a restatement of Corollary D.3 in~\cite{A24}.

\section{The term $a_1$}\label{app:term2}
In this section, we use shorthand such as
\be\label{shortzqderiv}
\z_{xy}= \nabla_x\pa_y\z(0_{d+1}),\quad \z_{xxy}=\nabla_x^2\pa_y\z(0_{d+1}),\quad \g_x = \nabla_x\g(0_{d+1}),\quad \g_y=\pa_y\g(0_{d+1}),
\ee and so on. Recall from Proposition~\ref{prop:main} that the terms of the expansion are given by $a_md^{2m}=\sum_{k=1}^{m+1}\nu_{m+1-k}(\g_k)$, where the $\nu_{m+1-k}$ are as in Definition~\ref{def:lap}. In particular, $$d^2a_1 = \nu_0(\g_2)+\nu_1(\g_1)=\g_2(0_d)+\nu_1(\g_1),$$ where the functions $\g_1,\g_2$ are given explicitly in~\eqref{g12def}. For convenience, we repeat~\eqref{g12def} here:
\be\label{app:g12def}
\g_1(x) = \frac{\g(x,0)}{\pa_y\z(x,0)},\qquad \g_2(x) =  \frac{\pa_y\g(x,0)}{\left(\pa_y\z(x,0)\right)^2}- \frac{\g(x,0)\pa_y^2\z(x,0)}{\left(\pa_y\z(x,0)\right)^3}.
\ee Using that $\pa_y\z(0_{d+1})=\pa_yz(0_{d+1})=1$, we see that
\be\label{nu0g2}
\g_2(0) = \g_y - \g(0_{d+1})\z_{yy}.\ee To define $\nu_1(\g_1)$, let $f(x) = \g_1(H^{-1/2}x)$ and $v(x)=\z(H^{-1/2}x)$. As shown in Appendix A of~\cite{A24}, the formula for $\nu_1(\g_1)$ is given as follows:
\bs\label{nu1-gen}
\nu_1(\g_1)  =&\frac12\Delta f(0)-\frac12\sum_{i,j=1}^d\partial_if(0)\partial_{ijj}^3v(0_d) + \frac{f(0)}{12}\|\nabla^3v(0_d)\|^2_{F} \\
&+ \frac{f(0)}8\sum_{i=1}^d\left(\partial_i\Delta v(0_d)\right)^2 - \frac{f(0)}8\sum_{i,j=1}^d\partial_i^2\partial_j^2v(0_d)\\
=&\frac12\langle\nabla^2f(0), I_d\rangle - \frac12\langle\nabla^3v(0_d), \nabla f(0)\otimes I_d\rangle + \frac{f(0)}{12}\|\nabla^3v(0_d)\|^2_{F} \\
&+\frac{f(0)}8\|\langle\nabla^3v(0_d), I_d\rangle\|^2 - \frac{f(0)}8\langle\nabla^4v(0_d), I_d\otimes I_d\rangle.
\es
We write this formula slightly more explicitly, keeping $v$ as is but using $\g_1$ instead of $f$. This gives
\bs\label{nu1g1}
\nu_1(\g_1)=& \frac12\langle\nabla^2 \g_1(0_d),H^{-1}\rangle-\frac12\left\langle\nabla^3v(0_d),I_d\otimes H^{-1/2}\nabla \g_1(0) \right\rangle \\
&+ \g_1(0_d)\left(\frac{1}{12}\|\nabla^3v(0_d)\|^2_{F} + \frac18\|\langle\nabla^3v(0_d),I_d\rangle\|^2 - \frac18\langle\nabla^4v(0_d), I_d\otimes I_d\rangle\right).
\es 
Now, we have $\g_1(0_d)=\g(0_{d+1})$, and we compute
\bs
\nabla\g_1 = &\frac{\nabla_x\g}{\pa_y\z} - \frac{\g\nabla_x\pa_y\z}{(\pa_y\z)^2},\\
\nabla^2\g_1 = &\frac{\nabla_x^2\g}{\pa_y\z} - \frac{\nabla_x\g\nabla_x\pa_y\z^\top + \nabla_x\pa_y\z\nabla_x\g^\top + \g\nabla_x^2\pa_y\z}{(\pa_y\z)^2}+2\frac{\g\nabla_x\pa_y\z\nabla_x\pa_y\z^\top}{(\pa_y\z)^3}.
\es
Using that $\pa_y\z(0_{d+1})=\pa_yz(0_{d+1})=1$, and the shorthands~\eqref{shortzqderiv}, we obtain
\bs\label{nabla12g1}
\nabla\g_1(0)=&\g_x-\g(0_{d+1})\z_{xy},\\
\nabla^2\g_1(0)=&\g_{xx}-\g(0_{d+1})\z_{xxy}-(\g_x\z_{xy}^\top + \z_{xy}\g_x^\top)+2\g(0_{d+1})\z_{xy}\z_{xy}^\top.
\es 
Substituting~\eqref{nabla12g1} and $\g_1(0_d)=\g(0_{d+1})$ into~\eqref{nu1g1}, and combining~\eqref{nu1g1} with~\eqref{nu0g2}, we obtain
\bs\label{a2-gen}
d^2a_1 = &\nu_0(\g_2)+\nu_1(\g_1)=\g_2(0_d)+\nu_1(\g_1) \\
= &\g_y - \g(0_{d+1})\z_{yy} + \frac12\langle \g_{xx},H^{-1}\rangle -\frac{\g(0_{d+1})}{2}\langle\z_{xxy}, H^{-1}\rangle\\
&-\z_{xy}^\top H^{-1}\g_x +\g(0_{d+1})\z_{xy}^\top H^{-1}\z_{xy}-\frac12\left\langle\nabla^3v(0_d),I_d\otimes H^{-1/2}(\g_x-\g(0_{d+1})\z_{xy}) \right\rangle\\
&+\g(0_{d+1})\left(\frac{1}{12}\|\nabla^3v(0_d)\|^2_{F} + \frac18\|\langle\nabla^3v(0_d),I_d\rangle\|^2 - \frac18\langle\nabla^4v(0_d), I_d\otimes I_d\rangle\right).
\es
\subsection{Constant $g$}\label{app:a2:constg}
When $g\equiv1$, we have $\g\equiv1$ as well. Then all derivatives of $\g$ vanish, and~\eqref{a2-gen} simplifies as follows:
\bs\label{a2-const-g}
d^2a_1 = &\nu_0(\g_2)+\nu_1(\g_1) \\
= & - \z_{yy}  -\frac{1}{2}\langle\z_{xxy}, H^{-1}\rangle +\z_{xy}^\top H^{-1}\z_{xy}+\frac12\left\langle\nabla^3v(0_d),I_d\otimes H^{-1/2}\z_{xy} \right\rangle\\
&+\frac{1}{12}\|\nabla^3v(0_d)\|^2_{F} + \frac18\|\langle\nabla^3v(0_d),I_d\rangle\|^2 - \frac18\langle\nabla^4v(0_d), I_d\otimes I_d\rangle.
\es
\subsection{Constant $g$, Gaussian case}\label{app:a2:gauss}
Finally, we consider the Gaussian setting of Section~\ref{sec:gauss}. Recall from~\eqref{z-gauss} that $z(x,t)=\|x\|^2/2+(t+1)^2/2$, and therefore $\z(x,y)=\|x\|^2/2+(\psi(x)+y+1)^2/2$, where $\psi$ parameterizes the boundary $\pa D$ near $0_{d+1}$. Using the computations in Appendix~\ref{app:gauss:elem} and recalling $\nabla\psi(0_d)=0_d$, we obtain
\bs\label{zyy-g}
 \z_{yy}&=1, \qquad \z_{xy} = 0, \qquad \z_{xxy}=\nabla^2\psi(0_d),\\
 \nabla_x^3\z(0_{d+1})&=\nabla^3\psi(0_d),\quad\nabla_x^4\z(0_{d+1})=\nabla^4\psi(0_d)+3\mathrm{Sym}(\nabla^2\psi(0_d)\otimes \nabla^2\psi(0_d)),
\es
where Sym is defined in~\eqref{sym}. Next, note that because $v(x)=\z(H^{-1/2}x,0)$ and $\nabla_x^3\z(0_{d+1})=\nabla^3\psi(0_d)$, we have $\nabla^3v(0_d)=\nabla^3\psi_H(0_d)$, where $\psi_H(x)=\psi(H^{-1/2}x)$. Thus
\be\label{v3psi}
\frac{1}{12}\|\nabla^3v(0_d)\|^2_{F} + \frac18\|\langle\nabla^3v(0_d),I_d\rangle\|^2 =\frac{1}{12}\|\nabla^3\psi_H(0_d)\|^2_{F} + \frac18\|\langle\nabla^3\psi_H(0_d),I_d\rangle\|^2.
\ee
We also have
\bs\label{vwpsi}
\langle\nabla^4v(0_d), I_d\otimes I_d\rangle &= \langle\nabla_x^4\z(0_{d+1}), H^{-1}\otimes H^{-1}\rangle \\
&=  \langle\nabla^4\psi(0_d), H^{-1}\otimes H^{-1}\rangle + 3\langle\mathrm{Sym}(\nabla^2\psi(0_d)\otimes \nabla^2\psi(0_d)), H^{-1}\otimes H^{-1}\rangle\\
&= \langle\nabla^4\psi(0_d), H^{-1}\otimes H^{-1}\rangle + \mathrm{Tr}(\nabla^2\psi(0_d)H^{-1})^2 + 2\mathrm{Tr}\left((\nabla^2\psi(0_d)H^{-1})^2\right)\\
&=\langle\nabla^4\psi_H(0_d), I_d\otimes I_d\rangle +  \mathrm{Tr}(\nabla^2\psi_H(0_d))^2 + 2\mathrm{Tr}\left(\nabla^2\psi_H(0_d)^2\right)
\es
Here, we used that
\be\label{AB}
\langle\mathrm{Sym}(A\otimes A), B\otimes B\rangle = \frac13\langle A, B\rangle^2 + \frac23\langle AB, AB\rangle
\ee for two symmetric matrices $A$ and $B$, which can be checked by a straightforward calculation. Using~\eqref{zyy-g},~\eqref{v3psi},~\eqref{vwpsi} in~\eqref{a2-const-g} gives
\bs
d^2a_1 = & - 1  -\frac12\mathrm{Tr}(\nabla^2\psi_H(0_d)) +\frac{1}{12}\|\nabla^3\psi_H(0_d)\|^2_{F} + \frac18\|\langle\nabla^3\psi_H(0_d),I_d\rangle\|^2 \\
&-\frac18\langle\nabla^4\psi_H(0_d), I_d\otimes I_d\rangle-\frac18 \mathrm{Tr}(\nabla^2\psi_H(0_d))^2 -\frac14\mathrm{Tr}\left(\nabla^2\psi_H(0_d)^2\right).
\es This proves~\eqref{a1gengauss}.

We now consider two special cases. First, consider a pure quartic boundary, $\psi(x)=\frac{1}{4!}\langle S, x^{\otimes4}\rangle$ for a symmetric tensor $S$. Then $\nabla^2\psi(0_d)$ and $\nabla^3\psi(0_d)$ are the zero tensors. Thus $H=I_d$, so that $\psi_H(x)=\psi(x)$. We then have $\nabla^4\psi_H(0_d)=\nabla^4\psi(0_d)=S$. We conclude that
\be
d^2a_1 =-1 -\frac18\langle S, I_d\otimes I_d\rangle. \ee For example, if $\psi(x)=\|x\|^4/24$, then we can write $\psi$ as $$\psi(x)=\frac{1}{24}\langle I_d\otimes I_d, x^{\otimes4}\rangle = \frac{1}{24}\langle \mathrm{Sym}(I_d\otimes I_d), x^{\otimes4}\rangle.$$
Thus
\be
d^2a_1 =-1 -\frac18\langle \mathrm{Sym}(I_d\otimes I_d), I_d\otimes I_d\rangle = -\frac{d^2}{24} - \frac{d}{12},
\ee
using~\eqref{AB}.
Second, consider a quadratic boundary, $\psi(x)=\frac12x^\top Bx$. Then $\psi_H(x)=\frac12x^\top B_Hx$, where $B_H = H^{-1/2}BH^{-1/2} = (I_d+B)^{-1/2}B(I_d+B)^{-1/2}$. Thus $\nabla^3\psi_H(0_d)$ and $\nabla^4\psi_H(0_d)$ are both zero tensors, and we obtain
\bs\label{a1quad}
d^2a_1 &= -1-\frac12\mathrm{Tr}(B_H)-\frac18\mathrm{Tr}(B_H)^2-\frac14\mathrm{Tr}(B_H^2)\\
&=-1-\frac12\sum_{i=1}^d\frac{b_i}{1+b_i}-\frac18\left(\sum_{i=1}^d\frac{b_i}{1+b_i}\right)^2-\frac14\sum_{i=1}^d\left(\frac{b_i}{1+b_i}\right)^2.
\es
In the second line, the $b_i$ are the eigenvalues of $B$.
\section{Classical results: Gaussian tails and Watson's Lemma}
Throughout the text, we use the standard Gaussian tail bound
\be\label{gausstail}
\mathbb P(\|Z\|\geq R\sqrt d) \leq \exp(-(R-1)^2d/2),\qquad R\geq1,\quad Z\sim\mathcal N(0_d,I_d).
\ee
See e.g.~\cite[Example 2.12]{gaussconc}.

Next, we formulate Watson's lemma, which is well-known and classical. Wee state and prove the result in a form which is convenient for our purposes, and which is not readily available in the literature. 
\begin{lemma}[Watson's Lemma]\label{lma:gen-wat}
Let $f,h\in C^L([0,T])$ and suppose  $h(0)=0$ and $h'(t)>b>0$ for all $t\in[0,T]$. Define the operator $D_h$ by $(D_hf)(t):=\frac{d}{dt}\left(\frac{f(t)}{h'(t)}\right)$. Then
\be\label{gen-wat-1}
\int_0^Tf(t)e^{-\la h(t)}\dd t=\frac{1}{h'(0)}\sum_{k=1}^{L-1}(D_h^{k-1}f)(0)\la^{-k}+\Rem_{L},
\ee where
\bs\label{RL-gen}
|\Rem_{L}|\leq  b^{-1}\|D_h^{L-1}f\|_\infty\la^{-L} + b^{-1}e^{-\la b T}\sum_{k=1}^{L-1}\la^{-k}|(D_h^{k-1}f)(T)|.
\es Here, $\|\cdot\|_\infty$ denotes the sup norm over the interval $[0,T]$.
\end{lemma}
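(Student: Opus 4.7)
The proof is a standard repeated integration by parts (IBP), driven by the identity
\[
e^{-\lambda h(t)} = -\frac{1}{\lambda h'(t)}\frac{d}{dt}\,e^{-\lambda h(t)},
\]
which is valid because $h'(t) \geq b > 0$ on $[0,T]$. My plan is to apply this rewriting once to $\int_0^T f(t)\,e^{-\lambda h(t)}\,dt$ and integrate by parts, producing boundary terms at $0$ and $T$ together with a new integral whose integrand is $(D_h f)(t)\,e^{-\lambda h(t)}$. Specifically, a single IBP yields
\[
\int_0^T f(t)\,e^{-\lambda h(t)}\,dt = \frac{1}{\lambda}\frac{f(0)}{h'(0)} - \frac{1}{\lambda}\frac{f(T)}{h'(T)}e^{-\lambda h(T)} + \frac{1}{\lambda}\int_0^T (D_h f)(t)\,e^{-\lambda h(t)}\,dt,
\]
where the boundary term at $0$ appears without an exponential factor thanks to the hypothesis $h(0)=0$.

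Next I would iterate this identity $L-1$ times, applying it to the new integrals involving $D_h f, D_h^2 f, \dots, D_h^{L-2} f$ in turn. This produces a telescoping structure: the boundary contributions at $t=0$ assemble into the stated series $\frac{1}{h'(0)}\sum_{k=1}^{L-1}(D_h^{k-1}f)(0)\,\lambda^{-k}$, while the boundary contributions at $t=T$, each carrying the factor $e^{-\lambda h(T)}$, accumulate into the second part of the remainder bound. The terminal integral
\[
\Rem_L^{(\mathrm{int})} := \frac{1}{\lambda^{L-1}}\int_0^T (D_h^{L-1}f)(t)\,e^{-\lambda h(t)}\,dt
\]
is what provides the $\lambda^{-L}$ contribution to $\Rem_L$.

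Finally, I would quantify the two remainder pieces. Since $h(0)=0$ and $h'\geq b$, one has $h(t)\geq bt$ on $[0,T]$; hence $\int_0^T e^{-\lambda h(t)}\,dt \leq \int_0^T e^{-\lambda b t}\,dt \leq (\lambda b)^{-1}$, giving $|\Rem_L^{(\mathrm{int})}| \leq b^{-1}\|D_h^{L-1}f\|_\infty\,\lambda^{-L}$. For the $T$-boundary contributions, I would use $h(T)\geq bT$ and $h'(T)\geq b$ to bound each summand by $b^{-1}e^{-\lambda b T}|(D_h^{k-1}f)(T)|\,\lambda^{-k}$. Together these give exactly the claimed bound on $\Rem_L$. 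There is no genuine obstacle here—the argument is entirely mechanical—and the only bookkeeping care needed is to verify that the iteration really does collect the $0$-boundary terms with the exponent-free weights claimed in \eqref{gen-wat-1}, which is where the assumption $h(0)=0$ is essential.
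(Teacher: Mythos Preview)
Your proposal is correct and follows essentially the same approach as the paper: a single integration by parts yields the recursion $I(f)=\lambda^{-1}\frac{f(0)}{h'(0)}-\lambda^{-1}\frac{f(T)}{h'(T)}e^{-\lambda h(T)}+\lambda^{-1}I(D_hf)$, which is then iterated $L-1$ times, and the remainder is bounded exactly as you describe using $h(t)\geq bt$ and $h'(T)\geq b$.
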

This formulation differs from standard results in that our error bound is explicit in its dependence on derivatives of the involved functions. We apply the lemma to functions $f$ and $h$ given by functions of $d+1$ variables, with $d$ of the $d+1$ arguments frozen at some fixed value. Thus $f$ and $h$ can depend on $d$, which is why we must quantify the bound's dependence on their derivatives.
\begin{proof}
Let $I(f)=\int_0^Tf(t)e^{-\la h(t)}\dd t$. Integrating by parts, we have
\bs
I(f) &= -\la^{-1}\int_0^T\frac{f(t)}{h'(t)}\frac{d}{dt}\left(e^{-\la h(t)}\right)\dd t \\
&= \la^{-1}\frac{f(0)}{h'(0)}-\la^{-1}\frac{f(T)}{h'(T)}e^{-\la h(T)} + \la^{-1}I(D_hf).
\es
Iterating this rule, we have
\bs
I(f)&= \int_0^Tf(t)e^{-\la h(t)}\dd t =\frac{1}{h'(0)}\sum_{k=1}^{L-1}\la^{-k}(D_h^{k-1}f)(0) + \Rem_L^\Wat,\\
\Rem_L^\Wat&:= - \frac{e^{-\la h(T)}}{h'(T)}\sum_{k=1}^{L-1}\la^{-k}(D_h^{k-1}f)(T) + \la^{-(L-1)}I(D_h^{L-1}f).
\es
Finally, note that
\bs
|I(D_h^{L-1}f)| &\leq \|D_h^{L-1}f\|_\infty\int_0^Te^{-\la h(t)}\dd t \\
&\leq \|D_h^{L-1}f\|_\infty\int_0^Te^{-\la bt}\dd t \leq \frac{\|D_h^{L-1}f\|_\infty}{\la b}.
\es
\end{proof}

\section{Bringing general case into the form of Assumption~\ref{assume1}}\label{app:gen}
Consider the integral $\int_{\bar D}e^{-\bar\la\bar z(u)}\dd u$, where
$$
u^*=\arg\min_{u\in\bar D}\bar z(u) \in \pa\bar D,\qquad \|\nabla\bar z(u^*)\|\neq 0, \qquad \bar D=\{u\in\br^{d+1}\,:\,F(u)\geq F_0\}.
$$
Let $n=\nabla \bar z(u^*)/\|\nabla\bar z(u^*)\|$. Let $U=(U_1,n)\in\br^{d+1}$ be an orthonormal matrix whose first $d$ columns form the matrix $U_1\in\br^{(d+1)\times d}$, and whose last column is $n$. Now, define
\be\label{zbar}z(u)= \bar z(u^* + Uu)/\|\nabla\bar z(u^*)\|,\qquad \la =\bar\la\|\nabla\bar z(u^*)\|,\qquad D=U^\top(\bar D-u^*).\ee Then
$$\int_{\bar D}e^{-\bar\la\bar z(u)}\dd u = \int_D e^{-\la z(u)}\dd u.$$ Furthermore, $0_{d+1}$ is the minimizer of $z$ over $D$, and we have $\nabla z(0_{d+1}) =(0_d, 1)$. We compute
\be\label{nablax-gen}
\nabla_x^2z(0_{d+1})=\frac{U_1^\top\nabla^2\bar z(u^*)U_1}{\|\nabla\bar z(u^*)\|}.
\ee
Now, note that the boundary $\pa\bar D$ is described by points $u$ such that $F(u)=F_0$. Furthermore, suppose $\pa D$ can be locally parameterized as $(x,\psi(x))$. Then for $u\in\pa\bar D$, we have $u=u^* + U((x,\psi(x))^\top) = u^* + U_1x +\psi(x)n$. But then
$$F(u^* + U_1x +\psi(x)n)=F_0.$$ We now compute the quadratic form $\langle\nabla^2\psi(x), b^{\otimes2}\rangle$ by substituting $x=x+sb$ above and differentiating twice with respect to $s$ at zero. We obtain
\bs\label{F1}
&\frac{d}{ds}F(u^*+U_1(x+sb)+\psi(x+sb)n) \\
&= \nabla F(u^*+U_1(x+sb)+\psi(x+sb)n)^\top\left(U_1b + n\nabla\psi(x+sb)^\top b\rangle\right),\\
0=&\frac{d}{ds}F(u^*+U_1(x+sb)+\psi(x+sb)n)\big\vert_{s=0}\\
&=\nabla F(u)^\top U_1b+ \left(\nabla\psi(x)^\top b \right) \nabla F(u)^\top n,
\es where $u=u^*+U_1x+\psi(x)n$, and
\bs\label{F2}
0=&\frac{d^2}{ds^2}F(u^*+U_1(x+sb)+\psi(x+sb)n)\big\vert_{s=0} \\
= &\langle \nabla^2 F(u), \, (U_1b + n\nabla\psi(x)^\top b)^{\otimes2}\rangle\\
&+\langle\nabla F(u), \, n\rangle b^\top\nabla^2\psi(x)b.
\es
We now take $x=0_d$, so that $u=u^*$, in~\eqref{F2}. We use that $\nabla\psi(0_d)=0_d$ to get
$$
b^\top\nabla^2\psi(0_d)b =\frac{b^\top U_1^\top\nabla^2F(u^*)U_1b}{\nabla F(u^*)^\top n}
$$ Thus we conclude that
\be\label{psi-gen}
\nabla^2\psi(0_d) = \frac{U_1^\top\nabla^2F(u^*)U_1}{\nabla F(u^*)^\top n}=\|\nabla\bar z(u^*)\|\frac{U_1^\top\nabla^2F(u^*)U_1}{\nabla F(u^*)^\top \nabla\bar z(u^*)}=\frac{U_1^\top\nabla^2F(u^*)U_1}{\|\nabla F(u^*)\|}.\ee To get the last equality we used that $\nabla F(u^*)$ is parallel to $\nabla \bar z(u^*)$, since $u^*$ minimizes $\bar z$ over $\{F(u)\geq F_0\}$. Thus $\nabla F(u^*)^\top \nabla\bar z(u^*)=\|\nabla F(u^*)\|\| \nabla\bar z(u^*)\|$. Combining~\eqref{nablax-gen} and~\eqref{psi-gen} now gives
\be\label{Hbar}
H=\nabla_x^2z(0_{d+1})+\nabla^2\psi(0_d)=U_1^\top\left(\frac{\nabla^2\bar z(u^*)}{\|\nabla\bar z(u^*)\|}+\frac{\nabla^2F(u^*)}{\|\nabla F(u^*)\|}\right)U_1.
\ee The normalization by $\nabla\bar z(u^*)$ arises because we have forced the new function $z$ from~\eqref{zbar} to have gradient norm 1 at the (new) instanton $0_{d+1}$. The normalization of $\nabla^2F(u^*)$ by $\|\nabla F(u^*)\|$ is natural, since the set $\{F(u)\geq F_0\}$ is invariant to rescaling $F$ by a constant. 

\subsection{The general sampling procedure}\label{app:gen:sample}We now consider the problem of sampling from $\bar\pi\vert_{\bar D}$, the restriction of the probability density $\bar\pi(u)\propto e^{-\bar\la\bar z(u)}$ to the set $\bar D=\{u\in\br^{d+1}\,:\,F(u)\geq F_0\}$. Let $\la, z, D$ be as in~\eqref{zbar}, and let $(X,T)\sim\pi\vert_D$. Then 
$$u^* + U\begin{pmatrix}X\\T\end{pmatrix} = u^*+U_1X + Tn \sim \bar\pi\vert_{\bar D}.$$ Correspondingly, the density of the approximation to $\bar\pi\vert_{\bar D}$ is the law of $u^*+U_1\hat X + \hat T n$, where $(\hat X,\hat T)\sim\hat\pi$ and $\hat\pi$ is the approximation to $\pi\vert_D$ from~\eqref{pi-hat}. We can now summarize the procedure to approximately sample from $\bar\pi\vert_{\bar D}$:
\begin{enumerate}
\item Draw $\hat X \sim \mathcal N(0_d, (\la H)^{-1})$, where $\la=\bar\la\|\nabla z(u^*)\|$ and $H$ is as in~\eqref{Hbar}.
\item Draw $\hat Y\sim\mathrm{Exp}(\la)$ independently of $\hat X$.
\item Set $\hat T = \hat Y + \frac12\hat X^\top\nabla^2\psi(0_d)\hat X$, where $\nabla^2\psi(0_d)$ is as in~\eqref{psi-gen}.
\item Return $u^*+U_1\hat X + \hat T n$.
\end{enumerate} 

\bibliographystyle{abbrv}
\bibliography{bibliogr_Lap}

\begin{thebibliography}{10}

\bibitem{agapiou2017importance}
S.~Agapiou, O.~Papaspiliopoulos, D.~Sanz-Alonso, and A.~M. Stuart.
\newblock Importance sampling: Intrinsic dimension and computational cost.
\newblock {\em Statistical Science}, pages 405--431, 2017.

\bibitem{au2001estimation}
S.-K. Au and J.~L. Beck.
\newblock Estimation of small failure probabilities in high dimensions by
  subset simulation.
\newblock {\em Probabilistic engineering mechanics}, 16(4):263--277, 2001.

\bibitem{bleistein1975asymptotic}
N.~Bleistein and R.~A. Handelsman.
\newblock {\em Asymptotic expansions of integrals}.
\newblock Ardent Media, 1975.

\bibitem{gaussconc}
S.~Boucheron, G.~Lugosi, and P.~Massart.
\newblock {\em Concentration Inequalities: A Nonasymptotic Theory of
  Independence}.
\newblock Oxford University Press, 02 2013.

\bibitem{breitung1984asymptotic}
K.~Breitung.
\newblock Asymptotic approximations for multinormal integrals.
\newblock {\em Journal of Engineering Mechanics}, 110(3):357--366, 1984.

\bibitem{cerou2012sequential}
F.~C{\'e}rou, P.~Del~Moral, T.~Furon, and A.~Guyader.
\newblock Sequential {M}onte {C}arlo for rare event estimation.
\newblock {\em Statistics and computing}, 22(3):795--808, 2012.

\bibitem{cerou2007adaptive}
F.~C{\'e}rou and A.~Guyader.
\newblock Adaptive multilevel splitting for rare event analysis.
\newblock {\em Stochastic Analysis and Applications}, 25(2):417--443, 2007.

\bibitem{choi2007reliability}
S.-K. Choi, R.~A. Canfield, and R.~V. Grandhi.
\newblock {\em Reliability-based structural design}.
\newblock Springer, 2007.

\bibitem{dematteis2019extreme}
G.~Dematteis, T.~Grafke, and E.~Vanden-Eijnden.
\newblock Extreme event quantification in dynamical systems with random
  components.
\newblock {\em SIAM/ASA Journal on Uncertainty Quantification},
  7(3):1029--1059, 2019.

\bibitem{ditlevsen1996structural}
O.~Ditlevsen and H.~O. Madsen.
\newblock {\em Structural Reliability Methods}.
\newblock John Wiley \& Sons, Chichester, 1996.

\bibitem{el2021improvement}
M.~El~Masri, J.~Morio, and F.~Simatos.
\newblock Improvement of the cross-entropy method in high dimension for failure
  probability estimation through a one-dimensional projection without gradient
  estimation.
\newblock {\em Reliability Engineering \& System Safety}, 216:107991, 2021.

\bibitem{embrechts1997modelling}
P.~Embrechts, C.~Kl{\"u}ppelberg, and T.~Mikosch.
\newblock {\em Modelling Extremal Events: for Insurance and Finance}, volume~33
  of {\em Applications of Mathematics}.
\newblock Springer, Berlin, 1997.

\bibitem{FriedliLinde2024RareEvents}
L.~Friedli and N.~Linde.
\newblock Rare event probability estimation for groundwater inverse problems
  with a two-stage sequential monte carlo approach.
\newblock {\em Water Resources Research}, 60, 2024.

\bibitem{grafke2019numerical}
T.~Grafke and E.~Vanden-Eijnden.
\newblock Numerical computation of rare events via large deviation theory.
\newblock {\em Chaos: An Interdisciplinary Journal of Nonlinear Science},
  29(6), 2019.

\bibitem{helin2022non}
T.~Helin and R.~Kretschmann.
\newblock Non-asymptotic error estimates for the {L}aplace approximation in
  {B}ayesian inverse problems.
\newblock {\em Numerische Mathematik}, 150(2):521--549, 2022.

\bibitem{hu2021second}
Z.~Hu, R.~Mansour, M.~Olsson, and X.~Du.
\newblock Second-order reliability methods: a review and comparative study.
\newblock {\em Structural and multidisciplinary optimization},
  64(6):3233--3263, 2021.

\bibitem{kahn1953methods}
H.~Kahn and A.~W. Marshall.
\newblock Methods of reducing sample size in {M}onte {C}arlo computations.
\newblock {\em Journal of the Operations Research Society of America},
  1(5):263--278, 1953.

\bibitem{bp}
M.~J. Kasprzak, R.~Giordano, and T.~Broderick.
\newblock How good is your {L}aplace approximation of the {B}ayesian posterior?
  {F}inite-sample computable error bounds for a variety of useful divergences.
\newblock {\em Journal of Machine Learning Research}, 26(87):1--81, 2025.

\bibitem{katskew}
A.~Katsevich.
\newblock The {L}aplace approximation accuracy in high dimensions: a refined
  analysis and new skew adjustment.
\newblock {\em arXiv preprint arXiv:2306.07262}, 2023.

\bibitem{A24}
A.~Katsevich.
\newblock The {L}aplace asymptotic expansion in high dimensions.
\newblock {\em arXiv preprint arXiv:2406.12706}, 2024.

\bibitem{katsBVM}
A.~Katsevich.
\newblock Improved dimension dependence in the {B}ernstein von {M}ises theorem
  via a new {L}aplace approximation bound.
\newblock {\em Information and Inference: A Journal of the IMA}, to appear,
  2025.

\bibitem{katspok}
A.~Katsevich and V.~Spokoiny.
\newblock A unified theory of the high-dimensional {L}aplace approximation with
  application to {B}ayesian inverse problems.
\newblock {\em arXiv preprint arXiv:2509.07952}, 2025.

\bibitem{lapinski2019multivariate}
T.~M. {\L}api{\'n}ski.
\newblock Multivariate {L}aplace's approximation with estimated error and
  application to limit theorems.
\newblock {\em Journal of Approximation Theory}, 248:105305, 2019.

\bibitem{Li2017LargeDeviations}
H.~Li.
\newblock Large deviations.
\newblock In E.~Serpedin, T.~Chen, and D.~Rajan, editors, {\em Mathematical
  Foundations for Signal Processing, Communications, and Networking}, pages
  295--315. CRC Press, Boca Raton, FL, 2017.

\bibitem{normalization2020}
Z.-H. Lu, C.-H. Cai, Y.-G. Zhao, Y.~Leng, and Y.~Dong.
\newblock Normalization of correlated random variables in structural
  reliability analysis using fourth-moment transformation.
\newblock {\em Structural Safety}, 82:101888, 2020.

\bibitem{owen2013monte}
A.~B. Owen.
\newblock Monte carlo theory, methods and examples, 2013.

\bibitem{BenRached2016UnifiedIS}
N.~B. Rached, A.~Kammoun, M.-S. Alouini, and R.~Tempone.
\newblock Unified importance sampling schemes for efficient simulation of
  outage capacity over generalized fading channels.
\newblock {\em IEEE Journal of Selected Topics in Signal Processing},
  10(2):376--388, 2016.

\bibitem{rubinstein2004cross}
R.~Y. Rubinstein and D.~P. Kroese.
\newblock {\em The cross-entropy method: a unified approach to combinatorial
  optimization, {M}onte-{C}arlo simulation and machine learning}.
\newblock Springer Science \& Business Media, 2004.

\bibitem{schillings2020convergence}
C.~Schillings, B.~Sprungk, and P.~Wacker.
\newblock On the convergence of the {L}aplace approximation and
  noise-level-robustness of {L}aplace-based {M}onte {C}arlo methods for
  {B}ayesian inverse problems.
\newblock {\em Numerische Mathematik}, 145:915--971, 2020.

\bibitem{siegmund1976importance}
D.~Siegmund.
\newblock Importance sampling in the {M}onte {C}arlo study of sequential tests.
\newblock {\em The Annals of Statistics}, pages 673--684, 1976.

\bibitem{spokoiny2023dimension}
V.~Spokoiny.
\newblock Dimension free nonasymptotic bounds on the accuracy of
  high-dimensional {L}aplace approximation.
\newblock {\em SIAM/ASA Journal on Uncertainty Quantification},
  11(3):1044--1068, 2023.

\bibitem{Straub2016BayesianRareEvents}
D.~Straub, I.~Papaioannou, and W.~Betz.
\newblock Bayesian analysis of rare events.
\newblock {\em Journal of Computational Physics}, 314:538--556, 2016.

\bibitem{tang2025laplace}
Y.~Tang and N.~Reid.
\newblock Laplace and saddlepoint approximations in high dimensions.
\newblock {\em Bernoulli}, 31(3):1759--1788, 2025.

\bibitem{tong2023large}
S.~Tong and G.~Stadler.
\newblock Large deviation theory-based adaptive importance sampling for rare
  events in high dimensions.
\newblock {\em SIAM/ASA Journal on Uncertainty Quantification}, 11(3):788--813,
  2023.

\bibitem{tong2021extreme}
S.~Tong, E.~Vanden-Eijnden, and G.~Stadler.
\newblock Extreme event probability estimation using {PDE}-constrained
  optimization and large deviation theory, with application to tsunamis.
\newblock {\em Communications in Applied Mathematics and Computational
  Science}, 16(2):181--225, 2021.

\bibitem{uribe2021cross}
F.~Uribe, I.~Papaioannou, Y.~M. Marzouk, and D.~Straub.
\newblock Cross-entropy-based importance sampling with failure-informed
  dimension reduction for rare event simulation.
\newblock {\em SIAM/ASA Journal on Uncertainty Quantification}, 9(2):818--847,
  2021.

\bibitem{robRES}
R.~J. Webber, D.~A. Plotkin, M.~E. O’Neill, D.~S. Abbot, and J.~Weare.
\newblock Practical rare event sampling for extreme mesoscale weather.
\newblock {\em Chaos: An Interdisciplinary Journal of Nonlinear Science},
  29(5):053109, 05 2019.

\bibitem{symmtens}
X.~Zhang, C.~Ling, and L.~Qi.
\newblock The best rank-1 approximation of a symmetric tensor and related
  spherical optimization problems.
\newblock {\em SIAM Journal on Matrix Analysis and Applications},
  33(3):806--821, 2012.

\end{thebibliography}
\end{document}